\numberwithin{equation}{section}
\numberwithin{figure}{section}
\numberwithin{table}{section}
\theoremstyle{plain}
\newtheorem{thm}{\protect\theoremname}[section]
  \theoremstyle{definition}
  \newtheorem{defn}[thm]{\protect\definitionname}
  \theoremstyle{plain}
  \newtheorem{prop}[thm]{\protect\propositionname}
  \theoremstyle{remark}
  \newtheorem*{rem*}{\protect\remarkname}
  \theoremstyle{plain}
  \newtheorem{lem}[thm]{\protect\lemmaname}
  \theoremstyle{plain}
  \newtheorem{cor}[thm]{\protect\corollaryname}
  \theoremstyle{remark}
  \newtheorem{rem}[thm]{\protect\remarkname}
\author{Sungkyung Kang}
\address{Mathematical Institute, University of Oxford, Andrew Wiles Building,
Radcliffe Observatory Quarter, Woodstock Road, Oxford, OX2 6GG, UK}
\email{sungkyung.kang@maths.ox.ac.uk}
\thanks{This project has received funding from the European Research Council (ERC) under the European Unions Horizon 2020 research and innovation programme (grant agreement No 674978).}
\subjclass[2010]{57M27; 57R58}
\keywords{Contact structure; Transverse knots; Heegaard Floer homology}
  \providecommand{\corollaryname}{Corollary}
  \providecommand{\definitionname}{Definition}
  \providecommand{\lemmaname}{Lemma}
  \providecommand{\propositionname}{Proposition}
  \providecommand{\remarkname}{Remark}
\providecommand{\theoremname}{Theorem}
\begin{document}

\title{$\mathbb{Z}_{2}$-equivariant Heegaard Floer cohomology of knots
in $S^{3}$ as a strong Heegaard invariant}
\begin{abstract}
The $\mathbb{Z}_{2}$-equivariant Heegaard Floer cohomlogy $\widehat{HF}_{\mathbb{Z}_{2}}(\Sigma(K))$
of a knot $K$ in $S^{3}$, constructed by Hendricks, Lipshitz, and
Sarkar, is an isotopy invariant which is defined using bridge diagrams
of $K$ drawn on a sphere. We prove that $\widehat{HF}_{\mathbb{Z}_{2}}(\Sigma(K))$
can be computed from knot Heegaard diagrams of $K$ and show that
it is a strong Heegaard invariant. As a topolocial application, we
construct a transverse knot invariant $\hat{\mathcal{T}}_{\mathbb{Z}_{2}}(K)$
as an element of $\widehat{HFK}_{\mathbb{Z}_{2}}(\Sigma(K),K)$, which
is a refinement of $\widehat{HF}_{\mathbb{Z}_{2}}(\Sigma(K))$, and
show that it is a refinement of both the LOSS invariant $\hat{\mathcal{T}}(K)$
and the $\mathbb{Z}_{2}$-equivariant contact class $c_{\mathbb{Z}_{2}}(\xi_{K})$.
\end{abstract}

\maketitle

\section{Introduction}

Suppose that a based knot $K$ in $S^{3}$ is given. Then we can represent
$K$ as a bridge diagram on a sphere, and taking its branched double
cover along the points where $K$ and the sphere intersect gives a
Heegaard diagram of the branched double cover $\Sigma(K)$ of $S^{3}$
along $K$. This diagram admits a natural $\mathbb{Z}_{2}$-action
which fixes the basepoint and the $\alpha$,$\beta$-curves. From
these data, Hendricks, Lipshitz, and Sarkar\cite{eqv-Floer} gave
a construction of the $\mathbb{Z}_{2}$-equivariant Heegaard Floer
cohomology $\widehat{HF}_{\mathbb{Z}_{2}}(\Sigma(K))$, using their
formulation of $\mathbb{Z}_{2}$-equivariant Floer cohomology theory.
They also proved that the isomorphism class of $\widehat{HF}_{\mathbb{Z}_{2}}(\Sigma(K))$,
which is a $\mathbb{F}_{2}[\theta]$-module where $\theta$ is a formal
variable, is an invariant of the isotopy class of the given knot $K$.
Also, the author proved in \cite{Kang} that $\widehat{HF}_{\mathbb{Z}_{2}}(\Sigma(K))$
satisfies naturality and is functorial under based link cobordisms
whose ends are knots. 

Given these facts, it is natural to ask whether $\widehat{HF}_{\mathbb{Z}_{2}}(\Sigma(K))$
can be computed from bridge diagrams of $K$ drawn on closed surfaces
of aribtrary genera, instead of spheres. In section 2, we will see
that this is almost always possible, by proving the following theorem.
\begin{thm}
\label{thm:main1}Let $\mathcal{E}$ be a weakly admissible extended
bridge diagrams representing a knot in $S^{3}$, which has at least
two A-arcs. Then $\widehat{HF}_{\mathbb{Z}_{2}}(\mathcal{E})\simeq\widehat{HF}_{\mathbb{Z}_{2}}(\Sigma(K))$.
\end{thm}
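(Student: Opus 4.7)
The strategy is to reduce an arbitrary extended bridge diagram $\mathcal{E}$ to a bridge diagram on the sphere representing the same knot $K$ through a sequence of elementary bridge moves, show that each such move lifts to a $\mathbb{Z}_{2}$-equivariant Heegaard move on the branched double cover, and then invoke the equivariant invariance results of Hendricks-Lipshitz-Sarkar \cite{eqv-Floer} together with the naturality results of \cite{Kang}.

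Concretely, I would first form the branched double cover $D(\mathcal{E})$ of the splitting surface of $\mathcal{E}$, branched along the bridge points, so that the A-arcs and B-arcs lift to closed curves fixed setwise by the covering involution $\tau$, while the $\alpha$- and $\beta$-curves of $\mathcal{E}$ lift to pairs of curves swapped by $\tau$. This produces a $\mathbb{Z}_{2}$-equivariant Heegaard diagram for $\Sigma(K)$; weak admissibility of $\mathcal{E}$ should translate to weak admissibility of $D(\mathcal{E})$, possibly after a standard winding argument to control periodic domains. Next, I would enumerate the elementary bridge moves between any two extended bridge diagrams of $K$---arc isotopies, handle slides of an $\alpha$- or $\beta$-curve over an A-arc, a B-arc, or another $\alpha$- or $\beta$-curve, and stabilizations and destabilizations of the splitting surface---and verify that each lifts to a $\tau$-equivariant Heegaard move on $D(\mathcal{E})$. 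The hypothesis of at least two A-arcs is what enables a full reduction toward the sphere case without degenerating: one A-arc must always be preserved to maintain the bridge structure during the cancellations, while the other can participate in the move, and only when two are available at every intermediate stage can the reduction be carried out equivariantly.

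The main technical obstacle will be in verifying this lifting dictionary carefully, particularly for stabilizations and destabilizations. An equivariant stabilization of $D(\mathcal{E})$ need not correspond to a single bridge move on $\mathcal{E}$, but may require a paired move respecting $\tau$, or the introduction of a new A/B-arc pair; each case must be analyzed to confirm that it yields a valid equivariant Heegaard move and that the induced map on $\mathbb{Z}_{2}$-equivariant Floer cohomology is an isomorphism. Once this correspondence is in place, the invariance of $\widehat{HF}_{\mathbb{Z}_{2}}$ under equivariant Heegaard moves, established in \cite{eqv-Floer} and reinforced by the naturality statements in \cite{Kang}, shows that $\widehat{HF}_{\mathbb{Z}_{2}}(\mathcal{E})$ agrees with the corresponding group computed from any sphere bridge diagram of $K$, which is by definition $\widehat{HF}_{\mathbb{Z}_{2}}(\Sigma(K))$.
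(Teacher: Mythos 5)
Your overall strategy -- form the branched double cover, relate $\mathcal{E}$ to a sphere bridge diagram by moves, show each move induces an isomorphism on $\widehat{HF}_{\mathbb{Z}_{2}}$ -- is the same as the paper's. The gap is in step (b): you treat "confirm that the induced map on $\mathbb{Z}_{2}$-equivariant Floer cohomology is an isomorphism" as a routine verification, and you appeal to invariance "established in \cite{eqv-Floer}." But \cite{eqv-Floer} only establishes this for moves where equivariant transversality (hypothesis (EH-2)) can be achieved, and for the move that changes the number of bridges -- a stabilization of type II, in the paper's language -- that hypothesis can fail. The paper notes explicitly that the arguments of \cite{eqv-Floer} extend to every extended Heegaard move \emph{except} type~II stabilizations, and the remainder of the section is devoted precisely to that case. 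This is not mere bookkeeping: it requires the notion of a diagram in nice position, the replacement of type~II stabilizations by special stabilizations (Lemmas \ref{spstab-lemma} and \ref{spstab-isom}), an associativity lemma for equivariant triangle counts in involutive Heegaard 6-tuples (Lemma \ref{associativity-1}), a commutation lemma (Lemma \ref{comm-lemma-stabII}), and finally a ladder-diagram argument (Theorem \ref{thm:basicmove-isom}) in which three known isomorphisms force the fourth. None of these ideas appears in your outline, so as written the proposal does not close.

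A secondary point: your explanation of the ``at least two A-arcs'' hypothesis -- that one arc must be preserved to maintain the bridge structure during cancellations -- is not what the hypothesis is actually doing. It guarantees that the source and target of every type~II (de)stabilization encountered while moving $\mathcal{E}$ to a sphere diagram retain at least one A-arc and one B-arc, which is exactly what Theorem \ref{thm:basicmove-isom} requires. The condition is a technical convenience later removed in Theorem \ref{thm:main3} using the very-nice-diagram machinery of Section~4; it is not a topological obstruction in the way your description suggests. Also, your claim that the $\alpha$- and $\beta$-curves always lift to pairs of curves swapped by $\tau$ is imprecise: depending on the mod-$2$ intersection number with the branch set, a curve may lift either to a single $\tau$-invariant circle or to a swapped pair, and the definition of $\mathcal{H}_{d}(\mathcal{E})$ in the paper takes connected components of preimages precisely to handle this.
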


Now, given such a fact, we can use it to compute $\widehat{HF}_{\mathbb{Z}_{2}}(\Sigma(K))$
from a weakly admissible knot Heegaard diagram of $K$. To write it
up clearly, choose a weakly admissible knot Heegaard diagram $H=(\Sigma,\boldsymbol{\alpha},\boldsymbol{\beta},z,w)$
which represents a based knot $(K,z)$. Add a pair of a small A-arc
and a small B-arc connected to $w$, whose interiors are disjoint;
this gives a weakly admissible extended bridge diagram representing
$K$, which has at least two A-arcs. Then taking the branched double
cover of the resulting extended bridge diagram and forgetting all
basepoints except $z$ gives a Heegaard diagram of $\Sigma(K)$ together
with a $\mathbb{Z}_{2}$-action. By Theorem \ref{thm:main1}, the
$\mathbb{Z}_{2}$-equivariant Heegaard Floer cohomology of this diagram
is isomorphic to $\widehat{HF}_{\mathbb{Z}_{2}}(\Sigma(K))$.

This construction implies that $\widehat{HF}_{\mathbb{Z}_{2}}(\Sigma(K))$
is a weak Heegaard invariant of $K$, as defined in \cite{Juhasz-naturality}.
In section 3, we will see that the $\mathbb{Z}_{2}$-equivariant Heegaard
Floer cohomology, as a weak Heegaard invariant, satisfies certain
commutativity axioms, thereby proving that it is actually a strong
Heegaard invariant. Morover, we will also see that $\widehat{HF}_{\mathbb{Z}_{2}}$
as a natural invariant calculated from bridge diagrams on a sphere
is naturally isomorphic to $\widehat{HF}_{\mathbb{Z}_{2}}$ as a strong
Heegaard invariant; to be precise, we will prove the following theorem.
\begin{thm}
\label{thm:main2}Consider the category~$\text{Knot}_{\ast}$ whose
objects are based knots in $S^{3}$ and morphisms are self-diffeomorphisms
of $S^{3}$, and let 
\[
\widehat{HF}_{\mathbb{Z}_{2}}^{bridge}\,:\,\text{Knot}_{\ast}\rightarrow\text{Mod}_{\mathbb{F}_{2}[\theta]}
\]
 be the functor defined in Theorem 6.9 of \cite{Kang}. Also, let
\[
\widehat{HF}_{\mathbb{Z}_{2}}^{knot}\,:\,\text{Knot}_{\ast}\rightarrow\text{Mod}_{\mathbb{F}_{2}[\theta]}
\]
 be the functor defined by considering $\widehat{HF}_{\mathbb{Z}_{2}}$
as a strong Heegaard invariant. Then there exists an invertible natural
transformation between $\widehat{HF}_{\mathbb{Z}_{2}}^{bridge}$ and
$\widehat{HF}_{\mathbb{Z}_{2}}$.
\end{thm}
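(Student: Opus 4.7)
The plan is to build the natural transformation pointwise by applying Theorem~\ref{thm:main1}, and then to use the strong Heegaard invariant structure established in Section~3 to verify naturality with respect to self-diffeomorphisms of $S^{3}$. The key observation is that both functors can be computed from the same extended bridge diagrams, namely those obtained from knot Heegaard diagrams by attaching a small A-arc/B-arc pair at $w$.

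For each based knot $(K,z)$, I would first choose a weakly admissible knot Heegaard diagram $H_{K}$ of $(K,z)$ and form its extended bridge diagram $\mathcal{E}(H_{K})$ as described following Theorem~\ref{thm:main1}. Since $\mathcal{E}(H_{K})$ has at least two A-arcs, Theorem~\ref{thm:main1} supplies an isomorphism $\widehat{HF}_{\mathbb{Z}_{2}}(\mathcal{E}(H_{K})) \simeq \widehat{HF}_{\mathbb{Z}_{2}}^{bridge}(K)$. The component $\eta_{K}$ is then defined by composing this isomorphism with the tautological identification of $\widehat{HF}_{\mathbb{Z}_{2}}(\mathcal{E}(H_{K}))$ with $\widehat{HF}_{\mathbb{Z}_{2}}^{knot}(K)$ coming from the computation recipe described after Theorem~\ref{thm:main1}. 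Next I would check that $\eta_{K}$ does not depend on the choice of $H_{K}$: any two weakly admissible knot Heegaard diagrams for $(K,z)$ are connected by a finite sequence of isotopies, handleslides, and (de)stabilizations, each of which lifts to a $\mathbb{Z}_{2}$-equivariant Heegaard move on the corresponding extended bridge diagrams, and the commutativity axioms of the strong Heegaard invariant guarantee that the induced transition isomorphisms are compatible with $\eta$.

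The hardest step is naturality with respect to a morphism $\phi:(S^{3},K,z)\to(S^{3},K',z')$ in $\text{Knot}_{\ast}$. Given $\phi$, one compares an extended bridge diagram $\mathcal{E}(H_{K'})$ with its pullback $\phi^{*}\mathcal{E}(H_{K'})=\mathcal{E}(\phi^{*}H_{K'})$: the diffeomorphism induces $\phi_{\ast}$ on both $\widehat{HF}_{\mathbb{Z}_{2}}^{bridge}$ and $\widehat{HF}_{\mathbb{Z}_{2}}^{knot}$ at the level of these diagrams. The isomorphism supplied by Theorem~\ref{thm:main1} is assembled from $\mathbb{Z}_{2}$-equivariant handleslides, isotopies, and (de)stabilizations, and each such move commutes with pullback under diffeomorphisms in the strong Heegaard category; so $\eta_{K'}\circ\widehat{HF}_{\mathbb{Z}_{2}}^{bridge}(\phi)=\widehat{HF}_{\mathbb{Z}_{2}}^{knot}(\phi)\circ\eta_{K}$. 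The main obstacle lies precisely in making this diffeomorphism-equivariance rigorous: one must check that the zig-zag of transition maps built in the proof of Theorem~\ref{thm:main1}, which passes through several auxiliary diagrams, can be promoted to a $\phi_{\ast}$-equivariant zig-zag, using the naturality properties of the $\mathbb{Z}_{2}$-equivariant Floer construction recorded in \cite{Kang}. Once this is done, invertibility of $\eta$ is automatic since each $\eta_{K}$ is already an isomorphism by construction.
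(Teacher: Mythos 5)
Your high-level strategy — build $\eta_K$ pointwise, check independence of choices, check diffeomorphism-naturality — is the right shape, but the direction and the key technical input differ from the paper's, and the proposal has a genuine gap at the central step.

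The paper does not pass from a knot Heegaard diagram $H_K$ to its associated extended bridge diagram $\mathcal{E}(H_K)$ and then invoke Theorem~\ref{thm:main1} abstractly. Instead it constructs a specific, canonical map in the opposite direction: for an arbitrary extended bridge diagram $\mathcal{E}$ it defines the translation isomorphism $\mathcal{T}_{\mathcal{E}}=(\mathcal{R}_{\mathcal{E}}\circ\mathcal{D}_{\mathcal{E}})^{-1}$ by first resolving every proper crossing of the A/B-arcs (the resolution maps $\mathcal{R}_{p,\mathcal{E}}$), then destabilizing to a knot Heegaard diagram $HR(\mathcal{E})$. Crucially, the paper must and does prove that each $\mathcal{R}_{p,\mathcal{E}}$ is independent of the auxiliary choices (Lemma~\ref{lem:resmap-welldef}), that resolution maps at distinct crossings commute (Lemma~\ref{lem:resmap-comm}), and that the destabilization map is canonical (Lemma~\ref{comm-lemma-stabII}). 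Only then does it verify naturality under each extended Heegaard move (Lemma~\ref{lem:HR-relations} and the final commuting-square argument). These all rest on the statement that any loop of basic moves on \emph{extended bridge diagrams} induces the identity (Remark~\ref{rem:functor-ext}) together with the commutation of extended Heegaard moves with type-II stabilizations (Lemma~\ref{comm-lemma-stabII}) and with type-I stabilizations and diffeomorphisms (Remark~\ref{rem:comm-stabI}).

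The gap in your proposal is precisely here. Theorem~\ref{thm:main1} only gives an isomorphism class, not a canonical isomorphism; your ``zig-zag of transition maps'' passes through auxiliary \emph{extended bridge diagrams} of higher genus, and you need to know that the resulting isomorphism is independent of the chosen zig-zag. You appeal to (i) ``the commutativity axioms of the strong Heegaard invariant'' and (ii) ``the naturality properties recorded in \cite{Kang},'' but (i) concerns only knot Heegaard diagrams (Definition~\ref{def:stronginv}) and (ii) only genus-0 bridge diagrams — neither covers the intermediate higher-genus extended bridge diagrams that the zig-zag runs through. Without a result of the form ``every loop of extended Heegaard moves on extended bridge diagrams induces the identity in $\widehat{HF}_{\mathbb{Z}_2}$'' (the content of Remark~\ref{rem:functor-ext} together with Lemma~\ref{comm-lemma-stabII} and Remark~\ref{rem:comm-stabI}), neither the well-definedness of $\eta_K$ nor its naturality actually follows. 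You correctly flag that the hardest part is promoting the zig-zag to something canonical and equivariant — but the missing ingredient is not diffeomorphism-equivariance (which, once the translation isomorphism is defined, follows easily from Remark~\ref{rem:comm-stabI}); it is the coherence of the transition maps across all extended bridge diagrams, and this requires the resolution-map machinery rather than citation of existing naturality results.
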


In section 4, we will see that any knot Heegaard diagrams representing
a (based) knot $K$ in $S^{3}$ can be transformed, via isotopies
and handleslides, to certain types of knot Heegaard diagrams, called
very nice diagrams. Also, we will see that, from such a diagram, we
can compute $\widehat{HF}_{\mathbb{Z}_{2}}(\Sigma(K))$ in a purely
combinatorial way. As a result, we can remove a pair of an A-arc and
a B-arc when computing $\widehat{HF}_{\mathbb{Z}_{2}}$ from a knot
Heegaard diagram, and thus extend Theorem \ref{thm:main1} to full
generality.
\begin{thm}
\label{thm:main3}Let $\mathcal{E}$ be a weakly admissible extended
bridge diagrams representing a knot in $S^{3}$. Then $\widehat{HF}_{\mathbb{Z}_{2}}(\mathcal{E})\simeq\widehat{HF}_{\mathbb{Z}_{2}}(\Sigma(K))$.
\end{thm}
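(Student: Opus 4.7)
The plan is to bootstrap from Theorem \ref{thm:main1} by showing that inserting a small disjoint pair of an A-arc and a B-arc into a weakly admissible extended bridge diagram $\mathcal{E}$ does not alter $\widehat{HF}_{\mathbb{Z}_{2}}(\mathcal{E})$. Since this operation always produces a diagram with at least two A-arcs, Theorem \ref{thm:main1} will then close the remaining gap.

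More precisely, I would first handle the trivial case: if $\mathcal{E}$ already has at least two A-arcs, there is nothing to do. Otherwise, I would construct a new weakly admissible extended bridge diagram $\mathcal{E}'$ representing the same knot $K$ by adjoining a small A-arc/B-arc pair near one of the basepoints, with interiors disjoint from all pre-existing curves and arcs. Theorem \ref{thm:main1} applies to $\mathcal{E}'$ and yields $\widehat{HF}_{\mathbb{Z}_{2}}(\mathcal{E}')\simeq \widehat{HF}_{\mathbb{Z}_{2}}(\Sigma(K))$, so the theorem reduces to proving $\widehat{HF}_{\mathbb{Z}_{2}}(\mathcal{E})\simeq \widehat{HF}_{\mathbb{Z}_{2}}(\mathcal{E}')$.

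For this comparison, I would invoke the very nice diagram machinery of Section 4. Taking branched double covers, $\mathcal{E}$ and $\mathcal{E}'$ give rise to $\mathbb{Z}_{2}$-equivariant Heegaard diagrams of $\Sigma(K)$, which can be reduced via equivariant isotopies and handleslides to very nice diagrams whose $\widehat{HF}_{\mathbb{Z}_{2}}$ is computed combinatorially. The small A-arc/B-arc pair lifts, under the branched double cover, to a pair of short curves exchanged by the covering involution, contributing a single new generator that assembles into an $\iota$-equivariant acyclic summand. Consequently the combinatorial output for $\mathcal{E}'$ matches that for $\mathcal{E}$, yielding the desired isomorphism.

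The hard part will be the equivariant bookkeeping in the very nice diagram reduction: one must check that the sequence of isotopies and handleslides can be chosen $\mathbb{Z}_{2}$-equivariantly, that the resulting very nice diagram still carries the covering involution in a combinatorially transparent way, and that the added A-arc/B-arc pair genuinely sits in an $\iota$-equivariantly contractible summand rather than merely an acyclic one in the sense of ordinary chain complexes. The strong Heegaard invariance established in Section 3 should eliminate much of the ambient work by guaranteeing that equivariant Heegaard moves induce canonical isomorphisms on $\widehat{HF}_{\mathbb{Z}_{2}}$; the remaining task is then the purely local combinatorial check near the inserted A-arc/B-arc pair, after which the chain of isomorphisms $\widehat{HF}_{\mathbb{Z}_{2}}(\mathcal{E})\simeq \widehat{HF}_{\mathbb{Z}_{2}}(\mathcal{E}')\simeq \widehat{HF}_{\mathbb{Z}_{2}}(\Sigma(K))$ completes the proof.
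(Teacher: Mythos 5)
Your high-level strategy matches the paper's: extend Theorem~\ref{thm:main1} by adjoining a pair of arcs to reduce to the $\geq 2$ A-arc case (an extended bridge diagram $\mathcal{E}$ with a single A-arc is exactly the data of a knot Heegaard diagram $H$ with $\mathcal{H}_d(\mathcal{E})=\hat{H}$, and the paper's implicit proof is precisely Theorem~\ref{thm:knotdiagramHF} combined with Corollary~\ref{cor:allgenus-isom}). You also correctly isolate the hard step. However, there are two substantive problems in how you propose to carry it out. First, the auxiliary data you propose to check is misdescribed: the new A-arc and B-arc have both endpoints at branch points, so under the branched double cover each lifts to a \emph{single} simple closed curve that is set-wise fixed by the covering involution (the involution exchanges the two halves of the circle); they are not ``a pair of short curves exchanged by the covering involution,'' and the effect on $\mathcal{H}_d$ is a $\mathbb{Z}_2$-invariant index-one stabilization in which the two new curves meet in two intersection points swapped by the involution, not in ``a single new generator.'' Second, and more seriously, your appeal to ``strong Heegaard invariance from Section~3'' plus a ``purely local combinatorial check near the inserted A-arc/B-arc pair'' does not resolve the equivariant contractibility issue you rightly flag: Section~3 establishes naturality/functoriality of maps induced by Heegaard moves of knot Heegaard diagrams, and says nothing about discarding the auxiliary A/B-arc pair, which is not a Heegaard move of the underlying knot diagram.

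What actually closes the gap in the paper is the very nice diagram technology of Section~4 used in a specific way that your sketch omits. Lemma~\ref{lem:eqvSW} reduces $H$ to a very nice diagram $H_0$ by a $\mathbb{Z}_2$-equivariant Sarkar--Wang algorithm; Lemma~\ref{lem:techlemma} then establishes that, for very nice diagrams, the equivariant transversality hypothesis is satisfied with all higher terms of the freed differential vanishing, so that $\widehat{CF}_{\mathbb{Z}_2}\simeq \text{RHom}_{\mathbb{F}_2[\mathbb{Z}_2]}(\widehat{CF},\mathbb{F}_2)$ for both $\hat{H}_0$ and $\mathcal{H}_d(\widetilde{H_0})$; finally, since $\mathcal{H}_d(\widetilde{H_0})$ is a $\mathbb{Z}_2$-invariant stabilization of $\hat{H}_0$, the ordinary stabilization map $\widehat{CF}(\hat{H}_0)\to\widehat{CF}(\mathcal{H}_d(\widetilde{H_0}))$ is a $\mathbb{Z}_2$-equivariant quasi-isomorphism, and over the local ring $\mathbb{F}_2[\mathbb{Z}_2]$ this passes to an isomorphism on $\text{RHom}_{\mathbb{F}_2[\mathbb{Z}_2]}(-,\mathbb{F}_2)$ (Lemma~\ref{lem:RHom-verynice}). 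The force of the argument is global, not local: it is the verification of equivariant transversality for the entire very nice diagram that lets you replace $\widehat{CF}_{\mathbb{Z}_2}$ by the derived $\mathrm{Hom}$, and only after that reduction does ``the stabilization is an ordinary quasi-isomorphism'' suffice. Without invoking Lemmas~\ref{lem:techlemma} and~\ref{lem:RHom-verynice} you have no way to promote the non-equivariant acyclicity of the stabilization summand to an isomorphism of $\widehat{HF}_{\mathbb{Z}_2}$, which is precisely the issue you said you were unsure how to handle.
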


In section 5, we construct a new invariant $\widehat{HFK}_{\mathbb{Z}_{2}}(\Sigma(K),K)$
associated to a knot $K$ in $S^{3}$, whose isomorphism class is
also an invariant of the isotopy class of $K$, and prove that a version
of localization isomorphism exists for $\widehat{HFK}_{\mathbb{Z}_{2}}$.
Finally, in section 6, we will construct an element $\hat{\mathcal{T}}_{\mathbb{Z}_{2}}(K)\in\widehat{HFK}_{\mathbb{Z}_{2}}(\Sigma(K),K)$
associated to a transverse knot $K$ in the standard contact sphere
$(S^{3},\xi_{std})$, which depends only on the transverse isotopy
class of $K$, and see that it is a refinement of both the LOSS invariant
defined in \cite{LOSS-inv} and the $\mathbb{Z}_{2}$-equivariant
contact class defined by the author in \cite{Kang}.

\subsection*{Acknowledgement}

The author would like to thank Andras Juhasz and Robert Lipshitz for
helpful discussions and suggestions.

\section{Equivariant Heegaard Floer cohomology and extended bridge diagrams}
\begin{defn}
Suppose that a Heegaard diagram $\mathcal{H}=(\Sigma,\boldsymbol{\alpha},\boldsymbol{\beta})$
is given. A based bridge diagram on $\mathcal{H}$ is a 4-tuple $(F,A,B,z)$,
where $F\subset\Sigma$ is a finite subset of points in $\Sigma$,
$A,B$ are sets of simple arcs on $\Sigma$, and $z\in F$, such that
the following properties are satisfied.
\begin{itemize}
\item For any $\gamma\in\boldsymbol{\alpha}\cup\boldsymbol{\beta}$, we
have $F\cap\gamma=\emptyset$.
\item For any two distinct elements $\boldsymbol{a},\boldsymbol{a}^{\prime}\in A\cup\boldsymbol{\alpha}$,
we have $\boldsymbol{a}\cap\boldsymbol{a}^{\prime}=\emptyset$, and
the same statement holds for elements in $B\cup\boldsymbol{\beta}$.
\item For any $\boldsymbol{a}\in A\cup\boldsymbol{\alpha}$ and $\boldsymbol{b}\in B\cup\boldsymbol{\beta}$,
the intersection $\boldsymbol{a}\cap\boldsymbol{b}$ is transverse.
\item For any $\boldsymbol{c}\in A\cup B$, the two endpoints of $\boldsymbol{c}$
are distinct and $\boldsymbol{c}\cap F=\partial\boldsymbol{c}$.
\item For any $p\in F$, there exists a unique element $\boldsymbol{a}$
of $A$ which satisfies $p\in\partial\boldsymbol{a}$, and the same
statement holds for $B$.
\end{itemize}
\end{defn}

Given a pointed bridge diagram $(F,A,B,z)$ on a Heegaard diagram
$(\Sigma,\boldsymbol{\alpha},\boldsymbol{\beta})$, we call the elements
of $A$ as A-arcs, the elements of $B$ as B-arcs, and $z$ as the
basepoint. Note that $(\Sigma,\boldsymbol{\alpha},\boldsymbol{\beta},z)$
is a pointed Heegaard diagram.

Suppose that a Heegaard diagram $\mathcal{H}=(\Sigma,\boldsymbol{\alpha},\boldsymbol{\beta})$
describes a 3-manifold $M$. Then, given a based bridge diagram $\mathcal{P}=(F,A,B,z)$
on $\mathcal{H}$, we can construct a based link $(L,p)$ lying inside
$M$ as follows. Let $M=H_{1}\cup H_{2}$ be a Heegaard splitting
of $M$ given by the Heegaard surface $\Sigma$. Suppose that we call
$H_{1}$ as the ``outside'' of $\Sigma$ and $H_{2}$ as the ``inside''
of $\Sigma$. Then, we can isotope the A-arcs of $\mathcal{P}$ slightly
ourwards and the B-arcs of $\mathcal{P}$ slightly inwards, while
leaving the set $F$ fixed. Concatenating the isotoped arcs gives
us a link $L\subset M$, and the basepoint $p$ lies on $L$, so that
we get a based link $(L,p)$ in $M$, which is uniquely determined
up to (based) isotopy.
\begin{defn}
Suppose that a Heegaard diagram $\mathcal{H}=(\Sigma,\boldsymbol{\alpha},\boldsymbol{\beta})$
describes a 3-manifold $M$. We say that a based link $(L,p)$ in
$M$ is represented by a based bridge diagram $\mathcal{P}=(F,A,B,z)$
if the process described above gives a based link which is (based)
isotopic to $(L,p)$.
\end{defn}

\begin{prop}
Suppose that a Heegaard diagram $\mathcal{H}=(\Sigma,\boldsymbol{\alpha},\boldsymbol{\beta})$
describes a 3-manifold $M$. Then every based link in $M$ can be
represented by a based bridge diagram on $\mathcal{H}$. 
\end{prop}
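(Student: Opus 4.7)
The plan is to put $L$ in bridge position with respect to the Heegaard splitting $M = H_1 \cup_\Sigma H_2$ underlying $\mathcal{H}$, and then refine this position so that all bridge arcs are disjoint from the attaching curves. First I would isotope $L$ so that it meets $\Sigma$ transversely, the basepoint $p$ lies on $\Sigma$, and $F := L \cap \Sigma$ is disjoint from $\boldsymbol{\alpha} \cup \boldsymbol{\beta}$; then set $z := p$. This decomposes $L$ into arcs $a_1, \ldots, a_n$ in $H_1$ and $b_1, \ldots, b_n$ in $H_2$ with endpoints in $F$.

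Next I would arrange that every $a_i$ and $b_j$ is boundary parallel, so that $L$ is in bridge position with respect to $\Sigma$. For a generic $L$ this is not automatic, but it can be achieved by a standard procedure: if some $a_i$ is not boundary parallel, push a small subarc of it across $\Sigma$ into $H_2$, producing two new points of $F$ and splitting $a_i$ into two shorter arcs in $H_1$ joined by a small arc in $H_2$. Iterating, together with a suitable complexity argument to guarantee termination, eventually yields a bridge position for $L$.

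For each boundary-parallel arc $a_i$, choose a parallelism disk $D_i \subset H_1$ with $\partial D_i = a_i \cup a_i'$ for some arc $a_i' \subset \Sigma$. I would then apply standard innermost-circle and outermost-arc arguments to isotope $D_i$ so that it is disjoint from a fixed system of compressing disks $\Delta_{\boldsymbol{\alpha}} \subset H_1$ bounded by $\boldsymbol{\alpha}$; this forces $a_i' = D_i \cap \Sigma$ to be disjoint from $\boldsymbol{\alpha}$. The same cut-and-paste reasoning, together with irreducibility of $H_1$, lets us further take the disks $D_i$ themselves to be mutually disjoint, so the $a_i'$ are pairwise disjoint on $\Sigma$. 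The analogous manipulation in $H_2$ produces arcs $b_j' \subset \Sigma$ disjoint from $\boldsymbol{\beta}$ and from each other.

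Setting $A := \{a_i'\}$ and $B := \{b_j'\}$, the tuple $(F, A, B, z)$ is then a based bridge diagram on $\mathcal{H}$; pushing the A-arcs slightly into $H_1$ and the B-arcs slightly into $H_2$ reconstructs $L$ up to ambient isotopy, so it represents $(L, p)$. The main technical obstacle is the simultaneous disjointness requirement in the third step -- each $a_i'$ must avoid both $\boldsymbol{\alpha}$ and the other $a_j'$'s, and similarly on the $B$-side -- but this is handled entirely by the standard 3-manifold topology of handlebodies (irreducibility and the usual innermost/outermost reductions), so no genuinely new ingredient is needed.
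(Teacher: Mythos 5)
Your proposal is correct in outline and lands in the same place as the paper, but it reaches bridge position by a genuinely different mechanism. You propose the classical "push subarcs across $\Sigma$ and iterate, with a complexity bound ensuring termination" argument, which is correct but requires you to specify and control a monotone complexity; as written this is the part that would need the most filling-in. The paper instead uses a softer, one-shot transversality argument: choose spines $C_1,C_2$ of the two handlebodies (so that $H_i\setminus C_i\cong\Sigma\times[0,\infty)$), and since $\dim L+\dim(C_1\cup C_2)=2<3=\dim M$, a generic isotopy pushes $L$ entirely off the spines while keeping $p$ on $\Sigma$. Once every arc of $L\cap H_i$ lives in the collar $\Sigma\times[0,\infty)$, boundary-parallelism is automatic, and no termination argument is needed. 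After that the two proofs converge: both need to isotope the parallelism disks (equivalently, the projected arcs on $\Sigma$) to be mutually disjoint and disjoint from the compressing disks of $\boldsymbol{\alpha}$ and $\boldsymbol{\beta}$, and both handle this by general position / innermost-outermost reasoning; the paper phrases this as an isotopy of $L$ fixing the basepoint, and you phrase it as cut-and-paste on the disks, which is equivalent. So your route is valid but heavier; the paper buys a cleaner proof by putting $L$ in general position with respect to the spine rather than inducting on bridge complexity.
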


\begin{proof}
Let $M=H_{1}\cup H_{2}$ be the Heegaard splitting of $M$, induced
by $\mathcal{H}$. Then for each $i=1,2$, there exists a 1-subcomplex
$C_{i}\subset\text{int}(H_{i})$ so that $H_{1}-C_{1}\simeq\Sigma\times[0,\infty)$
and $H_{2}-C_{2}\simeq\Sigma\times(-\infty,0]$. Since $C_{1}$ and
$C_{2}$ are both 1-dimensional and $M$ is a 3-manifold, we can isotope
$(L,p)$ so that $p\in\Sigma$, $L$ does not intersect $C_{1}\cup C_{2}$
and $L$ intersect transversely with $\Sigma$. After further isotoping
$L$, we may assume that every component $c^{i}$ of $L\cap H_{i}$
admits a disk $D_{c^{i}}\subset H_{i}$ so that $c^{i}\subset\partial D_{c^{i}}\subset c^{i}\cup\Sigma$,
and for any two distinct components $c_{1}^{i},c_{2}^{i}$ of $L\cap H_{i}$,
we have $D_{c_{1}^{i}}\cap D_{c_{2}^{i}}=\emptyset$. We can also
assume that the disks $D_{c^{i}}$ does not intersect the family of
compressing disks in $H_{i}$ , determined by the alpha- and beta-curves
of $\mathcal{H}$, possibly after applying another isotopy to $L$,
while leaving the basepoint $p$ fixed. For each component $c^{i}\in\pi_{0}(L\cap H_{i})$,
write $\partial D_{c^{i}}=c^{i}\cup p(c^{i})$ where $u^{i}\subset\Sigma$,
i.e. $p(c^{i})$ is a simple arc on $\Sigma$ which is a projection
of $c^{i}$. By assumption, the arcs $p(c^{i})$ does not intersect
the alpha- or beta-curves, and any two distinct arcs $p(c_{1}^{i})$
and $p(c_{2}^{i})$ do not intersect. 

Now, after isotoping the arcs $p(c^{i})$, we can assume that for
any two curves $p(c^{i})$ and $p(c^{j})$ intersect transversely
if $i\ne j$, i.e. $\{i,j\}=\{1,2\}$. Consider the following sets:
\begin{align*}
A & =\{p(c)\,\vert\,c\in\pi_{0}(L\cap H_{1})\},\\
B & =\{p(c)\,\vert\,c\in\pi_{0}(L\cap H_{2})\},\\
F & =\bigcup_{u\in A\cup B}\partial u.
\end{align*}
 Then $p\in F$, and the based bridge diagram $(A,B,F,p)$ represents
the given based link $(L,p)$ in $M$.
\end{proof}
\begin{defn}
An extended bridge diagram is a pair $\mathcal{E}=(\mathcal{H},\mathcal{P})$,
where $\mathcal{H}$ is a Heegaard diagram and $\mathcal{P}$ is a
based bridge diagram on $\mathcal{H}$. We write $\mathcal{H}$ as
$\mathcal{H}(\mathcal{E})$ and $\mathcal{P}$ as $\mathcal{P}(\mathcal{E})$.
If $\mathcal{H}=(\Sigma,\boldsymbol{\alpha},\boldsymbol{\beta})$
and $\mathcal{P}=(A,B,F,p)$, the pointed Heegaard diagram $(\Sigma,\boldsymbol{\alpha},\boldsymbol{\beta},p)$
will be denoted as $\mathcal{H}_{st}(\mathcal{E})$. Also, the 3-manifold
represented by $\mathcal{H}$ is denoted as $M(\mathcal{E})$, and
the based link in $M(\mathcal{E})$ represented by $\mathcal{P}$
is denoted as $L(\mathcal{E})$.
\end{defn}

Given an extended bridge diagram $\mathcal{E}=(\mathcal{H},\mathcal{P})$,
we have an associated 3-tuple $(M,L,p)$, where $M$ is the 3-manifold
represented by $\mathcal{H}$, and $(L,p)$ is a based link in $M$,
represented by $\mathcal{P}$. Obviously, given a 3-maniold together
with a based link inside it, we have lots of extended bridge diagrams
which represents it. In particular, we have a set of operations on
extended bridge diagrams which leave the associated 3-manifold and
based link fixed, which we will call as extended Heegaard moves. Also,
we will call isotopies and handleslides involving $\alpha$($\beta$)-curves
as $\alpha$($\beta$)-equivalences, and those involving A(B)-arcs
and A(B)-equivalences. Finally, we will call $\alpha$,$\beta$-equivalences
and A,B-equivalences as basic moves.

\subsection*{Isotopies}

Given an extended bridge diagram, we can isotope its A-arcs, B-arcs,
alpha-curves and beta-curves.

\subsection*{Handleslides of type I}

Given an extended bridge diagram, we can replace an alpha(beta)-curve
$\alpha$ with another simple closed curve $\alpha^{\prime}$ through
an ordinary handleslide of knot Heegaard diagrams. Here, the handleslide
region must not intersect any of the A/B-arcs.

\subsection*{Handleslides of type II}

Given an extended bridge diagram, we can replace an alpha(beta)-curve
$\alpha$ with another simple closed curve $\alpha^{\prime}$ if the
following conditions are satisfied.
\begin{itemize}
\item The curve $\alpha^{\prime}$ does not intersect with any of the A(B)-arcs
and the alpha(beta)-curves.
\item There exists an A-arc $A$ so that $\alpha,\alpha^{\prime}$ cobound
a cylinder whose interior contains $A$ and does not intersect with
any of the A-arcs, B-arcs, alpha-curves, and the beta-curves, except
$A$.
\end{itemize}

\subsection*{Handleslides of type III}

Given an extended bridge diagram, we can replace an A(B)-arc $a$
with another simple arc $a^{\prime}$ if the following conditions
are satisfied.
\begin{itemize}
\item $\partial a=\partial a^{\prime}$.
\item The interior of $a^{\prime}$ does not intersect with any of the A(B)-arcs
and the alpha(beta)-curves.
\item There exists an alpha(beta)-curve $\alpha$ so that $a,a^{\prime},\alpha$
bound a cylinder $C\subset\Sigma$, whose interior does not intersect
with any of the A-arcs, B-arcs, alpha-curves, and the beta-curves.
\end{itemize}

\subsection*{Handleslides of type IV}

Given an extended bridge diagram, we can replace an A(B)-arc $a$
with another simple arc $a^{\prime}$ if the following conditions
are satisfied.
\begin{itemize}
\item The interior of $a^{\prime}$ does not intersect with any of the A(B)-arcs
and the alpha(beta)-curves.
\item There exists an A(B)-arc $a_{0}$ such that $a,a^{\prime}$ bound
a disk $D\subset\Sigma$, whose interior contains $a_{0}$ and does
not intersect with any of the A-arcs, B-arcs, alpha-curves, and the
beta-curves, except for $a_{0}$.
\end{itemize}

\subsection*{(De)stabilizations of type I}

Given an extended bridge diagram $((\Sigma,\boldsymbol{\alpha},\boldsymbol{\beta}),(F,A,B,p))$,
we can stabilize its Heegaard diagram $(\Sigma,\boldsymbol{\alpha},\boldsymbol{\beta})$
at a point $q\in\Sigma$ such that $q\notin c$ for any $c\in A\cup B$.
The based bridge diagram $(F,A,B,p)$ remains the same.

\subsection*{(De)stabilizations of type II}

Given an extended bridge diagram $((\Sigma,\boldsymbol{\alpha},\boldsymbol{\beta}),(F,A,B,p))$,
where $|F|>2$, choose an A-arc $a$ such that $p\notin\partial a$,
and pick one of its endpoints, $z\in\partial a$. Choose two distinct
points $x,y$ lying in the interior of $z$, so that the following
conditions hold.
\begin{itemize}
\item $a-\{x,y\}$ has three components $a_{1},b_{1},a^{\prime}$, which
are simple arcs on $\Sigma$.
\item $\partial a_{1}=\{z,x\}$, $\partial b_{1}=\{x,y\}$, and $\partial a^{\prime}=\{y\}\cup(\partial a-\{z\})$.
\item $a_{1}$ and $b_{1}$ do not intersect with any of the B-arcs and
beta-curves.
\end{itemize}
Then $((\Sigma,\boldsymbol{\alpha},\boldsymbol{\beta}),(F\cup\{x,y\},(A-\{a\})\cup\{a_{1},a^{\prime}\}),B\cup\{b_{1}\},p))$
is again an extended bridge diagram.

\subsection*{Diffeomorphism}

Given an extended bridge diagram $((\Sigma,\boldsymbol{\alpha},\boldsymbol{\beta}),(F,A,B,p))$
and a diffeomorphism $\phi\in\text{Diff}^{+}(\Sigma)$, we can apply
$\phi$ on everything to get another extended bridge diagram.
\begin{prop}
\label{basicmoves1}Let $\mathcal{H}=(\Sigma,\boldsymbol{\alpha},\boldsymbol{\beta})$
be a Heegaard diagram which represent a 3-manifold $M$. Any two based
bridge diagrams on $\mathcal{H}$, which represent isotopic based
links in $M$, are related by isotopies, handleslides of type II,
III, IV, and (de)stabilizations of type II.
\end{prop}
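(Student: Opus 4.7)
The plan is to realise the passage from one bridge diagram to the other as a 1-parameter family of based bridge diagrams on the fixed Heegaard diagram $\mathcal H$, arising from an ambient isotopy in $M$, and then to identify each codimension-1 degeneracy of this family with one of the four listed moves. Let $\mathcal P_0,\mathcal P_1$ be the given based bridge diagrams on $\mathcal H$, representing isotopic based links $(L_0,p),(L_1,p)$, and choose an ambient isotopy $\Phi_t$, $t\in[0,1]$, of $M$ fixing $p$ with $\Phi_0=\mathrm{id}$ and $\Phi_1(L_0)=L_1$. Let $C_1\subset H_1$, $C_2\subset H_2$ be spines of the two handlebodies of the Heegaard splitting determined by $\mathcal H$, so that each $H_i-C_i\simeq\Sigma\times[0,\infty)$ comes with a projection $\pi_i$ onto $\Sigma$.

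First I would apply transversality to put $\Phi$ in generic position: for all but finitely many $t$ the link $\Phi_t(L_0)$ is disjoint from $C_1\cup C_2$ and transverse to $\Sigma$, and the images under $\pi_i$ of its arcs inside $H_i$ are in general position with respect to one another and to the $\alpha,\beta$-curves on $\Sigma$. This produces a 1-parameter family of extended bridge diagrams $\mathcal P_t$ on $\mathcal H$ which interpolates between $\mathcal P_0$ and $\mathcal P_1$ up to an isotopy of the bridge diagram. After a generic perturbation of $\Phi$, at each critical time $t_\ast$ exactly one of the following codimension-1 events occurs: a saddle tangency of $\Phi_t(L_0)$ with $\Sigma$ which splits or merges a pair of arcs of $\Phi_t(L_0)\cap H_i$; a transverse self-crossing of two shadow arcs on the same side of $\Sigma$; a shadow arc crossing an $\alpha$- or $\beta$-curve; or an arc of $\Phi_t(L_0)\cap H_i$ sweeping across a co-core disk of a 1-handle of $H_i$.

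Each event corresponds to precisely one of the listed moves. A saddle tangency creates or removes a B-arc between two subdivided A-arcs (or vice versa) exactly as in a (de)stabilisation of type II. Two shadow arcs crossing one another cobound the small disk required by a handleslide of type IV. A shadow arc crossing an $\alpha$- (resp.\ $\beta$-) curve is realised by a handleslide of type III on the corresponding A- (resp.\ B-) arc, the two positions of the arc together with the crossed curve cobounding the required cylinder. Finally, an arc of $\Phi_t(L_0)\cap H_i$ sweeping across a co-core disk of a 1-handle is recorded on $\Sigma$ as an $\alpha$- or $\beta$-curve sliding across the corresponding A- or B-arc, which is exactly a handleslide of type II. Between critical times, $\mathcal P_t$ varies by an ambient isotopy of $\Sigma$, so concatenating these local modifications produces a sequence of isotopies, handleslides of types II--IV, and (de)stabilisations of type II carrying $\mathcal P_0$ to $\mathcal P_1$.

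The hardest part will be the codimension-1 transversality argument: one must simultaneously control the behaviour of $\Phi_t(L_0)$ with respect to $\Sigma$, to the spines $C_i$, and to the compressing disks bounded by the $\alpha$- and $\beta$-curves, while keeping the basepoint $p$ and the two based arcs of $L_0$ incident to $p$ fixed throughout. A Cerf-theoretic stratification of the space of immersions into $M$, combined with first arranging that $\Phi_t$ is the identity on a small ball around $p$, should handle this. Once the genericity of $\Phi$ is established, the dictionary between the local events and the four move types is forced by the topology near each degeneracy.
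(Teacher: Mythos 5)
Your overall strategy matches the paper's: realise the isotopy between the two based links as a one-parameter family of shadows on $\Sigma$, put it in generic position, and read off each codimension-one degeneracy as a move. The paper does this with a Morse-Smale pair $(f,g)$ inducing $\mathcal H$ (so the projections are given by the gradient flow), lists six codimension-one singularities, and translates each one into a composition of moves; your setup with a spine and a fixed projection $\pi_i$ is an equivalent packaging. The problems are in the dictionary, and they are not cosmetic.

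First, your events (C) ``a shadow arc crossing an $\alpha$- or $\beta$-curve'' and (D) ``an arc of $\Phi_t(L_0)\cap H_i$ sweeping across a co-core disk of a 1-handle of $H_i$'' describe the same phenomenon. The co-core disk of a 1-handle of $H_1$ is precisely the compressing disk bounded by an $\alpha$-curve (the stable manifold of an index-1 critical point intersected with $H_1$), and an arc of $L\cap H_1$ hitting that disk is exactly what makes its shadow touch $\alpha$. You assign the first of these to a handleslide of type~III and the second to a handleslide of type~II; they cannot both be right, and in fact the correct assignment is type~III, since the $\alpha$-curve is fixed and the A-arc is the object that is pushed across it. This means your list never produces a handleslide of type~II, which the proposition asserts is needed, so your four-event list is not exhaustive.

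Second, the degeneracy you have missed is the one the paper isolates as a flowline tangent to $L$ (equivalently, the projection $\pi_i$ restricted to $L\cap H_i$ developing a fold). When this happens the shadow of a single arc ceases to be an embedded arc, and restoring embeddedness forces a pair of subdivisions --- two stabilisations of type~II --- followed by a handleslide of type~II (this is item~(3) in the paper's list). Your genericity requirement ``the images under $\pi_i$ of its arcs are in general position with respect to one another and to the $\alpha,\beta$-curves'' controls transverse intersections but not this fold singularity, so it is a genuine gap.

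Third, resolving your event (B) (two shadow arcs on the same side crossing transversally) cannot in general be done by a single handleslide of type~IV. Type~IV requires that the auxiliary arc $a_0$ be \emph{entirely} contained in the disk cobounded by $a$ and $a'$, but after a small crossing only a subsegment of the other shadow arc lies in that disk. One must first subdivide the other arc via a stabilisation of type~II so that the relevant piece fits inside the disk, and only then apply the type~IV slide; this is the paper's items (4) and (6), both of which are \emph{compositions} including a stabilisation. Until the dictionary is corrected on these three points and shown to be exhaustive, the argument does not close.
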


\begin{proof}
Choose a pair of a self-indexing Morse function $f:M\rightarrow[0,3]$
and a Riemannian metric $g$ on $M$, which induces the Heegaard diagram
$\mathcal{H}$ of $M$. In the space $\mathcal{S}$ of based link
$M$ such that its basepoint lies on $\Sigma$ and it is transverse
to $\Sigma$ at the basepoint, the subspace $\mathcal{S}_{0}$ of
based links $(L,p)$ which satisfy the conditions below is open and
dense. Given any link in $\mathcal{S}_{0}$, its projection along
the gradient flow of $f$ gives a based bridge diagram of $(L,p)$
on $\mathcal{H}$, up to stabilizations of type II.
\begin{itemize}
\item The intersection $L\cap\Sigma$ is transverse.
\item The gradient vector field $\nabla_{g}f$ is nonvanishing on $L$ and
transverse to $L$.
\item For any flowline $c$ of $\nabla_{g}f$ whose endpoints lie on $L$,
the intersection $c\cap\Sigma$ is transverse.
\item For each bi-infinite flowline $\gamma$ of $\nabla_{g}f$, we have
$|\gamma\cap L|\le2$, and if the equality holds, we have $\gamma\cap\Sigma\cap L=\emptyset$.
\item The intersections of $L$ with the unstable manifolds of critical
points of index $2$ and the stable manifolds of critical points of
index $1$ are transverse.
\end{itemize}
We call the set $\mathcal{S}_{0}$ as the set of points of codimension
$0$; to prove the proposition, it suffices to classify the codimension
$1$ singularities inside $\mathcal{S}$ and show that they correspond
to (compositions of) handleslides of type II, III, IV, and stabilizations
of type II. It is easy to see that the codimension $1$ singularities
in $\mathcal{S}$ are given as follows.
\begin{enumerate}
\item The link $L$ is tangent to $\Sigma$ at a point $z\in\Sigma$, such
that $z\ne p$ and the order of tangency is $1$.
\item The link $L$ intersects transversely with either the stable manifold
of a critical point of index $2$ or the unstable manifold of a critical
point of index $1$.
\item There exists a flowline $\gamma$ of $\nabla_{g}f$ which is tangent
to $L$ at a point, such that the order of tangency is $1$.
\item There exists a bi-infinite flowline $\gamma$ of $\nabla_{g}f$ such
that $|\gamma\cap L|=2$ and $|\gamma\cap\Sigma\cap L|=1$.
\item The link $L$ is tangent to either the unstable manifold of a critical
point of index $2$ or the stable manifold of a critical point of
index $1$, such that the order of tangency is $1$.
\item There exists three distinct points $x,y,z\in\Sigma$, different from
the basepoint $p$, and a bi-infinite flowline $\gamma$ of $\nabla_{g}f$
such that $x,y,z\in\gamma$.
\end{enumerate}
The perturbations of the above singularities can be translated as
the following compositions of extended Heegaard moves.
\begin{enumerate}
\item A single stabilization of type II.
\item A single handleslide of type III.
\item The composition of two stabilization of type II and a handleslide
of type II.
\item The composition of a stabilization of type II and a handleslide of
type IV.
\item An isotopy.
\item The composition of a stablilization of type II, a handleslide of type
IV, and an isotopy.
\end{enumerate}
Therefore we see that any two based bridge diagrams on $\mathcal{H}$
which represent isotopic based links in $M$ are related by isotopies,
handleslides of type II, III, IV, and (de)stabilizations of type II.
\end{proof}
\begin{defn}
A based bridge diagram $\mathcal{P}$ on a Heegaard diagram $\mathcal{H}=(\Sigma,\boldsymbol{\alpha},\boldsymbol{\beta})$
is simple if all A-arcs and B-arcs of $\mathcal{P}$ lie in the same
connected component of $\Sigma-\left(\bigcup_{\gamma\in\boldsymbol{\alpha}\cup\boldsymbol{\beta}}\gamma\right)$.
\end{defn}

\begin{thm}
\label{basicmoves2}If two extended bridge diagrams which represent
the same 3-manifold and isotopic based links, which are contained
in a ball, they are related by extended Heegaard moves.
\end{thm}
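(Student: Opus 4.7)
The plan is to reduce the problem to Proposition \ref{basicmoves1} by first arranging the two underlying Heegaard diagrams to coincide through extended Heegaard moves, and then invoking that proposition to relate the bridge diagrams on the common Heegaard diagram. Let $\mathcal{E}_i=(\mathcal{H}_i,\mathcal{P}_i)$ for $i=1,2$ denote the two extended bridge diagrams, both representing a based link $(L,p)$ in $M$ with $L$ contained in a ball $B\subset M$.

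For the first step, I will invoke the Reidemeister--Singer theorem, in the pointed form used in Juh\'asz's naturality framework for Heegaard Floer homology, to obtain a finite sequence of ordinary Heegaard moves---isotopies, handleslides, (de)stabilizations, and ambient diffeomorphisms---carrying $\mathcal{H}_1$ to $\mathcal{H}_2$. I then lift each such move to a sequence of extended Heegaard moves acting on $\mathcal{E}_1$. Isotopies and diffeomorphisms lift immediately by applying them to the A/B-arcs as well. A stabilization can be performed at a point disjoint from all A/B-arcs, producing a stabilization of type I. The subtle case is a handleslide, since a handleslide of type I requires that the handleslide region be disjoint from every A/B-arc. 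To deal with this, I first apply a sequence of handleslides of types III and IV, together with arc-isotopies, to push every A- and B-arc out of the prescribed region before performing the underlying handleslide as a type-I move.

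Showing that this arc-clearing step is always feasible is the main obstacle. My strategy is an induction on the number of transverse intersection points between the A/B-arcs and the boundary of the handleslide region. A type-III handleslide lets an A-arc cross an $\alpha$-curve through a sub-cylinder, while a type-IV handleslide lets the foot of an A-arc slide over that of a neighboring A-arc; together with surface isotopies, these moves are flexible enough to realize any ambient isotopy on $\Sigma$ of the projection arcs $p(c^i)$ that appear in the proof of Proposition \ref{basicmoves1}. The ball hypothesis on $L$ enters here to rule out topological obstructions: because $L$ sits inside a $3$-ball, the projected arcs have no homological reason to remain linked with any particular $\alpha$- or $\beta$-curve, so after contracting $L$ into $B$ we can freely isotope them to any convenient position without altering the represented based link.

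After all the lifted moves have been applied, $\mathcal{E}_1$ has been transformed, via extended Heegaard moves, into an extended bridge diagram $\mathcal{E}_1'$ with underlying Heegaard diagram $\mathcal{H}_2$ and still representing the based link $(L,p)$. Proposition \ref{basicmoves1} then relates the bridge diagrams $\mathcal{P}(\mathcal{E}_1')$ and $\mathcal{P}_2$ on $\mathcal{H}_2$ by a sequence of isotopies, handleslides of types II, III, and IV, and (de)stabilizations of type II, all of which are extended Heegaard moves. Concatenating the two sequences produces the desired chain of extended Heegaard moves from $\mathcal{E}_1$ to $\mathcal{E}_2$.
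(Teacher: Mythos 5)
Your proposal inverts the paper's order of operations, and in doing so it runs into the genuine technical difficulty that the paper's ordering is designed to sidestep. The paper's proof is: use the ball hypothesis to replace $\mathcal{P}$ by a \emph{simple} bridge diagram $\mathcal{P}_0$ on the same Heegaard diagram (one in which every A- and B-arc lies in the single region of $\Sigma\setminus(\boldsymbol{\alpha}\cup\boldsymbol{\beta})$ containing the basepoint); this is done first, via Proposition \ref{basicmoves1}. Once the arcs are confined to the basepoint region, they behave collectively as a big basepoint, so the pointed Reidemeister--Singer theorem for Heegaard diagrams supplies a sequence of moves on $\mathcal{H}$ that \emph{automatically} avoids the arcs, and these lift verbatim to handleslides and stabilizations of type I. No on-the-fly clearing is needed. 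Your proof instead performs the Heegaard moves first and proposes to clear the A/B-arcs out of each handleslide region as you go, but the justification for that clearing step is not actually given: you assert that handleslides of types III, IV and arc-isotopies are "flexible enough" and that the ball hypothesis removes "homological obstructions," but this is precisely the nontrivial content. Concretely, a handleslide region $P$ for an $\alpha$-handleslide is a planar subsurface bounded by $\alpha$-curves; a B-arc may enter and leave $P$ through distinct boundary components and so is essential in $P$, and a type-III move for a B-arc slides only along $\beta$-curves, not $\alpha$-curves, so there is no obvious way to push such a B-arc out of $P$ one move at a time. The resolution is exactly the paper's reduction to a simple bridge diagram, which you do not perform; invoking Proposition \ref{basicmoves1} only at the very end, rather than to produce a simple bridge diagram before the Heegaard moves, leaves the gap open. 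The overall two-stage structure you have (first sync the Heegaard diagrams, then match the bridge diagrams) could be made rigorous by inserting a reduction to the simple position before each $\alpha$- or $\beta$-handleslide, but at that point you have recovered the paper's argument in a more laborious form.
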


\begin{proof}
Since the given link is assumed to be contained in a ball, for any
Heegaard diagram $\mathcal{H}$, there exists a simple based bridge
diagram $\mathcal{P}_{0}$ which also represents the given link. Now,
given any based bridge diagram $\mathcal{P}$ on a Heegaard diagram
$\mathcal{H}$, we know from Proposition \ref{basicmoves1} that we
can apply isotopies, handleslides of type II, III, IV, and (de)stabilizations
of type II to $\mathcal{P}$ to reach $\mathcal{P}_{0}$. But then,
we can regard the A-arcs and B-arcs together as a ``big basepoint''
and apply isotopies, handleslides, stabilizations and diffeomorphisms
to the Heegaard diagram $\mathcal{H}$. The handleslides and stabilizations
applied to $\mathcal{H}$ corresponds to handleslides of type I and
stabilizations of type I applied to the extended bridge diagram $(\mathcal{P},\mathcal{H})$.
Since any two Heegaard diagrams representing the same 3-manifold are
related by isotopies, handleslides, stabilizations, and diffeomorphisms,
the proof is complete.
\end{proof}
\begin{rem*}
By considering perturbations of Morse-Smale pairs on $M$ together
with perturbations of the given link $L$, and classifying all possible
codimension $1$ singularities, we can remove the the assumption that
our base link is contained in a ball, in Theorem \ref{basicmoves2}.
However, this observation is not necessary, as we will only consider
knots and links in $S^{3}$ throughout this paper.
\end{rem*}
Now suppose that an extended bridge diagram $\mathcal{E}=(\mathcal{H},\mathcal{P})$,
$\mathcal{H}=(\Sigma,\boldsymbol{\alpha},\boldsymbol{\beta})$, $\mathcal{P}=(F,A,B,z)$
is given, where the Heegaard diagram $\mathcal{H}$ represents a 3-manifold
$M$ and the based bridge diagram $\mathcal{P}$ represents the isotopy
class of a based link $(L,z)$ in $M$. Then we construct a $4$-tuple
$\mathcal{H}_{d}(\mathcal{E})=(\tilde{\Sigma},\tilde{\boldsymbol{\alpha}},\tilde{\boldsymbol{\beta}},z)$,
which is defined as follows.
\begin{itemize}
\item $\tilde{\Sigma}$ is the branched double cover of $\Sigma$ along
$F$.
\item $\tilde{\boldsymbol{\alpha}}=\left(\bigcup_{\alpha\in\boldsymbol{\alpha}}\{\text{connected components of }\alpha\}\right)\cup\{p^{-1}(a)\,\vert\,a\in A,\,z\notin\partial a\}$,
where $p\,:\,\tilde{\Sigma}\rightarrow\Sigma$ is the branched covering
map, and $\tilde{\boldsymbol{\beta}}$ is defined similarly.
\item $z$ is the basepoint of the based link $(L,z)$.
\end{itemize}
The $4$-tuple $\mathcal{H}_{d}(\mathcal{E})$, by construction, is
a Heegaard diagram for the based 3-manifold $(\Sigma_{L}(M),z)$,
which is the branched double cover of the based 3-manifold $(M,z)$
along the link $L$. The covering transformation of the branched cover
$\Sigma_{L}(M)\rightarrow M$ induces an orientation-preserving $\mathbb{Z}_{2}$-action
on $\mathcal{H}_{d}(\mathcal{E})$. We will say that $\mathcal{H}_{d}(\mathcal{E})$
is the branched double cover of $\mathcal{H}$ along $\mathcal{P}$.
\begin{prop}
\label{weakadm}For any extended bridge diagram $\mathcal{E}$, the
pointed Heegaard diagarm $\mathcal{H}_{d}(\mathcal{E})$ is weakly
admissible if $\mathcal{H}_{pt}(\mathcal{E})$ is weakly admissible.
\end{prop}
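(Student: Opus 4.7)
The plan is to push a periodic domain on $\mathcal{H}_{d}(\mathcal{E})$ downward along the branched covering map $p:\tilde\Sigma\to\Sigma$ to a periodic domain on $\mathcal{H}_{pt}(\mathcal{E})$, and then invoke the hypothesis.

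I would start with a nontrivial periodic domain $\tilde D$ on $\mathcal{H}_{d}(\mathcal{E})$ with $n_z(\tilde D)=0$ and assume, toward contradiction, that $\tilde D\ge 0$; by symmetry this reduction suffices. Letting $\tau$ denote the covering involution, the 2-chain $E=\tilde D+\tau_{*}\tilde D$ is nontrivial (because $\tilde D\ge 0$ and nonzero prevents it from being $\tau$-anti-invariant), nonnegative, $\tau$-invariant, a periodic domain, and has $n_z(E)=0$. Since $\tilde z$ is the unique preimage of $z$, the upstairs region containing $\tilde z$ is $\tau$-invariant, while every other upstairs region is $\tau$-paired with the opposite sheet over its image. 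The $\tau$-invariance of $E$ then lets $E$ descend to a 2-chain $D$ on $\Sigma$ whose coefficient on each downstairs region equals the common $E$-coefficient on its upstairs preimages. This $D$ is nontrivial, nonnegative, satisfies $n_z(D)=0$, and its boundary is supported on $\boldsymbol{\alpha}\cup\boldsymbol{\beta}\cup A'\cup B'$, where $A'=A\setminus\{a_z\}$, $B'=B\setminus\{b_z\}$, and $a_z,b_z$ are the arcs of $\mathcal{P}(\mathcal{E})$ through $z$.

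The crux is to show that $\partial D$ has zero coefficient on every arc $\gamma\in A'\cup B'$; granting this, $D$ becomes a genuine nontrivial nonnegative periodic domain on $\mathcal{H}_{pt}(\mathcal{E})$ with $n_z(D)=0$, contradicting the hypothesis. For this I would use the combinatorics of a knot bridge diagram: the arcs $A\cup B$ together with $F$ form a single alternating cycle through the branch points, and deleting $a_z$, $b_z$, and $z$ leaves a simple path in $A'\cup B'$ with two endpoints at each of which exactly one arc terminates. Since $(\boldsymbol{\alpha}\cup\boldsymbol{\beta})\cap F=\emptyset$, only the arcs of $A'\cup B'$ contribute to $\partial^{2}D$ at a branch point $q\in F\setminus\{z\}$. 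The identity $\partial^{2}D=0$ at each endpoint of the path forces the single incident coefficient to vanish, and propagation through the two-variable relations at the interior vertices then yields $c_\gamma=0$ for every $\gamma\in A'\cup B'$.

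The most delicate technical point will be the careful local verification near the ramified point $\tilde z$, where one upstairs region is a branched cover of a downstairs disk rather than a disjoint union of two sheets: I need to check both that the descent $D$ is well-defined there and that the coefficient of $\partial D$ along each full $\alpha$- or $\beta$-curve is constant rather than possibly varying between arc segments cut off by intersections with $A'\cup B'$. Once this local behavior is pinned down, the remaining combinatorial propagation along the path in $A'\cup B'$ is an elementary induction starting from its two endpoints.
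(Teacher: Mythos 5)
Your overall strategy coincides with the paper's: start with a nontrivial nonnegative periodic domain $\tilde D$ upstairs with $n_z=0$, symmetrize to $E=\tilde D+\tau_*\tilde D$, and transfer the contradiction downstairs along the branched covering. The paper implements the transfer in the opposite direction, by introducing the pullback $p_\Sigma^*$ and invoking Lemma 4.2 of \cite{Kang} to produce a positive periodic domain $D_0$ on $\mathcal{H}_{pt}(\mathcal{E})$ with $p_\Sigma^* D_0=E$; you push $E$ down directly and try to verify by hand that the result is a periodic domain, which is the same idea with the cited step unpacked.

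The combinatorial propagation you propose for killing the coefficients of $\partial D$ on the arcs has a gap, and it also overlooks the shortest route. The gap: the proposition is stated for arbitrary extended bridge diagrams, which may represent multi-component links. For such a diagram, $A\cup B$ together with $F$ is a disjoint union of alternating cycles, only one of which passes through $z$. Deleting $a_z$, $b_z$, $z$ creates endpoints only on that one cycle; each remaining cycle is an even alternating cycle of arcs, and around an even cycle the relations coming from $\partial^2 D=0$ at the vertices only force the coefficients to agree up to sign, leaving a free parameter rather than forcing them all to vanish. The shortcut: since $E$ is a periodic domain for $\mathcal{H}_d(\mathcal{E})$, its boundary $\partial E$ is a $\mathbb{Z}$-linear combination of complete curves in $\tilde{\boldsymbol{\alpha}}\cup\tilde{\boldsymbol{\beta}}$ and is $\tau$-invariant. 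On each arc preimage $p^{-1}(a)$, a circle passing through the two branch points over $\partial a$, the deck transformation $\tau$ is a non-identity involution of a circle with two fixed points, hence orientation-reversing, so $\tau_*[p^{-1}(a)]=-[p^{-1}(a)]$. Comparing coefficients in $\tau_*(\partial E)=\partial E$ therefore kills the coefficient of every arc preimage at once, for knots and links alike, with no path induction. This also disposes of the constancy issue you flag along the $\alpha$- and $\beta$-lifts, since $\tau$ acts on those curves preserving orientation (and, once the arc coefficients vanish, the jump of $D$ across a full $\alpha$- or $\beta$-curve is unambiguous).
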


\begin{proof}
We will continue using the notations which we have used above. Consider
the branched covering map $p_{\Sigma}:\tilde{\Sigma}\rightarrow\Sigma$.
Then, for any connected component $R$ of $\Sigma-\left(\bigcup_{c\in\boldsymbol{\alpha}\cup\boldsymbol{\beta}}c\right)$,
which does not contain the basepoint $z$, we define its pullback
$p_{\Sigma}^{\ast}(R)$ as follows.
\begin{itemize}
\item If $p_{\Sigma}^{-1}(R)$ is connected, it is a connected component
of $\tilde{\Sigma}-\left(\bigcup_{\tilde{c}\in\tilde{\boldsymbol{\alpha}}\cup\tilde{\boldsymbol{\beta}}}\tilde{c}\right)$,
so we define $p_{\Sigma}^{\ast}(R)$ as $p_{\Sigma}^{-1}(R)$.
\item If $p_{\Sigma}^{-1}(R)$ is disconnected, it consists of a $\mathbb{Z}_{2}$-orbit
of some connected component of $\tilde{\Sigma}-\left(\bigcup_{\tilde{c}\in\tilde{\boldsymbol{\alpha}}\cup\tilde{\boldsymbol{\beta}}}\tilde{c}\right)$,
where $\mathbb{Z}_{2}$ acts as covering transformations. Denote that
orbit as $\{T,\sigma T\}$, where $\mathbb{Z}_{2}=\left\langle \sigma\right\rangle $.
Then we define $p_{\Sigma}^{\ast}(R)$ as $T+\sigma T$.
\end{itemize}
This definition can be extended linearly to give a group homomorphism
\[
p_{\Sigma}^{\ast}:\mathcal{D}(\mathcal{H}_{pt}(\mathcal{E}))\rightarrow\mathcal{D}(\mathcal{H}_{d}(\mathcal{E})),
\]
 where $\mathcal{D}(\mathcal{H})$ for a point Heegaard diagram $\mathcal{H}$
is defined to be the free abelian group of domains in $\mathcal{H}$
which do not intersect the basepoint. The map $p_{\Sigma}^{\ast}$
clearly preserves periodicity.

Suppose that $\mathcal{H}_{pt}(\mathcal{E})$ is weakly admissible
and there exists a positive periodic domain $D\in\mathcal{D}(\mathcal{H}_{d}(\mathcal{E}))$.
Then $D+\sigma D$ is also a positive periodic domain in $\mathcal{H}_{d}(\mathcal{E})$.
Using the proof of Lemma 4.2 in \cite{Kang}, we see that there exists
a positive periodic domain $D_{0}\in\mathcal{D}(\mathcal{H}_{pt}(\mathcal{E}))$
such that $p_{\Sigma}^{\ast}D_{0}=D+\sigma D$. Since $\mathcal{H}_{pt}(\mathcal{E})$
is assumed to be weakly admissible, we must have $D_{0}=0$ and thus
$D+\sigma D=0$. Since both $D$ and $\sigma D$ are positive, this
implies $D=0$, a contradiction. Therefore $\mathcal{H}_{d}(\mathcal{E})$
must be weakly admissible.
\end{proof}
We will now proceed to weak admissibilities of Heegaard triple diagrams
and quadruple diagrams, which are perturbations of branched double
covers of extended bridge diagrams. More precisely, the diagrams we
will deal with are defined as follows.
\begin{defn}
A 5-tuple $(\tilde{\Sigma},\tilde{\boldsymbol{\alpha}},\tilde{\boldsymbol{\beta}},\tilde{\boldsymbol{\gamma}},z)$
is called an involutive Heegaard 5-tuple if the conditions below are
satisfied. 
\begin{itemize}
\item $\tilde{\Sigma}$ is a branched double cover of a surface $\Sigma$
along a branching locus $F$, such that $z\in F$.
\item The 4-tuples $\tilde{\mathcal{H}}_{\alpha\beta}=(\tilde{\Sigma},\tilde{\boldsymbol{\alpha}},\tilde{\boldsymbol{\beta}},z)$,
$\tilde{\mathcal{H}}_{\beta\gamma}=(\tilde{\Sigma},\tilde{\boldsymbol{\beta}},\tilde{\boldsymbol{\gamma}},z)$,
$\tilde{\mathcal{H}}_{\alpha\gamma}=(\tilde{\Sigma},\tilde{\boldsymbol{\alpha}},\tilde{\boldsymbol{\gamma}},z)$
are pointed Heegaard diagrams.
\item There exist families of simple closed curves $\boldsymbol{\alpha},\boldsymbol{\beta},\boldsymbol{\gamma}$
and families of simple arcs $A,B,C$ on $\Sigma$ such that $\mathcal{T}_{0}=(\Sigma,\boldsymbol{\alpha},\boldsymbol{\beta},\boldsymbol{\gamma},z)$
is a Heegaard triple diagram, $\mathcal{P}_{AB}=(F,A,B,z)$, $\mathcal{P}_{BC}=(F,B,C,z)$,
and $\mathcal{P}_{AC}=(F,A,C,z)$ are based bridge diagrams on the
Heegaard diagrams $\mathcal{H}_{\alpha\beta}=(\Sigma,\boldsymbol{\alpha},\boldsymbol{\beta})$,
$\mathcal{H}_{\beta\gamma}=(\Sigma,\boldsymbol{\beta},\boldsymbol{\gamma})$,
and $\mathcal{H}_{\alpha\gamma}=(\Sigma,\boldsymbol{\alpha},\boldsymbol{\gamma})$,
respectively, and the branched double covers of $\mathcal{H}_{\alpha\beta},\mathcal{H}_{\beta\gamma},\mathcal{H}_{\alpha\gamma}$
along $\mathcal{P}_{AB},\mathcal{P}_{BC},\mathcal{P}_{CA}$ are $\tilde{\mathcal{H}}_{\alpha\beta},\tilde{\mathcal{H}}_{\beta\gamma},\tilde{\mathcal{H}}_{\alpha\gamma}$,
respectively.
\end{itemize}
A pointed Heegaard triple diagram $\mathcal{T}$ is nearly involutive
if it is given by a small perturbation of alpha-, beta-, and gamma-curves
of some involutive Heegaard 5-tuple $(\tilde{\Sigma},\tilde{\boldsymbol{\alpha}},\tilde{\boldsymbol{\beta}},\tilde{\boldsymbol{\gamma}},z)$.
We say that the pointed Heegaard triple diagram $\mathcal{T}_{0}$
is the base of the nearly involutive triple diagram $\mathcal{T}$.
\end{defn}

\begin{prop}
\label{weakadm2}A nearly involutive Heegaard triple diagram is weakly
admissible if its base is weakly admissible.
\end{prop}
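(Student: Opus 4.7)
The proof I have in mind follows the blueprint of Proposition~\ref{weakadm}, with an additional step to handle the fact that $\mathcal{T}$ is a perturbation rather than a literal branched double cover. Let $\tilde{\mathcal{T}}_{0}=(\tilde{\Sigma},\tilde{\boldsymbol{\alpha}},\tilde{\boldsymbol{\beta}},\tilde{\boldsymbol{\gamma}},z)$ denote the ``unperturbed'' $5$-tuple underlying $\mathcal{T}$. Although triple intersections may not be transverse in $\tilde{\mathcal{T}}_{0}$, the combinatorial notions of region, domain, and periodic domain still make sense, and $\tilde{\mathcal{T}}_{0}$ is the branched double cover of $\mathcal{T}_{0}$ along the bridge data, carrying the covering involution $\sigma$.

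The first half of the argument mimics the proof of Proposition~\ref{weakadm} directly. I would construct a pullback homomorphism $p_{\Sigma}^{\ast}:\mathcal{D}(\mathcal{T}_{0})\rightarrow\mathcal{D}(\tilde{\mathcal{T}}_{0})$ exactly as before: a region $R$ of $\mathcal{T}_{0}$ pulls back to $p_{\Sigma}^{-1}(R)$ if this is connected, and to $T+\sigma T$ if $p_{\Sigma}^{-1}(R)=T\sqcup\sigma T$; extend linearly. This map preserves periodicity since $p_{\Sigma}$ is a branched covering. The relevant analog of Lemma~4.2 of \cite{Kang} — that every $\sigma$-invariant periodic domain in $\tilde{\mathcal{T}}_{0}$ lies in the image of $p_{\Sigma}^{\ast}$ — goes through verbatim, replacing pairs of curves by triples. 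Given a positive periodic domain $\tilde{D}\in\mathcal{D}(\tilde{\mathcal{T}}_{0})$, its symmetrization $\tilde{D}+\sigma\tilde{D}$ is $\sigma$-invariant and equals $p_{\Sigma}^{\ast}D_{0}$ for some positive periodic $D_{0}\in\mathcal{D}(\mathcal{T}_{0})$; weak admissibility of $\mathcal{T}_{0}$ forces $D_{0}=0$, and positivity of $\tilde{D}$ and $\sigma\tilde{D}$ then forces $\tilde{D}=0$.

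The second half is to transfer weak admissibility from $\tilde{\mathcal{T}}_{0}$ to its small perturbation $\mathcal{T}$. For a sufficiently small perturbation, each region of $\mathcal{T}$ either corresponds to a unique region of $\tilde{\mathcal{T}}_{0}$ — a ``large'' region whose shape and area are close to the original — or is a thin ``sliver'' region concentrated near a formerly non-transverse triple intersection of $\tilde{\mathcal{T}}_{0}$. Given a positive periodic domain $D\in\mathcal{D}(\mathcal{T})$, I would define its coarsening $\tilde{D}\in\mathcal{D}(\tilde{\mathcal{T}}_{0})$ by assigning to each region of $\tilde{\mathcal{T}}_{0}$ the multiplicity of the corresponding large region of $D$. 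Since the perturbed curves are isotopic to the unperturbed ones, the boundary $\partial D$ and $\partial\tilde{D}$ represent the same classes in $H_{1}(\tilde{\Sigma})$, so $\tilde{D}$ inherits periodicity from $D$. Positivity is automatic from the definition, so $\tilde{D}$ is a positive periodic domain in $\tilde{\mathcal{T}}_{0}$, contradicting the first half unless $\tilde{D}=0$; a further local analysis near each non-transverse triple intersection should then force the sliver multiplicities of $D$ to vanish as well.

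The main obstacle is the second half: making precise the correspondence between regions of $\mathcal{T}$ and regions of $\tilde{\mathcal{T}}_{0}$, and verifying that the coarsening preserves periodicity and that vanishing of $\tilde{D}$ implies vanishing of $D$. This amounts to a careful case analysis of the local model near each non-transverse triple intersection of $\tilde{\mathcal{T}}_{0}$, understanding how a small perturbation resolves the singularity into a union of small triangular and bigon-type sliver regions. The expected upshot is that the periodicity condition on $\partial D$ forces each sliver multiplicity to be expressible in terms of the adjacent large-region multiplicities, so that the coarsening is well defined and the vanishing propagates from large regions to slivers.
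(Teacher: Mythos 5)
Your first half reproduces exactly the pullback argument the paper uses: the map $p_{\Sigma}^{\ast}$ on domains, the symmetrization $\tilde{D}+\sigma\tilde{D}$, and the appeal to the triple-diagram analogue of Lemma~4.2 of \cite{Kang} (the paper cites Lemma~4.3 of \cite{Kang} for precisely this role). That part is the same approach as the paper's, and the paper treats it as the \emph{entire} proof, writing only that ``the proof is the same as in Proposition~\ref{weakadm}'' with Lemma~4.3 of \cite{Kang} in place of Lemma~4.2.

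Where you genuinely diverge is in isolating the perturbation step as a separate issue. You are right that there is something to check here: the unperturbed $5$-tuple $\tilde{\mathcal{T}}_{0}$ need not have transverse triple curve configurations (for instance, lifts of $A$-arcs and $C$-arcs can coincide or be tangent), so the pullback argument lives in a degenerate object, while weak admissibility is a statement about the actual perturbed diagram $\mathcal{T}$. The paper passes over this silently, so flagging it is a useful improvement in rigor. However, your second half is left as a sketch in two places, both of which you acknowledge. First, the coarsening $D\mapsto\tilde{D}$ needs an argument that it produces a legitimate $2$-chain whose boundary is a $\mathbb{Z}$-linear combination of unperturbed curves; the assertion that $\partial D$ and $\partial\tilde{D}$ ``represent the same classes in $H_{1}$'' is not quite the right statement (periodicity is a chain-level, not homology-level, condition on the boundary), and one really wants to exhibit $\tilde{D}$ as periodic directly from the local models near the degenerate intersections. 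Second, and more seriously, ``vanishing of $\tilde{D}$ implies vanishing of $D$'' is asserted rather than proved: when $\tilde{D}=0$ you know the large-region multiplicities of $D$ vanish, but the sliver multiplicities are the curve coefficients $a_{i},b_{j},c_{k}$ of $\partial D$, and you still need to argue these vanish (this does follow, since each curve also borders some pair of large regions with multiplicity zero, but it should be said). To make the proposal airtight you should carry out the local analysis near each degenerate intersection that you describe in the final paragraph; as written, the gap you identify yourself is real.
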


\begin{proof}
The proof is the same as in Proposition \ref{weakadm}, except that
we are using triple diagrams instead of ordinary diagrams. Using the
proof of Lemma 4.3 in \cite{Kang}, we see that the argument for ordinary
diagrams can also be used for triple diagrams.
\end{proof}
\begin{defn}
A pointed 6-tuple $(\tilde{\Sigma},\tilde{\boldsymbol{\alpha}},\tilde{\boldsymbol{\beta}},\tilde{\boldsymbol{\gamma}},\tilde{\boldsymbol{\delta}},z)$
is an involutive Heegaard 6-tuple if any of the four 5-tuples given
by excluding one out of four curve bases $\tilde{\boldsymbol{\alpha}},\tilde{\boldsymbol{\beta}},\tilde{\boldsymbol{\gamma}},\tilde{\boldsymbol{\delta}}$
are involutive. A pointed Heegaard quadruple diagram $\mathcal{Q}$
is nearly admissible if it is given by a small perturbation of alpha-,
beta-, gamma-, and delta-curves of some involutive Heegaard 6-tuple
$(\tilde{\Sigma},\tilde{\boldsymbol{\alpha}},\tilde{\boldsymbol{\beta}},\tilde{\boldsymbol{\gamma}},\tilde{\boldsymbol{\delta}},z)$.
If the bases of the triple diagrams given by exluding one out of four
curve bases $\tilde{\boldsymbol{\alpha}},\tilde{\boldsymbol{\beta}},\tilde{\boldsymbol{\gamma}},\tilde{\boldsymbol{\delta}}$
are given by a surface $\Sigma$ and curve bases $\boldsymbol{\alpha},\boldsymbol{\beta},\boldsymbol{\gamma},\boldsymbol{\delta}$
on $\Sigma$, then we say that the pointed Heegaard quadruple diagram
$\mathcal{Q}_{0}=(\Sigma,\boldsymbol{\alpha},\boldsymbol{\beta},\boldsymbol{\gamma},\boldsymbol{\delta},z)$
is the base of the nearly involutive quadruple diagram $\mathcal{Q}$.
\end{defn}

\begin{prop}
\label{weakadm3}A nearly involutive Heegaard quadruple diagram is
weakly admissible if its base is weakly admissible.
\end{prop}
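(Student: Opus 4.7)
The plan is to follow the same argument as in Propositions~\ref{weakadm} and~\ref{weakadm2}, carried out now one dimension higher in the number of curve systems. First I would fix the involutive Heegaard 6-tuple $\tilde{\mathcal{Q}}_{0}=(\tilde{\Sigma},\tilde{\boldsymbol{\alpha}},\tilde{\boldsymbol{\beta}},\tilde{\boldsymbol{\gamma}},\tilde{\boldsymbol{\delta}},z)$ of which the nearly involutive quadruple diagram $\mathcal{Q}$ is a small perturbation, and consider the associated branched covering $p_{\Sigma}\colon\tilde{\Sigma}\rightarrow\Sigma$. Exactly as in \autoref{weakadm}, I would define a pullback homomorphism
\[
p_{\Sigma}^{\ast}\colon\mathcal{D}(\mathcal{Q}_{0})\rightarrow\mathcal{D}(\tilde{\mathcal{Q}}_{0})
\]
on the free abelian groups of domains disjoint from the basepoint, sending a region of $\Sigma-(\boldsymbol{\alpha}\cup\boldsymbol{\beta}\cup\boldsymbol{\gamma}\cup\boldsymbol{\delta})$ with connected preimage to that preimage, and a region with disconnected preimage $\{T,\sigma T\}$ to $T+\sigma T$, where $\sigma$ generates the deck group. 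This map clearly takes periodic domains to periodic domains.

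Next, suppose toward contradiction that $\mathcal{Q}$ carries a nonzero positive periodic domain $D$. A sufficiently small perturbation of curves induces a natural bijection between the regions of $\tilde{\mathcal{Q}}_{0}$ and those of $\mathcal{Q}$ which preserves positivity and the periodicity condition, so I may assume $D$ lies on $\tilde{\mathcal{Q}}_{0}$ itself. Then $D+\sigma D$ is a $\sigma$-invariant positive periodic domain, and by the quadruple-diagram analogue of the lifting lemma used in the proofs of Propositions~\ref{weakadm} and~\ref{weakadm2} (Lemmas~4.2 and~4.3 of \cite{Kang}), there exists a positive periodic domain $D_{0}\in\mathcal{D}(\mathcal{Q}_{0})$ with $p_{\Sigma}^{\ast}D_{0}=D+\sigma D$. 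Weak admissibility of $\mathcal{Q}_{0}$ then forces $D_{0}=0$, hence $D+\sigma D=0$; since $D$ and $\sigma D$ are both nonnegative, this yields $D=0$, a contradiction.

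The only substantive point is to extend the lifting lemma from \cite{Kang} from triple diagrams to quadruple diagrams. I expect this to be essentially automatic, since the argument there is purely combinatorial in the local multiplicities of regions under the branched cover $p_{\Sigma}$ and is insensitive to how many families of curves are drawn on $\Sigma$; one simply repeats the multiplicity matching region by region, with $\boldsymbol{\delta}$-curves handled exactly like $\boldsymbol{\alpha}$, $\boldsymbol{\beta}$, or $\boldsymbol{\gamma}$. Thus the proof is, in practice, a transcription of the earlier ones with an additional curve system carried along.
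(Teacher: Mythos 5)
Your proposal is correct and follows essentially the same route as the paper: the paper likewise reduces Proposition~\ref{weakadm3} to the argument of Proposition~\ref{weakadm}, using the pullback homomorphism $p_{\Sigma}^{\ast}$ and the positivity contradiction, with the quadruple-diagram version of the lifting lemma supplied by Lemma~4.4 of \cite{Kang} (the analogue of Lemmas~4.2 and~4.3 you invoke). The only cosmetic difference is that the paper cites that specific lemma rather than asserting that the extension is automatic.
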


\begin{proof}
The proof is the same as in Proposition \ref{weakadm2}, except that
we are now using the proof of Lemma 4.4, instead of the proof of Lemma
4.3, of \cite{Kang}.
\end{proof}
The propositions \ref{weakadm}, \ref{weakadm2}, and \ref{weakadm3}
tell us that, when we deal with nearly involutive diagrams, we do
not have to care about their weak admissibility, as long as their
base are weakly admissible. Hence, for the sake of simplicity, we
will call a extended bridge diagram $\mathcal{E}$ weakly admissible
if the pointed Heegard diagram $\mathcal{H}_{pt}(\mathcal{E})$ is
weakly admissible. Note that this implies weak admissibility of $\mathcal{H}_{d}(\mathcal{E})$.

Now, given an extended bridge diagram 
\[
\mathcal{E}=((\Sigma,\boldsymbol{\alpha},\boldsymbol{\beta}),(F,A,B,z)),
\]
 such that $\mathcal{H}_{pt}(\mathcal{E})$ is weakly admissible,
we can apply the construction of \cite{eqv-Floer} to the induced
symplectic $\mathbb{Z}_{2}$-action on the triple $(\text{Sym}^{\tilde{g}}(\tilde{\Sigma}-\{z\}),\mathbb{T}_{\tilde{\boldsymbol{\alpha}}},\mathbb{T}_{\tilde{\boldsymbol{\beta}}})$,
where $\mathcal{H}_{d}(\mathcal{E})=(\tilde{\Sigma},\tilde{\boldsymbol{\alpha}},\tilde{\boldsymbol{\beta}},z)$
and $\tilde{g}$ is the genus of $\tilde{\Sigma}$. What we get is
the equivariant Floer cohomology 
\[
\widehat{HF}_{\mathbb{Z}_{2}}(\mathcal{E})=\widehat{HF}_{\mathbb{Z}_{2}}(\text{Sym}^{\tilde{g}}(\tilde{\Sigma}-\{z\}),\mathbb{T}_{\tilde{\boldsymbol{\alpha}}},\mathbb{T}_{\tilde{\boldsymbol{\beta}}}),
\]
 which is a $\mathbb{F}_{2}[\theta]$-module in a natural way.
\begin{lem}
\label{weakadm-moves}Given any two extended bridge diagram $\mathcal{E},\mathcal{E}^{\prime}$
representing the same bridge link $(L,p)$ inside the same 3-manifold
$M$, such that $\mathcal{H}_{pt}(\mathcal{E})$ and $\mathcal{H}_{pt}(\mathcal{E}^{\prime})$
are weakly admissible and $L$ is contained in a ball, there exists
a sequence of extended Heegaard moves which relates $\mathcal{E}$
and $\mathcal{E}^{\prime}$, such that for every extended bridge diagram
$\mathcal{E}_{0}$ appearing in an intermediate step, the pointed
Heegaard diagram $\mathcal{H}_{pt}(\mathcal{E}_{0})$ is weakly admissible.
\end{lem}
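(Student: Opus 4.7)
The plan is to upgrade a generic sequence of extended Heegaard moves from $\mathcal{E}$ to $\mathcal{E}'$, as provided by Theorem~\ref{basicmoves2}, into one that stays inside the locus of extended bridge diagrams whose underlying pointed Heegaard diagram is weakly admissible. This is modeled on the classical argument of Ozsvath--Szabo that any two weakly admissible pointed Heegaard diagrams of a pointed 3-manifold are related by a sequence of moves through weakly admissible diagrams; the novelty is that one must carry the A- and B-arcs along without violating the disjointness conditions in the definition of a based bridge diagram.

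I would begin by sorting the extended Heegaard moves by their effect on the pointed Heegaard diagram $\mathcal{H}_{pt}(\mathcal{E}_i)$. Isotopies of A- and B-arcs, handleslides of types II, III, IV, and (de)stabilizations of type II all leave $\mathcal{H}_{pt}(\mathcal{E}_i)$ literally unchanged, and diffeomorphisms act on it by diffeomorphism, so all of these preserve weak admissibility automatically. (De)stabilizations of type I attach a standard handle with a pair of transverse curves meeting in a single point; this creates no new positive periodic domains and so likewise preserves weak admissibility. The only moves that can spoil weak admissibility of $\mathcal{H}_{pt}$ are isotopies of $\alpha$- or $\beta$-curves and handleslides of type I.

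For each such potentially bad transition $\mathcal{E}_i\to\mathcal{E}_{i+1}$, I would insert a finger move (winding) on the affected $\alpha$- or $\beta$-curve along a short arc towards the basepoint $z$, exactly as in Ozsvath--Szabo's proof that every pointed Heegaard diagram can be made weakly admissible. Since weak admissibility of $\mathcal{H}_{pt}$ depends only on $(\Sigma,\boldsymbol{\alpha},\boldsymbol{\beta},z)$ and not on the arcs $A$, $B$ or the non-basepoint elements of $F$, enough winding guarantees admissibility of the pre- and post-move diagrams, and (if desired) one can undo the winding on the far side of the bad move by the reverse isotopy, which is again admissibility-preserving.

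The main obstacle is that winding an $\alpha$-curve must not run over any A-arc (and similarly for $\beta$-curves and B-arcs), lest the defining disjointness properties of an extended bridge diagram fail. To handle this, just before each finger move I would first apply type-$A$ or type-$B$ isotopies to push the relevant arcs out of a chosen tubular neighborhood of the winding path. Such clearance is always possible: the arcs are one-dimensional with endpoints at the finite set $F$, the $\alpha$- and $\beta$-curves are already disjoint from $F$, and a standard transversality and general-position argument lets us realize the isotopy. These arc-isotopies are themselves extended Heegaard moves that fix $\mathcal{H}_{pt}$ and so preserve weak admissibility. Performing, in order, the arc-clearing isotopy, the winding, the original bad move, and (if needed) the inverse arc-isotopy and inverse winding, gives an admissible refinement of the step $\mathcal{E}_i\to\mathcal{E}_{i+1}$; concatenating over all bad steps produces the required sequence.
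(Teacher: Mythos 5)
Your approach is genuinely different from the paper's, and the difference matters. The paper's proof leverages the structure of the proof of Theorem~\ref{basicmoves2}: it first performs bridge moves (handleslides of type II, III, IV, stabilizations of type II) to concentrate all A- and B-arcs into a simple configuration inside a single region near the basepoint $z$; once this is done, the arcs can be treated as a ``big basepoint,'' and the standard Ozsvath--Szabo result (Proposition 2.2 of \cite{OSz-original}) that weakly admissible pointed Heegaard diagrams of the same pointed 3-manifold are connected by admissible moves applies directly. Crucially, the bridge-only moves at most change $\mathcal{H}_{pt}$ by ``empty'' isotopies (type II handleslides push an $\alpha$-curve across a cylinder containing no $\beta$-curves, so they create or destroy no intersections), hence cannot spoil weak admissibility. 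You instead try to keep the raw sequence and interleave winding at each dangerous step, clearing the arcs out of the way on the fly. That is a legitimate strategy in principle, but two concrete issues arise.

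First, a minor slip: you state that handleslides of type II ``leave $\mathcal{H}_{pt}(\mathcal{E}_i)$ literally unchanged.'' They do not; by definition they replace an $\alpha$-curve $\alpha$ with a different curve $\alpha'$. Your conclusion that they preserve weak admissibility is still correct, but for a different reason: the cylinder between $\alpha$ and $\alpha'$ contains no $\beta$-curves, so the regions and periodic domains of $\mathcal{H}_{pt}$ are in canonical bijection before and after, making the move an intersection-number-preserving isotopy.

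Second, and more seriously, the clearing step is where your argument has a real gap, and it is exactly the issue the paper's reduction is designed to avoid. When you wind $\alpha$ along an arc $\gamma$ toward $z$, a tubular neighborhood of $\gamma$ must be swept free of A-arcs. But A-arcs have endpoints in the fixed finite set $F$ (which cannot be moved during an isotopy), and they must remain disjoint from \emph{all} $\alpha$-curves during the clearing isotopy. If $\gamma$ separates a region of $\Sigma\setminus(\boldsymbol{\alpha}\cup\boldsymbol{\beta})$ and some A-arc has its two endpoints in $F$ on opposite sides of $\gamma$, no isotopy fixing $F$ and avoiding $\boldsymbol{\alpha}$ can push that arc off of $\gamma$. ``A standard transversality and general-position argument'' does not resolve this, since the problem is topological, not dimensional. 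The paper evades the obstruction by first shrinking all A- and B-arcs into an arbitrarily small neighborhood of $z$ before any winding occurs; if you want to keep your interleaving strategy, you would need to argue that one may always reposition the arcs (via handleslides of type III/IV and stabilizations of type II, which preserve admissibility) so that the required winding arcs miss them --- but carrying that out amounts to re-deriving the simple-position reduction the paper already invokes.
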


\begin{proof}
From Proposition \ref{weakadm} and the proof of Theorem \ref{basicmoves2},
we see that we do not have to consider based bridge diagrams of extended
bridge diagrams. So we only have to care about extended Heegaard moves
of pointed Heegaard diagrams. Since any two weakly admissible diagrams
representing the same 3-manifold are related by isotopies, handleslides,
stabilizations, and diffeomorphisms while preserving weak admissibility
by Proposition 2.2 of \cite{OSz-original}, we are done.
\end{proof}
Given an extended Heegaard move of extended bridge diagrams whose
source and target are both weakly admissible, we can associate it
to a $\mathbb{F}_{2}[\theta]$-module homomorphism between the corresponding
$\mathbb{Z}_{2}$-equivariant Floer cohomology, as defined in \cite{eqv-Floer}.
From Lemma \ref{weakadm-moves}, we know that any two weakly admissible
extended bridge diagrams $\mathcal{E},\mathcal{E}^{\prime}$ which
represent the same based link inside the same 3-manifold, we have
a map 
\[
\widehat{HF}_{\mathbb{Z}_{2}}(\mathcal{E}^{\prime})\rightarrow\widehat{HF}_{\mathbb{Z}_{2}}(\mathcal{E}),
\]
 which is defined as a composition of maps associated to extended
Heegaard moves. The arguments used in the section 6 of \cite{eqv-Floer}
can be extended directly to show that the maps associated to extended
Heegaard moves, except for stabilizations of type II, are isomorphisms.
\begin{rem*}
Here, we will assume that all stabilizations which we consider here
occur near the basepoint, to ensure that they clearly induce isomorphisms
of $\widehat{HF}_{\mathbb{Z}_{2}}$. In general, when we want to stabilize
in a region which is not close to the basepoint, we can also associate
it to an isomorphism by first performing it near the basepoint and
then moving it via a sequence of handleslides. Of course, such an
isomorphism is not unique; this problem will be resolved in the next
section.
\end{rem*}
We now claim that, in some special cases, we can prove that stabilizations
of type II induce isomorphisms. Note that, from now on, we will implicitly
assume all extended bridge diagrams to be \textbf{weakly admissible},
by which we will mean that its base is weakly admissible; this is
possible without loss of generality by Lemma \ref{weakadm-moves}.
\begin{lem}
\label{eqvtrans}Let $\mathcal{E}=((\Sigma,\boldsymbol{\alpha},\boldsymbol{\beta}),(F,A,B,z))$
be an extended bridge diagram, and let $F=F_{1}\cup F_{2}$ be the
unique partition of $F$ such that every $c\in A\cup B$ satisfies
$|\partial c\cap F_{1}|=|\partial c\cap F_{2}|=1$. Then applying
a stabilization of type II to $\mathcal{E}$ induces an isomorphism
of equivariant Floer cohomology.
\end{lem}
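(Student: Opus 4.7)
The plan is to observe that, in the branched double cover, a stabilization of type II manifests as a $\mathbb{Z}_{2}$-equivariant Heegaard stabilization together with a $\mathbb{Z}_{2}$-equivariant handleslide, each of which induces an isomorphism on $\widehat{HF}_{\mathbb{Z}_{2}}$.

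First, I would unpack the effect on $\mathcal{H}_{d}(\mathcal{E})$. Let $\sigma$ denote the covering involution on the branched double cover, and let $\mathcal{E}^{\prime}$ be the stabilized diagram. Since adjoining the pair of new branch points $\{x,y\}$ increases the genus of the branched double cover by one, the surface $\tilde{\Sigma}^{\prime}$ is obtained from $\tilde{\Sigma}$ by attaching a $\sigma$-invariant handle in a neighborhood of $p^{-1}(a)$. The new closed curves $\tilde{a}_{1}=p^{-1}(a_{1})$, $\tilde{a}^{\prime}=p^{-1}(a^{\prime})$, and $\tilde{b}_{1}=p^{-1}(b_{1})$ are each $\sigma$-invariant as sets. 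In particular, $\tilde{a}_{1}$ and $\tilde{b}_{1}$ meet transversely at the single $\sigma$-fixed point $\tilde{x}=p^{-1}(x)$ and are disjoint from every other curve of $\tilde{\boldsymbol{\alpha}}^{\prime}\cup\tilde{\boldsymbol{\beta}}^{\prime}$, so together they constitute a canonical Heegaard stabilization pair for the new handle. The remaining new $\alpha$-curve $\tilde{a}^{\prime}$ is the image of the old $\alpha$-curve $\tilde{a}=p^{-1}(a)$ under a $\sigma$-equivariant handleslide across $\tilde{b}_{1}$. The bipartite hypothesis $F=F_{1}\cup F_{2}$ ensures that $x$ and $y$ are added to opposite parts of $F$, so the bipartite structure, and hence the coherent $\sigma$-action on $\tilde{\Sigma}^{\prime}$, is preserved under the stabilization.

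Granted this identification, the isomorphism then follows from a $\sigma$-equivariant version of the usual stabilization invariance theorem. Using a nearest-point almost complex structure on $\text{Sym}^{\tilde{g}+1}(\tilde{\Sigma}^{\prime}-\{z\})$ chosen $\sigma$-equivariantly in a neighborhood of the new handle (which is possible because both the region and the fixed point $\tilde{x}$ are $\sigma$-invariant), one obtains a chain-level identification $\mathbf{x}\mapsto\mathbf{x}\cup\{\tilde{x}\}$ from $\widehat{CF}(\mathcal{H}_{d}(\mathcal{E}))$ to $\widehat{CF}(\mathcal{H}_{d}(\mathcal{E}^{\prime}))$ which strictly commutes with the induced action of $\sigma$. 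The main obstacle is then to promote this strict chain-level equivariance to an isomorphism on the full equivariant complex defining $\widehat{HF}_{\mathbb{Z}_{2}}$ in the sense of \cite{eqv-Floer}; this requires choosing equivariant perturbation data compatibly throughout the relevant hypercube of resolutions. The $\sigma$-invariance of the stabilization region is precisely what allows this, and an equivariant adaptation of the invariance arguments of Section 6 of \cite{eqv-Floer} then yields the desired isomorphism.
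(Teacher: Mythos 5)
Your proposal takes a genuinely different route from the paper, and it has a gap at the step you yourself flag as "the main obstacle."

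The paper's argument does not attempt to realize the type-II stabilization geometrically as an equivariant Heegaard stabilization plus an equivariant handleslide. Instead it verifies the equivariant transversality hypothesis (EH-2) of \cite{eqv-Floer} directly, via a Maslov index computation: writing a $\mathbb{Z}_{2}$-invariant domain in $\mathcal{H}_{d}(\mathcal{E})$ in terms of its quotient, the Lipshitz formula $\mu=n_{\mathbf{x}}+n_{\mathbf{y}}+e$ plus the branched Riemann--Hurwitz correction gives
\[
\mu(p^{\ast}D)=2\mu(D)+\Bigl(\textstyle\sum_{f\in F_{1}}n_{f}(D)-\sum_{f\in F_{2}}n_{f}(D)\Bigr),
\]
and the bipartition hypothesis $|\partial c\cap F_{1}|=|\partial c\cap F_{2}|=1$ is precisely what kills the correction term. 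Once (EH-2) holds, the stabilization-invariance argument of \cite{eqv-Floer} for bridge diagrams applies essentially verbatim.

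The gap in your argument is at the sentence ``the main obstacle is then to promote this strict chain-level equivariance to an isomorphism on the full equivariant complex \ldots\ this requires choosing equivariant perturbation data compatibly throughout the relevant hypercube of resolutions.'' This is not something that the $\sigma$-invariance of the stabilization region alone gives you: the HLS machinery requires transversality for \emph{all} $\mathbb{Z}_{2}$-invariant moduli spaces appearing in the homotopy-coherent diagram, not just those localized near the new handle, and generic equivariant perturbations may fail to achieve this. That failure is exactly what (EH-2) rules out, and checking (EH-2) is the substantive content of the lemma. Deferring to ``an equivariant adaptation of the invariance arguments of Section 6 of \cite{eqv-Floer}'' is circular, since those arguments are only available after the transversality hypothesis is in place.

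Relatedly, your use of the bipartition hypothesis is vacuous. Since the bridge arcs $A\cup B$ decompose $F$ into alternating even cycles, a bipartition with $|\partial c\cap F_{1}|=|\partial c\cap F_{2}|=1$ for every arc $c$ exists automatically, and the new branch points $x,y$ are always placed in opposite parts; saying ``the bipartite structure is preserved'' carries no information. The hypothesis enters the paper's proof not as a structural bookkeeping device but as the exact identity needed to make the index-defect term $\sum_{F_{1}}n_{f}-\sum_{F_{2}}n_{f}$ vanish for invariant domains. A correct proof must use the hypothesis in some analogous quantitative way; a proposal in which it could be deleted without affecting the argument is missing the point of the lemma.

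Finally, a smaller caveat: even granting the geometric decomposition into a stabilization followed by a handleslide, the stabilization occurs near an \emph{old branch point} of $F$, not near the genuine basepoint $z$; the usual ``stabilize near the basepoint'' neck-stretching shortcut, which is what makes the ordinary stabilization map trivially an isomorphism in the equivariant setting, is not available here, and this is another place where the equivariant transversality hypothesis would need to be invoked.
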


\begin{proof}
Recall that, in the paper \cite{eqv-Floer}, the proof that stabilizations
of bridge diagrams induce isomorphisms use equivariant transversality.
That proof can be directly extended to our case, so if the $\mathbb{Z}_{2}$-action
on $\mathcal{H}_{d}(\mathcal{E})$ achieves equivariant transversality,
then a stabilization of type II induces an isomorphisms.

For any domain $D$ of $\mathcal{H}_{d}(\mathcal{E})$ from a Floer
generator $\mathbf{x}$ to a generator $\mathbf{y}$, the Maslov index
formula in the paper \cite{cylindrical} reads:
\[
\mu(D)=n_{\mathbf{x}}(D)+n_{\mathbf{y}}(D)+e(D),
\]
 where $n_{\mathbf{x}},n_{\mathbf{y}}$ are point measures and $e$
is the Euler measure. Let $p:\tilde{\Sigma}\rightarrow\Sigma$ be
the branched double covering map with branching locus $F$. Suppose
that $D$ is $\mathbb{Z}_{2}$-invariant. Then $\mathbf{x}$ and $\mathbf{y}$
are also $\mathbb{Z}_{2}$-invariant, and thus we may assume without
loss of generality that $\mathbf{x}=\mathbf{x}^{\prime}\cup F_{1}$
and $\mathbf{y}=\mathbf{y}^{\prime}\cup F_{1}$, where $\mathbf{x}^{\prime},\mathbf{y}^{\prime}$
are Floer generators in $\mathcal{H}_{pt}(\mathcal{E})$. By the assumption
that every $c\in A\cup B$ satsfies $|\partial c\cap F_{1}|=|\partial c\cap F_{2}|$,
the Maslov index of the domain $p^{\ast}D$, as defined in the proof
of Proposition \ref{weakadm}, is given as follows.
\begin{align*}
\mu(p^{\ast}D) & =n_{\mathbf{x}}(p^{\ast}D)+n_{\mathbf{y}}(p^{\ast}D)+e(p^{\ast}D)\\
 & =(2n_{\mathbf{x}}(D)+\sum_{y\in F_{1}}n_{y}(D))+(2n_{\mathbf{y}}(D)+\sum_{y\in F_{1}}n_{y}(D))+(2e(D)-\sum_{y\in F}n_{y}(D))\\
 & =2\mu(D)+\left(\sum_{y\in F_{1}}n_{y}(D)-\sum_{y\in F_{2}}n_{y}(D)\right)\\
 & =2\mu(D).
\end{align*}
 Therefore, the hypothesis (EH-2) in \cite{eqv-Floer} is satisfied,
and thus $\mathcal{E}$ achieves equivariant transversality.
\end{proof}
Now we argue that, given any extended bridge diagram which represents
a knot in $S^{3}$, we can always adjust it to a position in which
stabilizations of type II induce isomorphisms.
\begin{defn}
\label{niceposition-def}An extended bridge diagram $\mathcal{E}=((\Sigma,\boldsymbol{\alpha},\boldsymbol{\beta}),(F,A,B,z))$
is said to be in a nice position if there exists an arc $c\in A\cup B$
such that the following conditions hold.
\begin{itemize}
\item $z\in\partial c$.
\item The interior of $c$ does not intersect with any of the B-arcs and
beta-curves.
\end{itemize}
\end{defn}

Given an extended bridge diagram $\mathcal{E}=((\Sigma,\boldsymbol{\alpha},\boldsymbol{\beta}),(F,A,B,z))$
which is in a nice position, choose an arc $c$ as in Definition \ref{niceposition-def},
and assume without loss of generality that $c$ is an A-arc. Let $p$
be an endpoint of $c$ such that $\partial c=\{p,z\}$. Choose two
distinct interior points $x,y$ of $c$ such that the following condition
is satisfied.
\begin{itemize}
\item $c-\{x,y\}$ has three connected components $c_{px},c_{xy},c_{yz}$,
each of which is a simple arc, satisfying $\partial c_{px}=\{p,x\}$,
$\partial c_{xy}=\{x,y\}$, and $\partial c_{yz}=\{y,z\}$.
\end{itemize}
Then, the pair $\mathcal{E}^{\prime}=((\Sigma,\boldsymbol{\alpha},\boldsymbol{\beta}),(F,(A-\{c\})\cup\{c_{xy},c_{yz}\},B\cup\{c_{px}\},z))$
is an extended bridge diagram, which represent the same based link
in a same 3-manifold as $\mathcal{E}$. 
\begin{defn}
We define the above operation as special stabilization, i.e. applying
a special stabilization to $\mathcal{E}$ gives $\mathcal{E}^{\prime}$.
\end{defn}

\begin{lem}
\label{spstab-lemma}Let $(K,p)$ be a based knot in $S^{3}$. Then
any two extended bridge diagrams representing $(K,p)$ are related
by isotopies, handleslides of type I, II, III, IV, stabilizations
of type I near the basepoint, diffeomorphisms, and special stabilizations.
\end{lem}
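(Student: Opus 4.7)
The plan is to start from the sequence of extended Heegaard moves provided by Theorem~\ref{basicmoves2} and refine it so that the only stabilizations appearing are (i) stabilizations of type I performed near the basepoint and (ii) special stabilizations. Since any based knot $(K,p)$ in $S^{3}$ is contained in a ball, Theorem~\ref{basicmoves2} applies and furnishes such a connecting sequence between the two given extended bridge diagrams; what remains is to rewrite every general stabilization of type I and every stabilization of type II in that sequence as a composition of moves from the allowed list.

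To replace a stabilization of type I at a point $q\in\Sigma\setminus(A\cup B)$ that is not near the basepoint, I would first perform a stabilization of type I in a small disk near $z$ that is disjoint from all A-arcs and B-arcs, creating a small pair of dual alpha- and beta-curves intersecting once. A sequence of handleslides of type I applied to these newly created curves then drags the small handle across $\Sigma$ to the target point $q$; the handleslide regions can be chosen to avoid every A- and B-arc, after possibly isotoping them slightly in advance, since the complement of the arcs in $\Sigma$ is path-connected once a small neighborhood of the basepoint is removed.

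To replace a stabilization of type II performed on an arc $a$ with $z\notin\partial a$, I would first use isotopies together with handleslides of types II, III, and IV to bring the diagram into a nice position in the sense of Definition~\ref{niceposition-def}; this exploits the $S^{3}$ assumption, since the bridge arcs form a projection of a knot sitting in a ball, so an A-arc incident to $z$ can be cleared of the obstructing B-arcs and beta-curves by successive pushings. Once the diagram is in nice position, a special stabilization introduces a new pair of branch points and a new small arc near $z$. Using repeated handleslides of type IV (to slide A-arcs over A-arcs and B-arcs over B-arcs), supplemented by handleslides of types II and III for adjusting the simple closed curves that cross the bridge, this newly created structure can be transported along the bridge presentation of $K$ until the new pair of branch points lies at the prescribed location on $a$, thereby reproducing the effect of the original stabilization of type II up to isotopy.

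The main obstacle is verifying that the transport step in the last paragraph can always be carried out using only the allowed moves, without ever invoking a type I stabilization away from the basepoint or a type II stabilization. This reduces to checking that, along the bridge graph of $K$ inside $S^{3}$, a small handle-like perturbation can always be slid to any desired vertex, which in turn follows from the connectedness of the bridge graph together with the freedom to isotope A- and B-arcs, and from the fact that weak admissibility is preserved throughout the process by Lemma~\ref{weakadm-moves}.
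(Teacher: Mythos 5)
Your proposal is correct and follows essentially the same strategy as the paper: begin with the connecting sequence of extended Heegaard moves from Theorem~\ref{basicmoves2}, reduce every type~I stabilization to one performed near the basepoint by dragging via handleslides of type~I, and replace every type~II stabilization by first moving into nice position using handleslides of types~II--IV, performing a special stabilization at the basepoint, and then transporting the newly created small arc pair along the knot via isotopies and handleslides of types III and IV, which is possible precisely because $K$ has a single component.

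One small remark: your first paragraph on relocating type~I stabilizations is phrased as if the handle itself is dragged across the surface; what actually happens is that after stabilizing near $z$ one performs a sequence of type~I handleslides so that the resulting diagram agrees (up to isotopy) with the one obtained by stabilizing at $q$, while the new handle stays put. This is a standard part of the Ozsv\'ath--Szab\'o invariance argument and the paper treats it as already handled in the remark preceding Lemma~\ref{eqvtrans}; your version spells it out, which is harmless but not strictly needed.
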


\begin{proof}
We only have to prove that we can use special stabilizations instead
of stabilizations of type II. Given any extended bridge diagram $\mathcal{E}$
representing $(K,p)$ in $S^{3}$, we can apply handleslide of type
II, III, and IV to place it in a nice position. Since $K$ is a knot
and thus has only one component, applying a stabilization of type
II at any point has the same effect as applying a special stabilization
and then moving the newly created pair of arcs to that point via isotopies
and handleslide of type III. Therefore a stabilization of type II
has the same effect as a composition of handleslides of type III,
IV, and special stabilizations. 
\end{proof}
\begin{lem}
\label{spstab-isom}Let $(K,p)$ be a based knot in $S^{3}$. Suppose
that an extended bridge diagram $\mathcal{E}$, which is in a nice
position, represents $(K,p)$ in $M$, and applying a special stabilization
to $\mathcal{E}$ gives another diagram $\mathcal{E}^{\prime}$. Then
there exists an associated isomorphism:
\[
\widehat{HF}_{\mathbb{Z}_{2}}(\mathcal{E}^{\prime})\xrightarrow{\sim}\widehat{HF}_{\mathbb{Z}_{2}}(\mathcal{E}).
\]
\end{lem}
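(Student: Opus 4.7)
The plan is to adapt the proof of Lemma \ref{eqvtrans} essentially verbatim. The only new feature of a special stabilization, relative to a type II stabilization, is that the subdivided arc $c$ carries the basepoint $z$ as one of its endpoints, and this will not affect either the Maslov index calculation or the equivariant transversality argument underlying Lemma \ref{eqvtrans}. I begin by verifying that the bipartition hypothesis of that lemma is automatic whenever $\mathcal{E}$ represents a knot in $S^{3}$. Consider the graph $G_{\mathcal{E}}$ with vertex set $F$ and edges given by the arcs in $A\cup B$. By the bridge-diagram axioms each vertex is incident to exactly one A-edge and one B-edge, so $G_{\mathcal{E}}$ has degree two and splits as a disjoint union of cycles alternating in A and B. Since $K$ is a knot, $L(\mathcal{E})$ has a single component, so $G_{\mathcal{E}}$ is a single such cycle; being alternating, it has even length and is therefore bipartite, yielding the unique partition $F=F_{1}\sqcup F_{2}$ with $|\partial c\cap F_{1}|=|\partial c\cap F_{2}|=1$ for every $c\in A\cup B$.

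Next I extend this bipartition to $\mathcal{E}'$. Writing $p\in F_{1}$ and $z\in F_{2}$ without loss of generality, I set $F_{1}'=F_{1}\cup\{y\}$ and $F_{2}'=F_{2}\cup\{x\}$; a quick check shows that each of the three sub-arcs produced by the special stabilization has exactly one endpoint in each class, so $\mathcal{E}'$ also satisfies the bipartition hypothesis. With this in place, I rerun the Maslov computation from Lemma \ref{eqvtrans} on $\mathcal{H}_{d}(\mathcal{E}')$. For any $\mathbb{Z}_{2}$-invariant domain $D$ between $\mathbb{Z}_{2}$-invariant generators, writing $\mathbf{x}=\mathbf{x}'\cup F_{1}'$ and $\mathbf{y}=\mathbf{y}'\cup F_{1}'$ and applying the cylindrical Maslov formula together with the Euler- and point-measure identities for branched covers yields
\[
\mu(p_{\Sigma}^{\ast}D)=2\mu(D)+\Bigl(\sum_{y\in F_{1}'}n_{y}(D)-\sum_{y\in F_{2}'}n_{y}(D)\Bigr)=2\mu(D),
\]
since the per-arc bipartition forces the bracketed term to vanish. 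Hypothesis (EH-2) of \cite{eqv-Floer} therefore holds for $\mathcal{E}'$, equivariant transversality is achieved, and the construction of section 6 of \cite{eqv-Floer} produces the desired isomorphism $\widehat{HF}_{\mathbb{Z}_{2}}(\mathcal{E}')\xrightarrow{\sim}\widehat{HF}_{\mathbb{Z}_{2}}(\mathcal{E})$.

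The main obstacle I anticipate is justifying that the presence of the basepoint $z$ on the subdivided arc $c$ does not affect the Maslov index bookkeeping. I expect this to be automatic: every Floer domain is supported on the punctured surface $\Sigma\setminus\{z\}$ (and its branched double cover with the basepoint removed), so $z$ itself contributes zero to $n_{y}(D)$ for all $y$, and the definitional restriction $p\notin\partial a$ from the type II stabilization will play no role in the chain-level argument. Hence the proof of Lemma \ref{eqvtrans} should transfer unchanged to the setting of special stabilizations.
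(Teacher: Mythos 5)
Your proof takes a genuinely different route from the paper's. The paper proves Lemma \ref{spstab-isom} by an explicit chain-level argument: it observes (see Figure \ref{fig01}) that the new A-arc created by the special stabilization has essentially unique intersection data near the basepoint, so every Floer generator of $\mathcal{H}_{d}(\mathcal{E}')$ has the form $\mathbf{x}\cup\{x\}$ for a unique generator $\mathbf{x}$ of $\mathcal{H}_{d}(\mathcal{E})$, and — using split almost complex structures $\text{Sym}^{\tilde g}(\mathfrak{j})$ — any holomorphic disk in $\mathcal{H}_{d}(\mathcal{E}')$ that involves the extra coordinate $x$ must cross the basepoint, so $\mathcal{M}(\mathbf{x},\mathbf{y})\simeq\mathcal{M}(\mathbf{x}\cup\{x\},\mathbf{y}\cup\{x\})$ and the map $\mathbf{x}\mapsto\mathbf{x}\cup\{x\}$ is a chain isomorphism $\widehat{CF}_{\mathbb{Z}_2}(\mathcal{E})\to\widehat{CF}_{\mathbb{Z}_2}(\mathcal{E}')$. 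You instead try to bootstrap from Lemma \ref{eqvtrans}: verify the bipartition hypothesis (your alternating-cycle argument for this is correct), extend the bipartition across the special stabilization, rerun the Maslov index computation, and invoke equivariant transversality. That is an interesting alternative, and your bipartition observation is correct — but it proves more than you need: it shows that Lemma \ref{eqvtrans}'s hypothesis is satisfied by \emph{every} extended bridge diagram of a knot, which would make the ``nice position'' condition, special stabilizations, Lemma \ref{spstab-lemma}, Lemma \ref{comm-lemma-stabII}, and the commutative-square argument culminating in Theorem \ref{thm:basicmove-isom} all unnecessary. The paper's structure strongly indicates the author did not regard Lemma \ref{eqvtrans} alone as sufficient, so you should worry that you are glossing over a subtlety (for instance, whether the stated form $\mathbf{x}=\mathbf{x}'\cup F_1$ of $\mathbb{Z}_2$-invariant generators in the proof of Lemma \ref{eqvtrans} is consistent with the count of $\tilde{\boldsymbol{\alpha}}$-curves of type $p^{-1}(a)$, only $|A|-1$ of which exist).

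There is also a concrete local gap at the end of your argument. You write that ``the construction of section 6 of \cite{eqv-Floer} produces the desired isomorphism,'' but you never say which map this is or why equivariant transversality makes it an isomorphism for a \emph{special} stabilization (the \cite{eqv-Floer} stabilization argument is set up for bridge diagrams on $S^{2}$ and for moves that are type II in the paper's terminology, and a special stabilization changes which arcs contain $z$ and hence which arcs lift to curves). The paper's proof discharges exactly this burden by exhibiting the isomorphism explicitly at the chain level. To complete your version you would need to identify the map — presumably the neck-stretching/stabilization map — and justify that, once $(\text{EH-2})$ holds for both $\mathcal{E}$ and $\mathcal{E}'$, one has $\widehat{CF}_{\mathbb{Z}_2}(\cdot)\simeq\text{RHom}_{\mathbb{F}_2[\mathbb{Z}_2]}(\widehat{CF}(\cdot),\mathbb{F}_2)$ and the $\mathbb{Z}_2$-equivariant stabilization map on $\widehat{CF}$ is an equivariant quasi-isomorphism (as the paper does later in the proof of Lemma \ref{lem:eqvhfk-welldef}). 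As written, the last step is asserted rather than proved.
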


\begin{proof}
The extended bridge diagrams $\mathcal{E}$ and $\mathcal{E}^{\prime}$,
near the basepoint $p$, are drawn in Figure \ref{fig01}. Since $\gamma_{zx}$
is the only B-arc/$\beta$-curve which intersects the A-arc $\gamma_{xy}$
of $\mathcal{H}_{d}(\mathcal{E}^{\prime})$, any Floer generator $\mathbf{x}^{\prime}$
of $\mathcal{H}_{d}(\mathcal{E}^{\prime})$ can be written as 
\[
\mathbf{x}^{\prime}=\mathbf{x}\cup\{x\},
\]
 where $\mathbf{x}$ is a uniquely determined Floer generator of $\mathcal{H}_{d}(\mathcal{E})$.

Now assume that we are using almost complex structures of form $\text{Sym}^{\tilde{g}}(\mathfrak{j})$,
where $\mathfrak{j}$ is an almost complex structure on the Heegaard
surface $\tilde{\Sigma}$ of $\mathcal{H}_{d}(\mathcal{E})$ (thus
also of $\mathcal{H}_{d}(\mathcal{E}^{\prime})$) and $\tilde{g}$
is the genus of $\tilde{\Sigma}$; this is possible since such structures
are enough to acheive transversality for all homotopy classes of Whitney
disks. Since the regions of $\mathcal{H}_{d}(\mathcal{E}^{\prime})$
which contains $x$ on its boundary necessarily contains the basepoint
$z$, any holomorphic disks from $\mathbf{x}\cup\{x\}$ to $\mathbf{y}\cup\{x\}$,
where $\mathbf{x},\mathbf{y}$ are Floer generators of $\mathcal{H}_{d}(\mathcal{E})$,
are actually holomorphic disks from $\mathbf{x}$ to $\mathbf{y}$,
as the point $x$ cancels out. Hence we have a homeomorphism 
\[
\mathcal{M}(\mathbf{x},\mathbf{y})\simeq\mathcal{M}(\mathbf{x}\cup\{x\},\mathbf{y}\cup\{x\}).
\]
This implies that the map between the equivariant Floer complexes,
\begin{align*}
\widehat{CF}_{\mathbb{Z}_{2}}(\mathcal{E}) & \rightarrow\widehat{CF}_{\mathbb{Z}_{2}}(\mathcal{E}^{\prime}),\\
\mathbf{x} & \mapsto\mathbf{x}\cup\{x\}
\end{align*}
 is an isomorphism. Therefore the induced map $\widehat{HF}_{\mathbb{Z}_{2}}(\mathcal{E}^{\prime})\rightarrow\widehat{HF}_{\mathbb{Z}_{2}}(\mathcal{E})$
is also an isomorphism.
\end{proof}
\begin{figure}
\resizebox{.4\textwidth}{!}{\includegraphics{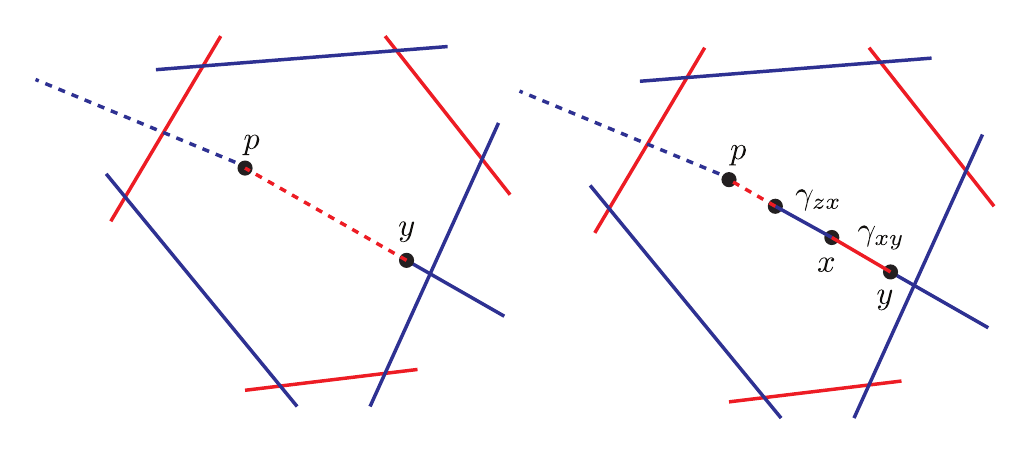}} \caption{\label{fig01}Attaching a small A-arc and a B-arc at the point $w$.}
\end{figure}

Now, given an extended bridge diagram $\mathcal{E}$ which represents
a knot in $S^{3}$, we have a following commutative diagram of extended
bridge diagrams, where the vertical arrows are stabilizations of type
II and horizontal arroLews are either isotopies, handleslides of type
I, II, III, IV, stabilizations of type I, or special stabilizations,
and $\mathcal{P}(\mathcal{E}_{n})$ (and also $\mathcal{P}(\mathcal{E}_{n}^{st})$)
are simple.
\[
\xymatrix{\mathcal{E}\ar[r]\ar[d]_{\text{stab}} & \mathcal{E}_{1}\ar[r]\ar[d]_{\text{stab}} & \cdots\ar[r] & \mathcal{E}_{n-1}\ar[r]\ar[d]^{\text{stab}} & \mathcal{E}_{n}\ar[d]^{\text{stab}}\\
\mathcal{E}^{st}\ar[r] & \mathcal{E}_{1}^{st}\ar[r] & \cdots\ar[r] & \mathcal{E}_{n-1}^{st}\ar[r] & \mathcal{E}_{n}^{st}
}
\]
 Translating this diagram into equivariant Floer cohomology and maps
between them gives the following diagram.
\[
\xymatrix{\widehat{HF}_{\mathbb{Z}_{2}}(\mathcal{E}_{n}^{st})\ar[r]^{\sim}\ar[d]^{\sim} & \widehat{HF}_{\mathbb{Z}_{2}}(\mathcal{E}_{n-1}^{st})\ar[r]^{\sim}\ar[d] & \cdots\ar[r]^{\sim} & \widehat{HF}_{\mathbb{Z}_{2}}(\mathcal{E}_{1}^{st})\ar[r]^{\sim}\ar[d] & \widehat{HF}_{\mathbb{Z}_{2}}(\mathcal{E}^{st})\ar[d]\\
\widehat{HF}_{\mathbb{Z}_{2}}(\mathcal{E}_{n})\ar[r]^{\sim} & \widehat{HF}_{\mathbb{Z}_{2}}(\mathcal{E}_{n-1})\ar[r]^{\sim} & \cdots\ar[r]^{\sim} & \widehat{HF}_{\mathbb{Z}_{2}}(\mathcal{E}_{1})\ar[r]^{\sim} & \widehat{HF}_{\mathbb{Z}_{2}}(\mathcal{E})
}
\]
 All horizontal arrows are isomorphisms. We also know that the leftmost
vertical arrow is also an isomorphism, since $\mathcal{P}(\mathcal{E})$
is simple. So, if this diagram is commutative, we can deduce that
the map $\widehat{HF}_{\mathbb{Z}_{2}}(\mathcal{E}^{st})\rightarrow\widehat{HF}_{\mathbb{Z}_{2}}(\mathcal{E}),$
which is the map induced by a stabilization of type II applied to
$\mathcal{E}$, is an isomorphism.
\begin{lem}
\label{associativity-1}Let $(\tilde{\Sigma},\tilde{\boldsymbol{\alpha}},\tilde{\boldsymbol{\beta}},\tilde{\boldsymbol{\gamma}},\tilde{\boldsymbol{\delta}},z)$
be an involutive Heegaard 6-tuple with base $(\Sigma,\boldsymbol{\alpha},\boldsymbol{\beta},\boldsymbol{\gamma},\boldsymbol{\delta},z)$,
and $\theta_{\beta,\gamma},\theta_{\gamma,\delta}$ be $\mathbb{Z}_{2}$-invariant
cycles in $\widehat{CF}(\tilde{\Sigma},\tilde{\boldsymbol{\beta}},\tilde{\boldsymbol{\gamma}},z),\widehat{CF}(\tilde{\Sigma},\tilde{\boldsymbol{\gamma}},\tilde{\boldsymbol{\delta}},z)$,
respectively. Suppose that, for any Floer generator $\mathbf{x}$
of $(\tilde{\Sigma},\tilde{\boldsymbol{\beta}},\tilde{\boldsymbol{\delta}},z)$,
every element of $\pi_{2}(\theta_{\beta,\gamma},\theta_{\gamma,\delta},\mathbf{x})$
has Maslov index at least $0$ and achieves transversality for generic
$\mathbb{Z}_{2}$-equivariant families of almost complex structures.
Then, for any $x_{\alpha,\beta}\in H_{\ast}(\widetilde{CF}_{\mathbb{Z}_{2}}(\tilde{\Sigma},\tilde{\boldsymbol{\alpha}},\tilde{\boldsymbol{\beta}})\otimes_{\mathbb{F}_{2}[\mathbb{Z}_{2}]}\mathbb{F}_{2})$,
we have 
\[
\hat{f}_{\alpha,\gamma,\delta}(\hat{f}_{\alpha,\beta,\gamma}(x_{\alpha,\beta}\otimes\theta_{\beta,\gamma})\otimes\theta_{\gamma,\delta})=\hat{f}_{\alpha,\beta,\delta}(x_{\alpha,\beta}\otimes f_{\beta,\gamma,\delta}(\theta_{\beta,\gamma}\otimes\theta_{\gamma,\delta})),
\]
 where $\hat{f}$ denotes the equivariant triangle maps, defined in
\cite{eqv-Floer}, and $f$ denotes the ordinary triangle map, with
repsect to their subindices.
\end{lem}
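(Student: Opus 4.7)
The plan is to carry the standard associativity-of-triangle-maps argument (counting ends of a one-parameter family of pseudo-holomorphic rectangles) over to the $\mathbb{Z}_{2}$-equivariant setting of \cite{eqv-Floer}, exploiting the $\mathbb{Z}_{2}$-invariance of $\theta_{\beta,\gamma}$ and $\theta_{\gamma,\delta}$ together with the Maslov-index and transversality hypotheses to ensure that the equivariant quadrilateral moduli are regular and have the expected boundary structure. Concretely, I would fix a finite-dimensional Borel approximation, namely an $S^{N}$-family of $\mathbb{Z}_{2}$-equivariant almost complex structures on $\mathrm{Sym}^{\tilde g}(\tilde\Sigma - \{z\})$, and build a parametrized equivariant quadrilateral map by counting $\mathfrak{j}_{t}$-holomorphic rectangles with three corners fixed at $x_{\alpha,\beta}$, $\theta_{\beta,\gamma}$, $\theta_{\gamma,\delta}$ and output in $\widehat{CF}(\tilde\Sigma,\tilde{\boldsymbol\alpha},\tilde{\boldsymbol\delta},z)$, weighted by the cells of $S^{N}$. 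Proposition \ref{weakadm3}, applied to the base quadruple diagram, supplies the Gromov compactness of all relevant moduli.

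Next, I would analyze the codimension-one ends of the associated one-dimensional parametrized moduli spaces. Away from $\partial S^{N}$, these split into three types: (i) degenerations along the $\alpha\gamma$-diagonal, contributing $\hat f_{\alpha,\gamma,\delta}(\hat f_{\alpha,\beta,\gamma}(x_{\alpha,\beta}\otimes\theta_{\beta,\gamma})\otimes\theta_{\gamma,\delta})$; (ii) degenerations along the $\beta\delta$-diagonal, contributing $\hat f_{\alpha,\beta,\delta}(x_{\alpha,\beta}\otimes f_{\beta,\gamma,\delta}(\theta_{\beta,\gamma}\otimes\theta_{\gamma,\delta}))$; and (iii) strip-breakings, which produce $\partial$-exact terms since $\theta_{\beta,\gamma}, \theta_{\gamma,\delta}$ are cycles and $x_{\alpha,\beta}$ represents a homology class in the Borel quotient $\widetilde{CF}_{\mathbb{Z}_{2}}\otimes_{\mathbb{F}_{2}[\mathbb{Z}_{2}]}\mathbb{F}_{2}$. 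The $\partial S^{N}$-strata produce the comparison homotopy between $S^{N}$- and $S^{N-1}$-counts; summing over $N$ and passing to inverse limits kills this contribution on $H_{\ast}$. Equivariant transversality of the quadrilateral families is ensured by a Maslov-index computation in the style of Lemma \ref{eqvtrans}: the pullback Maslov index of a $\mathbb{Z}_{2}$-invariant rectangle equals twice that of its quotient, verifying hypothesis (EH-2) of \cite{eqv-Floer} for quadrilaterals.

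The main obstacle is to verify that the $\beta\delta$-diagonal end really produces the \emph{ordinary} triangle map $f_{\beta,\gamma,\delta}(\theta_{\beta,\gamma}\otimes\theta_{\gamma,\delta})$, rather than some equivariant enhancement. Because both middle inputs are already $\mathbb{Z}_{2}$-invariant cycles and the hypothesis on $\pi_{2}(\theta_{\beta,\gamma},\theta_{\gamma,\delta},\mathbf{x})$ forbids negative-index bubbles, the $S^{N}$-parameter acts essentially trivially on the $\beta\gamma\delta$-sub-triangle, so the parametrized count on that side collapses to the ordinary triangle count of \cite{eqv-Floer}. Combined with the end-counting above, and the fact that $f_{\beta,\gamma,\delta}(\theta_{\beta,\gamma}\otimes\theta_{\gamma,\delta})$ is automatically a $\mathbb{Z}_{2}$-invariant cycle (hence an admissible input for $\hat f_{\alpha,\beta,\delta}$), one obtains the stated equality on $H_{\ast}(\widetilde{CF}_{\mathbb{Z}_{2}}(\tilde\Sigma,\tilde{\boldsymbol\alpha},\tilde{\boldsymbol\delta})\otimes_{\mathbb{F}_{2}[\mathbb{Z}_{2}]}\mathbb{F}_{2})$.
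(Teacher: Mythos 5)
Your proposal is correct and follows essentially the same strategy as the paper: construct an equivariant square (rectangle) map $\hat s_{\alpha,\beta,\gamma,\delta}$ by mimicking the construction of equivariant triangle maps from Lemma 3.25 of \cite{eqv-Floer}, analyze the codimension-one ends of the relevant one-dimensional moduli spaces, and use the hypothesis that elements of $\pi_{2}(\theta_{\beta,\gamma},\theta_{\gamma,\delta},\mathbf{x})$ have Maslov index $\ge 0$ and are equivariantly transverse to conclude that the $\beta\gamma\delta$-degeneration contributes the ordinary (non-equivariant) triangle count $f_{\beta,\gamma,\delta}(\theta_{\beta,\gamma}\otimes\theta_{\gamma,\delta})$. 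The only cosmetic difference is that you phrase the Borel direction in terms of finite $S^{N}$-approximations whereas the paper (following \cite{eqv-Floer}) works directly with homotopy coherent diagrams over $\mathscr{E}\mathbb{Z}_{2}$; these are equivalent implementations, and your extra remark about the pullback Maslov index being double that of the quotient (in the style of Lemma \ref{eqvtrans}) makes the transversality step more explicit than the paper does.
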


\begin{proof}
By assumption, the cycle $f_{\beta,\gamma,\delta}(\theta_{\beta,\gamma}\otimes\theta_{\gamma,\delta})\in\widehat{CF}_{\mathbb{Z}_{2}}(\tilde{\Sigma},\tilde{\boldsymbol{\beta}},\tilde{\boldsymbol{\delta}},z)$
is $\mathbb{Z}_{2}$-invariant, so that the term 
\[
\hat{f}_{\alpha,\beta,\delta}(x_{\alpha,\beta}\otimes f_{\beta,\gamma,\delta}(\theta_{\beta,\gamma}\otimes\theta_{\gamma,\delta}))
\]
 is well-defined. Using square maps from non-equivariant Floer theory,
we can construct the equivariant square map 
\[
\hat{s}_{\alpha,\beta,\gamma,\delta}\,:\,\widetilde{CF}_{\mathbb{Z}_{2}}(\tilde{\Sigma},\tilde{\boldsymbol{\alpha}},\tilde{\boldsymbol{\beta}},z)\rightarrow\widetilde{CF}_{\mathbb{Z}_{2}}(\tilde{\Sigma},\tilde{\boldsymbol{\alpha}},\tilde{\boldsymbol{\delta}},z),
\]
 by mimicing the consruction of equivariant triangle maps, as in the
proof of Lemma 3.25 in \cite{eqv-Floer}. If a sequence of holomorphic
squares in $(\tilde{\Sigma},\tilde{\boldsymbol{\alpha}},\tilde{\boldsymbol{\beta}},\tilde{\boldsymbol{\gamma}},\tilde{\boldsymbol{\delta}},z)$
having $\theta_{\beta,\gamma},\theta_{\gamma,\delta}$ as two of their
vertices degenerates to a concatenation of a holomorphic triangle
in $(\tilde{\Sigma},\tilde{\boldsymbol{\alpha}},\tilde{\boldsymbol{\beta}},\tilde{\boldsymbol{\delta}},z)$
and another holomorphc triangle in $(\tilde{\Sigma},\tilde{\boldsymbol{\beta}},\tilde{\boldsymbol{\gamma}},\tilde{\boldsymbol{\delta}},z)$,
the singular vertex should be the points in the cycle $f_{\beta,\gamma,\delta}(\theta_{\beta,\gamma}\otimes\theta_{\gamma,\delta})$
by assumption. Hence, if we choose a cycle representative $\mathbf{x}_{\alpha,\beta}$
of $x_{\alpha,\beta}$, the quantity
\[
\hat{f}_{\alpha,\gamma,\delta}(\hat{f}_{\alpha,\beta,\gamma}(\mathbf{x}_{\alpha,\beta}\otimes\theta_{\beta,\gamma})\otimes\theta_{\gamma,\delta})+\hat{f}_{\alpha,\beta,\delta}(\mathbf{x}_{\alpha,\beta}\otimes f_{\beta,\gamma,\delta}(\theta_{\beta,\gamma}\otimes\theta_{\gamma,\delta}))
\]
 should be the same as the quantity 
\[
d\hat{s}_{\alpha,\beta,\gamma,\delta}(\mathbf{x}_{\alpha,\beta}\otimes\theta_{\beta,\gamma}\otimes\theta_{\gamma,\delta})+\hat{s}_{\alpha,\beta,\gamma,\delta}(d\mathbf{x}_{\alpha,\beta}\otimes\theta_{\beta,\gamma}\otimes\theta_{\gamma,\delta}).
\]
 Therefore, by passing to the homology, we get the desired result.
\end{proof}
\begin{lem}
\label{comm-lemma-stabII}The maps induced by extended Heegaard moves,
and special stabilizations commute with the maps induced by stabilizations
of type II.
\end{lem}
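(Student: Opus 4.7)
The plan is to reduce the commutativity to the associativity provided by Lemma \ref{associativity-1}, applied to suitable involutive Heegaard 6-tuples. First I would observe that each of the moves in the statement admits a uniform description in terms of (equivariant) triangle maps. Namely, for handleslides of types I--IV and for isotopies, the induced map is (after small Hamiltonian perturbation of the moved curves) the equivariant triangle map against the top-degree $\mathbb{Z}_2$-invariant cycle $\Theta$ in the Floer complex of the pair (old curves, new curves); for diffeomorphisms one uses naturality of the Floer complex; for stabilizations of type I near the basepoint one uses the Künneth-type isomorphism of \cite{eqv-Floer}; for special stabilizations one uses the explicit chain-level isomorphism produced in Lemma \ref{spstab-isom}; and for stabilizations of type II one uses the map constructed after Lemma \ref{weakadm-moves} (composed, if necessary, with handleslides so that the stabilization takes place in a standard position).

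Next, I would argue the commutativity one case at a time, but all cases follow the same template. Given a move $M$ (an extended Heegaard move or a special stabilization) and a stabilization of type II $S$, I would construct an involutive Heegaard 6-tuple
\[
(\tilde{\Sigma}, \tilde{\boldsymbol{\alpha}},\tilde{\boldsymbol{\beta}},\tilde{\boldsymbol{\gamma}},\tilde{\boldsymbol{\delta}},z)
\]
in which one of the curve transitions $\alpha\to\beta$ or $\gamma\to\delta$ encodes $M$ (against a $\mathbb{Z}_2$-invariant top cycle $\theta_{\beta,\gamma}$ or $\theta_{\alpha,\beta}$) and another encodes $S$ (against a $\mathbb{Z}_2$-invariant top cycle $\theta_{\gamma,\delta}$). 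Weak admissibility of the resulting triple and quadruple diagrams follows from Propositions \ref{weakadm2} and \ref{weakadm3} after choosing the bases to be weakly admissible, which can always be arranged by Lemma \ref{weakadm-moves}. The key computation is then to verify the hypothesis of Lemma \ref{associativity-1}: that every class in $\pi_2(\theta_{\beta,\gamma},\theta_{\gamma,\delta},\mathbf{x})$ has nonnegative Maslov index and that equivariant transversality can be achieved. Both conditions are standard because the relevant curves $\tilde{\boldsymbol{\gamma}},\tilde{\boldsymbol{\delta}}$ are small Hamiltonian perturbations or localized near the stabilization region, so that the composed triangle reduces to the product of a triangle in the base diagram with a small triangle in the support of $S$, and the Maslov index computation proceeds exactly as in Lemma \ref{eqvtrans}.

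Applying Lemma \ref{associativity-1} then gives
\[
\hat{f}_{\alpha,\gamma,\delta}\bigl(\hat{f}_{\alpha,\beta,\gamma}(x\otimes\theta_{\beta,\gamma})\otimes\theta_{\gamma,\delta}\bigr)
=\hat{f}_{\alpha,\beta,\delta}\bigl(x\otimes f_{\beta,\gamma,\delta}(\theta_{\beta,\gamma}\otimes\theta_{\gamma,\delta})\bigr),
\]
where the left-hand side corresponds to first performing $M$ and then $S$, and the right-hand side corresponds to performing the composite move directly. A symmetric 6-tuple, in which $S$ is applied first and $M$ second, produces an equal expression; combining the two equalities yields the desired commutativity. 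For stabilizations of type I, diffeomorphisms, and special stabilizations, whose induced maps are not triangle maps in the strict sense, I would absorb them into the triangle-map framework by first performing them on the $\alpha$-side to land in an isomorphic target diagram, where the remaining argument goes through verbatim; for special stabilizations in particular, the chain-level formula $\mathbf{x}\mapsto\mathbf{x}\cup\{x\}$ of Lemma \ref{spstab-isom} commutes strictly with the triangle map for $S$ because the new intersection point $x$ lies in a region disjoint from the support of $S$ after a handleslide of type III.

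The main obstacle I expect is the case where the stabilization of type II and the other move are supported in the same region of $\Sigma$, e.g., when $M$ is itself a special stabilization or a handleslide of type III acting on the same arc $c$ that $S$ splits. In that situation the two moves are not immediately in ``general position,'' and the involutive 6-tuple has to be built after first conjugating $M$ by an isotopy or handleslide of type III that moves its support away from the splitting point of $S$; one then has to check that this intermediate isotopy does not change the induced map, which in turn follows from the same associativity argument applied to a simpler 6-tuple, giving a bootstrap-style reduction to the disjoint-support case.
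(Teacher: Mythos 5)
Your proposal takes essentially the same approach as the paper: treat diffeomorphisms and special stabilizations by inspection, reduce the remaining cases to the associativity Lemma~\ref{associativity-1} for a suitably chosen involutive Heegaard 6-tuple, and verify the Maslov index and transversality hypotheses using the fact that a type-II stabilization is supported in a small region. The only substantive difference is your citation of Lemma~\ref{eqvtrans} for the Maslov-index check: that lemma establishes the doubling identity $\mu(p^{\ast}D)=2\mu(D)$ needed for (EH-2), which is not quite the same as the hypothesis of Lemma~\ref{associativity-1} that classes in $\pi_{2}(\theta_{\beta,\gamma},\theta_{\gamma,\delta},\mathbf{x})$ have nonnegative Maslov index; the paper instead verifies this directly from the local picture of Figure~\ref{fig02}, observing that there are no nonconstant triangles in the relevant $(\beta,\gamma,\delta)$ subdiagram and that the constant ones have Maslov index~$0$, and that the non-equivariant triangle map sends the top classes to the unique $\mathbb{Z}_{2}$-invariant top class. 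Your closing paragraph about moves with overlapping support addresses a genuine concern, and the bootstrap reduction you describe (conjugate by a handleslide of type III, then check the conjugation itself commutes) is consistent with how the paper's argument would extend to that case.
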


\begin{proof}
The statement is obvious for diffeomorphisms and special stabilizations.
Also, the statement for isotopy maps is a direct consequence of Lemma
6.2 of \cite{Kang}. The remaining cases involve associativity of
equivariant triangle maps, and thus we would like to use Lemma \ref{associativity-1}.

Here, we will only give a proof for the commutation of stabilization
maps of type II with handleslides of type IV, since other cases can
be proven using the same argument. To give a proof for this case,
we consider the two triple-diagram in Figure \ref{fig02}. In (the
branched double cover of) the given triple-diagrams, there are clearly
no nonconstant triangles, and the constant triangles have Maslov index
$0$. Also, the induced (non-equivariant) triangle maps sends the
top classes to the unique $\mathbb{Z}_{2}$-invariant top class. Therefore,
by Lemma \ref{associativity-1}, the maps induced by extended Heegaard
moves and special stabilizations commute with the maps induced by
stabilizations of type II.
\end{proof}
\begin{figure}
\resizebox{.4\textwidth}{!}{\includegraphics{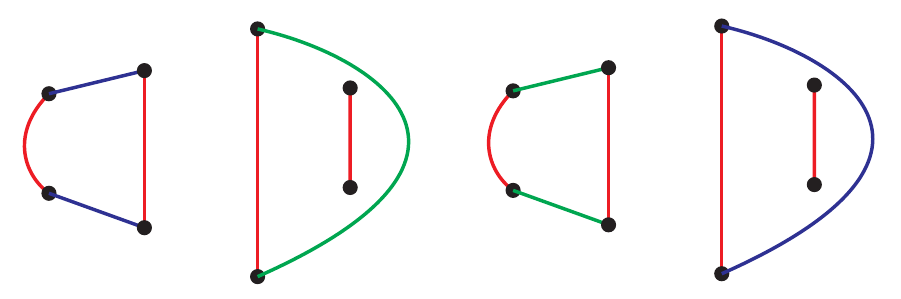}} \caption{\label{fig02}The diagram on the left is the triple-diagram for the
case when a stabilization map of type II is composed first and then
a handleslide map of type IV is composed, and the diagram on the right
is for the case when the maps are composed in the opposite order.}
\end{figure}
\begin{thm}
\label{thm:basicmove-isom}Given an extended bridge diagram $\mathcal{E}$
which represents a knot in $S^{3}$, let $\mathcal{E}^{\prime}$ be
the extended bridge diagram we get by applying an extended Heegaard
move to $\mathcal{E}$. Suppose that both $\mathcal{E}$ and $\mathcal{E}^{\prime}$
have at least one A-arcs and B-arcs. Then the associated map $\widehat{HF}_{\mathbb{Z}_{2}}(\mathcal{E}^{\prime})\rightarrow\widehat{HF}_{\mathbb{Z}_{2}}(\mathcal{E})$
is an isomorphism.
\end{thm}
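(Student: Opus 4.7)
The plan is to split the argument into two cases: extended Heegaard moves other than stabilizations of type II, and stabilizations of type II themselves. The first case is already essentially handled. As noted in the discussion immediately preceding Lemma \ref{eqvtrans}, the arguments of Section 6 of \cite{eqv-Floer} extend directly to show that isotopies, handleslides of types I--IV, stabilizations of type I near the basepoint, and diffeomorphisms all induce isomorphisms on $\widehat{HF}_{\mathbb{Z}_{2}}$. Special stabilizations induce isomorphisms by Lemma \ref{spstab-isom}. So the actual content of the theorem is the claim that every stabilization of type II induces an isomorphism.

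For this remaining case, the plan is to exploit the commutative diagram displayed immediately before the theorem statement. By Lemma \ref{spstab-lemma}, one can connect $\mathcal{E}$ to an extended bridge diagram $\mathcal{E}_{n}$ with simple based bridge diagram, using only moves of the type already known to induce isomorphisms. Applying the same type II stabilization at each step produces the second row, giving the full commutative diagram of extended bridge diagrams. After passing to $\widehat{HF}_{\mathbb{Z}_{2}}$, the horizontal arrows are all isomorphisms by the first case. The leftmost vertical arrow, being a stabilization of type II applied to the simple diagram $\mathcal{E}_{n}$, is an isomorphism by Lemma \ref{eqvtrans}: the bipartition $F=F_{1}\cup F_{2}$ required by that lemma exists automatically for any bridge diagram of a knot, since one may take $F_{1}$ and $F_{2}$ to be the ``outgoing'' and ``incoming'' endpoints along the knot, and the resulting Maslov index computation shows equivariant transversality is achievable. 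A direct diagram chase then forces the rightmost vertical arrow to be an isomorphism as well.

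The main obstacle is verifying that each of the squares of $\widehat{HF}_{\mathbb{Z}_{2}}$ maps actually commutes; this is precisely the content of Lemma \ref{comm-lemma-stabII}, which in turn rests on the associativity of equivariant triangle maps established in Lemma \ref{associativity-1}. Finally, the hypothesis that both $\mathcal{E}$ and $\mathcal{E}'$ possess at least one A-arc and one B-arc is what guarantees that every intermediate extended bridge diagram produced in the reduction to $\mathcal{E}_{n}$ remains nontrivial enough for the relevant moves, in particular the special stabilizations supplying the arc adjacent to the basepoint, to be applicable. Combining the two cases yields the theorem.
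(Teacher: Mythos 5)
Your proposal reproduces the paper's proof in structure: reduce to the type-II stabilization case, set up the ladder of extended bridge diagrams connecting $\mathcal{E}$ to a simple diagram $\mathcal{E}_n$, invoke Lemma \ref{comm-lemma-stabII} for commutativity of the squares, and chase the diagram. The one place you diverge is the justification for the leftmost vertical arrow, and this is worth flagging because the way you phrase it is logically unstable. You claim Lemma \ref{eqvtrans} applies because the bipartition $F = F_1 \cup F_2$ exists automatically for any bridge diagram of a knot. That much is true: the arcs of a knot bridge diagram form a single even cycle, which is uniquely 2-colorable. But if the bipartition alone were enough to run Lemma \ref{eqvtrans}, that lemma would apply directly to the original stabilization $\mathcal{E} \to \mathcal{E}'$ and the entire ladder you build would be superfluous. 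You invoke it only for $\mathcal{E}_n$, which means you are implicitly leaning on the fact that $\mathcal{E}_n$ is simple without saying why that matters.

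The paper's justification for the leftmost arrow is genuinely different and does use simplicity in an essential way: a simple diagram decomposes as a connected sum $\mathcal{B} \sharp \mathcal{H}$ with $\mathcal{B}$ a genus-zero bridge diagram (as spelled out in the proof of Lemma \ref{lem:alpha-functor}), and hypothesis (EH-2) for such a connected sum follows from the Hendricks--Lipshitz--Sarkar results for genus-zero bridge diagrams. The reason this route is safer than yours: the cancellation $\sum_{y \in F_1} n_y(D) - \sum_{y \in F_2} n_y(D) = 0$ in the proof of Lemma \ref{eqvtrans} is clear when all branch points lie in a single region of $\mathcal{H}_{pt}(\mathcal{E})$ (so all the local multiplicities coincide), which holds when $\mathcal{P}$ is simple; for a general bridge diagram the arcs may cross $\alpha$- or $\beta$-curves, so the branch points can sit in regions of differing multiplicity and the cancellation needs a real argument. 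You should either (a) cite the connected-sum/genus-zero argument for the leftmost arrow, which is what the paper intends when it writes ``since $\mathcal{P}(\mathcal{E}_n)$ is simple,'' or (b) commit to the claim that Lemma \ref{eqvtrans} holds for arbitrary knot bridge diagrams and simply drop the ladder. Finally, a minor point: the hypothesis that $\mathcal{E}$ and $\mathcal{E}'$ each have at least one A-arc and one B-arc is not really about ``the special stabilizations being applicable''; it rules out the degenerate case of a type-II destabilization landing on a diagram with no arcs at all, which is not a valid bridge diagram.
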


\begin{proof}
We only have to consider the case when $\mathcal{E}^{\prime}$ is
obtained from $\mathcal{E}$ by a stabilization of type II. In this
case, recall that we had the following diagram. 
\[
\xymatrix{\widehat{HF}_{\mathbb{Z}_{2}}(\mathcal{E}_{n}^{st})\ar[r]^{\sim}\ar[d]^{\sim} & \widehat{HF}_{\mathbb{Z}_{2}}(\mathcal{E}_{n-1}^{st})\ar[r]^{\sim}\ar[d] & \cdots\ar[r]^{\sim} & \widehat{HF}_{\mathbb{Z}_{2}}(\mathcal{E}_{1}^{st})\ar[r]^{\sim}\ar[d] & \widehat{HF}_{\mathbb{Z}_{2}}(\mathcal{E}^{st})\ar[d]\\
\widehat{HF}_{\mathbb{Z}_{2}}(\mathcal{E}_{n})\ar[r]^{\sim} & \widehat{HF}_{\mathbb{Z}_{2}}(\mathcal{E}_{n-1})\ar[r]^{\sim} & \cdots\ar[r]^{\sim} & \widehat{HF}_{\mathbb{Z}_{2}}(\mathcal{E}_{1})\ar[r]^{\sim} & \widehat{HF}_{\mathbb{Z}_{2}}(\mathcal{E})
}
\]
 We know from Lemma \ref{comm-lemma-stabII} that this diagram is
commutative. Therefore all vertical arrows should be isomorphisms.
In particular, the map $\widehat{HF}_{\mathbb{Z}_{2}}(\mathcal{E}^{\prime})\rightarrow\widehat{HF}_{\mathbb{Z}_{2}}(\mathcal{E})$,
induced by a stabilization of type II, is an isomorphism.
\end{proof}
\begin{cor}
\label{cor:allgenus-isom}Let $\mathcal{E}$ be an extended bridge
diagrams representing a knot in $S^{3}$, which has at least two A-arcs.
Then $\widehat{HF}_{\mathbb{Z}_{2}}(\mathcal{E})\simeq\widehat{HF}_{\mathbb{Z}_{2}}(\Sigma(K))$.
\end{cor}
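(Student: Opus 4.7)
The plan is to connect $\mathcal{E}$ to a standard bridge diagram $\mathcal{E}_{0}$ of $K$ drawn on the sphere $S^{2}$ via a sequence of extended Heegaard moves, and then invoke Theorem \ref{thm:basicmove-isom} to turn each move into an isomorphism on equivariant Floer cohomology. Since $\mathcal{E}_{0}$ is precisely the type of diagram that Hendricks--Lipshitz--Sarkar \cite{eqv-Floer} use to define $\widehat{HF}_{\mathbb{Z}_{2}}(\Sigma(K))$, composing these isomorphisms yields the desired identification.

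First, I would invoke Lemma \ref{spstab-lemma} to produce a finite sequence of extended Heegaard moves---isotopies, handleslides of types I--IV, stabilizations of type I near the basepoint, diffeomorphisms, and special stabilizations---connecting $\mathcal{E}$ to $\mathcal{E}_{0}$. To apply Theorem \ref{thm:basicmove-isom} to each individual move in this sequence, it suffices to arrange that every intermediate extended bridge diagram retains at least one A-arc and at least one B-arc.

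The main (mild) obstacle is ensuring this arc condition throughout the sequence, since Lemma \ref{spstab-lemma} does not guarantee it a priori. The idea is to pad the sequence with a buffer of extra arcs. Concretely, I would first apply $N$ stabilizations of type II to $\mathcal{E}$ for a sufficiently large $N$, producing an extended bridge diagram $\widetilde{\mathcal{E}}$; because $\mathcal{E}$ has at least two A-arcs by hypothesis, every such stabilization has at least one A-arc and one B-arc on both sides, so Theorem \ref{thm:basicmove-isom} gives $\widehat{HF}_{\mathbb{Z}_{2}}(\widetilde{\mathcal{E}})\simeq\widehat{HF}_{\mathbb{Z}_{2}}(\mathcal{E})$. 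Among the remaining move types from Lemma \ref{spstab-lemma}, none of isotopies, handleslides of types I--IV, stabilizations of type I, diffeomorphisms, or special stabilizations can decrease arc counts, so the buffer only needs to survive the terminal destabilizations of type II that bring us down to $\mathcal{E}_{0}$; taking $N$ larger than the total number of such destabilizations ensures every diagram in the sequence from $\widetilde{\mathcal{E}}$ to $\mathcal{E}_{0}$ has at least one A-arc and one B-arc.

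Finally, composing the isomorphisms supplied by Theorem \ref{thm:basicmove-isom} along the adjusted sequence
\[
\mathcal{E} \longleftrightarrow \widetilde{\mathcal{E}} \longleftrightarrow \cdots \longleftrightarrow \mathcal{E}_{0}
\]
yields $\widehat{HF}_{\mathbb{Z}_{2}}(\mathcal{E})\simeq\widehat{HF}_{\mathbb{Z}_{2}}(\mathcal{E}_{0})=\widehat{HF}_{\mathbb{Z}_{2}}(\Sigma(K))$, which is the statement of the corollary. Note that this argument recovers Theorem \ref{thm:main1} from the introduction, while the further extension to general $\mathcal{E}$ (Theorem \ref{thm:main3}) is deferred to the combinatorial analysis of very nice diagrams in section 4.
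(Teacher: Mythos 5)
Your proposal follows essentially the same route as the paper's one-line proof: relate $\mathcal{E}$ to a standard sphere bridge diagram by extended Heegaard moves, then invoke Theorem~\ref{thm:basicmove-isom} to convert each move into an isomorphism of $\widehat{HF}_{\mathbb{Z}_{2}}$. The paper cites Theorem~\ref{basicmoves2} for the existence of the move sequence; you cite Lemma~\ref{spstab-lemma}, which is fine since it supplies an equivalent sequence (with stabilizations of type~II replaced by special stabilizations).

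The one place where the proposal is off-target is the padding step. By the definition of a based bridge diagram, the basepoint $z$ lies in $F$, and every point of $F$ is an endpoint of exactly one A-arc and exactly one B-arc; hence \emph{every} extended bridge diagram automatically has at least one A-arc and one B-arc, and the hypothesis of Theorem~\ref{thm:basicmove-isom} is never in danger of failing along the sequence. The $N$ preliminary type~II stabilizations are therefore unnecessary. Moreover, the claim that only ``terminal destabilizations of type~II'' threaten the arc count is inconsistent with your choice to use Lemma~\ref{spstab-lemma}: that lemma's sequence contains no type~II moves at all, only special (de)stabilizations, and those too can decrease the arc count without dropping below one. None of this undermines the argument---it just means the padding is dead weight, and the hypothesis ``at least two A-arcs'' is being used (as in the paper) to ensure the diagram is nondegenerate for the HLS construction and that Theorem~\ref{thm:basicmove-isom}'s proof applies, not to protect intermediate arc counts.
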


\begin{proof}
This follows directly from Theorem \ref{thm:basicmove-isom} and Proposition
\ref{basicmoves2}.
\end{proof}

\section{Equivariant Heegaard Floer cohomology for knots is a strong Heegaard
invariant}

Given a based knot $(K,z)$ in $S^{3}$, choose a knot Heegaard diagram
$H=(\Sigma,\boldsymbol{\alpha},\boldsymbol{\beta},z,w)$, so that
one of the two basepoints of $H$ (which is $z$ here) is the basepoint
of the given based knot. By distinguishing the two basepoints $z$
and $w$, we may say that $\mathcal{H}$ represents the based knot
$(K,z)$.
\begin{defn}
We say that a knot Heegaard diagram $H=(\Sigma,\boldsymbol{\alpha},\boldsymbol{\beta},z,w)$
represents a based knot $(K,p)$ in $S^{3}$ if $H$ repesents $K$
and $z=p$. Here, we call $w$ as the \textbf{second basepoint}.
\end{defn}

Given such a diagram $H$, we can construct an element $w\in H^{2}(\Sigma\backslash\{z,w\};\mathbb{F}_{2})$
by the following formula.
\begin{align*}
w([\alpha_{i}])=w([\beta_{j}])=0 & \text{ for each }\alpha_{i}\in\boldsymbol{\alpha}\text{ and }\beta\in\boldsymbol{\beta}\\
w([c])=1 & \text{ where }c\text{ is a small circle around }z
\end{align*}
 Since $(\Sigma,\boldsymbol{\alpha},\boldsymbol{\beta},z)$ represents
$S^{3}$, the homology classes $[\alpha_{i}]$ and $[\beta_{j}]$
span $H_{2}(\Sigma\backslash\{z,w\};\mathbb{F}_{2})$, so the above
formula defines $w$ uniquely. The element $w$ defines a map $\pi_{1}(\Sigma\backslash\{z-w\})\rightarrow\mathbb{Z}_{2}$,
thus induces a branched double covering $\tilde{\Sigma}\xrightarrow{f}\Sigma$,
branched along $\{z,w\}$. Denote the sets of inverse images of alpha-curves
and beta-curves by $\tilde{\boldsymbol{\alpha}}$ and $\tilde{\boldsymbol{\beta}}$.
Also, add a small A-arc $a$ and a B-arc $b$ at $w$, as drawn in
Figure \ref{fig03}. Then the Heegaard diagram $\tilde{H}=((\tilde{\Sigma},\tilde{\boldsymbol{\alpha}},\tilde{\boldsymbol{\beta}}),(\{z,w\},\{a\},\{b\},z))$
is a Heegaard diagram of $\Sigma(K)$ with a $\mathbb{Z}_{2}$-action.

Since $H$ can also be regarded as a 1-bridge knot diagram of $K$
with a hidden pair of bridges $a^{\prime}$ and $b^{\prime}$ connected
to $z$, the pair $\mathcal{E}_{H}=((\tilde{\Sigma},\tilde{\boldsymbol{\alpha}},\tilde{\boldsymbol{\beta}}),(\{z,w\},\{a,a^{\prime}\},\{b,b^{\prime}\},z))$
is an extended bridge diagram representing $K$. By the definition
of $\widehat{HF}_{\mathbb{Z}_{2}}(\mathcal{E}_{H})$, we have 
\[
\widehat{HF}_{\mathbb{Z}_{2}}(\mathcal{E}_{H})\simeq\widehat{HF}_{\mathbb{Z}_{2}}(\mathcal{H}_{d}(\tilde{H})).
\]
 Together with Corollary \ref{cor:allgenus-isom}, this gives a way
to compute the equivariant Heegaard Floer cohomology of a knot from
its knot Heegaard Floer cohomology.

\begin{figure}
\resizebox{.4\textwidth}{!}{\includegraphics{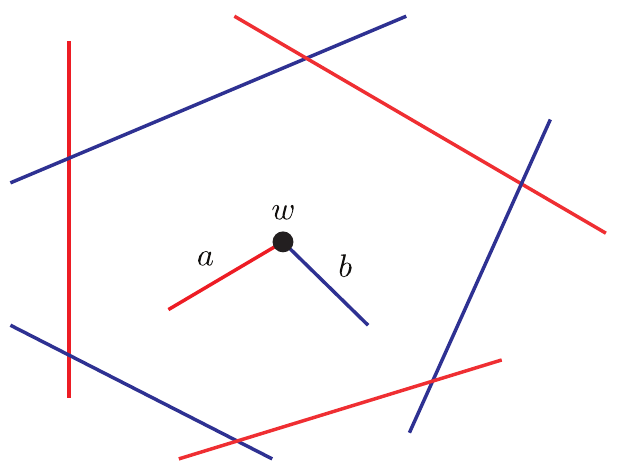}} \caption{\label{fig03}Attaching a small A-arc and a B-arc at the point $w$.}
\end{figure}
\begin{lem}
Let $H$ be a knot Heegaard diagram of a based knot $(K,z)$ in $S^{3}$.
Then $\widehat{HF}_{\mathbb{Z}_{2}}(\Sigma(K),z)\simeq\widehat{HF}_{\mathbb{Z}_{2}}(\tilde{H})$.
\end{lem}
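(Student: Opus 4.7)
The plan is to read off the lemma as an immediate consequence of the construction given in the paragraph just above the statement, combined with Corollary~\ref{cor:allgenus-isom}. That construction associates to the knot Heegaard diagram $H$ the extended bridge diagram
\[
\mathcal{E}_{H}=((\Sigma,\boldsymbol{\alpha},\boldsymbol{\beta}),(\{z,w\},\{a,a'\},\{b,b'\},z))
\]
on $\Sigma$, which represents the based knot $(K,z)\subset S^{3}$; its $A$-arcs are the small arc $a$ attached at $w$ together with the ``hidden'' bridge arc $a'$ attached at $z$, and analogously for its $B$-arcs. The first step is to identify $\tilde{H}$ with the branched double cover $\mathcal{H}_{d}(\mathcal{E}_{H})$. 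This is essentially by inspection: the arcs $a,b$ at $w$ lift to closed curves in $\tilde{\Sigma}$ which augment the lifted alpha- and beta-systems, while the arcs $a',b'$ at the basepoint $z$ contribute nothing to the lifted curve systems, in accordance with the definition of $\mathcal{H}_{d}(\cdot)$.

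Once this identification is in place, the definition of equivariant Heegaard Floer cohomology for extended bridge diagrams gives
\[
\widehat{HF}_{\mathbb{Z}_{2}}(\mathcal{E}_{H})=\widehat{HF}_{\mathbb{Z}_{2}}(\mathcal{H}_{d}(\mathcal{E}_{H}))\simeq\widehat{HF}_{\mathbb{Z}_{2}}(\tilde{H}).
\]
On the other hand, $\mathcal{E}_{H}$ has two $A$-arcs by construction (the arcs $a$ and $a'$), so Corollary~\ref{cor:allgenus-isom} applies to $\mathcal{E}_{H}$ and gives
\[
\widehat{HF}_{\mathbb{Z}_{2}}(\mathcal{E}_{H})\simeq\widehat{HF}_{\mathbb{Z}_{2}}(\Sigma(K)).
\]
Composing the two displayed isomorphisms yields the conclusion of the lemma.

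The only technical point worth flagging is weak admissibility: after a small isotopy of the curves of $H$ one may arrange that $\mathcal{H}_{pt}(\mathcal{E}_{H})=(\Sigma,\boldsymbol{\alpha},\boldsymbol{\beta},z)$ is weakly admissible, and Proposition~\ref{weakadm} then yields weak admissibility of $\mathcal{H}_{d}(\mathcal{E}_{H})$, so that every equivariant Floer complex appearing in the chain of isomorphisms is well defined. I do not anticipate any substantive obstacle; the content of the lemma is really the observation that the construction of $\tilde{H}$ from $H$ is, by design, the branched double cover of a specific two-$A$-arc extended bridge diagram for $K$, so that the real work has already been carried out in the proof of Corollary~\ref{cor:allgenus-isom}.
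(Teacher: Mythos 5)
Your proof is correct and follows essentially the same route as the paper's one-line argument: both identify $\tilde{H}$ with the branched double cover of the extended bridge diagram $\mathcal{E}_{H}$ (so that $\widehat{HF}_{\mathbb{Z}_{2}}(\mathcal{E}_{H})\simeq\widehat{HF}_{\mathbb{Z}_{2}}(\tilde{H})$ by definition), and then invoke Corollary~\ref{cor:allgenus-isom} using the fact that $\mathcal{E}_{H}$ has two $A$-arcs. Your added remark about arranging weak admissibility by a small isotopy and invoking Proposition~\ref{weakadm} is a reasonable flag of a hypothesis the paper leaves implicit throughout this section.
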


\begin{proof}
By Corollary \ref{cor:allgenus-isom}, we have $\widehat{HF}_{\mathbb{Z}_{2}}(\Sigma(K),z)\simeq\widehat{HF}_{\mathbb{Z}_{2}}(\mathcal{E}_{H})\simeq\widehat{HF}_{\mathbb{Z}_{2}}(\tilde{H})$.
\end{proof}
Recall that any two knot Heegaard diagrams of a given based knot are
related by a sequence of Heegaard moves, namely, isotopies, handleslides,
(de)stabilizations, and diffeomorphisms. We claim that these basic
moves induce isomorphisms of $\widehat{HF}_{\mathbb{Z}_{2}}$, thereby
proving that the equivariant Heegaard Floer cohomology is a weak Heegaard
invariant.
\begin{thm}
\label{thm:basicmoves-knotHeegaard}Let $H,H^{\prime}$ be two knot
Heegaard diagrams of a based knot $(K,z)$, related by a Heegaard
move. Then there exists an induced isomorphism 
\[
\hat{f}_{\mathcal{H}\rightarrow\mathcal{H}^{\prime}}^{basic}\,:\,\widehat{HF}_{\mathbb{Z}_{2}}(\tilde{H})\rightarrow\widehat{HF}_{\mathbb{Z}_{2}}(\tilde{H}^{\prime}).
\]
\end{thm}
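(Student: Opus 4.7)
The plan is to reduce the theorem to Theorem~\ref{thm:basicmove-isom} by lifting each basic Heegaard move $H\to H'$ to a sequence of extended Heegaard moves on the associated extended bridge diagrams $\mathcal{E}_H$ and $\mathcal{E}_{H'}$. Since both of these extended bridge diagrams carry two A-arcs $\{a,a'\}$ and two B-arcs $\{b,b'\}$, the arc-count hypothesis of Theorem~\ref{thm:basicmove-isom} is satisfied throughout the lift provided we never modify the small arcs at $z$ and $w$. Combined with the identifications $\widehat{HF}_{\mathbb{Z}_{2}}(\tilde{H})\simeq\widehat{HF}_{\mathbb{Z}_{2}}(\mathcal{E}_H)$ and $\widehat{HF}_{\mathbb{Z}_{2}}(\tilde{H}')\simeq\widehat{HF}_{\mathbb{Z}_{2}}(\mathcal{E}_{H'})$ established just above, it will then suffice to construct such a lift.

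I would deal with the four basic move types separately. Fix small disk neighborhoods $U_z$ and $U_w$ of the two basepoints, chosen so as to contain the small arcs $a,b,a',b'$ and to be disjoint from every $\alpha$- and $\beta$-curve. An isotopy of an $\alpha$- or $\beta$-curve of $H$ can be perturbed to be supported outside $U_z\cup U_w$, giving an isotopy move on $\mathcal{E}_H$. A handleslide in $H$ admits a handleslide region that can be taken disjoint from $U_z\cup U_w$, producing a handleslide of type~I on $\mathcal{E}_H$. A (de)stabilization of $H$ can be performed at a point of $\Sigma\setminus(U_z\cup U_w)$, yielding a (de)stabilization of type~I on $\mathcal{E}_H$. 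Finally, a diffeomorphism of $H$ lifts canonically to a diffeomorphism of $\mathcal{E}_H$ by transporting the small arcs along with it. In each case a further isotopy move, if needed, brings the resulting extended bridge diagram to $\mathcal{E}_{H'}$.

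Applying Theorem~\ref{thm:basicmove-isom} to every step of the resulting sequence produces an isomorphism $\widehat{HF}_{\mathbb{Z}_{2}}(\mathcal{E}_H)\to\widehat{HF}_{\mathbb{Z}_{2}}(\mathcal{E}_{H'})$, and conjugating by the identifications above gives the desired map $\hat{f}^{basic}_{\mathcal{H}\to\mathcal{H}'}$. The main bookkeeping point is that every intermediate extended bridge diagram remains weakly admissible, but this is handled by Lemma~\ref{weakadm-moves}, since the operations performed on the base are precisely standard Heegaard moves leaving the small A/B-arcs intact. Beyond this I do not foresee any serious obstacle: the fact that the full strength of Theorem~\ref{thm:basicmove-isom} applies (including to the type~II stabilizations implicit in the definition of $\mathcal{E}_H$ from $\tilde H$) removes the only genuinely delicate step.
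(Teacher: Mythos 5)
Your reduction to Theorem~\ref{thm:basicmove-isom} via the extended bridge diagrams $\mathcal{E}_H$ is sound and in fact ultimately rests on the same ingredients the paper uses directly (Proposition~3.24 of the Hendricks--Lipshitz--Sarkar paper for isotopies and handleslides, neck-stretching for stabilizations near the basepoint). The isotopy, handleslide, and stabilization cases are handled correctly, and the observation that $\mathcal{E}_H$ and $\mathcal{E}_{H'}$ both carry the extra arcs $a,a',b,b'$, so that the arc hypothesis of Theorem~\ref{thm:basicmove-isom} is met, is the right bookkeeping.

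However, the diffeomorphism case conceals a genuine subtlety that your argument glosses over. You write that a diffeomorphism of $H$ ``lifts canonically to a diffeomorphism of $\mathcal{E}_H$,'' which is true for the extended bridge diagram living on the base surface $\Sigma$ --- but the induced map on $\widehat{HF}_{\mathbb{Z}_2}$ is constructed by lifting that diffeomorphism to the branched double cover $\tilde{\Sigma}$, and there are \emph{two} such lifts, differing by the covering involution. Theorem~\ref{thm:basicmove-isom} only asserts that the map associated to a given extended Heegaard move is an isomorphism; it does not address whether the diffeomorphism map is independent of the choice of lift. The paper handles this point explicitly: the two lifts differ by the $\mathbb{Z}_2$-action, and because the equivariant cochain complex $\widehat{CF}^*_{\mathbb{Z}_2}(\tilde{H})$ is formed by taking $\mathrm{RHom}_{\mathbb{F}_2[\mathbb{Z}_2]}(-,\mathbb{F}_2)$ of the freed Floer complex, the induced $\mathbb{Z}_2$-action on $\widehat{HF}_{\mathbb{Z}_2}(\tilde{H})$ is trivial, so the two lifts yield the same map. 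Without that argument your proposal produces \emph{an} isomorphism for each choice of lift but does not establish that the assignment is unambiguous, which is what is needed for $\widehat{HF}_{\mathbb{Z}_2}$ to define a weak Heegaard invariant. Adding this short argument would close the gap; the remainder of your proposal is essentially the paper's proof repackaged through Theorem~\ref{thm:basicmove-isom}.
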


\begin{proof}
The isotopy case is obvious, and the handleslide case follows directly
from Proposition 3.24 of \cite{eqv-Floer}, which proves the invariance
of equivariant Floer cohomology under equivariant Hamiltonian isotopies
of Lagrangians. The stabilization case is just an application of the
standard neck-stretching argument, after assuming that it happens
near the basepoint.

It remains to show the case when the given Heegaard move is a diffeomorphism.
A diffeomorphism between (knot) Heegaard diagrams has two possible
lifts to diffeomorphisms between the branched double covers, and the
two lifts differ by the $\mathbb{Z}_{2}$-action. But the $\mathbb{Z}_{2}$-action
on $\tilde{H}$ induces the $\mathbb{F}_{2}[\mathbb{Z}_{2}]$-module
structure on the freed Floer complex, which is then dualized and tensored
with $\mathbb{F}_{2}$ with the trivial $\mathbb{Z}_{2}$-action when
calculating the cochain complex $\widehat{CF}_{\mathbb{Z}_{2}}^{\ast}(\tilde{H})$,
so that the $\mathbb{Z}_{2}$-action induced on $\widehat{HF}_{\mathbb{Z}_{2}}(\tilde{H})$
is trivial. Therefore the map between $\widehat{HF}_{\mathbb{Z}_{2}}(\tilde{H})$
induced by a diffeomorphism is well-defined.
\end{proof}
\begin{cor}
The $\mathbb{Z}_{2}$-equivariant Heegaard Floer cohomology of based
knots in $S^{3}$ is a weak Heegaard invariant in the sense of Juhasz;
see \cite{Juhasz-naturality} for the definition of Heegaard invariants.
\end{cor}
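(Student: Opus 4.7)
The plan is to verify the axioms of a weak Heegaard invariant in Juhász's sense by packaging what has already been proved. Recall that a weak Heegaard invariant is, roughly, a functor from the category whose objects are (knot) Heegaard diagrams and whose morphisms are finite sequences of Heegaard moves to a target category, satisfying that each elementary move induces an isomorphism (so that the isomorphism class of the output depends only on the underlying based knot). So the work consists of specifying the object assignment, specifying the morphism assignment, and checking that the assignment descends to isomorphism classes.

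First, given a knot Heegaard diagram $H$ of $(K,z)$, I would assign the $\mathbb{F}_{2}[\theta]$-module $\widehat{HF}_{\mathbb{Z}_{2}}(\tilde{H})$, where $\tilde{H}$ is the Heegaard diagram of $\Sigma(K)$ with $\mathbb{Z}_{2}$-action produced by the branched-double-cover construction after attaching the small A-arc $a$ and B-arc $b$ at the second basepoint $w$, as in Figure \ref{fig03}. For each basic Heegaard move $H \rightsquigarrow H^{\prime}$ (isotopy, handleslide, (de)stabilization, diffeomorphism) between two such diagrams of the same based knot, Theorem \ref{thm:basicmoves-knotHeegaard} directly supplies an isomorphism $\hat{f}_{H\to H^{\prime}}^{basic}\colon \widehat{HF}_{\mathbb{Z}_{2}}(\tilde{H})\to\widehat{HF}_{\mathbb{Z}_{2}}(\tilde{H}^{\prime})$. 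Composing these along a sequence of moves gives a morphism assigned to an arbitrary sequence connecting two diagrams, and this is well-defined as a single morphism because composition of isomorphisms is an isomorphism.

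It remains to observe that the assignment $H\mapsto \widehat{HF}_{\mathbb{Z}_{2}}(\tilde{H})$ descends to the isomorphism-class level required by the weak invariant definition. Since any two knot Heegaard diagrams representing the same based knot $(K,z)$ are connected by a finite sequence of isotopies, handleslides, (de)stabilizations, and diffeomorphisms (the standard Reidemeister-Singer-type theorem for pointed knot Heegaard diagrams), and each such basic move gives an isomorphism on $\widehat{HF}_{\mathbb{Z}_{2}}(\tilde{H})$ by Theorem \ref{thm:basicmoves-knotHeegaard}, the isomorphism class of $\widehat{HF}_{\mathbb{Z}_{2}}(\tilde{H})$ is an invariant of $(K,z)$. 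This is exactly what Juhász requires for weak Heegaard invariance.

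The main (and essentially only) obstacle has already been cleared by Theorem \ref{thm:basicmoves-knotHeegaard}; the rest of the argument is formal bookkeeping. I would only need to be careful about one subtlety: diffeomorphisms of $H$ admit two lifts to $\tilde{H}$ differing by the deck transformation, but, as noted in the proof of Theorem \ref{thm:basicmoves-knotHeegaard}, the $\mathbb{Z}_{2}$-action on $\widehat{HF}_{\mathbb{Z}_{2}}(\tilde{H})$ is trivial after the standard dualization-and-tensor construction, so the induced map is independent of the choice of lift. With this, the corollary follows.
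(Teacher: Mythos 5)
Your proposal is correct and takes essentially the same approach as the paper: both observe that Theorem \ref{thm:basicmoves-knotHeegaard}, which assigns an isomorphism of $\widehat{HF}_{\mathbb{Z}_{2}}$ to each basic Heegaard move, is exactly the content of Juh\'asz's definition of a weak Heegaard invariant. The paper states this in a single sentence; you simply unpack the definition (object assignment, morphism assignment, descent via Reidemeister--Singer, and the lift ambiguity for diffeomorphisms), all of which is already contained in or an immediate consequence of the cited theorem.
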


\begin{proof}
The statement of Theorem \ref{thm:basicmoves-knotHeegaard} is precisely
the definition of a weak Heegaard invariant.
\end{proof}
\begin{rem*}
Note that, due to the construction of stabilization maps of type I,
we do not know at this stage whether such maps are defined uniquely.
This problem will be resolved later.
\end{rem*}
Now we will show that the $\mathbb{Z}_{2}$-equivariant Heegaard Floer
cohomology is not only a weak invariant, but also a strong Heegaard
invariant. To recall the definition of strong Heegaard invariants,
we consider the graph $\mathcal{G}=\mathcal{G}(K,z)$, for a given
based knot $(K,z)$ in $S^{3}$, defined as follows.
\begin{align*}
V(\mathcal{G})= & \text{knot Heegaard isotopy diagrams representing }(K,z)\\
E(\mathcal{G})=E( & \mathcal{G}_{\alpha})\cup E(\mathcal{G}_{\beta})\cup E(\mathcal{G}_{\text{stab}})\cup E(\mathcal{G}_{\text{diff}})
\end{align*}
 Here, isotopy diagram means a diagram in which $\alpha$- and $\beta$-curves
are determined up to isotopy, and the graphs $\mathcal{G}_{\alpha},\mathcal{G}_{\beta},\mathcal{G}_{\text{stab}},\mathcal{G}_{\text{diff}}$
are graphs with the same vertex set as $\mathcal{G}$, defined in
the following way. $\mathcal{G}_{\alpha}$ is the graph such that
for each pair of elements $H_{1},H_{2}\in V(\mathcal{G})$, $\mathcal{G}_{\alpha}(H_{1},H_{2})$
consists of single element if $H_{1},H_{2}$ are $\alpha$-equivalent,
i.e. related by a sequence of $\alpha$-equivalences, and is empty
otherwise. $\mathcal{G}_{\beta}$ is the defined in the same way,
using $\beta$-equivalences. The subgraph $\mathcal{G}_{\text{stab}}$
is the graph of stabilizations, and $\mathcal{G}_{\text{diff}}$ is
the graph of diffeomorphisms. 

Also, there is a set $\mathcal{D}_{\mathcal{G}}$ of certain subgraphs
of $\mathcal{G}$, called distinguished rectangles, defined in Definition
2.30 of \cite{Juhasz-naturality}. Then we have the following definition.
\begin{defn}
\label{def:stronginv}(Definition 2.33 of \cite{Juhasz-naturality})
A weak Heegaard invariant $F$ (of knots in $S^{3}$) is a strong
Heegaard invariant if the following axioms hold.
\begin{itemize}
\item (Functoriality) The restriction of $F$ to $\mathcal{G}_{\alpha},\mathcal{G}_{\beta},\mathcal{G}_{\text{diff}}$
are functors, and for any stabilization $e$ and its reverse destabilization
$e^{\prime}$, we have $F(e^{\prime})=F(e)^{-1}$.
\item (Commutativity) For any distinguished rectangle $D\in\mathcal{D}_{\mathcal{G}}$,
the diagram $F(D)$ is commutative.
\item (Continuity) If $e$ is a loop-edge of $\mathcal{G}$ which is a diffeomorphism
isotopic to the identity, then $F(e)$ is the identity.
\item (Handleswap invariance) For every simple handleswap (see Figure 4
in \cite{Juhasz-naturality} for its definition) 
\[
\xymatrix{H_{1}\ar[rd]^{e}\\
H_{3}\ar[u]^{g} & H_{2}\ar[l]^{f}
}
\]
 in $\mathcal{G}$, we have $F(g)\circ F(f)\circ F(e)=\text{id}$.
\end{itemize}
\end{defn}

For the definition of distinguished rectangles and handleswaps, see
section 2.4 of \cite{Juhasz-naturality}.
\begin{lem}
\label{associativity-2}Let $(\tilde{\Sigma},\tilde{\boldsymbol{\alpha}},\tilde{\boldsymbol{\beta}},\tilde{\boldsymbol{\gamma}},\tilde{\boldsymbol{\delta}},z)$
be an involutive Heegaard 6-tuple with base $(\Sigma,\boldsymbol{\alpha},\boldsymbol{\beta},\boldsymbol{\gamma},\boldsymbol{\delta},z)$,
and $\theta_{\alpha,\beta},\theta_{\gamma,\delta}$ be $\mathbb{Z}_{2}$-invariant
cycles in $\widehat{CF}(\tilde{\Sigma},\tilde{\boldsymbol{\alpha}},\tilde{\boldsymbol{\beta}},z),\widehat{CF}(\tilde{\Sigma},\tilde{\boldsymbol{\gamma}},\tilde{\boldsymbol{\delta}},z)$,
respectively. Then, for any $x_{\beta,\gamma}\in H_{\ast}(\widetilde{CF}_{\mathbb{Z}_{2}}(\tilde{\Sigma},\tilde{\boldsymbol{\beta}},\tilde{\boldsymbol{\gamma}})\otimes_{\mathbb{F}_{2}[\mathbb{Z}_{2}]}\mathbb{F}_{2})$,
we have 
\[
\hat{f}_{\alpha,\gamma,\delta}(\hat{f}_{\alpha,\beta,\gamma}(\theta_{\alpha,\beta}\otimes x_{\beta,\gamma})\otimes\theta_{\gamma,\delta})=\hat{f}_{\alpha,\beta,\delta}(\theta_{\alpha,\beta}\otimes f_{\beta,\gamma,\delta}(x_{\beta,\gamma}\otimes\theta_{\gamma,\delta})),
\]
 where $\hat{f}$ denotes the equivariant triangle maps, defined in
\cite{eqv-Floer}, and $f$ denotes the ordinary triangle map, with
repsect to their subindices.
\end{lem}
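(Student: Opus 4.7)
The plan is to adapt the proof of Lemma~\ref{associativity-1} with only one structural change: the two $\mathbb{Z}_{2}$-invariant cycles now sit at opposite (non-adjacent) corners of a square rather than at adjacent ones. Concretely, I will construct an equivariant square map
\[
\hat{s}_{\alpha,\beta,\gamma,\delta}(\theta_{\alpha,\beta}\otimes -\otimes\theta_{\gamma,\delta})\,:\,\widetilde{CF}_{\mathbb{Z}_{2}}(\tilde{\Sigma},\tilde{\boldsymbol{\beta}},\tilde{\boldsymbol{\gamma}},z)\rightarrow\widetilde{CF}_{\mathbb{Z}_{2}}(\tilde{\Sigma},\tilde{\boldsymbol{\alpha}},\tilde{\boldsymbol{\delta}},z),
\]
by counting parameterized holomorphic rectangles whose $(\alpha,\beta)$- and $(\gamma,\delta)$-corners are decorated with $\theta_{\alpha,\beta}$ and $\theta_{\gamma,\delta}$, in the spirit of the equivariant triangle map construction in Lemma~3.25 of \cite{eqv-Floer}. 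Weak admissibility of the underlying nearly involutive quadruple diagram is automatic by Proposition~\ref{weakadm3}, and the $\mathbb{Z}_{2}$-invariance of both fixed corners allows the $E\mathbb{Z}_{2}$-parameterized almost complex structures used for the equivariant triangle maps to carry over to this setting.

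The heart of the argument is a boundary analysis of the compactified moduli space of such parameterized squares. By $\mathbb{Z}_{2}$-equivariant Gromov compactness, the codimension-one boundary consists of degenerations along one of the two diagonals. Degeneration along the $(\alpha,\gamma)$-diagonal glues a holomorphic triangle in $(\tilde{\boldsymbol{\alpha}},\tilde{\boldsymbol{\beta}},\tilde{\boldsymbol{\gamma}})$ carrying $\theta_{\alpha,\beta}$ to one in $(\tilde{\boldsymbol{\alpha}},\tilde{\boldsymbol{\gamma}},\tilde{\boldsymbol{\delta}})$ carrying $\theta_{\gamma,\delta}$, and the $E\mathbb{Z}_{2}$-parameter localizes on each side to recover the equivariant triangle maps, giving $\hat{f}_{\alpha,\gamma,\delta}(\hat{f}_{\alpha,\beta,\gamma}(\theta_{\alpha,\beta}\otimes x_{\beta,\gamma})\otimes\theta_{\gamma,\delta})$. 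Degeneration along the $(\beta,\delta)$-diagonal instead produces a triangle in $(\tilde{\boldsymbol{\beta}},\tilde{\boldsymbol{\gamma}},\tilde{\boldsymbol{\delta}})$ whose equivariant parameterization collapses (since no equivariant input survives on that side of the split), so that side contributes the ordinary count $f_{\beta,\gamma,\delta}(x_{\beta,\gamma}\otimes\theta_{\gamma,\delta})$, which is then fed into the equivariant triangle map $\hat{f}_{\alpha,\beta,\delta}(\theta_{\alpha,\beta}\otimes -)$ coming from the remaining triangle. Summing both contributions, for a cycle representative $\mathbf{x}_{\beta,\gamma}$ of $x_{\beta,\gamma}$, the Stokes-type identity
\[
\hat{f}_{\alpha,\gamma,\delta}(\hat{f}_{\alpha,\beta,\gamma}(\theta_{\alpha,\beta}\otimes\mathbf{x}_{\beta,\gamma})\otimes\theta_{\gamma,\delta})+\hat{f}_{\alpha,\beta,\delta}(\theta_{\alpha,\beta}\otimes f_{\beta,\gamma,\delta}(\mathbf{x}_{\beta,\gamma}\otimes\theta_{\gamma,\delta}))=d\hat{s}(\theta_{\alpha,\beta}\otimes\mathbf{x}_{\beta,\gamma}\otimes\theta_{\gamma,\delta})+\hat{s}(\theta_{\alpha,\beta}\otimes d\mathbf{x}_{\beta,\gamma}\otimes\theta_{\gamma,\delta})
\]
holds mod~$2$, which vanishes after passing to homology. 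Note that, unlike Lemma~\ref{associativity-1}, no positivity-of-Maslov-index hypothesis on triangles involving both invariant cycles is needed, because $\theta_{\alpha,\beta}$ and $\theta_{\gamma,\delta}$ never occur as two corners of a common triangle.

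The main obstacle I anticipate is verifying the asymmetric way in which the $E\mathbb{Z}_{2}$-parameter distributes across the two broken configurations: in the $(\alpha,\gamma)$-degeneration the parameter must split coherently into the equivariant perturbation data of the two triangle maps on each side, while in the $(\beta,\delta)$-degeneration the parameter must be shown to concentrate entirely on the triangle containing $\theta_{\alpha,\beta}$, leaving the other triangle parameterized by ordinary data. This is an equivariant neck-stretching argument formally analogous to the one used in Lemma~3.25 of \cite{eqv-Floer}, and once set up correctly the remainder of the proof is structurally identical to that of Lemma~\ref{associativity-1}.
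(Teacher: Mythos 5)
Your proposal is correct and follows essentially the same route as the paper: the paper's proof of Lemma \ref{associativity-2} is a one-line appeal to the argument of Lemma \ref{associativity-1} (equivariant square map, Stokes identity, pass to homology), noting only that the positivity-of-Maslov hypothesis may be dropped since the two invariant cycles now sit at opposite corners. You reproduce exactly this structure, and your observation that $\theta_{\alpha,\beta}$ and $\theta_{\gamma,\delta}$ can never share a triangle is precisely the paper's reason for omitting the extra hypothesis.
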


\begin{proof}
The proof is basically the same as in Lemma \ref{associativity-1},
except that we do not need additional condition on triangles.
\end{proof}
\begin{lem}
\label{lem:alpha-beta-comm}Consider a following commutative diagram
$\mathcal{D}$ of extended bridge diagram of a based knot in $S^{3}$.
\[
\xymatrix{\mathcal{E}_{1}\ar[r]^{e}\ar[d]_{f} & \mathcal{E}_{2}\ar[d]^{f^{\prime}}\\
\mathcal{E}_{3}\ar[r]_{e^{\prime}} & \mathcal{E}_{4}
}
\]
 Suppose that $e,e^{\prime}$ are $\alpha$- or A-equivalences and
$f,f^{\prime}$ are $\beta$- or B-equivalences. Then $\widehat{HF}_{\mathbb{Z}_{2}}(\mathcal{D})$
commutes, i.e. 
\[
\widehat{HF}_{\mathbb{Z}_{2}}(e)\circ\widehat{HF}_{\mathbb{Z}_{2}}(f^{\prime})=\widehat{HF}_{\mathbb{Z}_{2}}(f)\circ\widehat{HF}_{\mathbb{Z}_{2}}(e^{\prime}).
\]
\end{lem}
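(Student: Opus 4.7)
The plan is to reduce the commutativity of the square to an instance of the associativity lemma (Lemma \ref{associativity-2}) applied to a single involutive Heegaard 6-tuple built in the branched double cover.

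First I would put each edge in the form of an equivariant triangle map. After replacing isotopy edges by small Hamiltonian perturbations (invoking the chain-homotopy invariance of Lemma~6.2 of \cite{Kang}, as in the proof of Lemma \ref{comm-lemma-stabII}), every edge becomes a handleslide. Because an $\alpha$- or A-equivalence modifies only the $\tilde{\boldsymbol{\alpha}}$-curves in $\mathcal{H}_d$, the edges $e,e'$ correspond to a single handleslide $\tilde{\boldsymbol{\alpha}}\to\tilde{\boldsymbol{\alpha}}'$ in the branched double cover; likewise $f,f'$ correspond to $\tilde{\boldsymbol{\beta}}\to\tilde{\boldsymbol{\beta}}'$. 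Let $\theta_{\tilde{\boldsymbol{\alpha}}',\tilde{\boldsymbol{\alpha}}}$ and $\theta_{\tilde{\boldsymbol{\beta}},\tilde{\boldsymbol{\beta}}'}$ be the top-degree cycles implementing these handleslides; they are $\mathbb{Z}_{2}$-invariant because the intersection pattern descending from $\Sigma$ is $\mathbb{Z}_2$-symmetric in $\tilde{\Sigma}$.

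Next I would assemble the involutive Heegaard 6-tuple
\[
(\tilde{\Sigma},\,\tilde{\boldsymbol{\alpha}},\,\tilde{\boldsymbol{\alpha}}',\,\tilde{\boldsymbol{\beta}},\,\tilde{\boldsymbol{\beta}}',\,z),
\]
whose base is assembled from the four extended bridge diagrams in $\mathcal{D}$. Weak admissibility of the base is our standing assumption, and Proposition \ref{weakadm3} then guarantees weak admissibility of the 6-tuple. In this setup the two routes around the square translate on cohomology to
\[
x \longmapsto \hat{f}_{\tilde{\boldsymbol{\alpha}}',\tilde{\boldsymbol{\beta}},\tilde{\boldsymbol{\beta}}'}\bigl(\hat{f}_{\tilde{\boldsymbol{\alpha}}',\tilde{\boldsymbol{\alpha}},\tilde{\boldsymbol{\beta}}}(\theta_{\tilde{\boldsymbol{\alpha}}',\tilde{\boldsymbol{\alpha}}} \otimes x) \otimes \theta_{\tilde{\boldsymbol{\beta}},\tilde{\boldsymbol{\beta}}'}\bigr)
\]
(top-right route, via $\mathcal{E}_1 \to \mathcal{E}_2 \to \mathcal{E}_4$) and
\[
x \longmapsto \hat{f}_{\tilde{\boldsymbol{\alpha}}',\tilde{\boldsymbol{\alpha}},\tilde{\boldsymbol{\beta}}'}\bigl(\theta_{\tilde{\boldsymbol{\alpha}}',\tilde{\boldsymbol{\alpha}}} \otimes f_{\tilde{\boldsymbol{\alpha}},\tilde{\boldsymbol{\beta}},\tilde{\boldsymbol{\beta}}'}(x \otimes \theta_{\tilde{\boldsymbol{\beta}},\tilde{\boldsymbol{\beta}}'})\bigr)
\]
(bottom-left route, via $\mathcal{E}_1 \to \mathcal{E}_3 \to \mathcal{E}_4$). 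After the substitution $(\alpha,\beta,\gamma,\delta)\to(\tilde{\boldsymbol{\alpha}}',\tilde{\boldsymbol{\alpha}},\tilde{\boldsymbol{\beta}},\tilde{\boldsymbol{\beta}}')$ these are precisely the two sides of Lemma \ref{associativity-2}, so the equality on $\widehat{HF}_{\mathbb{Z}_2}$ is immediate.

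The main technical hurdle I foresee is the uniform treatment of the various handleslide types: in particular, realizing A- and B-arc handleslides (types II--IV) as honest handleslides of $\tilde{\boldsymbol{\alpha}}$- or $\tilde{\boldsymbol{\beta}}$-curves in the 6-tuple, and making sure the top cycles $\theta$ implementing them can be chosen simultaneously $\mathbb{Z}_{2}$-invariant. Once this identification is in place the argument is purely formal, piggybacking on Lemma \ref{associativity-2}; and the isotopy sub-case is already covered by the perturbation trick of Lemma~6.2 of \cite{Kang}, which was used for the analogous commutation statement in Lemma \ref{comm-lemma-stabII}.
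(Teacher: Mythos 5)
Your proposal is correct and takes essentially the same route as the paper, whose entire proof is the single sentence that the lemma is a direct application of Lemma \ref{associativity-2}; you have simply spelled out the substitution $(\alpha,\beta,\gamma,\delta)\mapsto(\tilde{\boldsymbol{\alpha}}',\tilde{\boldsymbol{\alpha}},\tilde{\boldsymbol{\beta}},\tilde{\boldsymbol{\beta}}')$ and the reduction of each edge to an equivariant triangle map with $\mathbb{Z}_{2}$-invariant top class.
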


\begin{proof}
This is a direct application of Lemma \ref{associativity-2}.
\end{proof}
\begin{lem}
\label{lem:alpha-functor}The restriction of the weak Heegaard invariant
$\widehat{HF}_{\mathbb{Z}_{2}}$ to $\mathcal{G}_{\alpha}$ is a functor.
\end{lem}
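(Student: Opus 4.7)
The plan is to reduce functoriality of $\widehat{HF}_{\mathbb{Z}_{2}}$ on $\mathcal{G}_{\alpha}$ to associativity of equivariant triangle maps, following the standard Ozsvath--Szabo strategy for handleslide invariance. Since $\mathcal{G}_{\alpha}$ has at most one edge between any two vertices, the assertion is equivalent to showing that for any sequence of basic $\alpha$-moves $H_{0} \to H_{1} \to \cdots \to H_{n}$, the composition
\[
F_{H_{0} \to H_{n}} := \widehat{HF}_{\mathbb{Z}_{2}}(H_{n-1} \to H_{n}) \circ \cdots \circ \widehat{HF}_{\mathbb{Z}_{2}}(H_{0} \to H_{1})
\]
depends only on $H_{0}$ and $H_{n}$ as $\alpha$-isotopy diagrams, and that this identification is compatible with concatenation of such sequences.

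First, I would re-express each basic $\alpha$-move map from $H_{i} = (\Sigma, \boldsymbol{\alpha}_{i}, \boldsymbol{\beta}, z, w)$ to $H_{i+1} = (\Sigma, \boldsymbol{\alpha}_{i+1}, \boldsymbol{\beta}, z, w)$ as the equivariant triangle map $\hat{f}_{\boldsymbol{\alpha}_{i}, \boldsymbol{\alpha}_{i+1}, \boldsymbol{\beta}}(\Theta_{i,i+1} \otimes \cdot)$, where $\Theta_{i,i+1}$ is the canonical top-degree generator of $\widehat{HF}(\tilde{\Sigma}, \tilde{\boldsymbol{\alpha}}_{i}, \tilde{\boldsymbol{\alpha}}_{i+1}, z)$, which represents a connected sum of copies of $S^{1} \times S^{2}$. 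These generators are $\mathbb{Z}_{2}$-invariant since the covering involution is orientation-preserving and fixes the unique top generator in each summand. Then I would apply Lemma~\ref{associativity-2} iteratively to the involutive Heegaard multi-tuple $(\tilde{\Sigma}, \tilde{\boldsymbol{\alpha}}_{0}, \tilde{\boldsymbol{\alpha}}_{1}, \ldots, \tilde{\boldsymbol{\alpha}}_{n}, \tilde{\boldsymbol{\beta}}, z)$, rewriting $F_{H_{0} \to H_{n}}$ as the single equivariant triangle map $\hat{f}_{\boldsymbol{\alpha}_{0}, \boldsymbol{\alpha}_{n}, \boldsymbol{\beta}}(\Theta_{0,n} \otimes \cdot)$, where $\Theta_{0,n}$ is the iterated non-equivariant polygon product of the $\Theta_{i,i+1}$'s. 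A standard dimension-counting argument in ordinary Heegaard Floer theory identifies this polygon product with the canonical top-degree generator of $\widehat{HF}(\tilde{\Sigma}, \tilde{\boldsymbol{\alpha}}_{0}, \tilde{\boldsymbol{\alpha}}_{n}, z)$, so $F_{H_{0} \to H_{n}}$ depends only on the endpoints, and composition in $\mathcal{G}_{\alpha}$ is preserved automatically.

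The main obstacle will be justifying the iterated application of Lemma~\ref{associativity-2}: at each stage the relevant intermediate polygon product must remain $\mathbb{Z}_{2}$-invariant so that the next application is legitimate. This follows from the fact that the polygon product of canonical top generators is again a canonical top generator, hence $\mathbb{Z}_{2}$-invariant, which is precisely the ingredient that makes functoriality work in ordinary Heegaard Floer theory. Weak admissibility of all intermediate multi-diagrams is guaranteed by Propositions~\ref{weakadm2} and~\ref{weakadm3}, so no additional admissibility issues arise, and the resulting functor structure on $\mathcal{G}_{\alpha}$ is compatible with the individual edge maps constructed in Theorem~\ref{thm:basicmoves-knotHeegaard}.
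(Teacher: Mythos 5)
Your proposal attempts the naive algebraic argument: express each $\alpha$-equivalence as an equivariant triangle map against a $\mathbb{Z}_{2}$-invariant top class, and then iterate associativity to collapse the composition into a single triangle map whose defining cycle is the polygon product of the top classes. This is the standard Ozsvath--Szabo route and is reasonable to try, but there are two gaps, one of which is serious.

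First, the citation is wrong. For two consecutive moves on the \emph{same} side (which is what a sequence of $\alpha$-equivalences produces), the relevant statement is Lemma~\ref{associativity-1}, not Lemma~\ref{associativity-2}. Lemma~\ref{associativity-2} is set up for one move on the $\alpha$-side and one on the $\beta$-side — note that the two fixed cycles $\theta_{\alpha,\beta}$ and $\theta_{\gamma,\delta}$ sit on opposite sides of the moving element $x_{\beta,\gamma}$ — and the paper uses it precisely for the $\alpha/\beta$-commutativity statement (Lemma~\ref{lem:alpha-beta-comm}). Once you replace it by Lemma~\ref{associativity-1}, you inherit its substantial hypothesis: every element of $\pi_{2}(\theta_{\beta,\gamma},\theta_{\gamma,\delta},\mathbf{x})$ must have Maslov index at least $0$ and must achieve transversality for generic $\mathbb{Z}_{2}$-equivariant families of almost complex structures. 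This is not free. The equivariant Floer complex of \cite{eqv-Floer} is built from a homotopy coherent diagram over $\mathscr{E}\mathbb{Z}_{2}$, and equivariant transversality (condition EH-2) can fail for domains of the branched double cover that are $\mathbb{Z}_{2}$-invariant but do not interact with the branching locus in the controlled way used in Lemma~\ref{eqvtrans}. Your proposal simply asserts that the hypothesis is handled, citing Propositions~\ref{weakadm2} and~\ref{weakadm3}; but those propositions are about weak admissibility of triple and quadruple diagrams and say nothing about equivariant transversality of triangle moduli spaces, let alone for the higher polygon diagrams that an iterated application would implicitly require.

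The paper's actual proof avoids this entirely by taking a geometric route that reduces to a situation where equivariant transversality is automatic. It interprets the sequence of $\alpha$-equivalences as a one-parameter family of Morse--Smale pairs, multiplies it by a second one-parameter family shrinking the knot into a small neighbourhood of the basepoint, and perturbs the resulting two-parameter family so that all codimension-two singularities arise from independent codimension-one events. The commuting-square diagrams around those singularities are then handled by Lemma~\ref{associativity-1} in situations where the triangles are constant or small (so the hypothesis is trivially verified), and the whole diagram is anchored by its bottom row, where all A- and B-arcs have been pushed into the region containing the basepoint. There the extended bridge diagram is a connected sum $\mathcal{B}\sharp\mathcal{H}$ with $\mathcal{B}$ a genus-zero bridge diagram, so condition (EH-2) holds, $\widehat{HF}_{\mathbb{Z}_{2}}(\mathcal{B}\sharp\mathcal{H})\simeq\widehat{HF}_{\mathbb{Z}_{2}}(\mathcal{B})$, and the paper can invoke the already-established genus-zero naturality of $\widehat{HF}_{\mathbb{Z}_{2}}$ from \cite{Kang}. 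Your proposal never reduces to the genus-zero case and therefore never secures equivariant transversality for the triangles that it needs; this is the essential missing ingredient.
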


\begin{proof}
In this proof, we will use knot bridge diagrams of nontrivial genera
and their $\widehat{HF}_{\mathbb{Z}_{2}}$. The idea is to use the
naturality of $\widehat{HF}_{\mathbb{Z}_{2}}$ for bridge diagrams
on a sphere, which is already established by the author in \cite{Kang}.

Let $\{f_{i}\,:\,H_{i}\rightarrow H_{i+1}\,\vert\,i=1,\cdots,n\}$
be a composable sequence of $\alpha$-equivalences of knot Heegaard
diagrams of a based knot $(K,z)$ in $S^{3}$. Each $f_{i}$ is either
an isotopy of $\alpha$-curves or an $\alpha$-handleslide, which
can be translated to a 1-parameter family of Morse-Smale pairs on
$S^{3}$, as in Proposition 6.35 of \cite{Juhasz-naturality}. Composing
them gives a 1-parameter family $(f_{t},v_{t})_{t\in[0,1]}$ of simple
pairs, where $(f_{t},v_{t})$ is Morse-Smale except for finitely many
$t$. Also, there exists a Heegaard surface $\Sigma\subset S^{3}$
which intersects transversely with $K$, contains the basepoint $z$,
and is shared by every $(f_{t},v_{t})$, since no stabilizations or
destabilizations occur.

Given such a family and a Heegaard surface $\Sigma$, we now choose
an isotopy $\{K_{t}\}_{t\in[0,1]}$ of the given based knot $(K,z)$
while fixing the basepoint $z$, such that $K_{0}=K$, $K_{1}$ is
an isotopic copy of $K$ which is contained in a very small neighborhood
of $z$ and intersects transversely with $\Sigma$. We can further
assume that the given isotopy $\{K_{t}\}$ is generic, so that $K_{t}$
is transverse to $\Sigma$ for all but finitely many $t$. Since codimension-1
singularities of $\{K_{t}\}$ correspond to simple tangencies, we
know that, when passing through those ``finitely many'' $t$, a
pair of intersection points in $K_{t}\cap\Sigma$ is either created
or annihilated.

Now we multiply the two 1-parameter families mentioned above to get
a smooth 2-parameter family $\mathcal{F}$;
\[
\mathcal{F}=\{(f_{t},v_{t},K_{s})\}_{(t,s)\in[0,1]\times[0,1]}.
\]
 We can then perturb $\mathcal{F}$ in the space 
\[
\mathcal{S}=\{\text{simple pairs in }S^{3}\}\times\{\text{knots in }S^{3},\text{ isotopic to }K\text{ and containing }z\}
\]
to another generic 2-parameter family $\mathcal{F}^{\prime}$ close
to $\mathcal{F}$ in $\mathcal{S}$, which would have finitely many
codimension-2 singularities and no higher singularity. But since both
$(f_{t},v_{t})$ and $\{K_{s}\}$ were generic, by taking the perturbation
to be sufficiently small, we may assume that every codimension-2 singularity
arising in $\mathcal{F}^{\prime}$ is a combination of a codimension-1
singularity in the space of simple pairs in $S^{3}$ and a codimension-1
singularity in the space of based knots in $S^{3}$. Furthermore,
by closeness, no stabilization/destabilization would occur as a codimension-1
singularity. Therefore we only have to consider the following two
possible types of codimension-2 singularities in $\mathcal{S}$.
\begin{enumerate}
\item An $\alpha$-handleslide occurs and $K_{s}$ is (simple) tangent to
$\Sigma$; $K_{s}$ is transverse to all stable/unstable manifolds
of $v_{t}$.
\item An $\alpha$-handleslide occurs and $K_{s}$ intersects transversely
with the unstable manifold of a order 1 singularity of $v_{t}$ or
the stable manifold of an order 2 singularity of $v_{t}$.
\item An $\alpha$-handleslide occurs and $K_{s}$ is tangent to the stable
manifold of a order 1 singularity of $v_{t}$ or the unstable manifold
of an order 2 singularity of $v_{t}$.
\item An $\alpha$-handleslide occurs, $K_{s}$ is transverse to $\Sigma$
and all stable/unstable manifolds of $v_{t}$, and the projection
of $K_{s}$ to $\Sigma$ via the flow of $v_{t}$ has a simple tangency.
\end{enumerate}
The monodromies of those singularities can be translated as shown
below. Note that, in the case 1, we have replaced stabilizations of
type II by special stabilizations, as a stabilization of type II can
be seen as the composition of a special stabilization followed by
a sequence of handleslides of type II, IV, and (A/$\alpha$)-isotopies. 
\begin{enumerate}
\item Commutation of $\alpha$-handleslides with stabilizations of type
II
\item Commutation of $\alpha$-handleslides with handleslides of type II
\item Commutation of $\alpha$-handleslides with compositions of stabilizations
of type II and handleslides of type III
\item Commutation of $\alpha$-handleslides with compositions of stabilizations
of type II and handleslides of type IV
\end{enumerate}
For a type 2 monodromy, Lemma \ref{associativity-1} can be used to
prove its commutativity, as in the proof of Lemma \ref{comm-lemma-stabII}.
The type 1 monodromy is clearly commutative from the construction
of maps associated to special stabilizations, which was given in the
proof of Lemma \ref{spstab-isom}. A similar argument can be applied
to monodromies of type 3 or 4.

It remains to show that $\alpha$-handleslide maps and isotopy maps
commute with isotopy maps, which correspond to the regions of $\mathcal{F}^{\prime}$
which does not contain codimension-2 singularity, which can actually
be proven by a direct application of the proof of Lemma 6.2 in \cite{Kang}.
Therefore, as in proof of naturality in \cite{Kang}, we have shown
that the diagram in Table \ref{table1} commutes after applying $\widehat{HF}_{\mathbb{Z}_{2}}$,
where $\mathcal{E}_{i}^{j}$ are extended bridge diagrams and arrows
are extended Heegaard moves. Note that leftmost column and the rightmost
column must be identical, i.e. $\mathcal{E}_{1}^{i}=\mathcal{E}_{n}^{i}$
for all $i=1,\cdots,k$, and the extended bridge diagrams $\mathcal{E}_{1}^{k},\cdots,\mathcal{E}_{n}^{k}$
in the bottom row satisfy the property that all A-arcs and B-arcs
are contained in the region, bounded by $\alpha$-curves and $\beta$-curves,
which contains the basepoint. Also, the handleslides among the arrows
in the bottom row can only be handleslides of type I. 

Since the extended bridge diagrams appearing in the top row are of
the form $\mathcal{B}\sharp\mathcal{H}$ for a genus 0 bridge diagram
$\mathcal{B}$ and a (weakly admissible) Heegaard diagram $\mathcal{H}$,
where the connected sum is taken near the basepoint $z$ of $\mathcal{H}$,
they clearly achieve hypothesis (EH-2) in \cite{eqv-Floer}, they
satisfy equivariant transversality, so that we have a natural isomorphism
\[
\widehat{HF}_{\mathbb{Z}_{2}}(\mathcal{B}\sharp\mathcal{H})\simeq\widehat{HF}_{\mathbb{Z}_{2}}(\mathcal{B})\otimes_{\mathbb{F}_{2}}\widehat{HF}(\mathcal{H})\simeq\widehat{HF}_{\mathbb{Z}_{2}}(\mathcal{B}),
\]
 since $\mathcal{H}$ represents $S^{3}$ and so $\widehat{HF}(\mathcal{H})\simeq\mathbb{F}_{2}$.
Thus, by the naturality of $\widehat{HF}_{\mathbb{Z}_{2}}$ for genus-0
bridge diagrams of based knots in $S^{3}$, we deduce that the composition
\[
\widehat{HF}_{\mathbb{Z}_{2}}(f_{1})\circ\cdots\circ\widehat{HF}_{\mathbb{Z}_{2}}(f_{n})\,:\,\widehat{HF}_{\mathbb{Z}_{2}}(H_{n})\rightarrow\widehat{HF}_{\mathbb{Z}_{2}}(H_{1})
\]
 does not depend on the choice of a sequence $f_{1,}\cdots,f_{n}$.
Therefore the restriction of $\widehat{HF}_{\mathbb{Z}_{2}}$ to $\mathcal{G}_{\alpha}$
is a functor.
\end{proof}
\begin{table}
\[
\xymatrix{H_{1}\ar[r]^{f_{1}}\ar[d] & \cdots\ar[r]^{f_{n}} & H_{n}\ar[d]\\
\mathcal{E}_{1}^{1}\ar[d] &  & \mathcal{E}_{n}^{1}\ar[d]\\
\vdots\ar[d] &  & \vdots\ar[d]\\
\mathcal{E}_{1}^{k}\ar[r] & \cdots\ar[r] & \mathcal{E}_{n}^{k}
}
\]
\caption{\label{table1}A diagram of extended bridge diagrams and extended
Heegaard moves, which becomes commutative after taking $\widehat{HF}_{\mathbb{Z}_{2}}$.
What we have actually shown is that the interior of this diagram can
be filled with smaller diagrams which represent monodromies around
the singularities of $\mathcal{F}^{\prime}$, and such diagrams are
commutative after taking $\widehat{HF}_{\mathbb{Z}_{2}}$.}
\end{table}
\begin{rem}
\label{rem:functor-ext}The proof of Lemma \ref{lem:alpha-functor}
can be directly extended to extended bridge diagrams representing
based knots in $S^{3}$ to show that any loop consisting of A-equivalences
and $\alpha$-equivalences induce the identity map in $\widehat{HF}_{\mathbb{Z}_{2}}$.
Furthermore, applying \ref{lem:alpha-beta-comm} gives us that any
loop of basic moves induce the identity map in $\widehat{HF}_{\mathbb{Z}_{2}}$.
\end{rem}

\begin{prop}
\label{prop-stab-unique}Let $\mathcal{E}_{1},\mathcal{E}_{2}$ be
extended bridge diagrams of a based knot $(K,z)$ in $S^{3}$, related
by a single stabilization (of type I or II), and let $\mathcal{E}_{1}\xrightarrow{s}\mathcal{E}_{2}$
be such a stabilization. Then the induced map 
\[
\widehat{HF}_{\mathbb{Z}_{2}}(s)\,:\,\widehat{HF}_{\mathbb{Z}_{2}}(\mathcal{E}_{2})\rightarrow\widehat{HF}(\mathcal{E}_{1})
\]
 is defined uniquely.
\end{prop}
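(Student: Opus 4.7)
The plan is to split the argument by the type of stabilization. For a stabilization $s$ of type II, the map $\widehat{HF}_{\mathbb{Z}_2}(s)$ is constructed directly via the equivariant Floer framework of \cite{eqv-Floer}, so the only potential source of ambiguity is the choice of equivariant almost complex structures used to achieve transversality. Since $K$ is a knot in $S^3$, its set $F$ of branch points in any bridge diagram decomposes alternately along $K$ as $F = F_1 \sqcup F_2$ with each arc in $A\cup B$ having exactly one endpoint in each piece, so the hypothesis of Lemma~\ref{eqvtrans} is automatically satisfied and the direct construction gives a well-defined map. The remaining work is thus for type I stabilizations that do not occur near the basepoint.

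For such a stabilization $s\colon \mathcal{E}_1 \to \mathcal{E}_2$, the map $\widehat{HF}_{\mathbb{Z}_2}(s)$ is built by choosing basic-move sequences
\[
\mathcal{E}_1 \xrightarrow{\phi} \mathcal{E}_1^{\mathrm{bp}}, \qquad \mathcal{E}_2^{\mathrm{bp}} \xrightarrow{\psi} \mathcal{E}_2,
\]
together with a near-basepoint stabilization $s_0\colon \mathcal{E}_1^{\mathrm{bp}}\to\mathcal{E}_2^{\mathrm{bp}}$, and setting $\widehat{HF}_{\mathbb{Z}_2}(s) := \widehat{HF}_{\mathbb{Z}_2}(\phi)\circ \widehat{HF}_{\mathbb{Z}_2}(s_0)\circ \widehat{HF}_{\mathbb{Z}_2}(\psi)$. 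Given two such constructions $(\phi_1, s_{0,1}, \psi_1)$ and $(\phi_2, s_{0,2}, \psi_2)$, I would first reduce to the case $s_{0,1}=s_{0,2}$ by fixing a canonical near-basepoint attaching region once and for all and using that any two stabilizations at nearby points close to $z$ can be connected by a small isotopy of the alpha- and beta-curves; such an isotopy is a basic move, and absorbing it into $\phi_i$ or $\psi_i$ does not change the composed map. With the near-basepoint stabilization fixed, the two constructions differ by pre- and post-composing with loops of basic moves based at $\mathcal{E}_1^{\mathrm{bp}}$ and $\mathcal{E}_2^{\mathrm{bp}}$, and Remark~\ref{rem:functor-ext} forces these loops to induce the identity on $\widehat{HF}_{\mathbb{Z}_2}$.

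The main obstacle is making the reduction to a common near-basepoint stabilization fully rigorous. One must verify that the isotopy of alpha- and beta-curves moving one small attaching handle to another can be arranged as a sequence of basic moves that avoid all other $\alpha$- and $\beta$-curves and all A- and B-arcs; this is possible because the small attaching regions are, by construction, disjoint from the rest of the diagram. One must also check that all intermediate diagrams remain weakly admissible, which is ensured by Proposition~\ref{weakadm} together with Lemma~\ref{weakadm-moves}. Once this reduction is carried out, the uniqueness of $\widehat{HF}_{\mathbb{Z}_2}(s)$ follows immediately from the loop-triviality of basic-move sequences recorded in Remark~\ref{rem:functor-ext}.
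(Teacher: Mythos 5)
Your argument matches the paper's approach: reduce the type I stabilization away from the basepoint to a near-basepoint stabilization composed with handleslide maps, and conclude uniqueness from the fact that loops of basic moves induce the identity on $\widehat{HF}_{\mathbb{Z}_2}$. Two observations are worth flagging. First, your citation of Remark~\ref{rem:functor-ext} for loop-triviality is the appropriate one; the paper's proof instead cites Lemma~\ref{lem-funt}, which is stated and proved \emph{after} Proposition~\ref{prop-stab-unique} and whose own proof invokes this very proposition, so the paper's reference as written is circular (the intended reference is presumably Lemma~\ref{lem:alpha-functor} or Remark~\ref{rem:functor-ext}). Second, your separate treatment of type II stabilizations --- invoking Lemma~\ref{eqvtrans} to check equivariant transversality, noting that the alternating partition $F = F_1 \sqcup F_2$ exists automatically for a knot, and concluding that the stabilization map is directly defined with no handleslide ambiguity --- is a cleaner split than the paper's brief claim that ``the proof of type I and II are similar,'' since type II stabilizations do not actually require post-composition with handleslide maps. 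Your reduction step to a fixed near-basepoint attaching handle is sound: the small isotopy moving one attaching handle to another can indeed be absorbed into the basic-move sequences, and weak admissibility along the way is handled by Proposition~\ref{weakadm} and Lemma~\ref{weakadm-moves}, as you note.
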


\begin{proof}
Since the proof of type I and II are similar, we will only work out
the case of type I explicitly. Let $\Sigma$ be the Heegaard surface
of $\mathcal{H}_{\text{pt}}(\mathcal{E}_{1})$ and choose a point
$w$ which is very close to the basepoint $z$. Let $\mathcal{E}$
be the extended bridge diagram given by performing a stabilization
of type I on $\mathcal{E}_{1}$ at $w$; write the induced map as
\[
\widehat{HF}_{\mathbb{Z}_{2}}(s_{w})\,:\,\widehat{HF}_{\mathbb{Z}_{2}}(\mathcal{E})\rightarrow\widehat{HF}_{\mathbb{Z}_{2}}(\mathcal{E}_{1}).
\]
 Recall that, to construct an induced map for a stabilization of type
I performed at another point on $\Sigma$, which is not near $z$,
we have to compose $\widehat{HF}_{\mathbb{Z}_{2}}(s_{w})$ with maps
induced by $\alpha$- and $\beta$-handleslides. To prove that $\widehat{HF}_{\mathbb{Z}_{2}}(s)$
is unique, we have to prove that for any sequence of $\alpha$-handleslides
which starts from and ends at $\mathcal{E}$, the induced automorphism
of $\widehat{HF}_{\mathbb{Z}_{2}}(\mathcal{E})$ is the identity.
But this is a direct consequence of Lemma \ref{lem-funt}. Therefore
$\widehat{HF}_{\mathbb{Z}_{2}}(s)$ is uniquely defined.
\end{proof}
With the fact that the maps induced by stabilizations are uniquely
defined, we can now finish the proof of functoriality condition.
\begin{lem}
\label{lem-funt}The weak Heegaard invariant $\widehat{HF}_{\mathbb{Z}_{2}}$
satisfies functoriality condition of Definition \ref{def:stronginv}.
\end{lem}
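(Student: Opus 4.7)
The plan is to verify the two clauses of the functoriality axiom in Definition \ref{def:stronginv}: first, that the restrictions of $\widehat{HF}_{\mathbb{Z}_2}$ to $\mathcal{G}_\alpha$, $\mathcal{G}_\beta$, and $\mathcal{G}_{\text{diff}}$ are functors, and second, that for every stabilization $e$ and its reverse destabilization $e'$ one has $\widehat{HF}_{\mathbb{Z}_2}(e') = \widehat{HF}_{\mathbb{Z}_2}(e)^{-1}$. Functoriality for $\mathcal{G}_\alpha$ is already handled by Lemma \ref{lem:alpha-functor}, and the $\mathcal{G}_\beta$ case follows by the same argument after swapping the roles of $\alpha$- and $\beta$-curves (and correspondingly A- and B-arcs) throughout, so I would not rework it in detail.

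For the diffeomorphism case, I would use the observation already made in the proof of Theorem \ref{thm:basicmoves-knotHeegaard} that the $\mathbb{Z}_2$-action on $\widehat{HF}_{\mathbb{Z}_2}(\tilde{H})$ is trivial. Concretely, a diffeomorphism $\phi$ between knot Heegaard diagrams admits two possible lifts to the branched double cover, differing by the deck transformation; the two lifts induce chain maps on the freed Floer complex that differ by the $\mathbb{Z}_2$-action, but this discrepancy disappears after dualizing and tensoring with $\mathbb{F}_2$ equipped with the trivial $\mathbb{Z}_2$-action. Thus the induced $\mathbb{F}_2[\theta]$-module map depends only on $\phi$ and not on the lift, and composition of diffeomorphisms lifts to composition of lifts, giving functoriality on $\mathcal{G}_{\text{diff}}$.

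For the stabilization/destabilization clause, let $e:H_1\to H_2$ be a stabilization and $e':H_2\to H_1$ its reverse. By Proposition \ref{prop-stab-unique} both $\widehat{HF}_{\mathbb{Z}_2}(e)$ and $\widehat{HF}_{\mathbb{Z}_2}(e')$ are uniquely defined, and by Theorem \ref{thm:basicmove-isom} each is an isomorphism. Using Proposition \ref{prop-stab-unique} again, I would reduce to the case in which the stabilization is performed near the basepoint: any two realizations of the same stabilization differ by sequences of handleslides that induce the identity on $\widehat{HF}_{\mathbb{Z}_2}$ by Remark \ref{rem:functor-ext}. For a stabilization near the basepoint, the induced map is defined through the standard neck-stretching argument appearing in the proof of Theorem \ref{thm:basicmoves-knotHeegaard}, and the corresponding destabilization map in the same region is, by construction, its geometric inverse, so the desired equality $\widehat{HF}_{\mathbb{Z}_2}(e')\circ \widehat{HF}_{\mathbb{Z}_2}(e) = \mathrm{id}$ holds on the nose.

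The main obstacle will be ensuring coherence in this last reduction: without Proposition \ref{prop-stab-unique} one could not transport the near-basepoint identity to a stabilization at an arbitrary point, since the maps used to conjugate between the two realizations would not be known to cancel. This is precisely where the combination of Remark \ref{rem:functor-ext} (loops of basic moves act as the identity) with the canonicity of the stabilization map is essential. Once both inputs are in hand, the two clauses combine to yield the functoriality axiom for $\widehat{HF}_{\mathbb{Z}_2}$.
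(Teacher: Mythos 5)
Your proposal is correct and follows essentially the same route as the paper: Lemma \ref{lem:alpha-functor} for $\mathcal{G}_\alpha$ (and the symmetric argument for $\mathcal{G}_\beta$), Proposition \ref{prop-stab-unique} together with Remark \ref{rem:functor-ext} to reduce (de)stabilizations to the near-basepoint case where the maps are geometric inverses, and the triviality of the induced $\mathbb{Z}_2$-action for $\mathcal{G}_{\text{diff}}$. The only cosmetic difference is that you spell out the diffeomorphism case, which the paper dismisses as obvious.
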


\begin{proof}
By Lemma \ref{lem:alpha-functor}, we know that the restriction of
$\widehat{HF}_{\mathbb{Z}_{2}}$ to $\mathcal{G}_{\alpha}$ is a functor.
So, by the same argument applied to $\beta$-curves instead of $\alpha$-curves,
we also see that the restriction of $\widehat{HF}_{\mathbb{Z}_{2}}$
to $\mathcal{G}_{\beta}$ is also a functor. If $e$ is a stabilization
and $e^{\prime}$ is its reverse, we can apply Proposition \ref{prop-stab-unique}
to reduce to the case when the (de)stabilization is performed near
the basepoint, in which case $\widehat{HF}_{\mathbb{Z}_{2}}(e)\circ\widehat{HF}_{\mathbb{Z}_{2}}(e^{\prime})=\text{id}$
holds by choosing a suitable family of almost complex structures.
The diffeomorphism part is obvious. 
\end{proof}
We now move on to a proof of continuity condition.
\begin{lem}
\label{lem-cont}The weak Heegaard invariant $\widehat{HF}_{\mathbb{Z}_{2}}$
satisfies continuity condition of Definition \ref{def:stronginv}.
\end{lem}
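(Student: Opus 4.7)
The plan is to lift the isotopy to the branched double cover and realize the induced map $\widehat{HF}_{\mathbb{Z}_{2}}(\phi)$ as the composition of maps coming from a loop of basic moves, then invoke Remark \ref{rem:functor-ext}.

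Let $\phi : H \to H$ be a self-diffeomorphism of the knot Heegaard diagram $H$ that is isotopic to the identity via an isotopy $\{\phi_{t}\}_{t \in [0,1]}$ with $\phi_{0} = \mathrm{id}$, $\phi_{1} = \phi$, and each $\phi_{t}$ fixing the two basepoints pointwise. Since $\{\phi_{t}\}$ fixes the branching locus of the branched double cover $\tilde{\Sigma} \to \Sigma$, it admits a unique $\mathbb{Z}_{2}$-equivariant lift $\{\tilde{\phi}_{t}\}_{t \in [0,1]}$ starting at the identity; let $\tilde{\phi} = \tilde{\phi}_{1}$. The induced map $\widehat{HF}_{\mathbb{Z}_{2}}(\phi)$ is computed using this equivariant lift $\tilde{\phi}$, following the construction discussed in the proof of Theorem \ref{thm:basicmoves-knotHeegaard}.

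Next, I would subdivide the interval $[0,1]$ finely enough that each incremental diffeomorphism $\tilde{\phi}_{t_{i+1}} \circ \tilde{\phi}_{t_{i}}^{-1}$ is $C^{1}$-close to the identity and can be realized as a composition of small equivariant isotopies of $\tilde{\boldsymbol{\alpha}}$ and $\tilde{\boldsymbol{\beta}}$; these translate to $\alpha$- and $\beta$-equivalences of the associated extended bridge diagram $\mathcal{E}_{H}$. Under such a decomposition, the chain map induced by $\tilde{\phi}$ becomes chain-homotopic to the composition of the corresponding continuation maps from $\tilde{H}$ to $\tilde{\phi}(\tilde{H})$. Composing with the reverse sequence of basic moves transporting $\tilde{\phi}(\tilde{H})$ back to $\tilde{H}$ produces a closed loop of basic moves at $\mathcal{E}_{H}$; by Remark \ref{rem:functor-ext}, this loop induces the identity on $\widehat{HF}_{\mathbb{Z}_{2}}$, so $\widehat{HF}_{\mathbb{Z}_{2}}(\phi) = \mathrm{id}$, as desired.

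The main obstacle is the chain-level comparison between the tautological map induced by $\tilde{\phi}$ on intersection points and the composition of continuation maps associated to the isotopy $\{\tilde{\phi}_{t}\}$. This will be handled via the standard Floer-theoretic argument that pulling back the continuation data along the isotopy produces the identity up to chain homotopy; the only new ingredient is ensuring the chain homotopy is $\mathbb{Z}_{2}$-equivariant, which follows from essentially the same argument used in the proof of Proposition 3.24 of \cite{eqv-Floer}, applied to our equivariant isotopy $\{\tilde{\phi}_{t}\}$.
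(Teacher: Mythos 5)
Your approach is genuinely different from the paper's. The paper reduces continuity to the already-established genus-zero naturality from \cite{Kang}: it first handles the case where all $A$- and $B$-arcs sit in a small disk $D$ near the basepoint and the $\alpha$- and $\beta$-curves sit outside $D$, using a neck-stretching argument to split
\[
\widehat{HF}_{\mathbb{Z}_{2}}(\mathcal{E})\simeq\widehat{HF}_{\mathbb{Z}_{2}}(\mathcal{B})\otimes_{\mathbb{F}_{2}}\widehat{HF}(\mathcal{H},\mathrm{pt})\simeq\widehat{HF}_{\mathbb{Z}_{2}}(\mathcal{B})
\]
with $\mathcal{B}$ a genus-$0$ bridge diagram, and observes that $\phi$ then acts through its action on $\mathcal{B}$, which is trivial by the genus-$0$ naturality. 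The general case is reduced to this one by a sequence of extended Heegaard moves avoiding type I stabilizations, using that diffeomorphism maps commute with all other moves. You instead propose a direct chain-level argument: lift the isotopy equivariantly, subdivide, and compare the tautological pushforward $\tilde{\phi}_{\ast}$ with a composition of continuation maps so that Remark \ref{rem:functor-ext} applies.

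The skeleton of your argument is sound, but there is a substantive gap that you flag but underestimate: the claim that the tautological (pushforward) isomorphism $\tilde{\phi}_{\ast}$ is $\mathbb{Z}_{2}$-equivariantly chain homotopic to the composition of continuation maps. For ordinary Heegaard Floer theory this is a known energy estimate for sufficiently small isotopies, but in the equivariant setting of \cite{eqv-Floer} one cannot just take a generic $\mathbb{Z}_{2}$-invariant almost complex structure, and the equivariant transversality problem (conditions (EH-1)/(EH-2), homotopy coherent diagrams over $\mathscr{E}\mathbb{Z}_{2}$) is precisely the technical difficulty that the rest of the paper works so hard to sidestep. Proposition 3.24 of \cite{eqv-Floer}, which you cite, gives invariance under equivariant Hamiltonian isotopies of Lagrangians, not a comparison between the pushforward by a diffeomorphism of the ambient symplectic manifold and a composition of continuation maps; bridging these requires an additional argument that is not in \cite{eqv-Floer} and is not supplied here. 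You also do not address weak admissibility of the intermediate diagrams $(\tilde{\phi}_{t_{i}}(\tilde{\boldsymbol{\alpha}}),\tilde{\phi}_{t_{i}}(\tilde{\boldsymbol{\beta}}))$, nor the decomposition of each small ambient diffeomorphism into a composition of $\alpha$-only (and $A$-only) followed by $\beta$-only (and $B$-only) isotopies in a way that keeps each intermediate diagram an honest branched double cover of an extended bridge diagram. The paper's route avoids all of these by delegating the analytical work to the genus-$0$ case, where naturality has already been established. If you want to pursue your direct route, the chain-level comparison between $\tilde{\phi}_{\ast}$ and the continuation maps is the step that needs a real proof, not a citation.
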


\begin{proof}
We will prove a slightly more general statement, that for any weakly
admissible extended bridge diagram $\mathcal{E}$ of a based knot
$(K,z)$ in $S^{3}$ and a self-diffeomorphism $\phi$ of $\mathcal{E}$
which is isotopic to the identity, the map 
\[
\widehat{HF}_{\mathbb{Z}_{2}}(\phi)\in\text{Aut}_{\mathbb{F}_{2}[\theta]}(\widehat{HF}_{\mathbb{Z}_{2}}(\Sigma(K),z))
\]
 induced by $\phi$ is the identity.

If $\mathcal{E}$ is a bridge diagram on a sphere, then by the genus-0
naturality of $\widehat{HF}_{\mathbb{Z}_{2}}$, we know that $\widehat{HF}_{\mathbb{Z}_{2}}(\phi)=\text{id}$.
Next, suppose that $\mathcal{E}$ satisfies the following property.
\begin{itemize}
\item There exists a small disk $D$ on the Heegaard surface $\Sigma$ of
$\mathcal{H}_{\text{pt}}(\mathcal{E})$, near its basepoint $z$,
so that all A- and B-arcs of $\mathcal{H}_{\text{pt}}(\mathcal{E})$
are contained in $D$ and all $\alpha$- and $\beta$-arcs are contained
in $\Sigma\backslash D$.
\end{itemize}
Then we may assume that $\phi$ fixes the curve $\partial D$, and
by taking $\partial D$ as the connected-sum neck, we can represent
$\mathcal{E}$ as a connected sum:
\[
\mathcal{E}=\mathcal{B}\sharp\mathcal{H},
\]
 where $\mathcal{B}$ is a bridge diagram of $(K,z)$ on a sphere
and $\mathcal{H}$ is a Heegaard diagram representing $S^{3}$. As
in the proof of \ref{lem:alpha-functor}, we have a decomposition
\[
\widehat{HF}_{\mathbb{Z}_{2}}(\mathcal{E})\simeq\widehat{HF}_{\mathbb{Z}_{2}}(\mathcal{B})\otimes_{\mathbb{F}_{2}}\widehat{HF}(\mathcal{H},\text{pt})\simeq\widehat{HF}_{\mathbb{Z}_{2}}(\mathcal{B})
\]
 by stretching the neck to infinite length. By assumption, the action
of $\phi$ on $\widehat{HF}_{\mathbb{Z}_{2}}(\mathcal{H})$ reduces
to $\widehat{HF}_{\mathbb{Z}_{2}}(\mathcal{B})$. But since $\mathcal{B}$
is a genus-0 diagram, we already know that $\phi$ acts trivially
on it. Hence $\widehat{HF}_{\mathbb{Z}_{2}}(\phi)=\text{id}$ in this
case.

Finally, we work out the general case. As in the proof of Lemma \ref{lem:alpha-functor},
we know that there exists a sequence of extended Heegaard moves, except
stabilizations of type I (whose ``induced map'' is not uniquely
defined yet in the general case, in particular, if it occurs in a
point not close to the basepoint), from $\mathcal{E}$ to another
extended bridge diagram $\mathcal{E}^{\prime}$ which satisfies the
above property. Now, by the definition of diffeomorphism map (it just
acts by diffeomorphism), we know that it commutes with maps induced
by all extended Heegaard moves except stabilizations of type I. Therefore
the problem reduces to the former case and we are done.
\end{proof}
Now we will prove that the commutativity condition also holds. For
that, we recall the definition of distinguished rectangles, defined
in Definition 2.30 of \cite{Juhasz-naturality}.
\begin{defn}
Let $H_{i}=(\Sigma_{i},[\boldsymbol{\alpha}_{i}],[\boldsymbol{\beta}_{i}])$
be isotopy diagrams for $i=1,\cdots,4$. A distinguished rectangle
in $\mathcal{G}$ is a subgraph 
\[
\xymatrix{H_{1}\ar[r]^{e}\ar[d]_{f} & H_{2}\ar[d]^{g}\\
H_{3}\ar[r]^{h} & H_{4}
}
\]
 of $\mathcal{G}$ that satisfies one of the following properties.
\begin{enumerate}
\item Both $e$ and $h$ are $\alpha$-equivalences, while both $f$ and
$g$ are $\beta$-equivalences.
\item Both $e$ and $h$ are $\alpha$- or $\beta$-equivalences, while
$f$ and $g$ or both stabilizations.
\item Both $e$ and $h$ are $\alpha$- or $\beta$-equivalences, while
$f$ and $g$ are both diffeomorphisms. In this case, we necessarily
have $\Sigma_{1}=\Sigma_{2}$ and $\Sigma_{3}=\Sigma_{4}$, and we
require in addition that the diffeomorphisms $\Sigma_{1}\xrightarrow{f}\Sigma_{3}$
and $\Sigma_{2}\xrightarrow{g}\Sigma_{4}$ are the same.
\item The maps $e,f,g,h$ are all stabilizations, such that there are disjoint
disks $D_{1},D_{2}\subset\Sigma_{1}$ and disjoint punctured tori
$T_{1},T_{2}\subset\Sigma_{4}$ satisfying $\Sigma_{1}\backslash(D_{1}\cup D_{2})=\Sigma_{4}\backslash(T_{1}\cup T_{2})$,
$\Sigma_{2}=(\Sigma_{1}\backslash D_{1})\cup T_{1}$, and $\Sigma_{3}=(\Sigma_{1}\backslash D_{2})\cup T_{2}$.
\item The maps $e,h$ are stabilizations, while $f,g$ are diffeomorphisms.
Furthermore, there are disks $D\subset\Sigma_{1}$ and $D^{\prime}\subset\Sigma_{4}$
and punctured tori $T\subset\Sigma_{2}$ and $T^{\prime}\subset\Sigma_{4}$
such that $\Sigma_{1}\backslash D=\Sigma_{2}\backslash T$, $\Sigma_{3}\backslash D^{\prime}=\Sigma_{4}\backslash T^{\prime}$,
and the diffeomorphisms $f,g$ satisfy $f(D)=D^{\prime}$, $g(T)=T^{\prime}$,
and $f|_{\Sigma_{1}\backslash D}=g|_{\Sigma_{2}\backslash T}$.
\end{enumerate}
\end{defn}

\begin{lem}
\label{lem-comm}The weak Heegaard invariant $\widehat{HF}_{\mathbb{Z}_{2}}$
satisfies commutativity condition of Definition \ref{def:stronginv}.
\end{lem}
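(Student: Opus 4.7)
The plan is to run through the five types of distinguished rectangles one by one and verify commutativity after applying $\widehat{HF}_{\mathbb{Z}_{2}}$. For each rectangle I would lift the four knot Heegaard diagrams to extended bridge diagrams by attaching a small A-arc and B-arc at the second basepoint $w$, as in the construction preceding this section, so that every basic move becomes an extended Heegaard move and the machinery of section 2 applies directly.

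The first type (horizontal $\alpha$-equivalences, vertical $\beta$-equivalences) is handled as follows. By Lemma \ref{lem:alpha-functor} and its evident $\beta$-analogue, both horizontal compositions and both vertical compositions are independent of the chosen factorizations, so it suffices to treat a single $\alpha$-move against a single $\beta$-move. In that setting the rectangle is exactly the diagram $\mathcal{D}$ of Lemma \ref{lem:alpha-beta-comm}, whose commutativity rests on Lemma \ref{associativity-2} applied to the involutive Heegaard 6-tuple assembled from the four curve bases. Types 3 and 5, in which the vertical edges are (compatible) diffeomorphisms, reduce to the tautological statement that the pushforward by a diffeomorphism commutes with an extended Heegaard move whose combinatorial data is transported by the diffeomorphism; in Type 5 I would additionally invoke Proposition \ref{prop-stab-unique} to pin down the stabilization map so that the identification $f(D)=D'$, $g(T)=T'$ of the rectangle produces commuting squares on the nose.

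For Type 4, the four edges are stabilizations in disjoint disks $D_1,D_2$. Using Proposition \ref{prop-stab-unique}, I would transport each stabilization to a small disk near the basepoint via a sequence of handleslides of type I supported away from the other stabilization locus. Once both stabilizations are localized near the basepoint in disjoint necks, the Floer complex splits as a tensor product $\widehat{CF}_{\mathbb{Z}_{2}}(\mathcal{E})\otimes \widehat{CF}(\mathcal{H}_{0,1})\otimes \widehat{CF}(\mathcal{H}_{0,2})$ after sufficient neck-stretching, and the two stabilization maps act on independent tensor factors, so the square commutes at the chain level.

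The main obstacle is Type 2, where the horizontal edges are curve equivalences and the vertical edges are stabilizations. Here the plan is to combine Proposition \ref{prop-stab-unique} with the neck-stretching decomposition used in the proof of Lemma \ref{lem:alpha-functor}: after relocating the stabilization into a small disk near the basepoint, I would write the stabilized extended bridge diagram as a connected sum $\mathcal{E}\sharp\mathcal{H}$ whose neck is disjoint from the support of the curve equivalence, obtain the natural splitting
\[
\widehat{HF}_{\mathbb{Z}_{2}}(\mathcal{E}\sharp\mathcal{H})\simeq\widehat{HF}_{\mathbb{Z}_{2}}(\mathcal{E})\otimes_{\mathbb{F}_{2}}\widehat{HF}(\mathcal{H})\simeq\widehat{HF}_{\mathbb{Z}_{2}}(\mathcal{E}),
\]
and observe that under this splitting the curve equivalence acts by its map on $\mathcal{E}$ tensored with the identity. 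The delicate point, and the place where the proof requires genuine work, is to verify that the system of handleslides of type I used to localize the stabilization can be chosen coherently with the given curve equivalence; this amounts to a two-parameter codimension-1 analysis in the product space of Morse-Smale pairs times stabilization positions, entirely parallel to the argument used to prove Lemma \ref{lem:alpha-functor}, and Remark \ref{rem:functor-ext} guarantees that any resulting loop of basic moves acts as the identity on $\widehat{HF}_{\mathbb{Z}_{2}}$, which closes the square.
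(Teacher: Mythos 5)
Your overall plan is sound and correctly handles Types 1, 3, 4, 5, but it diverges from the paper most sharply at Type 2, and it is precisely there that your argument leaves an unfilled gap which the paper's proof sidesteps.

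The paper handles all of Types 2--5 by a single, simpler mechanism: after reducing to the case where the stabilization occurs near the basepoint $z$ (which is legitimate by Proposition \ref{prop-stab-unique}), the stabilization map has the explicit chain-level form $\mathbf{x}\mapsto\mathbf{x}\cup p^{-1}(\{c\})$ for an almost complex structure of the form $\mathrm{Sym}^{g}(\mathfrak{j})$, because any nonconstant holomorphic disk with a corner at $p^{-1}(\{c\})$ is forced to cross the basepoint. The paper then simply observes that the same positivity/basepoint constraint applies to the holomorphic triangles computing $\alpha$- and $\beta$-equivalence maps, so the stabilization commutes with those triangle maps \emph{at the chain level}, with no neck-stretching and with no hypothesis that the curve equivalence is supported away from the stabilization region. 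By contrast, your Type~2 argument writes the stabilized diagram as a connected sum $\mathcal{E}\sharp\mathcal{H}$ and appeals to a K\"unneth-type splitting, which requires the curve equivalence to be supported in the $\mathcal{E}$-factor; you flag this as the delicate point and propose resolving it by a two-parameter codimension-one analysis together with Remark \ref{rem:functor-ext}. That remark only controls loops of \emph{basic moves}, but the loop you would produce necessarily contains the stabilization and its inverse, so it does not by itself close the square. The coherence you need is exactly what the paper's chain-level observation supplies for free.

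For Type 4 your neck-stretching argument in two disjoint necks is correct but heavier than necessary, and for Types 3 and 5 your ``tautological pushforward'' observation is fine; the paper instead funnels all of Types 3, 4, 5 through functoriality (Lemma \ref{lem-funt}) plus the already-established Type 2 commutativity to reduce to near-basepoint stabilizations, and then reuses the same explicit formula. So the structure is: once Type 2 is known, the rest follows uniformly. I would encourage you to replace the neck-stretching argument for Type 2 with the direct chain-level argument --- it is both shorter and removes the gap you identified.
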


\begin{proof}
Commutativity for distinguished rectangles of type 1 is proven in
Lemma \ref{lem:alpha-beta-comm}. For the remaining cases, recall
that a stabilization map are defined by composing the map induced
by a stabilization near the basepoint following by $\alpha$- and
$\beta$-equivalences. Also, given a knot Heegaard diagram $\mathcal{H}$
representing a based knot $(K,z)$ and another diagram $\mathcal{H}^{\prime}$
given by stabilizing $\mathcal{H}$ near $z$, the stabilization map
is given explicitly by 
\begin{align*}
\widehat{HF}_{\mathbb{Z}_{2}}(\tilde{\mathcal{H}}) & \rightarrow\widehat{HF}_{\mathbb{Z}_{2}}(\tilde{\mathcal{H}}^{\prime}),\\
\mathbf{x} & \mapsto\mathbf{x}\cup p^{-1}(\{c\}),
\end{align*}
 where $p$ is the branched double covering map and $c$ is the new
intersection point introduced by the stabilization, by taking family
of almost complex structure of the form $\text{Sym}^{g}(\mathfrak{j})$,
the reason being that any holomorphic disk involving the Floer generator
in $p^{-1}(\{c\})$ must intersect the basepoint. But this argument
also works for counting holomorphic triangles, and thus we deduce
that the maps induced by $\alpha$- and $\beta$-equivalences and
stabilizations must commute, thereby proving commutativity of distinguished
rectangles of type 2.

Now, we can apply the functoriality condition, which is already proven
in Lemma \ref{lem-funt}, together with commutativity of distinguisehd
rectangles of type 2, to reduce the distinguished rectangles of type
3,4,5 to the case when all stabilizations involved are performed near
the basepoint. Then, by the above description of maps induced by stabilizations
near the basepoint $z$, they must commute with stabilization maps
and diffeomorphism maps. Therefore $\widehat{HF}_{\mathbb{Z}_{2}}$
satisfies commutativity for all distinguished rectangles.
\end{proof}
\begin{rem}
\label{rem:comm-stabI}The proof of Lemma \ref{lem-comm} can be directly
extended to stabilization of type I on extended bridge diagrams. The
result we get is that stabilization maps of type I and diffeomorphism
maps commutes with any maps induced by extended Heegaard diagrams.
\end{rem}

We will now prove that $\widehat{HF}_{\mathbb{Z}_{2}}$ satisfies
handleslide invariance.
\begin{lem}
\label{lem:handleswap-inv}The weak Heegaard invariant $\widehat{HF}_{\mathbb{Z}_{2}}$
satisfies handleswap invariance condition of Definition \ref{def:stronginv}.
\end{lem}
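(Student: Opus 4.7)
The plan is to mirror the strategy used in Lemma \ref{lem-comm}: reduce the handleswap invariance to a statement about sphere bridge diagrams (where $\widehat{HF}_{\mathbb{Z}_{2}}$ is already known to be natural by \cite{Kang}) and to the analogous statement for ordinary Heegaard Floer cohomology (which is a strong Heegaard invariant by Juh\'asz's theorem).

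First, I would unpack the definition of a simple handleswap from Figure 4 of \cite{Juhasz-naturality}. The three knot Heegaard diagrams $H_{1},H_{2},H_{3}$ coincide outside a fixed genus-2 region containing the handleswap, and the moves $e,f,g$ consist of $\alpha$- and $\beta$-handleslides supported in that region. Passing to the branched double covers and attaching the small A- and B-arc pair at $w$ as in the construction preceding Theorem \ref{thm:basicmoves-knotHeegaard}, the loop of knot Heegaard diagrams lifts to a loop $\tilde{H}_{1}\to\tilde{H}_{2}\to\tilde{H}_{3}\to\tilde{H}_{1}$ of extended bridge diagrams, and the induced maps are equivariant handleslide maps.

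Second, applying Lemma \ref{lem-funt}, Lemma \ref{lem-comm}, and Remark \ref{rem:comm-stabI}, I would conjugate the loop by a sequence of isotopies, A/B-handleslides, and handleslides of type I to arrange that all A- and B-arcs of $\tilde{H}_{i}$ lie in a small disk near the basepoint $z$, while the handleswap region remains disjoint from this disk. In this position, each $\tilde{H}_{i}$ admits a connected-sum decomposition $\tilde{H}_{i}=\mathcal{B}\sharp\mathcal{H}_{i}$, where $\mathcal{B}$ is a fixed genus-0 bridge diagram for $(K,z)$ and $\mathcal{H}_{i}$ is a weakly admissible Heegaard diagram representing $S^{3}$, with the handleswap moves supported entirely in the $\mathcal{H}_{i}$-factor. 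Stretching the neck as in Lemma \ref{lem:alpha-functor} and Lemma \ref{lem-cont} then yields the natural isomorphism
\[
\widehat{HF}_{\mathbb{Z}_{2}}(\tilde{H}_{i})\simeq\widehat{HF}_{\mathbb{Z}_{2}}(\mathcal{B})\otimes_{\mathbb{F}_{2}}\widehat{HF}(\mathcal{H}_{i})\simeq\widehat{HF}_{\mathbb{Z}_{2}}(\mathcal{B}),
\]
under which the maps $\widehat{HF}_{\mathbb{Z}_{2}}(e),\widehat{HF}_{\mathbb{Z}_{2}}(f),\widehat{HF}_{\mathbb{Z}_{2}}(g)$ act as $\mathrm{id}_{\widehat{HF}_{\mathbb{Z}_{2}}(\mathcal{B})}$ tensored with the corresponding non-equivariant handleslide maps on $\widehat{HF}(\mathcal{H}_{i})$.

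Third, since Juh\'asz proved that $\widehat{HF}$ satisfies handleswap invariance as a strong Heegaard invariant, the composition of the three handleslide maps on the $\widehat{HF}(\mathcal{H}_{i})$-factor is the identity, hence so is the composition on $\widehat{HF}_{\mathbb{Z}_{2}}(\tilde{H}_{i})$. The main obstacle is Step~2: carefully verifying that the sequence of moves conjugating the original loop into a loop localized away from the A- and B-arcs can be chosen so that every intermediate extended bridge diagram is weakly admissible and every map involved is covered by our functoriality/commutativity results. This is where Remark \ref{rem:functor-ext} and Remark \ref{rem:comm-stabI} do the essential work, ensuring that the conjugating moves commute with the handleswap loop up to genuine identities in $\widehat{HF}_{\mathbb{Z}_{2}}$, so that proving the composition is trivial for the localized loop suffices.
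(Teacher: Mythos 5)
Your proposal takes a genuinely different route from the paper's. The paper's proof does \emph{not} conjugate the handleswap loop into a connected-sum position; instead, it writes $H = H_0 \sharp H_s$ with $H_s$ the handleswap piece, directly handles the case when $H_s$ is attached near $z$ (citing the analogue of Juh\'asz's Lemma 9.30), and then runs an induction over regions of $\Sigma_0 \setminus (\boldsymbol\alpha \cup \boldsymbol\beta)$: it defines an auxiliary ``$\alpha$-crossing map'' as a single equivariant triangle map $F_{H,H'}$ with a carefully chosen $\mathbb{Z}_2$-invariant top class, and uses Lemma \ref{associativity-2} to show that this crossing map commutes with the handleswap maps, so that handleswap invariance propagates one region at a time from the region containing $z$. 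Your plan instead fixes the handleswap region and moves the A/B-arcs into a small disk near $z$, then stretches the neck.

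That strategy is attractive and resembles the proof of Lemma \ref{lem-cont}, but as written it has a gap. Your conjugating moves (``isotopies, A/B-handleslides, and handleslides of type I'') cannot bring the A/B-arcs into a small disk containing $z$: the arcs $a,b$ are attached at $w$, and $w$ is a fixed intersection point of $K$ with $\Sigma$; if $z$ and $w$ lie in different components of $\Sigma\setminus(\boldsymbol\alpha\cup\boldsymbol\beta)$, no amount of isotopy and arc-sliding collects all the arcs (and both branch points $z,w$) into a disk disjoint from all $\alpha$- and $\beta$-curves. You would at minimum need stabilizations of type II / special stabilizations to re-bridge the diagram, exactly as in the proof of Lemma \ref{lem-cont} and Lemma \ref{lem:alpha-functor}. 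Including those is not fatal (the paper has Lemma \ref{comm-lemma-stabII} and Remark \ref{rem:functor-ext} to control their commutativity), but you have omitted them, and after including them the argument that all the resulting squares commute and that $\mathcal{B}$ is \emph{the same} fixed genus-$0$ diagram across $\tilde H_1, \tilde H_2, \tilde H_3$ needs a real argument rather than an appeal. Separately, the claim that under the neck-stretching isomorphism the equivariant handleslide maps $\widehat{HF}_{\mathbb{Z}_2}(e),\widehat{HF}_{\mathbb{Z}_2}(f),\widehat{HF}_{\mathbb{Z}_2}(g)$ decompose as $\mathrm{id}\otimes(\text{non-equivariant maps})$ is asserted without justification; this is precisely the kind of statement the paper's crossing-map construction and Lemma \ref{associativity-2} are designed to establish at the level of holomorphic triangles. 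Finally, a minor point: once you are in the $\mathcal{B}\sharp\mathcal{H}_i$ position, you do not need Juh\'asz's handleswap invariance for $\widehat{HF}$ at all, since $\widehat{HF}(\mathcal{H}_i)\cong\mathbb{F}_2$ and the only automorphism of $\mathbb{F}_2$ is the identity.
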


\begin{proof}
Let $H=H_{0}\sharp H_{s}$ be a knot Heegaard diagram representing
a based knot $(K,z)$ in $S^{3}$, where $H_{0}$ is double-pointed,
$H_{s}$ is the diagram drawn in Figure \ref{fig04}, and suppose
that we perform a simple handleswap on $H_{s}$. If the connected
sum is taken near $z$, then as in the proof of Lemma 9.30 in \cite{Juhasz-naturality},
we are done. 

In the general case, let $\Sigma_{0}$ and $\Sigma_{s}$ be the Heegaard
surfaces of $H_{0}$ and $H_{s}$, respectively, and let $p$ be the
point in $\Sigma_{0}$ at which we take the connected sum. Define
a subset of $\Sigma_{0}\backslash(\{z\}\cup(\alpha,\beta\text{-curves of }H_{0})$
as follows. 
\[
X=\{p\in\Sigma_{0}\backslash(\{z\}\cup(\alpha,\beta\text{-curves of }H_{0})\,\vert\,\text{handleswap invariance holds for }H\}
\]
 Then, by functoriality(Lemma \ref{lem-funt}) and continuity(Lemma
\ref{lem-comm}), we know that $X$ is a union of some connected components..
Also, we already know that the connected component containing $z$
is contained in $X$, so that $X\neq\emptyset$.

Now let $p\in\Sigma_{0}\backslash(\alpha,\beta\text{-curves of }H_{0})$
be a point satisfying the following property.
\begin{itemize}
\item Let $R$ be the connected component of $\Sigma_{0}\backslash(\{z\}\cup(\alpha,\beta\text{-curves of }H_{0}))$
containing $p$. Then $\overline{R}\cap X$ contains a segment of
either an $\alpha$-curve or a $\beta$-curve.
\end{itemize}
Then, without loss of generality, we may assume that there exists
a point $p^{\prime}\in X$ such that, when we denote the connected
sums of $H_{0}$ and $H_{s}$ performed at $p$ and $p^{\prime}$
as $H$ and $H^{\prime}$, respecively, the knot Heegaard diagrams
$H$ and $H^{\prime}$ are related by a sequence of four $\alpha$-handleslides.
Hence, by Lemma 3.25 of \cite{eqv-Floer}, the equivariant triangle
map 
\[
F_{H,H^{\prime}}\,:\,\widehat{HF}_{\mathbb{Z}_{2}}(\widetilde{H^{\prime}})\rightarrow\widehat{HF}_{\mathbb{Z}_{2}}(\widetilde{H}),
\]
 defined using the top class $\mathbf{x}$ drawn in Figure \ref{fig05},
is an isomorphism; we will call this map as the \textbf{$\alpha$-crossing
map}. We will not prove that the crossing map is the same as the composition
of four $\alpha$-handleslide maps, as we do not need it to prove
this lemma. Note that, although we are drawing knot Heegaard diagrams,
we are actually working with their branched double coverings.

Now consider the simple handleswap diagram involving $H$, and another
simple handleswap diagram involving $H^{\prime}$. Taking $\widehat{HF}_{\mathbb{Z}_{2}}$
gives the following diagram, where every edge is an isomorphism. Note
that the innermost and the outermost triangle are simple handleswap
diagrams and all crossing maps are $\alpha$-crossing maps.
\[
\xymatrix{\widehat{HF}_{\mathbb{Z}_{2}}(\widetilde{H})\ar[dr]_{\text{\ \ \ \ \ \ \ crossing}}\ar[drrr]^{\ \ \ \ \ \alpha\text{-handleslide}}\\
 & \widehat{HF}_{\mathbb{Z}_{2}}(\widetilde{H^{\prime}})\ar[r] & \widehat{HF}_{\mathbb{Z}_{2}}(\widetilde{H_{1}^{\prime}})\ar[dl] & \widehat{HF}_{\mathbb{Z}_{2}}(\widetilde{H_{1})}\ar[l]^{\text{crossing}}\ar[ddll]^{\ \ \ \beta\text{-handleslide}}\\
 & \widehat{HF}_{\mathbb{Z}_{2}}(\widetilde{H_{2}^{\prime}})\ar[u]\\
 & \widehat{HF}_{\mathbb{Z}_{2}}(\widetilde{H_{2}})\ar[uuul]^{\text{diffeo}}\ar[u]_{\text{crossing}}
}
\]
Since diffeomorphism maps clearly commutes with crossing maps, the
leftmost face is commutative. Also, by Lemma \ref{associativity-2},
we see that the face in the right-bottom corner is also commutative,
and the same argument works for the commutativity of the topmost face. 

Now, the central triangle is commutive by assumption. Thus every face
of the diagram is commutative. Since all edges of the diagram are
isomorphisms, the peripheral triangle must also be commutative. Hence
$p\in X$ by the definition of $X$. Therefore, by induction on the
number of $\alpha$- and $\beta$-curves needed to cross to travel
from $p$ to $z$, we deduce that 
\[
X=\Sigma_{0}\backslash(\{z\}\cup(\alpha,\beta\text{-curves of }H_{0}),
\]
 i.e. $\widehat{HF}_{\mathbb{Z}_{2}}$ satisfies handleswap invariance
condition of Definition \ref{def:stronginv}.
\end{proof}
\begin{figure}
\resizebox{.4\textwidth}{!}{\includegraphics{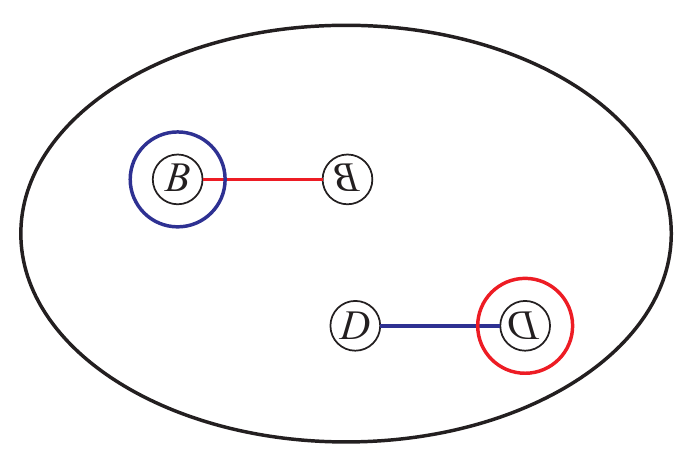}} \caption{\label{fig04}The Heegaard diagram $H_{s}$; the boundary is collapsed
to form a genus 2 surface.}
\end{figure}

\begin{figure}
\resizebox{.4\textwidth}{!}{\includegraphics{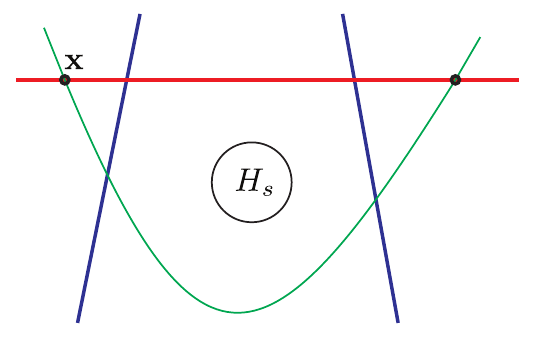}} \caption{\label{fig05}The triple-diagram near the connected sum region. The
circle in the middle is where the diagram $H_{s}$ is attached via
connected sum neck.}
\end{figure}
\begin{thm}
\label{thm:stronginv} There exists a strong Heegaard invariant of
based knots in $S^{3}$, which is isomorphic to $\widehat{HF}_{\mathbb{Z}_{2}}$.
\end{thm}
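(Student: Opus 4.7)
The plan is to assemble the four lemmas already proven in this section into a direct verification of Definition~\ref{def:stronginv}, and then appeal to Juh\'asz's machinery from \cite{Juhasz-naturality} to conclude the existence of the desired strong Heegaard invariant. First I would recall the construction: given a based knot $(K,z)$ in $S^{3}$ and a knot Heegaard diagram $H=(\Sigma,\boldsymbol{\alpha},\boldsymbol{\beta},z,w)$ representing $(K,z)$, we form the extended bridge diagram $\mathcal{E}_{H}$ obtained by doubly branching along $\{z,w\}$ and attaching the small A-/B-arc pair at $w$, and set
\[
\widehat{HF}_{\mathbb{Z}_{2}}^{\mathrm{knot}}(H)\;:=\;\widehat{HF}_{\mathbb{Z}_{2}}(\mathcal{E}_{H}).
\]
By Theorem~\ref{thm:basicmoves-knotHeegaard}, every elementary Heegaard move $e\colon H\to H'$ induces an isomorphism on $\widehat{HF}_{\mathbb{Z}_{2}}$, so $\widehat{HF}_{\mathbb{Z}_{2}}^{\mathrm{knot}}$ is a weak Heegaard invariant in the sense of \cite{Juhasz-naturality}.

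Next I would verify the four axioms of Definition~\ref{def:stronginv} by citing the lemmas already at hand: functoriality is exactly Lemma~\ref{lem-funt}, continuity is Lemma~\ref{lem-cont}, commutativity of distinguished rectangles is Lemma~\ref{lem-comm}, and simple handleswap invariance is Lemma~\ref{lem:handleswap-inv}. Since these are precisely the conditions in Definition~\ref{def:stronginv}, the assignment $H\mapsto \widehat{HF}_{\mathbb{Z}_{2}}^{\mathrm{knot}}(H)$ together with the edge maps $\widehat{HF}_{\mathbb{Z}_{2}}(e)$ is a strong Heegaard invariant.

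Finally, I would invoke the formalism of \cite{Juhasz-naturality}: a strong Heegaard invariant on the graph $\mathcal{G}(K,z)$ canonically yields, for each pair of isotopy diagrams $H_{1},H_{2}$ representing $(K,z)$, a preferred isomorphism
\[
\Phi_{H_{1},H_{2}}\colon \widehat{HF}_{\mathbb{Z}_{2}}^{\mathrm{knot}}(H_{1})\;\xrightarrow{\sim}\;\widehat{HF}_{\mathbb{Z}_{2}}^{\mathrm{knot}}(H_{2}),
\]
obtained by composing the edge maps along any path in $\mathcal{G}$, independently of the chosen path. This lets us glue the per-diagram modules into a single $\mathbb{F}_{2}[\theta]$-module associated to $(K,z)$, producing the sought-after strong Heegaard invariant isomorphic to $\widehat{HF}_{\mathbb{Z}_{2}}$.

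The main obstacle in this proof scheme has already been absorbed into the earlier lemmas: proving functoriality on $\mathcal{G}_{\alpha}$ and $\mathcal{G}_{\beta}$ required the two-parameter Morse-theoretic argument of Lemma~\ref{lem:alpha-functor}, and proving handleswap invariance required the careful diagram chase around the $\alpha$-crossing map in Lemma~\ref{lem:handleswap-inv}. With those pillars in place, the remaining step for Theorem~\ref{thm:stronginv} is essentially a bookkeeping invocation of Juh\'asz's framework, together with the observation from Remarks~\ref{rem:functor-ext} and \ref{rem:comm-stabI} that any potential ambiguity in stabilization-of-type-I maps on extended bridge diagrams is resolved by commutativity with the other extended Heegaard moves.
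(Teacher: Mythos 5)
Your proposal is correct and follows exactly the same approach as the paper: cite Lemmas~\ref{lem-funt}, \ref{lem-cont}, \ref{lem-comm}, and \ref{lem:handleswap-inv} to verify the four axioms in Definition~\ref{def:stronginv}, and conclude. The extra remarks you make about the construction of $\widehat{HF}_{\mathbb{Z}_{2}}^{\mathrm{knot}}$ and Juh\'asz's gluing formalism are accurate context but not needed beyond what the four lemmas already deliver.
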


\begin{proof}
By Lemma \ref{lem-funt}, Lemma \ref{lem-cont}, Lemma \ref{lem-comm},
and Lemma \ref{lem:handleswap-inv}, $\widehat{HF}_{\mathbb{Z}_{2}}$
satisfies all conditions of Definition \ref{def:stronginv}. Therefore
it is a strong Heegaard invariant.
\end{proof}
The important point of Theorem \ref{thm:stronginv} is that, instead
of treating $\widehat{HF}$ directly as a Heegaard invariant, we have
used the word ``isomorphic''. The reason is as follows. While it
is true that we have constructed a strong Heegaard invariant $\widehat{HF}_{\mathbb{Z}_{2}}(\Sigma(K),z)$
of based knots $(K,z)$ in $S^{3}$, defined using knot Heegaard diagrams,we
also have another definition of $\widehat{HF}_{\mathbb{Z}_{2}}(\Sigma(K),z)$
using bridge diagrams of $(K,z)$ on $S^{2}$. Both versions of $\widehat{HF}_{\mathbb{Z}_{2}}$
are natural invariants, i.e. define functors 
\[
\widehat{HF}_{\mathbb{Z}_{2}}^{knot},\,\widehat{HF}_{\mathbb{Z}_{2}}^{bridge}\,:\,\text{Knot}_{\ast}\rightarrow\text{Mod}_{\mathbb{F}_{2}[\theta]},
\]
 where $\text{Knot}_{\ast}$ is the category whose objects are based
knots in $S^{3}$ and morphisms are diffeomorphisms. Also, Theorem
\ref{thm:stronginv} tells us that both of the have the same isomorphism
type (of $\mathbb{F}_{2}[\theta]$-modules). However, we have not
yet proven whether they are isomorphic through a natural isomorphism,
i.e. there exists a natural transformation $\eta$ between functors
$\widehat{HF}_{\mathbb{Z}_{2}}^{knot}$ and $\widehat{HF}_{\mathbb{Z}_{2}}^{bridge}$. 

We will now construct such a natural transformation; note that, while
dealing with this issue, we will strictly distinguish the two invariants
$\widehat{HF}_{\mathbb{Z}_{2}}^{knot}$ and $\widehat{HF}_{\mathbb{Z}_{2}}^{bridge}$.
Let $\mathcal{E}=((\Sigma,\boldsymbol{\alpha},\boldsymbol{\beta}),(P,A,B,z))$
be an extended bridge diagram representing a based knot $(K,z)$ in
$S^{3}$. Choose a point $p\in\text{int}(A_{i})\cap\text{int}(B_{j})$
for some $A_{i}\in A$, $B_{j}\in B$, where $z\notin A_{i}\cup B_{j}$;
such a point will be called as a \textbf{proper crossing} of $\mathcal{E}$.
Let $D\subset\Sigma$ be a small disk neighborhood of $p$, which
does not intersect $\alpha$- and $\beta$-curves and intersects A-
and B-arcs only with $A_{i}$ and $B_{j}$, satisfying $D\cap(A_{i}\cup B_{j})=\{p\}$.
Then, on a puncture torus $T$ whose boundary $\partial T$ is identified
with $\partial D$, draw two disjoint simple arcs, denoted $A_{i}^{p}$
and $B_{j}^{p}$, so that $\partial A_{i}^{p}=\partial A_{i}$ and
$\partial B_{j}^{p}=\partial B_{j}$. Also, draw two disjoint simple
closed curves, denoted $\alpha_{p}$ and $\beta_{p}$, so that the
following conditions hold.
\begin{itemize}
\item $A_{i}^{p}\cap\alpha_{p}=B_{j}^{p}\cap\beta_{p}=\emptyset$.
\item $\alpha_{p}$ intersects $B_{j}^{p}$ transversely at one point.
\item $\beta_{p}$ intersects $A_{i}^{p}$ transversely at one point.
\end{itemize}
Now define the following objects.
\begin{itemize}
\item $\Sigma^{p}=(\Sigma\backslash D)\cup T$
\item $\boldsymbol{\alpha}^{p}=\boldsymbol{\alpha}\cup\{\alpha_{p}\}$,
$\boldsymbol{\beta}^{p}=\boldsymbol{\beta}\cup\{\beta_{p}\}$
\item $A^{p}=(A\backslash\{A_{i}\})\cup\{A_{i}^{p}\}$, $B^{p}=(B\backslash\{B_{j}\})\cup\{B_{j}^{p}\}$
\end{itemize}
Then the pair 
\[
R_{p}(\mathcal{E})=((\Sigma^{p},\boldsymbol{\alpha}^{p},\boldsymbol{\beta}^{p}),(P,A^{p},B^{p},z))
\]
 also represents $(K,z)$. Furthermore, $\mathcal{E}$ and $R_{p}(\mathcal{E})$
are related by a sequence consisting of a stabilization of type I
and two handleslides of type III. Hence, by taking $\widehat{HF}_{\mathbb{Z}_{2}}$
and composing the induced maps, we get an isomorphism, which we will
call as the resolution map at $p$:
\[
\mathcal{R}_{p,\mathcal{E}}\,:\,\widehat{HF}_{\mathbb{Z}_{2}}(R_{p}(\mathcal{E}))\rightarrow\widehat{HF}_{\mathbb{Z}_{2}}(\mathcal{E}).
\]
\begin{lem}
\label{lem:resmap-welldef}Given an extended bridge diagram $\mathcal{E}$
representing a based knot $(K,z)$ in $S^{3}$, and its proper crossing
$p$, the resolution map $\mathcal{R}_{p,\mathcal{E}}$ depends only
on the choice of $p$.
\end{lem}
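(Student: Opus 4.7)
The plan is to show that any two choices of auxiliary data for resolving $\mathcal{E}$ at the proper crossing $p$ differ by a sequence of extended Heegaard moves that is localized to a neighborhood of $p$ and that, after applying $\widehat{HF}_{\mathbb{Z}_{2}}$, intertwines the two candidate resolution maps.

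First I would normalize the local data. By small isotopies in $\Sigma$, I can arrange that any two candidates share the same small disk $D$ around $p$ and the same abstract punctured torus $T$ glued along $\partial D=\partial T$. The A-arcs $A_{i}^{p,1},A_{i}^{p,2}$ on $T$ have the same pair of endpoints on $\partial T$, as do the B-arcs, and the curves $\alpha_{p}^{1},\alpha_{p}^{2},\beta_{p}^{1},\beta_{p}^{2}$ on $T$ satisfy the same transversality conditions with these arcs. Any two such systems on a punctured torus are related by a finite sequence of isotopies, handleslides of types II, III, IV, and possibly an orientation-preserving diffeomorphism of $T$ relative to $\partial T$, each supported entirely inside $T$ and leaving all the original data of $\mathcal{E}$ untouched. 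This produces an explicit sequence of extended Heegaard moves from $R_{p}^{(1)}(\mathcal{E})$ to $R_{p}^{(2)}(\mathcal{E})$.

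Next I would unpack $\mathcal{R}_{p,\mathcal{E}}^{(i)}$ as the composite of a destabilization (the inverse of a stabilization of type I in $D \cup T$) with two handleslides of type III. Since the localized sequence connecting the two resolutions is supported inside $T$, Remark \ref{rem:comm-stabI} lets me commute the destabilization past that sequence. The two handleslides of type III built into each resolution can likewise be pushed past the rest of the localized sequence using Lemma \ref{lem:alpha-beta-comm} (to separate $\alpha$-side and $\beta$-side moves) together with Remark \ref{rem:functor-ext} (which guarantees that any loop of basic moves of one type is trivial on $\widehat{HF}_{\mathbb{Z}_{2}}$). After these commutations, the difference between $\mathcal{R}_{p,\mathcal{E}}^{(2)}$ and $\mathcal{R}_{p,\mathcal{E}}^{(1)}$ reduces to a loop of basic moves, possibly composed with a self-diffeomorphism of the resolved diagram that is isotopic to the identity, which by Lemma \ref{lem-cont} and Remark \ref{rem:functor-ext} acts as the identity on $\widehat{HF}_{\mathbb{Z}_{2}}(\mathcal{E})$.

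The main obstacle I anticipate is discharging the commutation rigorously: in particular, verifying that every elementary move in the localized sequence commutes with both the destabilization of $T$ and with the two type-III handleslides that realize the resolution, rather than simply with generic basic moves. If a naive decomposition is not transparent, I would fall back on a 2-parameter argument in the spirit of Lemma \ref{lem:alpha-functor} and Lemma \ref{lem-comm}, interpolating between the two auxiliary choices by a generic path in the contractible space of valid local data, classifying the codimension-one singularities (all confined to $T$), and discharging the corresponding monodromies via the associativity statements of Lemmas \ref{associativity-1} and \ref{associativity-2}, Remark \ref{rem:comm-stabI}, and Lemma \ref{lem:alpha-beta-comm}.
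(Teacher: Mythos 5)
Your proposal is correct and follows essentially the same strategy as the paper's proof: both reduce the composition $(\mathcal{R}_{p,\mathcal{E}}^{2})^{-1}\circ\mathcal{R}_{p,\mathcal{E}}^{1}$ to a loop of basic moves and then invoke Remark \ref{rem:functor-ext} to conclude that this loop induces the identity. The details of cancelling the type-I stabilization pair differ slightly: the paper rearranges via Lemma \ref{lem:alpha-beta-comm} and then replaces the stab/destab pair by special (de)stabilizations before collapsing to basic moves, while you slide the stabilization past the localized sequence inside $T$ using Remark \ref{rem:comm-stabI}. Your version is a bit more explicit and also anticipates a possible residual diffeomorphism of $T$ rel $\partial T$ (discharged via Lemma \ref{lem-cont}), a case the paper's terse argument does not spell out; these are refinements of detail rather than a different approach.
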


\begin{proof}
The definition of $\mathcal{R}_{p,\mathcal{E}}$ involves choosing
a point in $D\backslash(A_{i}\cup B_{j})$, at which a stabilization
of type I would occur, and also choosing which one of two handleslides
of type I/II is to be applied first. Take any two possible choices,
and denote the two maps given by taking compositions of the induced
maps as $\mathcal{R}_{p,\mathcal{E}}^{1}$ and $\mathcal{R}_{p,\mathcal{E}}^{2}$.
Then the composition $(\mathcal{R}_{p,\mathcal{E}}^{2})^{-1}\circ\mathcal{R}_{p,\mathcal{E}}^{1}$,
which is an automorphism of $\widehat{HF}_{\mathbb{Z}_{2}}(\Sigma(K),z)$,
can be decomposed as 
\begin{align*}
(\mathcal{R}_{p,\mathcal{E}}^{2})^{-1}\circ\mathcal{R}_{p,\mathcal{E}}^{1} & =(\text{basic moves})\circ(\text{destabilization})\circ(\text{stabilization})\circ(\text{basic moves})\\
 & =(\text{basic moves})\circ(\text{special destab})\circ(\text{special stab})\circ(\text{basic moves})\\
 & =\text{composition of basic moves}
\end{align*}
 where Lemma \ref{lem:alpha-beta-comm} is applied in the first line.
Now, by Remark \ref{rem:functor-ext}, this map should be the identity.
Therefore $\mathcal{R}_{p,\mathcal{E}}^{1}=\mathcal{R}_{p,\mathcal{E}}^{2}$.
\end{proof}
\begin{lem}
\label{lem:resmap-comm}Let $p,q$ be two distinct proper crossings
of $\mathcal{E}$. Then we have 
\[
\mathcal{R}_{p,\mathcal{E}}\circ\mathcal{R}_{q,R_{p}(\mathcal{E})}=\mathcal{R}_{q,\mathcal{E}}\circ\mathcal{R}_{p,R_{q}(\mathcal{E})}.
\]
 In other words, resolution maps of distinct proper crossings commute.
\end{lem}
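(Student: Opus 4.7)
The plan is to mimic the strategy of Lemma \ref{lem:resmap-welldef}, namely to show that the loop
\[
\Phi \;:=\; (\mathcal{R}_{q,\mathcal{E}} \circ \mathcal{R}_{p, R_q(\mathcal{E})})^{-1} \circ \mathcal{R}_{p,\mathcal{E}} \circ \mathcal{R}_{q, R_p(\mathcal{E})}
\]
is an automorphism of $\widehat{HF}_{\mathbb{Z}_2}(R_p(R_q(\mathcal{E})))$ that can be written as a composition of basic moves together with stabilizations and destabilizations of type I, and then to invoke Remark \ref{rem:functor-ext} and Remark \ref{rem:comm-stabI} to deduce that it is the identity.

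First I would exploit the fact that $p$ and $q$ are distinct proper crossings by choosing the disk neighborhoods $D_p$ and $D_q$ used to build $\mathcal{R}_{p,-}$ and $\mathcal{R}_{q,-}$ small enough that they are disjoint and miss all the $\alpha$- and $\beta$-curves except possibly along $A_i, B_j$ (resp. the pair of arcs through $q$). Under this choice every stabilization of type I and every handleslide of type III appearing in the definition of $\mathcal{R}_{p,-}$ is supported in $D_p$, and similarly for $\mathcal{R}_{q,-}$ in $D_q$. In particular the two resolutions do not interfere at the level of extended bridge diagrams, so $R_p(R_q(\mathcal{E})) = R_q(R_p(\mathcal{E}))$ on the nose and both compositions in the statement have the same source and the same target.

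Second, I would expand both $\mathcal{R}_{p,\mathcal{E}} \circ \mathcal{R}_{q, R_p(\mathcal{E})}$ and $\mathcal{R}_{q,\mathcal{E}} \circ \mathcal{R}_{p, R_q(\mathcal{E})}$ into their constituent elementary moves. This produces two alternating sequences of ``$D_p$-moves'' and ``$D_q$-moves'' whose formal concatenation (one followed by the inverse of the other) is $\Phi$. Since moves supported in disjoint disks act on the diagram independently, and since by Remark \ref{rem:comm-stabI} every stabilization of type I commutes with every other extended Heegaard move, I would reorder the sequence so that all $D_p$-moves are grouped together and all $D_q$-moves are grouped together. The result exhibits $\Phi$ as the concatenation of a loop of basic moves and type I stabilizations supported entirely in $D_p$ with an analogous loop supported entirely in $D_q$.

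Finally, Remark \ref{rem:functor-ext} says that any loop of basic moves induces the identity on $\widehat{HF}_{\mathbb{Z}_2}$, and Remark \ref{rem:comm-stabI} allows each pair of stabilization and destabilization of type I inside a given loop to be cancelled against each other after commuting past the remaining basic moves. Combining these two facts with the previous step gives $\Phi = \mathrm{id}$, which is the desired identity of resolution maps. The main obstacle is Step~2: one has to check rigorously that the moves in $D_p$ and those in $D_q$ commute not merely as topological operations on the diagram but as maps on $\widehat{HF}_{\mathbb{Z}_2}$. This is where Lemma \ref{lem:alpha-beta-comm} (for the pairs of handleslides of type III that mix an A- and a B-equivalence in disjoint regions) together with the disjoint-support argument for same-type handleslides do the real work; once that bookkeeping is carried out, the conclusion follows automatically.
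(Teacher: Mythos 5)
Your proposal is correct and follows essentially the same strategy as the paper: write the discrepancy
\[
\Phi = (\mathcal{R}_{q,\mathcal{E}} \circ \mathcal{R}_{p, R_q(\mathcal{E})})^{-1} \circ \mathcal{R}_{p,\mathcal{E}} \circ \mathcal{R}_{q, R_p(\mathcal{E})}
\]
as the map induced by a loop of extended Heegaard moves, and use the already-established commutativity and functoriality statements to conclude that it is the identity. The paper's own proof is a single sentence: it cites Lemma~\ref{lem-comm} (commutativity of distinguished rectangles, which in particular handles basic moves commuting past stabilizations of type I) and the well-definedness Lemma~\ref{lem:resmap-welldef}, and asserts that $\Phi$ is a loop of basic moves. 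You unpack this more explicitly, and your unpacking is faithful to what the terse citation is implicitly doing: you (i) pin down the geometric commutativity $R_p(R_q(\mathcal{E})) = R_q(R_p(\mathcal{E}))$ via disjoint disk supports, (ii) reorder the constituent elementary moves using Remark~\ref{rem:comm-stabI} to move the type I stabilizations and cancel them in stab/destab pairs, Lemma~\ref{lem:alpha-beta-comm} for the $\alpha$/A against $\beta$/B crossings, and the functoriality of Remark~\ref{rem:functor-ext} for same-side crossings, and (iii) conclude that what remains is a loop of basic moves, hence the identity by Remark~\ref{rem:functor-ext}. One small clarification worth making explicit: for the ``same-type'' commutations in your Step~2 (two A-handleslides, or two B-handleslides, in disjoint disks), the justification is not the mere disjointness of supports but the functoriality of the restriction of $\widehat{HF}_{\mathbb{Z}_2}$ to $\mathcal{G}_\alpha$ (resp. $\mathcal{G}_\beta$), i.e.\ Lemma~\ref{lem:alpha-functor} as extended in Remark~\ref{rem:functor-ext}, which makes any loop of A/$\alpha$-equivalences (resp.\ B/$\beta$-equivalences) induce the identity. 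With that made precise, your argument matches the paper's.
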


\begin{proof}
Using Lemma \ref{lem-comm} and Remark \ref{lem:resmap-welldef},
we can see that the map 
\[
(\mathcal{R}_{p,\mathcal{E}}\circ\mathcal{R}_{q,R_{p}(\mathcal{E})})^{-1}\circ\mathcal{R}_{q,\mathcal{E}}\circ\mathcal{R}_{p,R_{q}(\mathcal{E})}
\]
 is a composition of maps induced by a loop of basic moves, thus is
the identity.
\end{proof}
Given an extended bridge diagram $\mathcal{E}$ representing a based
knot $(K,z)$ in $S^{3}$, let $\mathcal{I}$ be the set of all proper
crossings of $\mathcal{E}$. Choose an enumeration $\mathcal{I}=\{x_{1},\cdots,x_{n}\}$.
Define an extended bridge diagram $R(\mathcal{E})$, which depends
only on $\mathcal{E}$, as follows. 
\[
R(\mathcal{E})=R_{x_{n-1}}(\cdots(\mathcal{R}_{x_{1}}(\mathcal{E}))\cdots)
\]
 Also, define an isomorphism $\mathcal{R}_{\mathcal{E}}\,:\,\widehat{HF}_{\mathbb{Z}_{2}}(R(\mathcal{E}))\rightarrow\widehat{HF}_{\mathbb{Z}_{2}}(\mathcal{E})$
as follows.
\[
\mathcal{R}_{\mathcal{E}}=\mathcal{R}_{x_{1},\mathcal{E}}\circ\cdots\circ\mathcal{R}_{x_{n},R_{x_{n-1}}(\cdots(\mathcal{R}_{x_{1}}(\mathcal{E}))\cdots)}
\]
 Then, by Lemma \ref{lem:resmap-welldef} and Lemma \ref{lem:resmap-comm},
$\mathcal{R}_{\mathcal{E}}$ does not depend on the choice of an enumeration
of $\mathcal{I}$, thus depends only on $\mathcal{E}$.

Now observe that the A- and B-arcs of $R(\mathcal{E})$ which do not
contain $z$ also do not intersect themselves in their interior. So,
after performing handleslides as in Figure \ref{fig06} on $R(\mathcal{E})$,
it can then be destabilized (of type II) to a knot Heegaard diagram
$HR(\mathcal{E})$ representing $(K,z)$. Denote the composition of
destabilization maps as 
\[
\mathcal{D}_{\mathcal{E}}\,:\,\widehat{HF}_{\mathbb{Z}_{2}}(\widetilde{HR(\mathcal{E})})\rightarrow\widehat{HF}_{\mathbb{Z}_{2}}(R(\mathcal{E})).
\]
 Then, by Lemma \ref{comm-lemma-stabII}, $\mathcal{D}_{\mathcal{E}}$
also depends only on $\mathcal{E}$. 

\begin{figure}
\resizebox{.4\textwidth}{!}{\includegraphics{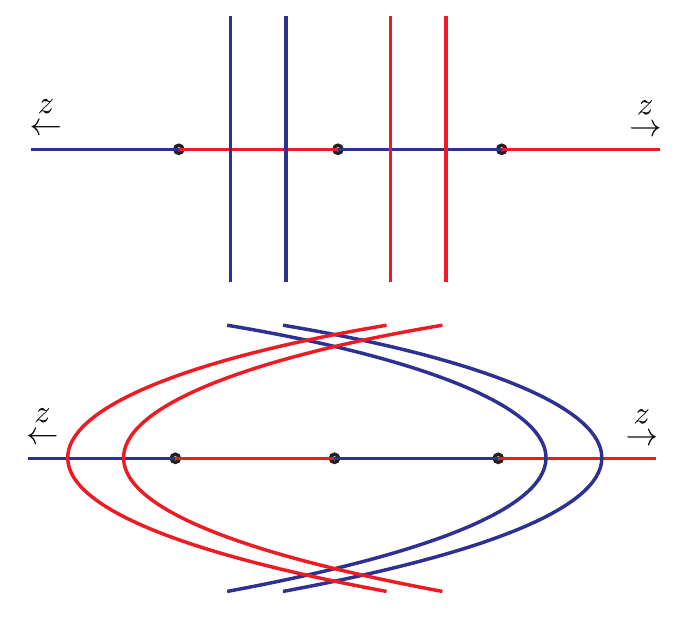}} \caption{\label{fig06}The diagrams $R(\mathcal{E})$(above) and $HR(\mathcal{E})$(below)
near the A-arcs and B-arcs. Here, we assume for simplicity that the
rightmost A-arc and the leftmost B-arc are the ones connected to the
basepoint $z$.}
\end{figure}
\begin{defn}
Given an extended bridge diagram $\mathcal{E}$ representing a based
knot $(K,z)$ in $S^{3}$, we define its translation isomorphism as
\[
\mathcal{T}_{\mathcal{E}}=(\mathcal{R}_{\mathcal{E}}\circ\mathcal{D}_{\mathcal{E}})^{-1}\,:\widehat{HF}_{\mathbb{Z}_{2}}(\mathcal{E})\rightarrow\,\widehat{HF}_{\mathbb{Z}_{2}}(\widetilde{HR(\mathcal{E})}).
\]
\end{defn}

We will now prove that translation isomorphisms define a natural transformation
between $\widehat{HF}_{\mathbb{Z}_{2}}^{knot}$ and $\widehat{HF}_{\mathbb{Z}_{2}}^{bridge}$.
\begin{lem}
\label{lem:HR-relations}Let $\mathcal{E}\xrightarrow{s}\mathcal{E}^{\prime}$
be an extended Heegaard move between extended bridge diagrams $\mathcal{E},\mathcal{E}^{\prime}$
representing a based knot $(K,z)$ in $S^{3}$. 
\begin{enumerate}
\item If $s$ is a basic move, then the knot Heeegaard diagrams $HR(\mathcal{E})$
and $HR(\mathcal{E}^{\prime})$ are related by a sequence of basic
moves and stabilizations of type I. 
\item If $s$ is a stabilization of type I, then $HR(\mathcal{E})$ and
$HR(\mathcal{E}^{\prime})$ are also related by a stabilization of
type I.
\item If $s$ is a stabilization of type II, then $HR(\mathcal{E})=HR(\mathcal{E}^{\prime})$.
\item If $s$ is a diffeomorphism, then $HR(\mathcal{E})$ and $HR(\mathcal{E}^{\prime})$
are also related by a diffeomorphism.
\end{enumerate}
\end{lem}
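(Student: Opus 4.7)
The plan is to unfold the definition of $HR(\mathcal{E})$ and trace how each type of extended Heegaard move propagates through its two stages. Recall that $HR(\mathcal{E})$ is built by first resolving every proper crossing of $\mathcal{E}$ via a genus-1 summand to form $R(\mathcal{E})$, and then destabilizing (of type II) every A/B-arc that does not meet $z$. In each case I would track (a) how the move alters the set of proper crossings, and (b) how it interacts with the destabilization step.

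Cases (3) and (4) are essentially formal. A stabilization of type II introduces a new triple $\{a_1,b_1,a'\}$ whose arcs $a_1,b_1$ by definition miss every other B-arc and $\beta$-curve, so they contribute no proper crossings; this pair is then precisely what the destabilization step of the $HR$-construction removes, giving $HR(\mathcal{E})=HR(\mathcal{E}')$. For a diffeomorphism $\phi\in\mathrm{Diff}^+(\Sigma)$, the set of proper crossings and the entire subsequent procedure are $\phi$-equivariant, so $\phi$ descends to a diffeomorphism between $HR(\mathcal{E})$ and $HR(\mathcal{E}')$. For case (2), a type I stabilization attaches a punctured torus $T$ at a point disjoint from every A/B-arc, so the set of proper crossings is canonically identified before and after, the resolution tori for $R$ can be placed off of $T$, and both $R$ and the destabilization phase commute with this type I stabilization.

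Case (1) is where the real work lies, and I would dispatch it by splitting basic moves into two groups. Moves touching only $\alpha$- or $\beta$-curves ($\alpha/\beta$-isotopies and handleslides of type I) fix every A/B-arc and hence every proper crossing, so the same move applied to $HR(\mathcal{E})$ produces $HR(\mathcal{E}')$ directly. Moves that involve A/B-arcs (isotopies of A/B-arcs, and handleslides of types II, III, IV) require a local analysis. The crucial observation is that a generic isotopy of an A/B-arc alters the set of proper crossings only through a pair-creation or pair-annihilation at a simple tangency, and each such event exactly matches the appearance or disappearance of one resolution torus, i.e.\ a type I stabilization in the $HR$-construction. A handleslide of type II or III produces a change of $\alpha$-curve or A-arc that can, after isotoping the handleslide region off any proper crossings, be carried verbatim into $HR(\mathcal{E}')$; if the cobounding region contains proper crossings, they are first resolved (adding type I stabilizations) and the handleslide is then performed in the stabilized diagram. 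A type IV handleslide is treated analogously, with the disk $D$ between $a$ and $a'$ first resolved at each proper crossing it contains.

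The main obstacle will be the bookkeeping in case (1), in particular for type IV handleslides, where the set of proper crossings contained in the cobounding disk can be nontrivial and can be rearranged by the slide. The strategy for overcoming this is to factor every A/B-arc basic move into elementary pieces (generic isotopies, crossing-creations, and a single local handleslide whose cobounding region is disjoint from all other A/B-arcs), and for each such piece verify that the effect on $HR$ is precisely a basic move or a type I stabilization; the full statement of case (1) then follows by composition, using Remark \ref{rem:functor-ext} and Remark \ref{rem:comm-stabI} to ensure that the order of the intermediate moves is immaterial.
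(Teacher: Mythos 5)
Your proposal unfolds the paper's one-sentence argument (the paper merely asserts that cases (2)--(4) are obvious and that for case (1) it ``can be easily seen'' that $R(\mathcal{E})$ and $R(\mathcal{E}')$ are related by basic moves and type I stabilizations) and the overall strategy — tracking how proper crossings and the two stages of the $HR$-construction respond to each type of move — is the same one the paper intends. Two corrections are worth noting, though. First, a simple tangency in an A/B-arc isotopy creates or annihilates a \emph{pair} of proper crossings, so it corresponds to the appearance or disappearance of \emph{two} resolution tori (two type I stabilizations plus the accompanying type III handleslides), not one. Second, and more substantively, your stated ``main obstacle'' for type IV handleslides does not arise: by the definition of a handleslide of type IV the disk $D$ cobounded by $a$ and $a'$ contains no B-arcs in its interior, so $D$ contains no proper crossings; moreover, since B-arcs cannot enter $D$ they cannot cross $a$ or $a'$ transversely at all, so a type IV slide leaves the set of proper crossings unchanged. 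The same reasoning shows that types II and III also preserve the set of proper crossings (the defining cylinder regions exclude B-arcs), so the only events in case (1) that alter proper crossings are generic A/B-arc isotopies through tangencies, and the bookkeeping is considerably lighter than you anticipate.
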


\begin{proof}
Cases 2, 3, and 4 are obvious. For case 1, it can be easily seen that
$R(\mathcal{E})$ and $R(\mathcal{E}^{\prime})$ are related by a
sequence of basic moves and stabilizations of type I. Since $HR$
is obtained from $R$ by performing handleslides of type III, we deduce
that $HR(\mathcal{E})$ and $HR(\mathcal{E}^{\prime})$ are also related
by a sequecne of basic moves and stabilizations of type I.
\end{proof}
\begin{thm}
Translation isomorphisms define an invertible natural transformation
between the functors 
\[
\widehat{HF}_{\mathbb{Z}_{2}}^{knot},\,\widehat{HF}_{\mathbb{Z}_{2}}^{bridge}\,:\,\text{Knot}_{\ast}\rightarrow\text{Mod}_{\mathbb{F}_{2}[\theta]}.
\]
\end{thm}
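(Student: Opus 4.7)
The plan is to verify the two axioms of a natural transformation---componentwise isomorphism and naturality under diffeomorphisms---and then invertibility will be immediate, since each $\mathcal{T}_{\mathcal{E}}$ is by construction the inverse of a composition $\mathcal{R}_{\mathcal{E}} \circ \mathcal{D}_{\mathcal{E}}$ of isomorphisms. The key intermediate assertion I will establish is the following: for every extended Heegaard move $s:\mathcal{E}\to\mathcal{E}'$ between extended bridge diagrams representing the same based knot $(K,z)$, the square
\[
\mathcal{T}_{\mathcal{E}'}\circ \widehat{HF}_{\mathbb{Z}_{2}}(s) \;=\; \widehat{HF}_{\mathbb{Z}_{2}}(s^{\ast}) \circ \mathcal{T}_{\mathcal{E}}
\]
commutes, where $s^{\ast}$ is the sequence of knot-Heegaard moves between $HR(\mathcal{E})$ and $HR(\mathcal{E}')$ supplied by Lemma~\ref{lem:HR-relations}. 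From this identity, the component $\eta_{(K,z)}$ of the natural transformation can be defined unambiguously by choosing any bridge diagram $\mathcal{E}_{0}$ for $(K,z)$ (in particular, a genus-zero one, so that the bridge-theoretic invariant agrees with $\widehat{HF}_{\mathbb{Z}_{2}}(\mathcal{E}_{0})$ and the knot-theoretic invariant agrees with $\widehat{HF}_{\mathbb{Z}_{2}}(\widetilde{HR(\mathcal{E}_{0})})$ via the already-established naturality of both strong Heegaard invariants) and setting $\eta_{(K,z)} = \mathcal{T}_{\mathcal{E}_{0}}$; the commuting square then certifies independence of the choice.

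To prove the square, I would split into four cases following the classification in Lemma~\ref{lem:HR-relations}. When $s$ is a diffeomorphism, the assertion is immediate from Remark~\ref{rem:comm-stabI}, since diffeomorphism maps commute with every map induced by a basic move or by stabilization of type I, and the map $\mathcal{E}\mapsto HR(\mathcal{E})$ is manifestly diffeomorphism-equivariant. When $s$ is a stabilization of type I, commutativity again follows from Remark~\ref{rem:comm-stabI}, because $HR(\mathcal{E})$ and $HR(\mathcal{E}')$ differ by the same stabilization of type I and stabilization-I maps commute with every other extended Heegaard move map. When $s$ is a stabilization of type II, Lemma~\ref{lem:HR-relations}(3) gives $HR(\mathcal{E})=HR(\mathcal{E}')$, and the claim reduces to showing that the extra stabilization-II map $\widehat{HF}_{\mathbb{Z}_{2}}(s)$ is absorbed by $\mathcal{R}$ and $\mathcal{D}$; this follows from Lemma~\ref{comm-lemma-stabII} together with Theorem~\ref{thm:basicmove-isom}.

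The main obstacle is the basic-move case. When $s$ is a basic move, the set of proper crossings may change---a handleslide or isotopy can either create or destroy crossings of A-arcs with B-arcs---so $R(\mathcal{E})$ and $R(\mathcal{E}')$ are not related by a single basic move, but by a sequence of basic moves together with a stabilization of type I inserting or removing the torus $T$ corresponding to a newly born or dying proper crossing. The idea is to factor the discrepancy
\[
\bigl(\mathcal{T}_{\mathcal{E}'}\circ \widehat{HF}_{\mathbb{Z}_{2}}(s)\bigr)^{-1}\circ \widehat{HF}_{\mathbb{Z}_{2}}(s^{\ast})\circ \mathcal{T}_{\mathcal{E}}
\]
as a composition of maps induced by a closed loop of basic moves and stabilizations of type I, and then invoke Remark~\ref{rem:functor-ext} together with Remark~\ref{rem:comm-stabI} to conclude that this composite is the identity. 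The bookkeeping required is delicate: one must use Lemma~\ref{lem:resmap-welldef} to sidestep the auxiliary choices in building $\mathcal{R}$, and Lemma~\ref{lem:resmap-comm} to shuffle the resolution maps into a convenient order matching the one used for $\mathcal{E}'$. The central technical point amounts to recognizing that after this shuffling, the remaining discrepancy is a loop composed exclusively of basic moves and stabilizations of type I, which by the functoriality and commutativity conditions of the strong Heegaard invariant (Lemmas~\ref{lem-funt},~\ref{lem-comm}) induces the identity.

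Once the square commutes, naturality of $\eta$ under a diffeomorphism $\phi:(K,z)\to(K',z')$ follows by applying the diffeomorphism case of the square to the move $s=\phi:\mathcal{E}_{0}\to\phi(\mathcal{E}_{0})$, and invertibility of $\eta$ is clear. This finishes the plan.
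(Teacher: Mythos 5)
Your proposal is correct and follows essentially the same route as the paper's own proof: reduce to a commuting square for each extended Heegaard move $s$, use Lemma~\ref{lem:HR-relations} to transport $s$ to a sequence $HR(s)$ of knot Heegaard moves, observe that the discrepancy is the map induced by a loop of basic moves and stabilizations, and then invoke Lemma~\ref{comm-lemma-stabII}, Remark~\ref{rem:comm-stabI}, and Remark~\ref{rem:functor-ext} to conclude that loop induces the identity, with naturality and invertibility following immediately. Your write-up is somewhat more explicit in its case-by-case discussion of the basic-move case (using Lemmas~\ref{lem:resmap-welldef} and \ref{lem:resmap-comm} to handle the changing set of proper crossings), but the underlying argument is the same as the paper's.
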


\begin{proof}
Let $\mathcal{B}\xrightarrow{s}\mathcal{B}^{\prime}$ be an extended
Heegaard diagram between bridge diagrams $\mathcal{B},\mathcal{B}^{\prime}$
representing a based knot $(K,z)$ in $S^{3}$, and denote the isomorphism
induced by $s$ by 
\[
\widehat{HF}_{\mathbb{Z}_{2}}^{bridge}(s)\,:\,\widehat{HF}_{\mathbb{Z}_{2}}^{bridge}(\mathcal{B}^{\prime})\rightarrow\widehat{HF}_{\mathbb{Z}_{2}}^{bridge}(\mathcal{B}).
\]
If $s$ is a basic move or a stabilization, then by Lemma \ref{lem:HR-relations},
there exists either a (possibly empty) sequence of basic moves and
stabilizations from $HR(\mathcal{B})$ and $HR(\mathcal{B}^{\prime})$;
denote the induced map by 
\[
\widehat{HF}_{\mathbb{Z}_{2}}^{knot}(HR(s))\,:\,\widehat{HF}_{\mathbb{Z}_{2}}^{bridge}(\widetilde{HR(\mathcal{B}^{\prime})})\rightarrow\widehat{HF}_{\mathbb{Z}_{2}}^{bridge}(\widetilde{HR(\mathcal{B})}).
\]
 Then $(\widehat{HF}_{\mathbb{Z}_{2}}^{bridge}(s)\circ\mathcal{T}_{\mathcal{B}})^{-1}\circ\mathcal{T}_{\mathcal{B}^{\prime}}\circ\widehat{HF}_{\mathbb{Z}_{2}}^{knot}(HR(s))$
is the map induced by a loop consisting of basic moves and stabilizations.
Hence, by Lemma \ref{comm-lemma-stabII}, Remark \ref{rem:comm-stabI},
and Remark \ref{rem:functor-ext}, we see that it is equal to the
identity map, i.e. 
\[
\widehat{HF}_{\mathbb{Z}_{2}}^{bridge}(s)\circ\mathcal{T}_{\mathcal{B}}=\mathcal{T}_{\mathcal{B}^{\prime}}\circ\widehat{HF}_{\mathbb{Z}_{2}}^{knot}(HR(s)).
\]
 Thus the translation maps induce a well-defined map 
\[
\mathcal{T}_{(K,z)}\,:\,\widehat{HF}_{\mathbb{Z}_{2}}^{bridge}(\Sigma(K),z)\rightarrow\widehat{HF}_{\mathbb{Z}_{2}}^{knot}(\Sigma(K),z).
\]
 Now, again by Remark \ref{rem:comm-stabI}, we know that $\mathcal{T}_{K}$
commutes with diffeomorphism maps. However the morphisms of the category
$\text{Knot}_{\ast}$ are precisely diffeomorphism. Therefore the
correspondence 
\[
\mathcal{T}\,:\,(K,z)\mapsto\mathcal{T}_{(K,z)}
\]
 is a natural transformation from $\widehat{HF}_{\mathbb{Z}_{2}}^{bridge}$
to $\widehat{HF}_{\mathbb{Z}_{2}}^{knot}$. Since $\mathcal{T}_{(K,z)}$
is an isomorphism for each based knot $(K,z)$ in $S^{3}$, the natural
transformation $\mathcal{T}$ is invertible. 
\end{proof}

\section{The $\widehat{HF}_{\mathbb{Z}_{2}}$ of very nice knot Heegaard diagrams}

Given a knot $K$ in $S^{3}$, we can combinatorially compute its
knot Floer homology $\widehat{HFK}(S^{3},K)$ in the following way.
Choose any knot Heegaard diagram $H_{0}$ representing $K$. Theorem
1.2 of \cite{Sarkar-Wang} tells us that we can convert $H_{0}$ into
a nice diagram $H$ using only isotopies and handleslides. Here, we
say that $H$ is nice if the follwing condition holds.
\begin{itemize}
\item Every region of $H$ bounded by $\alpha$- and $\beta$-curves, which
does not contain basepoints of $H$, is either a bigon or a square.
\end{itemize}
Then, by Theorem 3.3 and Theorem 3.4 of \cite{Sarkar-Wang}, for any
Floer generators $\mathbf{x}$,$\mathbf{y}$ and a Whitney disk $\phi\in\pi_{2}^{0}(\mathbf{x},\mathbf{y})$
with $\mu(\phi)=1$, there exists a holomorphic representative of
$\phi$ if and only if the domain $\mathcal{D}(\phi)$ of $\phi$
is either a bigon or a square which does not contain basepoints of
$H$, and the moduli space of holomorphic representatives of $\phi$
is a point. This allows us to describe $\widehat{CFK}(S^{3},K)$ and
thus compute $\widehat{HFK}(S^{3},K)$ in a combinatorial way.

We will now prove that we can also compute $\widehat{HF}_{\mathbb{Z}_{2}}(\Sigma(K),z)$
in a combinatorial way, without using desirable bridge diagrams of
$K$ as in Figure 1 of \cite{eqv-Floer}. The key idea is to use diagrams
satisfying the conditions similar to nice diagrams. For the technical
terms arising in Lemma \ref{lem:techlemma} involving details of the
construction of $\mathbb{Z}_{2}$-equivariant Heegaard Floer cohomology,
one can find their definitions in section 3 of \cite{eqv-Floer}.
\begin{defn}
A knot Heegaard diagram $H=(\Sigma,\boldsymbol{\alpha},\boldsymbol{\beta},z,w)$
representing a based knot $(K,z)$ in $S^{3}$ is \textbf{very nice}
if the following conditions are satisfied.
\begin{itemize}
\item $H$ is a nice diagram.
\item The region of $H$ containing $w$ is a bigon.
\end{itemize}
\end{defn}

\begin{lem}
\label{lem:techlemma}Let $H=(\Sigma,\boldsymbol{\alpha},\boldsymbol{\beta},z,w)$
be a very nice knot Heegaard diagram of a based knot $(K,z)$ in $S^{3}$,
and let $\hat{H}=(\tilde{\Sigma},\tilde{\boldsymbol{\alpha}},\tilde{\boldsymbol{\beta}},z)$
be the diagram obtained by taking branched double cover of $H$ along
$\{z,w\}$ and forgetting $w$. Then there exists a $\mathbb{Z}_{2}$-equivariant
homotopy coherent diagram $F\,:\,\mathscr{E}\mathbb{Z}_{2}\rightarrow\overline{\mathcal{J}}$
of eventually cylindrical almost complex structures on $\text{Sym}^{g}(\tilde{\Sigma})$,
where $g$ is the genus of $\tilde{\Sigma}$, satisfying the following
conditions.
\begin{itemize}
\item For every object $a\in\text{Ob}(\mathscr{E}\mathbb{Z}_{2})$, every
pair of generators $x,y\in\mathbb{T}_{\tilde{\boldsymbol{\alpha}}}\cap\mathbb{T}_{\tilde{\boldsymbol{\beta}}}$,
and every $\phi\in\pi_{2}(x,y)$, the moduli space $\mathcal{M}(\phi;F(a))$
is transversely cut out.
\item For every composable sequence $f_{n},\cdots,f_{1}$ of morphisms of
$\mathscr{E}\mathbb{Z}_{2}$ with $n\ge2$, every pair of generators
$x,y\in\mathbb{T}_{\tilde{\boldsymbol{\alpha}}}\cap\mathbb{T}_{\tilde{\boldsymbol{\beta}}}$,
and every $\phi\in\pi_{2}(x,y)$ with $\mu(\phi)=1-n$, whose domain
does not intersect $z$, we have 
\[
\mathcal{M}(\phi;F(f_{n},\cdots,f_{1}))=\emptyset.
\]
\end{itemize}
Also, the same statment is true for the $\mathbb{Z}_{2}$-invariant
Heegaard diagram $\mathcal{H}_{d}(\widetilde{H})$.
\end{lem}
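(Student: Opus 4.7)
The plan is to exploit the very nice structure of $H$ so that the branched double cover $\hat{H}$ becomes a nice Heegaard diagram in the sense of Sarkar--Wang \cite{Sarkar-Wang}; this makes the second condition essentially automatic, and reduces the first to the standard equivariant transversality construction of Hendricks--Lipshitz--Sarkar.

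First, I would verify that every region of $\hat{H}$ not containing $z$ is a bigon or a square. For regions of $H$ away from both basepoints, this follows from the nice condition on $H$: such a region is simply connected and its boundary is null-homotopic in $\Sigma \backslash \{z, w\}$, hence it lifts to two disjoint copies of itself. For the region of $H$ containing $w$, the very nice condition ensures it is a bigon; its boundary is homotopic to a small loop around $w$, which maps to the nontrivial element of $\mathbb{Z}_2$, so it lifts to a single region that is combinatorially a $4$-gon. Since $w$ is forgotten after the branched cover, this square contains no basepoint, and every region of $\hat{H}$ away from $z$ is a bigon or a square.

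Next, I would invoke Lipshitz's Maslov index formula $\mu(D) = e(D) + n_\mathbf{x}(D) + n_\mathbf{y}(D)$ to conclude that any positive domain $D$ of $\hat{H}$ whose support avoids $z$ has $\mu(D) \geq 0$: bigon and square regions have zero Euler measure, and the point measures are non-negative. Combined with positivity of intersection with $\{z\} \times \text{Sym}^{g-1}(\tilde{\Sigma})$, this shows that $\mathcal{M}(\phi; J) = \emptyset$ for every almost complex structure $J$ whenever $\mu(\phi) < 0$ and the domain of $\phi$ avoids $z$. Since $\mu(\phi) = 1 - n \leq -1$ for $n \geq 2$, the parametrized moduli space $\mathcal{M}(\phi; F(f_n, \ldots, f_1))$ is a union of empty fibers, so the second condition holds for any choice of $F$.

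It remains to construct $F$ achieving the transversality of the first condition. I would follow the inductive scheme of Proposition 3.15 of \cite{eqv-Floer}: choose a $\mathbb{Z}_2$-equivariant complex structure $\mathfrak{j}_0$ on $\tilde{\Sigma}$, set $F$ on every object to $\text{Sym}^g(\mathfrak{j}_0)$, and extend to each composable tuple of morphisms by a generic small $\mathbb{Z}_2$-equivariant perturbation. The Sarkar--Wang theorem together with Lipshitz's cylindrical formulation gives transversality for $\mu = 1$ disks already with the unperturbed structure (since they are embedded bigons or squares), while generic equivariant perturbation handles the remaining positive-dimensional moduli spaces. The main obstacle I anticipate is arranging equivariant transversality coherently across all composable tuples of the skeleta of $\mathscr{E}\mathbb{Z}_2$, but this is the standard content of the HLS construction. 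The case of $\mathcal{H}_d(\tilde{H})$ is immediate: since both $a$ and $b$ have $z$ as an endpoint, their lifts are excluded from the curve set of $\mathcal{H}_d(\tilde{H})$ by definition, so this diagram coincides with $\hat{H}$.
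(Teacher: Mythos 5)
Your first paragraph — verifying that $\hat H$ is a nice diagram — is correct and mirrors what the paper asserts, supplying a case analysis the paper omits. But there is a genuine gap in the middle of your argument, namely your claim that "$\mathcal{M}(\phi; J) = \emptyset$ for every almost complex structure $J$" whenever $\mu(\phi) < 0$ and the domain avoids $z$, and hence that "the second condition holds for any choice of $F$." Positivity of intersection of a $J$-holomorphic disk with the divisor $\{p\} \times \text{Sym}^{g-1}(\tilde\Sigma)$ requires that divisor to be $J$-holomorphic, which holds for $J = \text{Sym}^g(\mathfrak{j})$ but is not preserved under a general perturbation inside the space $\overline{\mathcal{J}}$ of eventually cylindrical almost complex structures. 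So a "generic small $\mathbb{Z}_2$-equivariant perturbation" does not automatically produce positive domains, and the two conditions you need could be in tension: satisfying the first might break the second. The paper's proof is designed precisely to avoid this: it restricts $F$ to families of the form $\text{Sym}^g(\mathfrak{j}_s)$ with $\mathfrak{j}_s$ $C^\infty$-close and isotopic to $\mathfrak{j}_0$ (which suffices for transversality because the moduli of complex structures on $\tilde\Sigma$ is finite-dimensional), then observes that such structures are pullbacks of $\mathfrak{j}_0$ by diffeomorphisms isotopic to the identity, so elements of $\mathcal{M}(\phi; F(f_n,\ldots,f_1))$ become $\text{Sym}^g(\mathfrak{j}_0)$-holomorphic disks with dynamic boundary conditions, for which positivity is classical. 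This compatibility between the constraints, which your proposal does not address, is the crux of the argument.

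There is also an error in your final sentence. The added arcs $a$ and $b$ in $\widetilde H$ start at $w$ (not $z$) and end at new nearby points; since they do not touch $z$, their lifts \emph{are} added to the curve collections of $\mathcal{H}_d(\widetilde H)$, and that diagram is a $\mathbb{Z}_2$-invariant stabilization of $\hat H$ on a higher-genus surface (branched over four points), not the same diagram. The paper instead invokes an index-preserving 1-1 correspondence between domains of $\mathcal{H}_d(\widetilde H)$ and $\hat H$. Separately, note that bigons have Euler measure $\tfrac{1}{2}$, not $0$, although this does not affect the sign of $\mu(D)$ in your argument.
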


\begin{proof}
Choose any $\mathbb{Z}_{2}$-invariant complex structure $\mathfrak{j}_{0}$
on $\tilde{\Sigma}$. Then, by Theorem 3.4 of \cite{Sarkar-Wang},
a generic perturbation of $\alpha$- and $\beta$-curves achieves
transversality of moduli spaces $\mathcal{M}(\phi;\text{Sym}^{g}(\mathfrak{j}_{0}))$
for any bigons or squares $\phi$ which does not intersect $z$. If
the perturbation is sufficiently small, then it can be seen as an
action of a self-diffeomorphism of $\tilde{\Sigma}$ which is sufficiently
close to the identity. Thus there exists a 1-parameter family of complex
structure $\mathfrak{j}$, isotopic to $\mathfrak{j}_{0}$, such that
$\mathcal{M}(\phi;\text{Sym}^{g}(\mathfrak{j}))$ achieves transversality
for any bigons or squares $\phi$ which does not intersect $z$.

Let $\sigma$ denote the deck transformation of the branched double
covering $\tilde{\Sigma}\rightarrow\Sigma$. Since $\mathfrak{j}_{0}$
is $\mathbb{Z}_{2}$-invariant and $\mathfrak{j}$ is sufficiently
close to $\mathfrak{j_{0}}$, $\sigma\mathfrak{j}$ is also close
to $\mathfrak{j}_{0}$. So we can choose a generic $\mathbb{Z}_{2}$-coherent
homotopy coherent diagram $F$ of eventually cylindrical almost complex
structures on $\tilde{\Sigma}$, consisting only of families of complex
structures of the form $\text{Sym}^{g}(\mathfrak{j}_{s})$ for complex
structures $\mathfrak{j}_{s}$ on $\tilde{\Sigma}$ which are $C^{\infty}$-close
to $\mathfrak{j}_{0}$, so that the first condition is satisfied.
Also, since the moduli space of complex structures on $\tilde{\Sigma}$
has dimension $6g-6$ and thus finite-dimensional, we may assume that
every element of those families is isotopic to $\mathfrak{j}_{0}$.
Then such complex structures are pullbacks of $\mathfrak{j}_{0}$
under diffeomorphisms isotopic to the identity, so elements of $\mathcal{M}(\phi;F(f_{n},\cdots,f_{1}))$
can be seen as a holomorphic disks in $\text{Sym}^{g}(\tilde{\Sigma})$
with dynamic boundary conditions. Such disks must intersect positively
with diagonals $\{p\}\times\text{Sym}^{g-1}(\tilde{\Sigma})$ by holomorphicity
when $p$ is not contained in a neighborhood of $\alpha$- and $\beta$-curves.
Hence, if $\mathcal{M}(\phi;F(f_{n},\cdots,f_{1}))\neq\emptyset$,
the domain $\mathcal{D}(\phi)$ of $\phi$ must be nonzero and positive.

Now, since $H$ is assumed to be very nice, $\hat{H}$ is a nice diagram.
Thus, by the proof of Theorem 3.3 of \cite{Sarkar-Wang}, positivity
of $\mathcal{D}(\phi)$ and the condition $\mu(\phi)=1-n<0$ implies
that $\phi$ is either a bigon or a square, whose domain does not
intersect $z$. However, such domains have Maslov index zero, so we
get a contradiction. Furthermore, since the domains of $\mathcal{H}_{d}(\widetilde{H})$
are in 1-1 correspondence, preserving Maslov indices, with domains
of $\hat{H}$, the same conclusion holds for $\mathcal{H}_{d}(\widetilde{H})$.
\end{proof}
\begin{lem}
\label{lem:RHom-verynice}Let $H$ be a very nice knot Heegaard diagram
which represents a based knot $(K,z)$ in $S^{3}$. Then we have 
\[
\widehat{HF}_{\mathbb{Z}_{2}}(\Sigma(K),z)\simeq H^{\ast}(\text{RHom}_{\mathbb{F}_{2}[\mathbb{Z}_{2}]}(\widehat{CF}(\hat{H}),\mathbb{F}_{2}))\simeq H^{\ast}(\text{RHom}_{\mathbb{F}_{2}[\mathbb{Z}_{2}]}(\widehat{CF}(\mathcal{H}_{d}(\widetilde{H})),\mathbb{F}_{2})).
\]
\end{lem}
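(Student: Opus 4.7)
The plan is to invoke Lemma \ref{lem:techlemma} to show that the $\mathbb{Z}_{2}$-equivariant Floer cochain complex computed with the provided family $F$ collapses to the standard complex computing a derived Hom. Recall from \cite{eqv-Floer} that the freed equivariant Floer complex $\widetilde{CF}_{\mathbb{Z}_{2}}(\hat{H};F)$ is a free $\mathbb{F}_{2}[\mathbb{Z}_{2}]$-resolution of $\widehat{CF}(\hat{H})$ whose differential is a sum of contributions indexed by composable sequences $f_{n},\ldots,f_{1}$ of morphisms in $\mathscr{E}\mathbb{Z}_{2}$, each counting rigid holomorphic disks with respect to $F(f_{n},\ldots,f_{1})$; the equivariant cochain complex is obtained by applying $\mathrm{Hom}_{\mathbb{F}_{2}[\mathbb{Z}_{2}]}(-,\mathbb{F}_{2})$, and $\widehat{HF}_{\mathbb{Z}_{2}}$ is by definition its cohomology.

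First I would specialize to the family $F$ provided by Lemma \ref{lem:techlemma}. The second bullet of that lemma immediately implies that, for every composable sequence of length $n\ge 2$, the relevant moduli spaces are empty, since contributing disks would have Maslov index $1-n\le -1$ with domain disjoint from $z$. Hence the freed differential on $\widetilde{CF}_{\mathbb{Z}_{2}}(\hat{H};F)$ collapses to the sum of the ordinary Floer differential on $\widehat{CF}(\hat{H})$ and the $\mathbb{F}_{2}[\mathbb{Z}_{2}]$-action induced on $\widehat{CF}(\hat{H})$ by the deck transformation $\sigma$ via the single morphisms of $\mathscr{E}\mathbb{Z}_{2}$. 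The resulting complex is $\widehat{CF}(\hat{H})\otimes_{\mathbb{F}_{2}}P_{\bullet}$, where $P_{\bullet}$ is the standard free $\mathbb{F}_{2}[\mathbb{Z}_{2}]$-resolution of $\mathbb{F}_{2}$, equipped with the diagonal action; dualizing over $\mathbb{F}_{2}[\mathbb{Z}_{2}]$ identifies the equivariant cochain complex with the classical complex computing $\mathrm{RHom}_{\mathbb{F}_{2}[\mathbb{Z}_{2}]}(\widehat{CF}(\hat{H}),\mathbb{F}_{2})$, and taking cohomology yields the first isomorphism. The last assertion of Lemma \ref{lem:techlemma} guarantees that the same argument applies verbatim to $\mathcal{H}_{d}(\widetilde{H})$, producing the second isomorphism; the two computations agree because $\widehat{CF}(\hat{H})$ and $\widehat{CF}(\mathcal{H}_{d}(\widetilde{H}))$ differ only by the canonical small A-/B-arc pair near $w$, which is handled exactly as in Lemma \ref{spstab-isom}.

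The main obstacle will be the careful bookkeeping needed to identify the collapsed freed complex with the classical bar resolution computing $\mathrm{RHom}$: one must verify that, after the vanishing of higher composition terms, the residual $\mathbb{F}_{2}[\mathbb{Z}_{2}]$-structure on the complex is literally the one induced by $\sigma$ rather than being twisted by surviving higher homotopies, and that the remaining differential is precisely the direct sum of the Floer differential with the Koszul-type boundary on $P_{\bullet}$. This is essentially a translation exercise once Lemma \ref{lem:techlemma} is in hand, but it requires unpacking the definitions of \cite{eqv-Floer} with enough care to see that only the ordinary Floer differential and the $\mathbb{Z}_{2}$-action terms survive the collapse.
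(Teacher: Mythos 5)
Your proposal correctly identifies the role of Lemma~\ref{lem:techlemma}: the vanishing of moduli spaces in the second bullet kills all higher terms of the freed differential, so $\widetilde{CF}_{\mathbb{Z}_{2}}(\,\cdot\,;F)$ becomes (the dual of) a standard free $\mathbb{F}_{2}[\mathbb{Z}_{2}]$-resolution, and dualizing over $\mathbb{F}_{2}[\mathbb{Z}_{2}]$ produces the classical complex computing $\mathrm{RHom}_{\mathbb{F}_{2}[\mathbb{Z}_{2}]}(-,\mathbb{F}_{2})$. You also correctly see that one must compare the two diagrams $\hat{H}$ and $\mathcal{H}_{d}(\widetilde{H})$, and that the comparison is a stabilization-type quasi-isomorphism at chain level (the paper phrases it as a $\mathbb{Z}_{2}$-invariant stabilization making $\widehat{CF}(\hat{H})\to\widehat{CF}(\mathcal{H}_{d}(\widetilde{H}))$ a $\mathbb{Z}_{2}$-equivariant quasi-isomorphism, which is the same point your appeal to the mechanism of Lemma~\ref{spstab-isom} is getting at).

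However there is a logical gap: you claim that collapsing the freed complex of $\hat{H}$ and ``taking cohomology yields the first isomorphism,'' i.e.\ the isomorphism with $\widehat{HF}_{\mathbb{Z}_{2}}(\Sigma(K),z)$ on the left. What that collapse actually gives you is $\widehat{HF}_{\mathbb{Z}_{2}}(\hat{H})\simeq H^{\ast}(\mathrm{RHom}_{\mathbb{F}_{2}[\mathbb{Z}_{2}]}(\widehat{CF}(\hat{H}),\mathbb{F}_{2}))$, where the left side is the equivariant Floer cohomology \emph{of the diagram} $\hat{H}$. At this point in the paper one does \emph{not} yet know that $\widehat{HF}_{\mathbb{Z}_{2}}(\hat{H})$ computes the invariant $\widehat{HF}_{\mathbb{Z}_{2}}(\Sigma(K),z)$; that identification is precisely Theorem~\ref{thm:knotdiagramHF}, which is proved \emph{using} the present lemma, so invoking it here is circular. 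The paper avoids this by anchoring the argument on $\mathcal{H}_{d}(\widetilde{H})$ rather than $\hat{H}$: since $\widetilde{H}$ is an extended bridge diagram representing $(K,z)$, Corollary~\ref{cor:allgenus-isom} gives $\widehat{HF}_{\mathbb{Z}_{2}}(\Sigma(K),z)\simeq\widehat{HF}_{\mathbb{Z}_{2}}(\mathcal{H}_{d}(\widetilde{H}))$, the collapse argument then identifies the right side with $H^{\ast}(\mathrm{RHom}_{\mathbb{F}_{2}[\mathbb{Z}_{2}]}(\widehat{CF}(\mathcal{H}_{d}(\widetilde{H})),\mathbb{F}_{2}))$, and only afterwards is the stabilization quasi-isomorphism used to replace $\mathcal{H}_{d}(\widetilde{H})$ by $\hat{H}$. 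Your argument has all of these pieces except the explicit appeal to the section~2--3 results that put $\widehat{HF}_{\mathbb{Z}_{2}}(\Sigma(K),z)$ on the left, and your ordering (starting from $\hat{H}$) presupposes the conclusion. Reordering to start from $\mathcal{H}_{d}(\widetilde{H})$ and citing Corollary~\ref{cor:allgenus-isom} closes the gap.
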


\begin{proof}
Let $F$ be a $\mathbb{Z}_{2}$-equivariant homotopy coherent diagram
arising in Lemma \ref{lem:techlemma}. Then all higher terms of the
differential of freed Floer complex $\widetilde{CF}_{\mathbb{Z}_{2}}(\mathcal{H}_{d}(\widetilde{H});F)$
vanish by construction, and thus we get 
\[
\widehat{CF}_{\mathbb{Z}_{2}}(\Sigma(K),z)\simeq\widehat{CF}_{\mathbb{Z}_{2}}(\mathcal{H}_{d}(\widetilde{H}))\simeq\text{RHom}_{\mathbb{F}_{2}[\mathbb{Z}_{2}]}(\widehat{CF}(\mathcal{H}_{d}(\widetilde{H})),\mathbb{F}_{2}).
\]
 Also, since $\mathcal{H}_{d}(\widetilde{H})$ is given by performing
a $\mathbb{Z}_{2}$-invariant stabilization to the Heegaard diagram
$\hat{H}$, the stabilization map 
\[
\widehat{CF}(\hat{H})\rightarrow\widehat{CF}(\mathcal{H}_{d}(\widetilde{H}))
\]
 is a $\mathbb{Z}_{2}$-equivariant quasi-isomorphism for a suitable
choice of almost complex structures. Therefore, by taking cohomology,
we get the desired result.
\end{proof}
Now we will briefly prove that any based knot can be represented by
a very nice knot Heegaard diagram by using Sarkar-Wang algorithm in
an equivariant way.
\begin{lem}
\label{lem:eqvSW}Every knot Heegaard diagram $H=(\Sigma,\boldsymbol{\alpha},\boldsymbol{\beta},z,w)$
representing a based knot $(K,z)$ in $S^{3}$ is related to a very
nice diagram by a sequence of isotopies and handleslides.
\end{lem}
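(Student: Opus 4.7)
The plan is to deduce the lemma from a two-stage modification of the Sarkar--Wang algorithm \cite{Sarkar-Wang}. Let $H=(\Sigma,\boldsymbol{\alpha},\boldsymbol{\beta},z,w)$ be the given knot Heegaard diagram. In the first stage, I apply the Sarkar--Wang algorithm to $H$ while treating both $z$ and $w$ as basepoints, obtaining via isotopies and handleslides a nice knot Heegaard diagram $H_{1}$ in which every region not containing $z$ or $w$ is either a bigon or a square. The region $R_{w}$ of $H_{1}$ containing $w$ is then some $2n$-gon with $n\ge 1$; if $n=1$ we are done, so assume $n\ge 2$.

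The second stage is an inductive reduction of $n$. Since $\partial R_{w}$ has at least four corners and $w$ is an interior point of $R_{w}$, I can pick a corner $p\in\partial R_{w}$ (an intersection of some $\alpha$-edge $a$ and $\beta$-edge $b$ of $\partial R_{w}$) such that $w$ lies far from a small neighborhood of $p$ inside $R_{w}$. I then perform a local finger-move isotopy at $p$, designed to carve a small bigon (not containing $w$) out of $R_{w}$ near $p$, thereby reducing the number of edges of $R_{w}$ by $2$. Concretely, the finger-move is a perturbation of one of the two curves containing $a$ or $b$ that introduces a pair of new intersection points near $p$ bounding a small bigon lying between $p$ and $w$, so that the remnant of $R_{w}$ is a $2(n-1)$-gon still containing $w$. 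Iterating this operation drives $n$ down to $1$.

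The main obstacle is to carry out the finger-move so that (i) the newly created bigon genuinely lies inside what was $R_{w}$ (and not in an adjacent region, which is the "default" behavior of a na\"ive finger-move whose tip-bigon sits across the crossed edge), (ii) the carved bigon is disjoint from $w$, and (iii) the resulting diagram remains nice away from $R_{w}$. Points (i) and (ii) will be handled by choosing the base of the finger to be a segment of $a$ (or $b$) near $p$ and routing the finger through $R_{w}$ around $p$ before crossing $b$ (or $a$), exploiting the alternating structure of $\partial R_{w}$; the resulting bigon is bounded by a single $\alpha$-arc and a single $\beta$-arc lying in the former $R_{w}$ on the $p$-side away from $w$. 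Point (iii) is handled by noting that the only regions affected outside $R_{w}$ are the ones across $a$ and $b$ from $R_{w}$, each of which gains at most a pair of new boundary arcs; re-applying Sarkar--Wang locally to these regions restores niceness without touching the carved bigon at $w$. A lexicographic complexity argument on (number of non-nice, non-$R_{z}$ regions, value of $n$) then guarantees that the combined procedure terminates at a very nice diagram.
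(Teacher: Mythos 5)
Your stage 1 is fine, but stage 2 contains a genuine gap: the finger move you describe does not reduce the number of corners of $R_{w}$. Work it out carefully in a local model: take $R_{w}$ to be the first quadrant near the corner $p=(0,0)$, with $a\subset\{y=0\}$ an $\alpha$-edge and $b\subset\{x=0\}$ a $\beta$-edge. Pushing a finger of the $\alpha$-curve from $a$ through $R_{w}$ and across $b$ introduces two new intersection points $p_{1},p_{2}\in\alpha\cap b$. The resulting picture has \emph{two} new bigons: one at the tip of the finger on the far side of $b$ (in the region $R_{b}$), and one "at the base" of the finger containing the old corner $p$. But the remnant of $R_{w}$ is still a $2n$-gon: it has lost the corner $p$ and gained the corner $p_{2}$, so its corner count is unchanged. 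The "routing the finger around $p$" idea does not change this; any isotopy that pushes $\alpha$ across $\beta$ creates new intersection points in pairs and cannot shave two corners off a pre-existing $2n$-gon, because to cut a bigon off an alternating $2n$-gon with an $\alpha$-arc you would need that arc to run between two $\beta$-edges while enclosing exactly one existing corner, and such an arc necessarily produces a triangle (odd-sided region), not a bigon. (There is also a secondary problem: "re-applying Sarkar--Wang locally" is not a well-defined operation, since the algorithm is global, so your lexicographic complexity would need a genuine termination argument, which you do not supply.)

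The paper's proof takes a quite different and cleaner route that sidesteps this entirely: rather than running Sarkar--Wang on $H$ and then repairing $R_{w}$, it runs Sarkar--Wang \emph{equivariantly} on the branched double cover $\hat{H}$ of $H$ along $\{z,w\}$, using only the basepoint lying over $z$. The algorithm can be performed in a $\mathbb{Z}_{2}$-equivariant way (handling $\mathbb{Z}_{2}$-orbit pairs of bad regions simultaneously and performing invariant finger moves on $\mathbb{Z}_{2}$-invariant bad regions, as in Figure \ref{fig07}), and the complexity argument goes through unchanged. The output $\hat{H}'$ then has every region away from $z$ a bigon or square, and descends to a knot Heegaard diagram $H'$. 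The region of $\hat{H}'$ containing the branch point over $w$ is $\mathbb{Z}_{2}$-invariant and cannot be a bigon (rotation by $\pi$ about the branch point would have to exchange its single $\alpha$-edge with its single $\beta$-edge), so it is a square, which covers a bigon in $H'$ containing $w$. This gives very niceness of $H'$ directly, with no post hoc repair step.
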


\begin{proof}
Let $\hat{H}$ be the branched double cover of $H$ along the branching
locus $\{z,w\}$. Then, in each stage of Sarkar-Wang algorithm for
$\hat{H}$, we can perform the algorithm in a $\mathbb{Z}_{2}$-equivariant
manner, as in Figure \ref{fig07}. The complexity argument works in
the same way, so that the result we get after performing the algorithm
is a branched double cover $\hat{H}^{\prime}$ of some knot Heegaard
diagram $H^{\prime}$, representing $(K,z)$, such that every region
of $\hat{H}^{\prime}$ which does not contain $z$ is either a bigon
or a square. But then the region of $H^{\prime}$ which contains $w$
must be a bigon. Therefore $H^{\prime}$ is a very nice diagram.
\end{proof}
\begin{figure}
\resizebox{.4\textwidth}{!}{\includegraphics{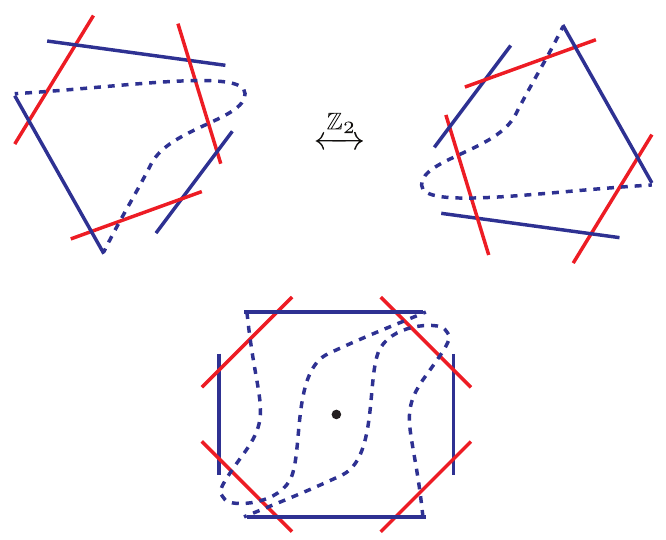}} \caption{\label{fig07}The initial stage of $\mathbb{Z}_{2}$-equivariant Sarkar-Wang
algorithm. The diagram above is the case when the algorithm starts
from a pair of bad regions which form a $\mathbb{Z}_{2}$-orbit, and
the diagram below is the case when algorithm starts from a $\mathbb{Z}_{2}$-invariant
bad region. The black point in the center of the latter is either
$z$ or $w$.}
\end{figure}

To sum up, we have proved the following theorem.
\begin{thm}
\label{thm:combinatorial}There is a combinatorial way to compute
$\widehat{HF}_{\mathbb{Z}_{2}}(\Sigma(K),z)$ of based knot $(K,z)$,
using knot Heegaard diagrams.
\end{thm}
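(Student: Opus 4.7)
The plan is to simply assemble the three lemmas of this section into a recipe, since all of the real work has already been done. Specifically, given a based knot $(K,z)$ in $S^{3}$, I would start by writing down any knot Heegaard diagram $H_{0}$ representing $(K,z)$, which can clearly be done combinatorially.

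Next, I would invoke Lemma \ref{lem:eqvSW} to replace $H_{0}$ by a very nice knot Heegaard diagram $H$, obtained by a finite sequence of isotopies and handleslides dictated by the $\mathbb{Z}_{2}$-equivariant Sarkar--Wang algorithm on the branched double cover $\widehat{H}_{0}$. Since the algorithm is explicit at every step and tracks only the combinatorial data of the regions of $\widehat{H}_{0}$ (together with the $\mathbb{Z}_{2}$-action), the output diagram $H$ is produced in a purely combinatorial way.

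Once the very nice diagram $H$ is in hand, I would apply Lemma \ref{lem:RHom-verynice} to rewrite
\[
\widehat{HF}_{\mathbb{Z}_{2}}(\Sigma(K),z)\simeq H^{\ast}(\mathrm{RHom}_{\mathbb{F}_{2}[\mathbb{Z}_{2}]}(\widehat{CF}(\widehat{H}),\mathbb{F}_{2})).
\]
By Theorem~3.3 of \cite{Sarkar-Wang}, applied to the nice diagram $\widehat{H}$, the differential on $\widehat{CF}(\widehat{H})$ counts empty embedded bigons and empty embedded rectangles in $\widehat{H}$, which is entirely a combinatorial procedure. The $\mathbb{F}_{2}[\mathbb{Z}_{2}]$-module structure is given by the deck transformation of the branched cover $\widetilde{\Sigma}\to\Sigma$, which is determined combinatorially from $H$ at the level of generators and is compatible with the bigon/square count. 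Finally, the functor $\mathrm{RHom}_{\mathbb{F}_{2}[\mathbb{Z}_{2}]}(-,\mathbb{F}_{2})$ is computed by resolving $\mathbb{F}_{2}$ by the standard $2$-periodic free resolution over $\mathbb{F}_{2}[\mathbb{Z}_{2}]$, which gives a totally explicit bicomplex on the finitely generated $\mathbb{F}_{2}[\mathbb{Z}_{2}]$-module $\widehat{CF}(\widehat{H})$; taking cohomology is then a matter of linear algebra over $\mathbb{F}_{2}[\theta]$.

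The only step which could conceivably be called an obstacle is verifying that the equivariant Sarkar--Wang algorithm in Lemma \ref{lem:eqvSW} actually terminates with a very nice diagram (including the condition that the region at $w$ is a bigon), but this is already contained in Lemma \ref{lem:eqvSW} and its equivariant complexity argument. Thus the theorem follows by concatenating the three steps above: Lemma \ref{lem:eqvSW} produces $H$, Lemma \ref{lem:techlemma} produces the equivariant almost complex structure for which the higher terms vanish, and Lemma \ref{lem:RHom-verynice} reduces the computation to pure homological algebra over $\mathbb{F}_{2}[\mathbb{Z}_{2}]$ applied to a combinatorially described chain complex.
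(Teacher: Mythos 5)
Your proposal follows exactly the same route as the paper: apply Lemma~\ref{lem:eqvSW} to reach a very nice diagram, invoke Lemma~\ref{lem:RHom-verynice} to reduce to computing $\mathrm{RHom}_{\mathbb{F}_{2}[\mathbb{Z}_{2}]}(\widehat{CF}(\hat{H}),\mathbb{F}_{2})$, and use Theorems~3.3 and 3.4 of \cite{Sarkar-Wang} to make $\widehat{CF}(\hat{H})$ combinatorial. The extra remarks you add (about the two-periodic free resolution and the role of Lemma~\ref{lem:techlemma}) are correct elaborations but do not change the argument.
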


\begin{proof}
Choose any knot Heegaard diagram $H_{0}$ representing $(K,z)$. Then,
by Lemma \ref{lem:eqvSW}, we can replace it by a very nice diagram
$H$. By Lemma \ref{lem:RHom-verynice}, one can compute $\widehat{HF}_{\mathbb{Z}_{2}}(\Sigma(K),z)$
using $\widehat{CF}(\hat{H})$. But the chain complex $\widehat{CF}(\hat{H})$
can be combinatorially computed by Theorem 3.3 and Theorem 3.4 of
\cite{Sarkar-Wang}. This gives a combitorial description of $\widehat{HF}_{\mathbb{Z}_{2}}(\Sigma(K),z)$.
\end{proof}
Finally, recall that any (weakly admissible) knot Heegaard diagram
$H=(\Sigma,\boldsymbol{\alpha},\boldsymbol{\beta},z,w)$ representing
a based knot $(K,z)$ in $S^{3}$, we have defined $\hat{H}$ as the
$\mathbb{Z}_{2}$-equivariant Heegaard diagram defined by taking branched
double cover of $H$ along $\{z,w\}$ and forgetting $w$, and defined
$\widetilde{H}$ as the extended bridge diagram obtained from $H$
by forgetting $w$ and adding an A-arc and a B-arc which start at
$w$ and do not intersect each other in their interior. In the previous
section, we have proved that 
\[
\widehat{HF}_{\mathbb{Z}_{2}}(\Sigma(K),z)\simeq\widehat{HF}_{\mathbb{Z}_{2}}(\mathcal{H}_{d}(\widetilde{H})).
\]
 Now, using Theorem \ref{thm:combinatorial}, we can remove the pair
of an A-arc and a B-arc from $\widetilde{H}$.
\begin{thm}
\label{thm:knotdiagramHF}Let $H$ be any weakly admissible knot Heegaard
diagram representing a based knot $(K,z)$ in $S^{3}$. Then we have
\[
\widehat{HF}_{\mathbb{Z}_{2}}(\Sigma(K),z)\simeq\widehat{HF}_{\mathbb{Z}_{2}}(\hat{H}).
\]
\end{thm}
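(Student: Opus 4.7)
The plan is to reduce the general case to the very nice case via the equivariant Sarkar--Wang algorithm, and then to handle the very nice case by matching two computations of $\text{RHom}_{\mathbb{F}_{2}[\mathbb{Z}_{2}]}(\widehat{CF}(\hat{H}),\mathbb{F}_{2})$.

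First I would dispose of the very nice case. For a very nice $H$, Lemma \ref{lem:RHom-verynice} provides the identification
\[
\widehat{HF}_{\mathbb{Z}_{2}}(\Sigma(K),z)\simeq H^{\ast}(\text{RHom}_{\mathbb{F}_{2}[\mathbb{Z}_{2}]}(\widehat{CF}(\hat{H}),\mathbb{F}_{2})).
\]
The second half of Lemma \ref{lem:techlemma} is explicitly stated for the $\mathbb{Z}_{2}$-invariant Heegaard diagram $\hat{H}$ itself, which means that the Hendricks--Lipshitz--Sarkar freed complex construction can be carried out on $\hat{H}$ with all higher-differential contributions vanishing. Running the same chain of identifications as in Lemma \ref{lem:RHom-verynice} (but using $\hat{H}$ directly in place of $\mathcal{H}_{d}(\widetilde{H})$, without the final equivariant-stabilization step) yields $\widehat{HF}_{\mathbb{Z}_{2}}(\hat{H})\simeq H^{\ast}(\text{RHom}_{\mathbb{F}_{2}[\mathbb{Z}_{2}]}(\widehat{CF}(\hat{H}),\mathbb{F}_{2}))$. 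Comparing the two right-hand sides completes the very nice case.

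Next I would bootstrap to arbitrary weakly admissible $H$ using Lemma \ref{lem:eqvSW}, which produces a finite sequence of isotopies and handleslides taking $H$ to a very nice $H'$. As arranged in the proof of that lemma, each such move lifts to a $\mathbb{Z}_{2}$-equivariant isotopy or handleslide on $\hat{H}$. Equivariant isotopies induce isomorphisms of $\widehat{HF}_{\mathbb{Z}_{2}}$ by a direct comparison of almost complex structures, and equivariant handleslides induce isomorphisms by the equivariant triangle map argument of Proposition~3.24 in \cite{eqv-Floer}; to justify the necessary admissibility of the intermediate diagrams I would invoke the analogue of Lemma \ref{weakadm-moves}, since the Sarkar--Wang moves can be realized within the class of weakly admissible $\mathbb{Z}_{2}$-invariant diagrams. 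Consequently $\widehat{HF}_{\mathbb{Z}_{2}}(\hat{H})\simeq\widehat{HF}_{\mathbb{Z}_{2}}(\widehat{H'})$, and since the left-hand side of the theorem is topologically invariant, the very nice case for $H'$ transports back to $H$.

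The hard part will be verifying that the equivariant transversality argument of Lemma \ref{lem:techlemma} together with the collapsing argument of Lemma \ref{lem:RHom-verynice} genuinely goes through for $\hat{H}$ in place of $\mathcal{H}_{d}(\widetilde{H})$. The key observation is that the freed complex, its differential, and its $\mathbb{F}_{2}[\mathbb{Z}_{2}]$-module structure all depend only on the underlying $\mathbb{Z}_{2}$-equivariant Heegaard diagram together with a choice of $\mathbb{Z}_{2}$-coherent homotopy coherent diagram of almost complex structures; the extra A-arcs and B-arcs of $\widetilde{H}$ play no role in that construction beyond guaranteeing the equivariant transversality hypothesis (EH-2), which the second half of Lemma \ref{lem:techlemma} verifies directly for $\hat{H}$. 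Once this is granted, no further analysis is needed to pass from the extended bridge computation to the intrinsic one.
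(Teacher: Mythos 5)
Your proposal is correct and follows the same route as the paper: reduce to the very nice case via Lemma \ref{lem:eqvSW} and $\mathbb{Z}_{2}$-equivariant isotopy/handleslide invariance (Lemmas~3.24 and 3.25 of \cite{eqv-Floer}), then identify $\widehat{HF}_{\mathbb{Z}_{2}}(\hat{H})$ with $H^{\ast}(\text{RHom}_{\mathbb{F}_{2}[\mathbb{Z}_{2}]}(\widehat{CF}(\hat{H}),\mathbb{F}_{2}))$ via the vanishing of higher differentials granted by Lemma \ref{lem:techlemma}, and compare with Lemma \ref{lem:RHom-verynice}. You in fact make explicit the step the paper leaves implicit---that Lemma \ref{lem:techlemma} applies to $\hat{H}$ itself (not just to $\mathcal{H}_{d}(\widetilde{H})$) and so the collapsing argument gives $\widehat{HF}_{\mathbb{Z}_{2}}(\hat{H})$ directly---which is a worthwhile clarification but not a divergence in method.
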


\begin{proof}
By \ref{lem:eqvSW}, $H$ is related by a sequence of isotopies and
handleslides to a very nice diagram $H_{0}$. Then $\widehat{H_{0}}$
is related by a sequence of equivariant isotopies and handleslides
to $\hat{H}$. Hence, by Lemma 3.24 and Lemma 3.25 of \cite{eqv-Floer},
we have 
\[
\widehat{HF}_{\mathbb{Z}_{2}}(\hat{H})\simeq\widehat{HF}_{\mathbb{Z}_{2}}(\widehat{H_{0}}).
\]
 But since $H_{0}$ is very nice, $\widehat{HF}_{\mathbb{Z}_{2}}(\widehat{H_{0}})\simeq\widehat{HF}_{\mathbb{Z}_{2}}(\Sigma(K),z)$
by \ref{lem:RHom-verynice}.
\end{proof}

\section{$\mathbb{Z}_{2}$-equivariant knot Floer cohomology}

Theorem \ref{thm:knotdiagramHF} tells us that, given an admissible
knot Heegaard diagram $H$ of a based knot $(K,z)$ in $S^{3}$, we
can calculate $\widehat{HF}_{\mathbb{Z}_{2}}(\Sigma(K),z)$ by taking
branched double cover of $H$, removing its second basepoint, and
then taking $\widehat{HF}_{\mathbb{Z}_{2}}$ of the $\mathbb{Z}_{2}$-invariant
diagram we obtain. 

Now suppose that we do not remove the second basepoint. Then the diagram
we obtain is a knot Heegaard diagram, which now represents $K$ in
the branched double cover $\Sigma(K)$. we will now observe that taking
$\widehat{HF}_{\mathbb{Z}_{2}}$ to this diagram gives a knot invariant.
Note that, to achieve more generality, we work with links in $S^{3}$,
instead of knots.
\begin{defn}
Given a link Heegaard diagram $H$ representing a link $L$ in $S^{3}$,
we denote by $B(H)$ the Heegaard diagram representing $L$ in $\Sigma(L)$,
obtained by taking branched double cover of $H$ along its two basepoints. 
\end{defn}

\begin{lem}
\label{lem:eqvtrans-HFK}Let $H$ be a link Heegaard diagram representing
a link in $S^{3}$ and write $B(H)=(\Sigma,\boldsymbol{\alpha},\boldsymbol{\beta},\mathbf{x},\mathbf{z})$.
Then for a generic 1-parameter family $J$ of $\mathbb{Z}_{2}$-equivariant
almost complex structures on $\text{Sym}^{g}(\Sigma\backslash(\mathbf{x}\cup\mathbf{z}))$,
every pair of generators $x,y\in\mathbb{T}_{\alpha}\cap\mathbb{T}_{\beta}$,
and every homotopy class $\phi\in\pi_{2}(x,y)$ which does not intersect
$\mathbf{x}$ and $\mathbf{z}$, the moduli spaces $\mathcal{M}(\phi;J)$
is transversely cut out.
\end{lem}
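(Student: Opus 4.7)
\medskip

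The plan is to reduce this to the equivariant transversality framework of \cite{eqv-Floer} via the same mechanism used in the proof of Lemma \ref{eqvtrans}. Since $J$ is $\mathbb{Z}_{2}$-equivariant, the standard argument of \cite{eqv-Floer} gives generic transversality for all moduli spaces of Whitney disks provided that hypothesis (EH-2) is satisfied, namely that every $\mathbb{Z}_{2}$-invariant Whitney disk $\phi$ whose domain avoids $\mathbf{x}\cup\mathbf{z}$ has even Maslov index. So the task is to verify this Maslov index condition.

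First I would set up the covering map. Let $\pi\,:\,\Sigma\rightarrow\Sigma_0$ denote the branched double covering of the Heegaard surface of $H$ whose branching locus is the set of basepoints of $H$; by construction, the fixed point set of the $\mathbb{Z}_{2}$-action on $\Sigma$ is exactly $\mathbf{x}\cup\mathbf{z}$. As in the proof of Proposition \ref{weakadm}, any $\mathbb{Z}_{2}$-invariant domain $\tilde{D}$ on $\Sigma$ that has no multiplicity at any point of $\mathbf{x}\cup\mathbf{z}$ is the pullback $\pi^{\ast}D$ of a unique domain $D$ on $\Sigma_0$, and this $D$ likewise has no multiplicity at any basepoint of $H$.

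Next I would compute the Maslov index. Given a $\mathbb{Z}_{2}$-invariant generator pair $x,y$ and an invariant disk $\phi\in\pi_{2}(x,y)$ with $\mathcal{D}(\phi)$ not meeting $\mathbf{x}\cup\mathbf{z}$, apply the cylindrical Maslov index formula $\mu=e+n_{x}+n_{y}$ on both $\Sigma$ and $\Sigma_0$ exactly as in the proof of Lemma \ref{eqvtrans}. Because $\mathcal{D}(\phi)=\pi^{\ast}D$ with $n_{p}(D)=0$ for every branch point $p$, all the correction terms $\sum n_{p}(D)$ which appeared in the analogous computation of Lemma \ref{eqvtrans} vanish identically. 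What remains is the clean identity
\[
\mu(\pi^{\ast}D)=2\mu(D),
\]
so the Maslov index of $\phi$ is automatically even. Combined with the standard equivariant transversality theorem from Section~3 of \cite{eqv-Floer}, this yields the conclusion for a generic $\mathbb{Z}_{2}$-equivariant one-parameter family $J$.

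The subtlety to handle carefully, rather than a serious obstacle, is the characterization of $\mathbb{Z}_{2}$-invariant generators: since the fixed points $\mathbf{x}\cup\mathbf{z}$ are deleted from $\text{Sym}^{g}(\Sigma\setminus(\mathbf{x}\cup\mathbf{z}))$, an invariant generator $x\in\mathbb{T}_{\alpha}\cap\mathbb{T}_{\beta}$ cannot contain any fixed point and must therefore consist entirely of orbit pairs $\{p,\sigma p\}$ (forcing $g$ to be even in any case where such a generator exists, else (EH-2) is vacuous). This differs from the setup of Lemma \ref{eqvtrans}, where invariant generators had the form $\mathbf{x}^{\prime}\cup F_{1}$, but it does not affect the Maslov index computation, which depends only on the domain and not on the structure of the generator. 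One only needs to confirm that the pullback identification of invariant domains with domains on $\Sigma_0$ continues to hold under this alternative structure of invariant generators, which is immediate from Riemann--Hurwitz applied to the covering $\pi$ away from the branch locus.
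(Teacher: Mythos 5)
Your proof is correct and takes essentially the same approach as the paper: the paper's proof is a single sentence noting that (EH-2) holds because the domain avoids the branch locus $\mathbf{x}\cup\mathbf{z}$, and your argument is an explicit unwinding of that claim via the Maslov index formula $\mu(\pi^{\ast}D)=2\mu(D)+\sum_{F_1}n_p(D)-\sum_{F_2}n_p(D)$ with all correction terms vanishing because $n_p(D)=0$ at every branch point. The aside about invariant generators consisting only of orbit pairs is a fine observation, though not needed for the index computation, which as you note depends only on the domain.
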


\begin{proof}
Since we are branching over $\mathbf{x}\cup\mathbf{z}$ and $\phi$
does not intersect with $\mathbf{x}$ and $\mathbf{z}$, condition
(EH-2) in \cite{eqv-Floer} is satisfied.
\end{proof}
\begin{lem}
\label{lem:eqvhfk-welldef}For any two weakly admissible link Heegaard
diagrams $H_{1},H_{2}$ reprsenting a link in $S^{3}$, we have 
\[
\widehat{HF}_{\mathbb{Z}_{2}}(B(H_{1}))\simeq\widehat{HF}(B(H_{2})).
\]
\end{lem}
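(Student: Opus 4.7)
The plan is to show that any two weakly admissible link Heegaard diagrams $H_1, H_2$ for the same link $L$ in $S^3$ are connected by a sequence of link-Heegaard moves (isotopies and handleslides of $\alpha,\beta$-curves, (de)stabilizations, and diffeomorphisms), each of which lifts to the branched double covers $B(H_1), B(H_2)$ as a $\mathbb{Z}_2$-equivariant move inducing an isomorphism on $\widehat{HF}_{\mathbb{Z}_2}$. Crucially, since the $\alpha$- and $\beta$-curves as well as the handleslide regions are disjoint from the two basepoints $\mathbf{x}\cup\mathbf{z}$, which form the branch locus of $B(H)$, every such move lifts equivariantly to $B(\cdot)$.

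For isotopies and handleslides, I would invoke Lemma \ref{lem:eqvtrans-HFK}, which asserts that for a generic $\mathbb{Z}_2$-equivariant family of almost complex structures, every homotopy class of Whitney disk disjoint from $\mathbf{x}\cup\mathbf{z}$ admits transversely cut-out moduli. Coupled with the equivariant continuation-map argument of Proposition 3.24 of \cite{eqv-Floer} (for equivariant Hamiltonian isotopies of Lagrangians) and the equivariant triangle-map argument of Lemma 3.25 of \cite{eqv-Floer} (for equivariant handleslides), this produces $\mathbb{F}_2[\theta]$-module isomorphisms on $\widehat{HF}_{\mathbb{Z}_2}$.

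For stabilizations of $H$, I would first use handleslides (already shown to induce isomorphisms) to move the stabilization region arbitrarily close to a basepoint; the lift to $B(H)$ then becomes a standard equivariant stabilization at a branch point. By choosing split almost complex structures of the form $\mathrm{Sym}^{g}(\mathfrak{j})$ and applying the neck-stretching argument exactly as in Section 6 of \cite{eqv-Floer}, this induces an isomorphism on $\widehat{HF}_{\mathbb{Z}_2}$. For a diffeomorphism of $H$ preserving basepoints, its two lifts to $B(H)$ differ by the deck transformation $\sigma$; but since the $\mathbb{Z}_2$-action on $\widehat{HF}_{\mathbb{Z}_2}$ is trivial (the complex is obtained by dualizing and tensoring with the trivial $\mathbb{F}_2[\mathbb{Z}_2]$-module), both lifts induce the same isomorphism, so the diffeomorphism map is well-defined.

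The main obstacle I anticipate is ensuring weak admissibility is preserved throughout the sequence of moves so that the equivariant Floer complex remains well-defined at every intermediate stage. This is handled by an argument directly analogous to Proposition \ref{weakadm}: the pullback of periodic domains via the branched covering map $B(H)\to H$ shows that weak admissibility of $H$ implies weak admissibility of $B(H)$, and by Proposition 2.2 of \cite{OSz-original} any two weakly admissible link Heegaard diagrams can be connected by moves preserving weak admissibility at every step. Combining all four cases shows that the resulting maps $\widehat{HF}_{\mathbb{Z}_2}(B(H_1)) \to \widehat{HF}_{\mathbb{Z}_2}(B(H_2))$ are isomorphisms, completing the proof.
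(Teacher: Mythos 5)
Your proof is sound and would establish the lemma, but it takes a more circuitous route than the paper for the stabilization case, and in doing so it misses the key simplification that Section 5 is built around. The whole point of Lemma \ref{lem:eqvtrans-HFK} is that, because both basepoints of $H$ serve as the branch locus of $B(H)$, every homotopy class contributing to $\widehat{CF}(B(H))$ is disjoint from the branch points, so condition (EH-2) of \cite{eqv-Floer} holds automatically and equivariant transversality is achieved for \emph{generic} $\mathbb{Z}_2$-equivariant families. This gives a canonical identification $\widehat{CF}_{\mathbb{Z}_2}(B(H))\simeq\mathrm{RHom}_{\mathbb{F}_2[\mathbb{Z}_2]}(\widehat{CF}(B(H)),\mathbb{F}_2)$. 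Once this is in hand, the stabilization case becomes essentially formal: a stabilization of $H_1$ (performed anywhere, away from both basepoints) lifts to a $\mathbb{Z}_2$-orbit pair of stabilizations of $B(H_1)$, so the stabilization map $\widehat{CF}(B(H_1))\to\widehat{CF}(B(H_2))$ is a $\mathbb{Z}_2$-equivariant quasi-isomorphism, and applying $\mathrm{RHom}_{\mathbb{F}_2[\mathbb{Z}_2]}(-,\mathbb{F}_2)$ to a quasi-isomorphism of bounded complexes of finite-dimensional $\mathbb{F}_2[\mathbb{Z}_2]$-modules yields a quasi-isomorphism. There is no need to move the stabilization region near a basepoint via handleslides or to run the neck-stretching analysis of Section 6 of \cite{eqv-Floer} -- that machinery was needed in Sections 2 and 3 precisely because equivariant transversality failed in general there, whereas in Section 5 it is automatic. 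Your treatment of isotopies, handleslides, and diffeomorphisms matches the paper's, and your attention to weak admissibility is correct (and implicitly assumed in the paper).
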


\begin{proof}
We only have to consider the following three cases.
\begin{enumerate}
\item $H_{1}$ and $H_{2}$ are related by either an $\alpha$-isotopy or
a $\beta$-isotopy.
\item $H_{1}$ and $H_{2}$ are related by a handleslide.
\item $H_{2}$ is obtained by stabilizing $H_{1}$.
\end{enumerate}
Cases 1 and 2 can be dealt in the same way as in the case of $\widehat{HF}_{\mathbb{Z}_{2}}(\Sigma(K),z)$,
so we assume that $H_{2}$ is obtained by stabilizing $H_{1}$. By
Lemma \ref{lem:eqvtrans-HFK}, we have 
\begin{align*}
\widehat{CF}_{\mathbb{Z}_{2}}(B(H_{1})) & \simeq\text{RHom}_{\mathbb{F}_{2}[\mathbb{Z}_{2}]}(\widehat{CF}(B(H_{1})),\mathbb{F}_{2}),\\
\widehat{CF}_{\mathbb{Z}_{2}}(B(H_{2})) & \simeq\text{RHom}_{\mathbb{F}_{2}[\mathbb{Z}_{2}]}(\widehat{CF}(B(H_{2})),\mathbb{F}_{2}).
\end{align*}
But $B(H_{2})$ is the diagram obtained by performing a ($\mathbb{Z}_{2}$-invariant)
pair of stabilizations on $B(H_{1})$, so the stabilization map 
\[
\widehat{CF}(B(H_{1}))\rightarrow\widehat{CF}(B(H_{2}))
\]
 is a $\mathbb{Z}_{2}$-equivariant quasi-isomorphism. Therefore $\widehat{CF}_{\mathbb{Z}_{2}}(B(H_{1}))$
is quasi-isomorphic to $\widehat{CF}_{\mathbb{Z}_{2}}(B(H_{2}))$.
\end{proof}
Lemma \ref{lem:eqvhfk-welldef} suggests us to make the following
definition.
\begin{defn}
Let $L$ be a link in $S^{3}$ and $H$ be a Heegaard diagram representing
$L$. Then we define $\mathbb{Z}_{2}$-equivariant knot Floer cohomology
$\widehat{HFL}_{\mathbb{Z}_{2}}(\Sigma(L),L)$ as the isomorphism
class of the $\mathbb{F}_{2}[\theta]$-module $\widehat{HF}_{\mathbb{Z}_{2}}(B(H))$.
when $L$ is a knot, then we denote $\widehat{HFL}_{\mathbb{Z}_{2}}$
by $\widehat{HFK}_{\mathbb{Z}_{2}}$. 
\end{defn}

\begin{rem}
As in the case of $\widehat{HF}_{\mathbb{Z}_{2}}$ of based knots,
$\widehat{HFK}_{\mathbb{Z}_{2}}$ of links can also be computed in
a combinatorial way. The difference is that, for $\widehat{HFK}_{\mathbb{Z}_{2}}$,
it suffices to use any nice diagrams, instead of more restrictive
very nice diagrams, and the proof is much more simple. 

Given a Heegaard diagram $H_{0}$ representing a link $L$ in $S^{3}$,
we first apply Sarkar-Wang algorithm on $H_{0}$ to turn it into a
nice diagram $H$. Then $B(H)$ is also nice, since we are not throwing
away any basepoints, so $\widehat{CF}(B(H))$ can be combinatorially
computed. Then, by Lemma \ref{lem:eqvhfk-welldef}, we have 
\[
\widehat{HFK}_{\mathbb{Z}_{2}}(\Sigma(L),L)\simeq H^{\ast}(\text{RHom}_{\mathbb{F}_{2}[\mathbb{Z}_{2}]}(\widehat{CF}(B(H)),\mathbb{F}_{2})),
\]
 so we can compute $\widehat{HFK}_{\mathbb{Z}_{2}}$ combinatorially.
\end{rem}

The equivariant link Floer cohomology $\widehat{HFL}_{\mathbb{Z}_{2}}$
is, as its name suggests, a refinement of both the equivariant Floer
cohomology $\widehat{HF}_{\mathbb{Z}_{2}}$ of based knots in $S^{3}$
and the ordinary knot Floer cohomology $\widehat{HFL}$ of links in
$S^{3}$.
\begin{thm}
\label{thm:specseq1}For any link $L$ in $S^{3}$, there exists a
spectral sequence 
\[
\widehat{HFL}(\Sigma(L),L)\otimes\mathbb{F}_{2}[\theta]\Rightarrow\widehat{HFL}_{\mathbb{Z}_{2}}(\Sigma(L),L),
\]
 and the isomorphism class of its pages are isotopy invariants of
$L$.
\end{thm}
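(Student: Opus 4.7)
The plan is to realize this spectral sequence as the one arising from the standard filtration on the equivariant Floer cochain complex by powers of $\theta$, and then to deduce invariance of all pages from the fact that maps induced by Heegaard moves are automatically filtered.

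First, I would fix a weakly admissible link Heegaard diagram $H$ representing $L$. Recall from section 3 of \cite{eqv-Floer} that $\widehat{CF}_{\mathbb{Z}_{2}}(B(H))$ is constructed by applying $\text{Hom}_{\mathbb{F}_{2}[\mathbb{Z}_{2}]}(-,\mathbb{F}_{2})$ to a free $\mathbb{F}_{2}[\mathbb{Z}_{2}]$-resolution of the freed Floer complex $\widetilde{CF}(B(H))$. Using the standard periodic resolution of $\mathbb{F}_{2}$ over $\mathbb{F}_{2}[\mathbb{Z}_{2}]$, the resulting bicomplex has a vertical $\theta$-direction concentrated in nonnegative $\theta$-degree, and the total complex carries the $\mathbb{F}_{2}[\theta]$-module structure used throughout the paper. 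I would filter $\widehat{CF}_{\mathbb{Z}_{2}}(B(H))$ by $F^{p}=\theta^{p}\cdot\widehat{CF}_{\mathbb{Z}_{2}}(B(H))$; this filtration is bounded below, exhaustive, and complete, so the associated spectral sequence converges to $\widehat{HFL}_{\mathbb{Z}_{2}}(\Sigma(L),L)$.

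Next I would identify the $E_{1}$ page. The $E_{0}$ page is $\bigoplus_{p\geq 0}\theta^{p}\cdot\widehat{CF}(B(H))$, with $E_{0}$-differential induced solely by the ordinary Floer differential, since all higher $\mathbb{Z}_{2}$-equivariant homotopies in the construction of \cite{eqv-Floer} strictly raise the $\theta$-filtration degree. By Lemma \ref{lem:eqvtrans-HFK}, equivariant transversality (hypothesis (EH-2) in \cite{eqv-Floer}) is satisfied for $B(H)$, so the complex being differentiated is genuinely the ordinary Floer complex $\widehat{CF}(B(H))$. Taking vertical cohomology yields an $E_{1}$ page isomorphic to $\widehat{HFL}(\Sigma(L),L)\otimes\mathbb{F}_{2}[\theta]$, as desired.

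For invariance of every page, I would observe that the equivariant continuation maps associated to Heegaard moves, which are the maps used implicitly in Lemma \ref{lem:eqvhfk-welldef}, are $\mathbb{F}_{2}[\mathbb{Z}_{2}]$-equivariant at the level of freed Floer complexes, hence automatically $\mathbb{F}_{2}[\theta]$-linear and filtration-preserving after applying $\text{Hom}_{\mathbb{F}_{2}[\mathbb{Z}_{2}]}(-,\mathbb{F}_{2})$. Each such map is therefore a morphism of filtered complexes and induces a morphism of spectral sequences. On the $E_{1}$ page it reduces to the classical non-equivariant Heegaard Floer transition map tensored with $\text{id}_{\mathbb{F}_{2}[\theta]}$, which is an isomorphism by standard Heegaard Floer invariance. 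By the comparison theorem for convergent filtered spectral sequences, an isomorphism on $E_{1}$ forces an isomorphism on every subsequent $E_{r}$, and the limit recovers the isomorphism of Lemma \ref{lem:eqvhfk-welldef}.

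The main obstacle is to verify cleanly that the equivariant transition maps really are \emph{strictly} filtered, i.e.\ that the combinations of higher homotopies appearing in the $A_{\infty}$-style construction of \cite{eqv-Floer} only shift the $\theta$-degree nonnegatively. This is forced by the use of the same periodic resolution at source and target together with the naturality of the underlying bar/Koszul construction, so once this bookkeeping point is checked the remainder of the proof is formal.
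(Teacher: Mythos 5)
Your proposal is correct and fills in exactly the argument the paper leaves implicit: the paper's one-line proof defers to Lemma \ref{lem:eqvtrans-HFK}, which via equivariant transversality gives the description $\widehat{CF}_{\mathbb{Z}_{2}}(B(H))\simeq\text{RHom}_{\mathbb{F}_{2}[\mathbb{Z}_{2}]}(\widehat{CF}(B(H)),\mathbb{F}_{2})$, and the $\theta$-power filtration on this complex is precisely the spectral sequence you construct. Your identification of the $E_{1}$ page and your invariance argument via filtered transition maps match the intended route.
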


\begin{proof}
This is a direct consequence of Lemma \ref{lem:eqvtrans-HFK}.
\end{proof}
\begin{thm}
\label{thm:specseq2}For any knot $K$ in $S^{3}$, there exists a
spectral sequence 
\[
\widehat{HFK}_{\mathbb{Z}_{2}}(\Sigma(K),K)\Rightarrow\widehat{HF}_{\mathbb{Z}_{2}}(\Sigma(K),z)
\]
 for any $z\in K$, and the isomorphism class of its pages are isotopy
invariants of $K$.
\end{thm}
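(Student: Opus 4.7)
The plan is to construct the desired spectral sequence by imitating the classical spectral sequence $\widehat{HFK}(Y,K)\Rightarrow\widehat{HF}(Y)$ coming from the Alexander filtration by $n_w$, but carried out $\mathbb{Z}_2$-equivariantly on the branched double cover. Fix a basepoint $z\in K$ and choose a weakly admissible knot Heegaard diagram $H=(\Sigma,\boldsymbol{\alpha},\boldsymbol{\beta},z,w)$ representing $(K,z)$. By Theorem \ref{thm:knotdiagramHF}, the right-hand side $\widehat{HF}_{\mathbb{Z}_2}(\Sigma(K),z)$ is computed from the diagram $\hat{H}=(\tilde{\Sigma},\tilde{\boldsymbol{\alpha}},\tilde{\boldsymbol{\beta}},z)$ obtained from $B(H)$ by forgetting $w$, while by definition $\widehat{HFK}_{\mathbb{Z}_2}(\Sigma(K),K)\simeq\widehat{HF}_{\mathbb{Z}_2}(B(H))$ where $B(H)=(\tilde{\Sigma},\tilde{\boldsymbol{\alpha}},\tilde{\boldsymbol{\beta}},z,w)$. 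Both equivariant complexes therefore share the same underlying generators and the same $\mathbb{Z}_2$-action on them, so the two cohomologies differ only by which disks are counted.

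Next I would introduce the Alexander filtration on generators, normalized by $A(\mathbf{x})-A(\mathbf{y})=n_z(\phi)-n_w(\phi)$ for $\phi\in\pi_2(\mathbf{x},\mathbf{y})$. The differential on $\widehat{CF}(\hat{H})$ only counts classes with $n_z(\phi)=0$, and any holomorphic representative automatically satisfies $n_w(\phi)\ge 0$ by positivity of intersection with the divisor $\{w\}\times\text{Sym}^{\tilde g-1}(\tilde{\Sigma})$; hence this differential is filtered, with associated graded exactly the differential on $\widehat{CF}(B(H))$. The critical step is to upgrade this to the full equivariant complex $\widehat{CF}_{\mathbb{Z}_2}(\hat{H})$: every higher structure map in the construction of \cite{eqv-Floer} is a count of $J$-holomorphic disks with boundary on the fixed Lagrangians $\mathbb{T}_{\tilde{\boldsymbol{\alpha}}},\mathbb{T}_{\tilde{\boldsymbol{\beta}}}$ where only the $\mathbb{Z}_2$-equivariant almost complex structure varies, and provided the homotopy coherent family $F\,:\,\mathscr{E}\mathbb{Z}_2\rightarrow\overline{\mathcal{J}}$ consists of structures making the divisor $\{w\}\times\text{Sym}^{\tilde g-1}(\tilde{\Sigma})$ pseudoholomorphic (which can be arranged equivariantly, since $w$ is a branch point and thus an isolated $\mathbb{Z}_2$-fixed point disjoint from all attaching curves), positivity of intersection again forces $n_w\ge 0$. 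Consequently the entire equivariant differential respects the filtration, the associated graded is identified with $\widehat{CF}_{\mathbb{Z}_2}(B(H))$, and the filtration is bounded because the finite generating set attains only finitely many Alexander values.

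The standard spectral sequence of a bounded filtered complex then delivers
\[
E^1 \cong \widehat{HFK}_{\mathbb{Z}_2}(\Sigma(K),K) \Longrightarrow \widehat{HF}_{\mathbb{Z}_2}(\Sigma(K),z).
\]
To see that the isomorphism class of each page is a knot invariant, I would verify that the maps induced by basic Heegaard moves in Theorem \ref{thm:basicmoves-knotHeegaard} all respect the Alexander filtration; this holds by the same positivity argument applied to equivariant continuation maps, triangle maps, and stabilization maps, so the comparison quasi-isomorphism between any two choices of diagram is filtered and hence induces an isomorphism on every page. The main obstacle will be arranging the homotopy coherent $\mathbb{Z}_2$-equivariant family of almost complex structures of \cite{eqv-Floer}, together with all interpolating families used in the proofs of invariance, so that each member preserves the divisor $\{w\}\times\text{Sym}^{\tilde g-1}(\tilde{\Sigma})$ while still achieving equivariant transversality; once this compatibility with $n_w$ is set up uniformly throughout the construction, the spectral sequence and its invariance follow formally.
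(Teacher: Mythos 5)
Your proposal is correct and takes essentially the same route as the paper's own proof, which is much more terse: it simply asserts that ``the second basepoint $w$ induces a filtration on $\widehat{CF}_{\mathbb{Z}_{2}}(\hat{H})$'' with invariant isomorphism type, identifies $E^1$ with $\widehat{HF}_{\mathbb{Z}_{2}}(B(H))$, and invokes Theorem~\ref{thm:knotdiagramHF} for the abutment. You have filled in the substantive details the paper elides—the positivity argument forcing $n_w\ge 0$ on every higher structure map of the freed complex, the need to arrange the equivariant homotopy coherent family of almost complex structures to preserve the divisor $\{w\}\times\text{Sym}^{\tilde g-1}(\tilde{\Sigma})$ (which works because $w$ is a $\mathbb{Z}_2$-fixed branch point), and the verification that Heegaard-move comparison maps are filtered—but the underlying construction coincides with the paper's.
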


\begin{proof}
Let $H=(\Sigma,\boldsymbol{\alpha},\boldsymbol{\beta},z,w)$ be any
knot Heegaard diagram representing $K$. Then, as in the non-equivariant
case, the second basepoint $w$ induces a filtration on $\widehat{CF}_{\mathbb{Z}_{2}}(\hat{H})$,
and its isomorphism class as a filtered chain complex is an invariant
of $K$. The spectral sequence we get is of the form 
\[
\widehat{HF}_{\mathbb{Z}_{2}}(B(H))\Rightarrow\widehat{HF}_{\mathbb{Z}_{2}}(\hat{H}).
\]
 Now, $\widehat{HF}_{\mathbb{Z}_{2}}(B(H))\simeq\widehat{HFK}_{\mathbb{Z}_{2}}(\Sigma(K),K)$
by the definition of $\widehat{HFK}_{\mathbb{Z}_{2}}$, and by Theorem
\ref{thm:knotdiagramHF}, we have $\widehat{HF}_{\mathbb{Z}_{2}}(\hat{H})\simeq\widehat{HF}_{\mathbb{Z}_{2}}(\Sigma(K),z)$.
\end{proof}
Now we will prove a version of localization theorem for $\mathbb{Z}_{2}$-equivariant
link Floer cohomology. Recall from Proposition 6.3 of \cite{eqv-Floer}
that, for any based link $(L,z)$ in $S^{3}$, we have 
\[
\widehat{HF}_{\mathbb{Z}_{2}}(\Sigma(L),z)\otimes_{\mathbb{F}_{2}[\theta]}\mathbb{F}_{2}[\theta,\theta^{-1}]\simeq(\mathbb{F}_{2}^{2})^{|L|-1}\otimes_{\mathbb{F}_{2}}\mathbb{F}_{2}[\theta,\theta^{-1}],
\]
 where $|L|$ is the number of connected components of $L$. We will
prove that a similar statement is true for $\widehat{HFL}(\Sigma(L),L)$.
\begin{thm}
\label{thm:hfk-localization}For any link $L$ in $S^{3}$, we have
an isomorphism
\[
\widehat{HFL}_{\mathbb{Z}_{2}}(\Sigma(L),L)\otimes_{\mathbb{F}_{2}[\theta]}\mathbb{F}_{2}[\theta,\theta^{-1}]\simeq\widehat{HFL}_{\mathbb{Z}_{2}}(S^{3},L)\otimes_{\mathbb{F}_{2}}\mathbb{F}_{2}[\theta,\theta^{-1}].
\]
\end{thm}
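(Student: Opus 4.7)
The plan is to apply the Seidel--Smith equivariant localization theorem, in the form used by Hendricks--Lipshitz--Sarkar to prove their Proposition 6.3, to the branched double cover Heegaard diagram $B(H)$. Choose any weakly admissible multi-pointed Heegaard diagram $H=(\Sigma_0,\boldsymbol{\alpha}_0,\boldsymbol{\beta}_0,\mathbf{x},\mathbf{z})$ representing $L$ in $S^3$, with $g_0=\text{genus}(\Sigma_0)$, and form $B(H)=(\Sigma,\tilde{\boldsymbol{\alpha}},\tilde{\boldsymbol{\beta}},\mathbf{x},\mathbf{z})$. By Riemann--Hurwitz, $g:=\text{genus}(\Sigma)=2g_0+|L|-1$. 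The covering involution $\sigma$ on $\Sigma$ has fixed-point set exactly $\mathbf{x}\cup\mathbf{z}$, so $\sigma$ acts freely on $\Sigma\setminus(\mathbf{x}\cup\mathbf{z})$, and induces a $\mathbb{Z}_2$-action on $\text{Sym}^{g+|L|-1}(\Sigma\setminus(\mathbf{x}\cup\mathbf{z}))$ preserving the Lagrangian tori $\mathbb{T}_{\tilde{\boldsymbol{\alpha}}},\mathbb{T}_{\tilde{\boldsymbol{\beta}}}$.

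A convenient feature of this setup, in contrast with the single-basepoint situation of Proposition 6.3 in \cite{eqv-Floer}, is that the dimension of the ambient symmetric product equals $g+|L|-1=2(g_0+|L|-1)$, which is automatically even. The branched covering $\Sigma\to\Sigma_0$ therefore identifies the fixed locus of the $\mathbb{Z}_2$-action on $\text{Sym}^{g+|L|-1}(\Sigma\setminus(\mathbf{x}\cup\mathbf{z}))$, consisting of multisets of $\sigma$-orbit pairs, with $\text{Sym}^{g_0+|L|-1}(\Sigma_0\setminus(\mathbf{x}\cup\mathbf{z}))$, and under this identification $\mathbb{T}_{\tilde{\boldsymbol{\alpha}}}^{\mathbb{Z}_2},\mathbb{T}_{\tilde{\boldsymbol{\beta}}}^{\mathbb{Z}_2}$ correspond precisely to the downstairs Heegaard tori $\mathbb{T}_{\boldsymbol{\alpha}_0},\mathbb{T}_{\boldsymbol{\beta}_0}$. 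The equivariant transversality hypothesis required to invoke the localization theorem is exactly Lemma \ref{lem:eqvtrans-HFK}; applying the theorem and passing to cohomology yields
\[
\widehat{HFL}_{\mathbb{Z}_2}(\Sigma(L),L)\otimes_{\mathbb{F}_2[\theta]}\mathbb{F}_2[\theta,\theta^{-1}]\;\simeq\;HF(\mathbb{T}_{\boldsymbol{\alpha}_0},\mathbb{T}_{\boldsymbol{\beta}_0})\otimes_{\mathbb{F}_2}\mathbb{F}_2[\theta,\theta^{-1}],
\]
and the right-hand side is by definition $\widehat{HFL}(S^3,L)\otimes_{\mathbb{F}_2}\mathbb{F}_2[\theta,\theta^{-1}]$.

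The main obstacle will be verifying the stable normal trivialization condition of the Seidel--Smith framework in this multi-pointed branched-cover setting. More precisely, one must check that the $\mathbb{Z}_2$-equivariant normal bundle of the fixed locus inside $\text{Sym}^{g+|L|-1}(\Sigma\setminus(\mathbf{x}\cup\mathbf{z}))$, along both Lagrangians, admits the required stable equivariant trivialization compatible with the Maslov index, by a computation analogous to the one carried out in the proof of Lemma \ref{eqvtrans} (where invariant domains were shown to have even Maslov index via the point measure and Euler measure decomposition). Once this is established, independence from the choice of $H$ follows from Lemma \ref{lem:eqvhfk-welldef} and the fact that downstairs Heegaard moves of $H$ pull back to $\mathbb{Z}_2$-equivariant Heegaard moves of $B(H)$, so the isomorphism descends to an isomorphism of isotopy invariants.
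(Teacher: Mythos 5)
Your proposal takes a genuinely different route from the paper and, in its current form, has a real gap at precisely the point you flag.

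You propose to apply the Seidel--Smith localization theorem directly to the symmetric product of $B(H)$ for an arbitrary multi-pointed Heegaard diagram $H$, identifying the fixed locus with the downstairs symmetric product. The identification of the fixed locus and the fixed Lagrangians is correct (the covering $\Sigma\setminus(\mathbf{x}\cup\mathbf{z})\to\Sigma_0\setminus(\mathbf{x}\cup\mathbf{z})$ is unramified, so fixed multisets are unions of $\sigma$-orbit pairs), and Lemma~\ref{lem:eqvtrans-HFK} indeed supplies equivariant transversality. But you acknowledge that the stable normal trivialization hypothesis --- this is the condition (EH-1) of \cite{eqv-Floer} --- must still be verified, and then you wave at "a computation analogous to the one carried out in the proof of Lemma~\ref{eqvtrans}." That does not close the gap: Lemma~\ref{eqvtrans} and Lemma~\ref{lem:eqvtrans-HFK} verify (EH-2), the parity condition on Maslov indices of invariant domains. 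The condition (EH-1) is a different, geometric hypothesis about equivariant stable normal triviality of the fixed locus, and in the paper it is only invoked in a setting where it is already known: for Heegaard diagrams drawn on $S^2$. Your argument as written simply defers the hard part.

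The paper's proof is structured precisely so as to avoid checking (EH-1) for general diagrams. It stabilizes $H$ to $H_n$ by adding $n$ handles and $2n$ basepoints near $w$, observes that $\widehat{HF}_{\mathbb{Z}_2}(B(H_n))\simeq\widehat{HFL}_{\mathbb{Z}_2}(\Sigma(L),L)\otimes V_n$ with $\dim V_n=2^n$, and notes that for $n$ large $H_n$ is Heegaard-equivalent to a multi-pointed diagram $H_0$ on $S^2$, where (EH-1) holds. Localization is applied only to $B(H_0)$. Combining the resulting isomorphisms gives $M^{2^n}\simeq N^{2^n}$ over $\mathbb{F}_2[\theta,\theta^{-1}]$, and the classification of finitely generated modules over the PID $\mathbb{F}_2[\theta,\theta^{-1}]$ then forces $M\simeq N$. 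If you want to salvage your direct approach, you would need to actually establish (EH-1) --- e.g., by constructing an explicit stable trivialization of the equivariant normal bundle of $\text{Sym}^{g_0+|L|-1}(\Sigma_0\setminus(\mathbf{x}\cup\mathbf{z}))$ inside $\text{Sym}^{g+|L|-1}(\Sigma\setminus(\mathbf{x}\cup\mathbf{z}))$, compatible with the polarizations along both Lagrangians --- which is a nontrivial additional argument not present in your proposal.
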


\begin{proof}
Let $H=(\Sigma,\boldsymbol{\alpha},\boldsymbol{\beta},z,w)$ be a
weakly admissible Heegaard diagram representing $L$, and for each
positive integer $n$, denote by $H_{n}$ the Heegaard diagram obtained
by adding $n$ $\alpha$-curves, $n$ $\beta$-curves, and $2n$ basepoints
in a small neighborhood of $w$, as drawn in Figure \ref{fig08}.
The proof of Lemma \ref{lem:eqvtrans-HFK} also applies to the diagram
$H_{n}$, so if we denote the branched double cover of $H_{n}$ along
its basepoints by $B(H_{n})$, then we get an isomorphism 
\[
\widehat{CF}_{\mathbb{Z}_{2}}(B(H_{n}))\simeq\text{RHom}_{\mathbb{F}_{2}[\mathbb{Z}_{2}]}(\widehat{CF}(B(H_{n})),\mathbb{F}_{2}).
\]
 Since we have $\widehat{CF}(B(H_{n}))\simeq\widehat{CF}(B(H))\otimes C_{n}^{\ast}$
for a finite-dimensional chain complex $C_{n}^{\ast}$, where $\mathbb{Z}_{2}$
acts trivially on $C_{n}^{\ast}$, we have an isomorphism 
\[
\widehat{HF}_{\mathbb{Z}_{2}}(B(H_{n}))\simeq\widehat{HFL}_{\mathbb{Z}_{2}}(\Sigma(L),L)\otimes V_{n}
\]
 for a $\mathbb{F}_{2}$-vector space $V_{n}$ of dimension $2^{n}$.
Hence $V_{n}$ satisfies 
\[
\widehat{HF}(H_{n})\simeq\widehat{HFL}(S^{3},L)\otimes V_{n}.
\]

When $n$ is sufficiently large, there exists a multi-pointed Heegaard
diagram $H_{0}$, whose Heegaard surface is $S^{2}$, so that $H_{n}$
and $H_{0}$ are related by a sequence of isotopies, handleslides,
and (de)stabilizations, which would imply $\widehat{HF}_{\mathbb{Z}_{2}}(B(H_{0}))\simeq\widehat{HF}_{\mathbb{Z}_{2}}(B(H_{n}))$
by equivariant transversality, and $\widehat{HF}(H_{0})\simeq\widehat{HF}(H_{n})$.
Since $H_{0}$ is a Heegaard diagram drawn on $S^{2}$, it satisfies
condition (EH-1) in \cite{eqv-Floer}, so we have a localization isomorphism
\[
\widehat{HF}_{\mathbb{Z}_{2}}(B(H_{0}))\otimes_{\mathbb{F}_{2}[\theta]}\mathbb{F}_{2}[\theta,\theta^{-1}]\simeq\widehat{HF}(H_{0})\otimes_{\mathbb{F}_{2}}\mathbb{F}_{2}[\theta,\theta^{-1}].
\]
 We now consider the isomorphisms we have got. First, we have the
following isomorphism:
\[
\widehat{HF}(H_{0})\simeq\widehat{HF}(H_{n})\simeq\widehat{HFL}(S^{3},L)\otimes V_{n}.
\]
 Next, we have the following isomorphisms:
\begin{align*}
\widehat{HF}(H_{0})\otimes_{\mathbb{F}_{2}}\mathbb{F}_{2}[\theta,\theta]^{-1}\simeq & \widehat{HF}_{\mathbb{Z}_{2}}(B(H_{0}))\otimes_{\mathbb{F}_{2}[\theta]}\mathbb{F}_{2}[\theta,\theta^{-1}]\\
\simeq & \widehat{HF}_{\mathbb{Z}_{2}}(B(H_{n}))\otimes_{\mathbb{F}_{2}[\theta]}\mathbb{F}_{2}[\theta,\theta^{-1}]\\
\simeq & \widehat{HFL}_{\mathbb{Z}_{2}}(\Sigma(L),L)\otimes_{\mathbb{F}_{2}[\theta]}\mathbb{F}_{2}[\theta,\theta^{-1}]\otimes_{\mathbb{F}_{2}}V_{n}.
\end{align*}
 Hence we have the following isomorphism of $\mathbb{F}_{2}[\theta,\theta^{-1}]$-modules:
\[
\widehat{HFL}(S^{3},L)\otimes_{\mathbb{F}_{2}}\mathbb{F}_{2}[\theta,\theta^{-1}]\otimes V_{n}\simeq\widehat{HFL}_{\mathbb{Z}_{2}}(\Sigma(L),L)\otimes_{\mathbb{F}_{2}[\theta]}\mathbb{F}_{2}[\theta,\theta^{-1}]\otimes_{\mathbb{F}_{2}}V_{n}.
\]
 In other words, if we denote $\widehat{HFL}(S^{3},L)\otimes_{\mathbb{F}_{2}}\mathbb{F}_{2}[\theta,\theta^{-1}]$
by $M$, $\widehat{HFL}_{\mathbb{Z}_{2}}(\Sigma(L),L)\otimes_{\mathbb{F}_{2}[\theta]}\mathbb{F}_{2}[\theta,\theta^{-1}]$
by $N$, then we have 
\[
M^{2^{n}}\simeq N^{2^{n}}.
\]
 However, $M$ and $N$ are finitely generated $\mathbb{F}_{2}[\theta,\theta^{-1}]$-modules,
and since $\mathbb{F}_{2}[\theta]$ is a PID, its localization $\mathbb{F}_{2}[\theta,\theta^{-1}]$
is also a PID. Therefore, by the classfication of finitely generated
modules over a PID, we must have $M\simeq N$, i.e.
\[
\widehat{HFL}(S^{3},L)\otimes_{\mathbb{F}_{2}}\mathbb{F}_{2}[\theta,\theta^{-1}]\simeq\widehat{HFL}_{\mathbb{Z}_{2}}(\Sigma(L),L)\otimes_{\mathbb{F}_{2}[\theta]}\mathbb{F}_{2}[\theta,\theta^{-1}].
\]
\end{proof}
\begin{figure}
\resizebox{.5\textwidth}{!}{\includegraphics{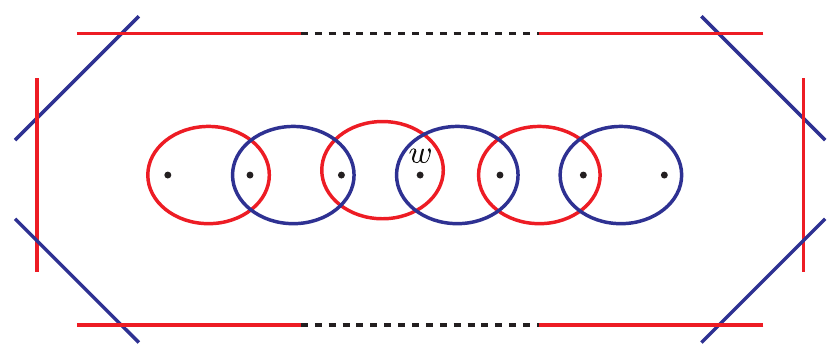}} \caption{\label{fig08}The diagram $H_{n}$. Here we have set $n=3$ for simplicity.}
\end{figure}

Theorem \ref{thm:hfk-localization} gives a new proof of Theorem 1.14
of \cite{eqv-Floer} as a direct corollary.
\begin{cor}
Given a link $L$ in $S^{3}$, there exists a spectral sequence 
\[
\widehat{HFL}(\Sigma(L),L)\otimes_{\mathbb{F}_{2}}\mathbb{F}_{2}[\theta,\theta^{-1}]\Rightarrow\widehat{HFL}_{\mathbb{Z}_{2}}(S^{3},L)\otimes_{\mathbb{F}_{2}[\theta]}\mathbb{F}_{2}[\theta,\theta^{-1}],
\]
 whose pages are isotopy invariants of $L$.
\end{cor}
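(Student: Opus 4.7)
The plan is to derive this corollary directly from Theorem \ref{thm:specseq1} and Theorem \ref{thm:hfk-localization}, with essentially no new ingredients. First, I would invoke Theorem \ref{thm:specseq1}, which produces a spectral sequence
\[
\widehat{HFL}(\Sigma(L),L)\otimes_{\mathbb{F}_{2}}\mathbb{F}_{2}[\theta]\Rightarrow\widehat{HFL}_{\mathbb{Z}_{2}}(\Sigma(L),L),
\]
whose pages are isotopy invariants of $L$. This spectral sequence arises from the standard filtration on the equivariant Floer complex by powers of $\theta$, in the manner of \cite{eqv-Floer}.

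Next, I would tensor the entire spectral sequence over $\mathbb{F}_{2}[\theta]$ with the localization $\mathbb{F}_{2}[\theta,\theta^{-1}]$. Since $\mathbb{F}_{2}[\theta,\theta^{-1}]$ is a localization of $\mathbb{F}_{2}[\theta]$, it is a flat $\mathbb{F}_{2}[\theta]$-module, so this operation preserves the differentials, commutes with the computation of successive homologies page by page, and preserves convergence. The resulting spectral sequence has $E_{2}$-page
\[
\widehat{HFL}(\Sigma(L),L)\otimes_{\mathbb{F}_{2}}\mathbb{F}_{2}[\theta,\theta^{-1}]
\]
and converges to $\widehat{HFL}_{\mathbb{Z}_{2}}(\Sigma(L),L)\otimes_{\mathbb{F}_{2}[\theta]}\mathbb{F}_{2}[\theta,\theta^{-1}]$. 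Applying Theorem \ref{thm:hfk-localization}, this abutment is naturally isomorphic to $\widehat{HFL}(S^{3},L)\otimes_{\mathbb{F}_{2}}\mathbb{F}_{2}[\theta,\theta^{-1}]$, which matches the target in the corollary (after correcting the evident notation in the statement).

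Finally, the isotopy invariance of the isomorphism class of each page of the localized spectral sequence is inherited from the invariance of the corresponding page in Theorem \ref{thm:specseq1}, because tensoring with the fixed flat $\mathbb{F}_{2}[\theta]$-algebra $\mathbb{F}_{2}[\theta,\theta^{-1}]$ is a functor between module categories that sends isomorphisms to isomorphisms. There is no serious obstacle in the argument: the nontrivial analytic and algebraic input, namely the identification of the limit after inverting $\theta$, has already been carried out in Theorem \ref{thm:hfk-localization}, and the remainder is purely formal homological algebra.
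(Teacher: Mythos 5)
Your proof is correct and follows essentially the same route as the paper: tensor the spectral sequence from Theorem \ref{thm:specseq1} with $\mathbb{F}_{2}[\theta,\theta^{-1}]$ and then identify the abutment using Theorem \ref{thm:hfk-localization}. You also correctly observe the typographical slip in the stated target (it should read $\widehat{HFL}(S^{3},L)$, not $\widehat{HFL}_{\mathbb{Z}_{2}}(S^{3},L)$), and your extra remarks about flatness of the localization make the formal tensoring step slightly more explicit than the paper's version.
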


\begin{proof}
Tensoring the spectral sequence of Theorem \ref{thm:specseq1} with
$\mathbb{F}_{2}[\theta,\theta^{-1}]$ gives a spectral sequence
\[
\widehat{HFL}(\Sigma(L),L)\otimes_{\mathbb{F}_{2}}\mathbb{F}_{2}[\theta,\theta^{-1}]\Rightarrow\widehat{HFL}_{\mathbb{Z}_{2}}(\Sigma(L),L)\otimes_{\mathbb{F}_{2}[\theta]}\mathbb{F}_{2}[\theta,\theta^{-1}],
\]
 and by Theorem \ref{thm:hfk-localization}, we have an isomorphism
\[
\widehat{HFL}_{\mathbb{Z}_{2}}(\Sigma(L),L)\otimes_{\mathbb{F}_{2}[\theta]}\mathbb{F}_{2}[\theta,\theta^{-1}]\simeq\widehat{HFL}(S^{3},L)\otimes_{\mathbb{F}_{2}}\mathbb{F}_{2}[\theta,\theta^{-1}].
\]
\end{proof}

\section{Application: A transverse invariant refining both the $c_{\mathbb{Z}_{2}}$
and the LOSS invariant}

Let $K\subset S^{3}$ be a transverse knot with respect to the standard
contact structure $\xi_{std}=\ker(dz-xdy)$. The author constructed
in \cite{Kang} an invariant $c_{\mathbb{Z}_{2}}(\xi_{K})$ as an
element of the module $\widehat{HF}_{\mathbb{Z}_{2}}(\Sigma(K))$,
which depends only on the transverse isotopy class of $K$. On the
other hand, we have the LOSS invariant $\hat{c}(K)$, defined in \cite{LOSS-inv},
which is an element of $\widehat{HFK}(S^{3},K)$, again depending
only on the transverse isotopy class of $K$. Therefore it is natural
to ask how those two invariants are related.

Recall that we have shown in the previous section that the equivariant
knot Floer cohomology $\widehat{HFK}_{\mathbb{Z}_{2}}(\Sigma(K),K)$
is a refinement of both $\widehat{HF}_{\mathbb{Z}_{2}}(\Sigma(K))$
and $\widehat{HFK}(S^{3},K)$. As a topological application of that
fact, we will show in this section that there exists a transverse
invariant of $K$, as an element of $\widehat{HFK}_{\mathbb{Z}_{2}}(\Sigma(K),K)$,
which is a simultaneous refinement of both the equivariant contact
class $c_{\mathbb{Z}_{2}}(\xi_{K})$ and the LOSS invariant $\hat{c}(K)$.

We first recall some facts regarding relations between braids and
transverse knots. When $(M,\xi)$ is a contact 3-manifold and $h\,:\,M\backslash B\rightarrow S^{1}$
is an open book supporting $\xi$. Then a transverse knot(link) $K\subset M$
is said to be in a \textbf{braid position} with respect to $B$ if
$h|_{K}$ is regular, and when $K$ is in a braid position, we call
it as a transverse braid. When $(M,\xi)$ is the standard contact
$S^{3}$, the theorem of Bennequin\cite{Bennequin} tells us that
every transverse knot is transversely isotopic to a transverse braid
along the $z$-axis. This fact can be directly generalized to the
most general case, when $(M,\xi)$ is any contact 3-manifold and $h$
is any open book; in fact, the theorem of Pavelescu\cite{Pavelescu}
tells us that any transverse knot in $M$ is transversely isotopic
to a braid position with respect to $h$, and two transverse braids,
braided with respect to $B$, are transversely isotopic if and only
if they are related by a sequence of braid isotopies, positive Markov
moves, and their inverses. This fact leads us to the following definition,
made first by Baldwin, Vela-Vick, and Vertesi in \cite{GRID=00003DLOSS}.
\begin{defn}
Given a contact 3-manifold $(M,\xi)$, an open book $h\,:\,M\backslash B\rightarrow S^{1}$
supporting $\xi$, and a transverse knot $K$ which is in a braid
position with respect to $B$, let $S$ be a page of $h$, $F=S\cap K$,
and $\phi$ be the monodromy of $h$ which fixes $F$ pointwise. Then
the isotopy class of the monodromy $\phi$ rel $F\cup\partial S$
uniquely determines the transverse isotopy class of $K$. So we say
that the (transverse isotopy class of) transverse knot $K$ is \textbf{encoded}
by the \textbf{pointed open book} $(S,F,\phi)$.
\end{defn}

Recall that, given a contact 3-manifold $(M,\xi)$, any two open books
of $M$ which support $\xi$ are related by a sequence of isotopies
and positive (de)stabilizations. Then, given any pointed open book
$(S,F,\phi)$ which encodes a transverse knot $K$ in $(M,\xi)$,
we can perform a positive stabilization with respect to any simple
arc $a\subset S\backslash F$ satisfying $\partial a\subset\partial S$.
Then the pointed open book $(S^{\prime},F^{\prime},\phi^{\prime})$
we obtain by the positive stabilization still encodes the same transverse
knot $K$; note that this is a special case of Corollary 2.5 of \cite{GRID=00003DLOSS}.

Using the facts we have stated above, we will give some useful observations
on transverse knots in the standard conatct $S^{3}$.
\begin{lem}
\label{lem:trans1}Let $K\subset(S^{3},\xi_{std})$ be a transverse
knot. Then there exists an open book decomposition of $S^{3}$ supporting
$\xi_{std}$, such that its binding is disjoint from $K$ and each
of its page intersects $K$ transversely at one point.
\end{lem}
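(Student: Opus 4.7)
The plan is to construct the desired open book by decomposing $S^{3}$ into a standard contact neighborhood of $K$ and its complement, and then assembling compatible open book structures on each piece. First, I fix a small tubular neighborhood $N(K)\cong(S^{1}\times D^{2},\alpha_{std})$ of $K$ in $(S^{3},\xi_{std})$ contactomorphic to a standard transverse neighborhood, with $K=S^{1}\times\{0\}$. Inside $N(K)$, the meridional disks $\{t\}\times D^{2}$ foliate $N(K)$ transversely to $K$, meeting $K$ in a single point each and tracing longitudinal curves on $\partial N(K)$.

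Next, I construct an open book of the complement $X:=S^{3}\setminus\mathrm{int}(N(K))$ whose binding lies in $\mathrm{int}(X)$ and whose pages meet $\partial X$ in a single longitudinal curve each, compatible with $\xi_{std}|_{X}$. After arranging $\partial N(K)$ to be a convex torus with longitudinal dividing set, such an open book exists by Honda--Kazez--Matic's extension of the Giroux correspondence to contact $3$-manifolds with convex boundary (equivalently, via the theory of partial open books). Gluing this open book of $X$ with the meridional disk fibration on $N(K)$ along $\partial N(K)$ then produces an open book of $S^{3}$: the binding coincides with that of $X$, hence is disjoint from $K$, and each page, being a meridional disk in $N(K)$ capped off by a page in $X$, intersects $K$ in exactly one point (the center of the meridional disk). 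That the resulting open book supports $\xi_{std}$ on $S^{3}$ follows from compatibility on each piece, combined with a suitable interpolation of contact forms in a collar of $\partial N(K)$.

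The main obstacle will be the second step, namely constructing a supporting open book on $X$ with pages realizing the prescribed longitudinal boundary slope. While existence follows in principle from the Honda--Kazez--Matic framework, matching the topological boundary slope with the characteristic foliation on $\partial N(K)$ requires a careful convex-surface adjustment near the boundary torus, after which the gluing step and the verification of support become routine. Since $S^{3}$ admits a unique tight contact structure up to isotopy, one may alternatively finish by arguing that the assembled open book supports \emph{some} tight contact structure (e.g., via right-veering monodromy after positive stabilization), hence $\xi_{std}$, and then apply an ambient contact isotopy to place $K$ back in its original transverse isotopy class.
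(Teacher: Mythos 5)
Your proposal takes a genuinely different route from the paper, and it contains gaps that I don't think you can close with the tools you invoke. The paper's proof is a ``built from the inside out'' argument: it chooses a Legendrian knot $L$ whose positive transverse pushoff is $K$, includes $L$ in the $1$-skeleton of a contact cell decomposition of $(S^{3},\xi_{std})$, and observes that near $L$ the transverse pushoff $K$ lies along the binding of the induced supporting open book, so that a small monotone spiraling perturbation puts $K$ into $1$-strand braid position. This is short, self-contained, and never needs to glue contact structures or open books along a torus.

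The first gap in your proposal is the existence step on $X=S^{3}\setminus\mathrm{int}(N(K))$. You want an open book structure on $X$ whose binding is interior and whose pages each meet $\partial X$ in a single simple closed curve of the correct slope (a meridian of $N(K)$, so that pages cap off with the meridional disks). The Honda--Kazez--Matic/partial open book machinery is built for sutured manifolds; the pages of a partial open book meet the boundary in \emph{arcs} adjacent to the suture, not in entire boundary circles of the torus, and their compatibility condition is relative to a dividing set, not a slope of closed boundary curves. Nothing in that framework directly produces a supporting ``relative open book'' on $X$ with the required closed-curve boundary behavior, and that existence is essentially equivalent to the lemma you are trying to prove; appealing to HKM here is not yet an argument. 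The second gap is your fallback finishing move. Knowing that the assembled open book supports \emph{some} tight contact structure $\xi'$ on $S^{3}$ (and note right-veering is necessary but not sufficient for tightness, so you would still need to verify tightness by other means) does not finish the proof, because Gray stability gives an ambient isotopy carrying $\xi'$ to $\xi_{std}$ that will generically move $K$; there is no guarantee that the image of $K$ under that isotopy is transversely isotopic to the original $K$ in $(S^{3},\xi_{std})$, since the Gray isotopy cannot be assumed to be supported away from $N(K)$ (tight contact structures on knot complements rel boundary are far from unique). If you want to pursue this route, you must construct the contact form on the glued manifold to literally agree with $\alpha_{std}$ (up to a contactomorphism fixing $N(K)$ pointwise), which is exactly the ``interpolation in a collar'' step you flag as routine but which is where the content lives. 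The paper's Legendrian-approximation argument sidesteps all of this.
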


\begin{proof}
Let $L$ be a Legendrian knot in $(S^{3},\xi_{std})$, whose positive
transverse pushoff is $K$. Choose a contact cell decomposition of
$(S^{3},\xi_{std})$, so that $L$ is contained in its 1-skeleton
$S$. Denote the 1-cells of $S$ which intersects $L$ by $L_{1},\cdots,L_{n}$,
and the open book induced by the cell decomposition as $h\,:\,S^{3}\backslash B\rightarrow S^{1}$,
where $B$ is the binding. Then a page $\Sigma$ of $h$ is drawn
in Figure \ref{fig09}. Note that $N(L)\cap\partial(h^{-1}(p))\subset K$,
where $N(L)$ is a small neighborhood of $L$; this is because $K$
is a positive pushoff of $L$.

Now perturb $K$ to a knot $K^{\prime}$, so that $h|_{K^{\prime}}$
is monotone but very close to zero in neighborhoods of $L_{1},\cdots,L_{n}$.
Then $K^{\prime}$ is clearly in a braid position with respect to
$B$. Also, since we can choose our perturbation to be sufficiently
$C^{1}$-small, $K^{\prime}$ is also a transverse knot. Therefore
$K^{\prime}$ is a 1-strand transverse braid along $B$, so applying
a self-contactomorphism of $(S^{3},\xi_{std})$ which takes $K$ to
$K^{\prime}$ gives a desired open book.
\end{proof}
\begin{figure}
\resizebox{.5\textwidth}{!}{\includegraphics{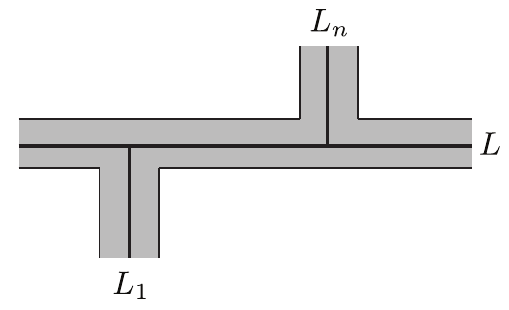}} \caption{\label{fig09}The page $\Sigma$ of the open book $h$, near the Legendrian
knot $L$.}
\end{figure}
\begin{lem}
\label{lem:trans2}Let $K\subset(S^{3},\xi_{std})$ be a transverse
knot and $(B_{1},h_{1}),(B_{2},h_{2})$ be open book decompositions
of $S^{3}$, supporting $\xi_{std}$, such that $K$ is transversely
isotopic to a 1-strand transverse braid along $B_{i}$, for each $i=1,2$.
Denote the induced (single-)pointed open books by $(S_{1},p_{1},\phi_{1})$
and $(S_{2},p_{2},\phi_{2})$, respectively. Then $(S_{1},p_{1},\phi_{1})$
and $(S_{2},p_{2},\phi_{2})$ are related by a sequence of isotopies
and positive (de)stabilizations.
\end{lem}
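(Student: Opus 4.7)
The plan is to obtain this lemma as a specialization of the \emph{pointed Giroux correspondence} for transverse links in contact 3-manifolds, namely the statement that any two pointed open books which encode the same transverse link in $(M,\xi)$ are related by a sequence of isotopies and positive (de)stabilizations in which the stabilization arc lies in $S\setminus F$ with endpoints on $\partial S$. This result is established by Baldwin--Vela-Vick--Vertesi (cf.\ Corollary~2.5 of \cite{GRID=LOSS}). Applied to $K\subset (S^3,\xi_{std})$ and the two pointed open books $(S_1,p_1,\phi_1)$, $(S_2,p_2,\phi_2)$ it immediately yields the lemma.

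If one prefers a self-contained argument, the strategy proceeds in three steps. First, forgetting the marking, the standard Giroux correspondence relates $(B_1,h_1)$ and $(B_2,h_2)$ by a finite sequence of ambient isotopies and positive (de)stabilizations of open books supporting $\xi_{std}$. Second, one upgrades each elementary move to the pointed setting: ambient isotopies of the open book are simply extended to the pair $(S^3,K)$, so they automatically preserve the 1-strand braid property and transport the marking $p$. A positive stabilization is performed along a properly embedded arc on a page; since $K$ meets the page transversely at the single point $p$, which is $0$-dimensional on the $2$-dimensional page, a generic isotopy of the arc rel its endpoints on $\partial S$ makes it disjoint from $p$. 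Performing the stabilization along such an arc keeps $K$ a 1-strand braid, with the same marking, and realizes a pointed positive stabilization of $(S,p,\phi)$.

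The main obstacle is to avoid Markov-type braid stabilizations, which would change the strand number from $1$ to $2$ and thereby alter the marked set $F$. This is precisely where the $1$-strand hypothesis and the freedom in choosing the stabilization arc are essential: the arc can always be slid off the single point $p$ on the page, so every stabilization in the Giroux sequence can be realized as a pointed stabilization in the sense of Definition of pointed open books. Combined with Pavelescu's transverse Markov theorem \cite{Pavelescu}, which handles the braid isotopies between equivalent $1$-strand braidings inside a fixed open book, this produces a sequence of pointed positive (de)stabilizations and isotopies connecting $(S_1,p_1,\phi_1)$ to $(S_2,p_2,\phi_2)$, as required.
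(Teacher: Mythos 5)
Your first route---invoking a ``pointed Giroux correspondence'' without Markov moves---is not what Baldwin--Vela-Vick--Vert\'esi establish. Their Corollary~2.5, which the paper also cites, is the one-directional statement that a positive open book stabilization along an arc disjoint from $F$ leaves the encoded transverse link unchanged. Their actual pointed Giroux correspondence relates two pointed open books encoding the same transverse link by a sequence of pointed stabilizations \emph{and positive Markov moves}, and the latter change $|F|$. For $1$-strand braids, Markov moves immediately leave the $1$-strand world, so the general BVV result does not by itself produce a sequence in which the marking stays a single point. The lemma here is precisely the assertion that, in this specific $S^3$ situation, the Markov moves are avoidable, and that is what needs proof.

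Your second, hands-on route has a concrete gap in the ``upgrade each Giroux move to the pointed setting'' step. The unmarked Giroux correspondence gives a chain of open books $(B_1,h_1)\to\cdots\to(B_2,h_2)$, but it says nothing about how $K$ sits with respect to the intermediate open books: $K$ is fixed in $S^3$, and after an ambient isotopy of the open book (or a stabilization) there is no reason for $K$ to remain a $1$-strand braid---it may fail to be braided at all, or be braided with more strands. Carrying $K$ along with the isotopy replaces $K$ by $\psi(K)$, which is not obviously transversely isotopic to $K$ since the intermediate ambient isotopies are not a priori contactotopies; this does not resolve the problem either. The genericity observation that a stabilization arc can be slid off the single point $p$ handles only one type of move and only once $K$ is already known to be braided with a single intersection point with the relevant page. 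Appealing to Pavelescu's transverse Markov theorem does not patch this: that theorem allows positive Markov stabilizations, which is exactly the move you are trying to exclude.

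The paper's proof sidesteps the issue entirely by working with contact cell decompositions adapted to $K$. It stabilizes both open books until they arise from contact cell decompositions, then stabilizes further so that the $1$-skeleton contains a Legendrian knot $L$ with $K$ as its transverse push-off; the argument from Lemma~\ref{lem:trans1} then shows the resulting pointed open books are isotopic. Building the decomposition around $L$ is what forces $K$ to stay a $1$-strand braid through the whole sequence, and this is the ingredient your proposal is missing.
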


\begin{proof}
By applying a self-contactomorphism of $(S^{3},\xi_{std})$ to the
given open books, we may assume that $K$ intersects the pages of
$(B_{1},h_{1})$ and $(B_{2},h_{2})$ transversely at one point. Then,
by performing positive stabilizations to $(B_{1},h_{1})$ and $(B_{2},h_{2})$,
we may assume that they are induced by a contact cell decomposition
of $(S^{3},\xi_{std})$, so that $(B_{1},h_{1})$ is isotopic to $(B_{2},h_{2})$.
Also, by performing additional positive stabilizations, we may further
assume that, for a Legendrian knot $L$ which has $K$ as its transverse
push-off, the given contact cell decomposition contains $L$ in its
1-skeleton. Then, by the argument used in the proof of Lemma \ref{lem:trans1},
we deduce that $(S_{1},p_{1},\phi_{1})$ and $(S_{2},p_{2},\phi_{2})$
are isotopic.
\end{proof}
To sum up, we have proven the following proposition.
\begin{prop}
\label{prop:trans}Any transverse knot in $(S^{3},\xi_{std})$ is
encoded by a single-pointed open book, and any two single-pointed
open books encoding the same transverse knot in $(S^{3},\xi_{std})$
are related by a sequence of isotopies and positive (de)stabilizations.
\end{prop}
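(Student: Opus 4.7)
The plan is to assemble Proposition~\ref{prop:trans} directly from the two preceding lemmas, so the proposal is really about bookkeeping rather than new ideas.

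For the existence half, I would start with a transverse knot $K \subset (S^3, \xi_{std})$ and invoke Lemma~\ref{lem:trans1} to produce an open book $(B,h)$ of $S^3$ supporting $\xi_{std}$ whose binding is disjoint from $K$ and whose pages each meet $K$ transversely in a single point. Since $h|_K$ is then a regular map to $S^1$ meeting every fiber once, $K$ is tautologically in braid position with respect to $B$ as a 1-strand braid. Taking a page $S$, setting $p = S \cap K$, and letting $\phi$ be the monodromy of $h$ fixing $p$ (and $\partial S$) pointwise produces a single-pointed open book $(S,p,\phi)$. By the definition recalled just before Lemma~\ref{lem:trans1}, this pointed open book encodes the transverse isotopy class of $K$.

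For the uniqueness half, suppose $(S_1,p_1,\phi_1)$ and $(S_2,p_2,\phi_2)$ are two single-pointed open books encoding the same transverse knot $K$. Each of them arises as above from an honest open book decomposition $(B_i, h_i)$ supporting $\xi_{std}$ in which $K$ sits as a 1-strand transverse braid. These two open books satisfy exactly the hypotheses of Lemma~\ref{lem:trans2}, so that lemma hands us a sequence of isotopies and positive (de)stabilizations relating $(S_1,p_1,\phi_1)$ to $(S_2,p_2,\phi_2)$.

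There is no genuine obstacle here; the proposition is essentially the concatenation of Lemma~\ref{lem:trans1} and Lemma~\ref{lem:trans2}. The only subtlety worth flagging is the check that the pointed-open-book data $(S,p,\phi)$ extracted from $(B,h)$ really does encode the transverse isotopy class of $K$ and not merely its smooth isotopy class; but this is exactly the content of the definition of ``encoded by a pointed open book'' given before the statement, which in turn appeals to Pavelescu's generalization of Bennequin's theorem that was cited at the start of the section. Once that identification is made, both halves of the proposition follow with no further work.
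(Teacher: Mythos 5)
Your proposal is correct and matches the paper's proof, which is also just a direct appeal to Lemma~\ref{lem:trans1} for existence and Lemma~\ref{lem:trans2} for uniqueness; you simply spell out the bookkeeping that the paper leaves implicit.
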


\begin{proof}
This is a direct consequence of Lemma \ref{lem:trans1} and Lemma
\ref{lem:trans2}.
\end{proof}
Now we will see how to translate a single-pointed open book, which
encodes a transverse knot $K$ in $(S^{3},\xi_{std})$, to an element
in $\widehat{HFK}_{\mathbb{Z}_{2}}(\Sigma(K),K)$. Given such a single-pointed
open book $(S,p,\phi)$, choose an arc-basis $\mathbf{a}=\{a_{1},\cdots,a_{n}\}$
of $S$, where the arcs in $\mathbf{a}$ do not contain $p$. Then
consider the following objects.
\begin{itemize}
\item $\Sigma=(S\times\{0,1\})/\sim$ where $(x,0)\sim(x,1)$ for any $x\in\partial S$
\item For each $i=1,\cdots,n$, $b_{i}$ is the arc in $S$ given by perturbing
$a_{i}$ slightly, so that $a_{i}$ intersects $b_{i}$ transversely
at one point $x_{i}$, where the intersection is positive.
\item $\boldsymbol{\alpha}=\{(a_{i}\times\{0\})\cup(a_{i}\times\{1\})\vert i=1,\cdots,n\}$
\item $\boldsymbol{\beta}=\{(b_{i}\times\{0\})\cup(\phi(b_{i})\times\{1\})\vert i=1,\cdots,n\}$
\item $z=(p,0)$, $w=(p,1)$ in $\Sigma$
\end{itemize}
Then the 5-tuple $H_{(S,p,\phi,\mathbf{a})}=(\Sigma,\boldsymbol{\alpha},\boldsymbol{\beta},z,w)$
is a knot Heegaard diagram representing $K$ in $S^{3}$, and its
branched double cover $\hat{H}$ along $\{z,w\}$ is a $\mathbb{Z}_{2}$-invariant
knot Heegaard diagram representing $K$ in $\Sigma(K)$, so that we
have 
\[
\widehat{HFK}_{\mathbb{Z}_{2}}(\Sigma(K),K)\simeq\widehat{HF}_{\mathbb{Z}_{2}}(\hat{H})\simeq H^{\ast}(\text{RHom}_{\mathbb{F}_{2}[\mathbb{Z}_{2}]}(\widehat{CF}(\hat{H}),\mathbb{F}_{2}))
\]
 by Lemma \ref{lem:eqvtrans-HFK}. So, if we denote the inverse image
of $\{x_{1},\cdots,x_{n}\}$ under the branched covering map by $\mathbf{x}_{(S,p,\phi,\mathbf{a})}$,
then since $\mathbf{x}_{(S,p,\phi,\mathbf{a})}$ is a $\mathbb{Z}_{2}$-invariant
cocycle in $\widehat{CF}(\hat{H})$, the element $\mathbf{x}_{(S,p,\phi,\mathbf{a})}\otimes1$
in $\text{RHom}_{\mathbb{F}_{2}[\mathbb{Z}_{2}]}(\widehat{CF}(\hat{H}),\mathbb{F}_{2})$
is a cocycle, which then corresponds to a cocycle in $\widehat{CFK}_{\mathbb{Z}_{2}}(\Sigma(K),K)$.
We will denote its cohomology class as $\hat{\mathcal{T}}_{\mathbb{Z}_{2}}(S,p,\phi,\mathbf{a})$.
Also, we will denote $\widehat{HF}_{\mathbb{Z}_{2}}(\hat{H})$ as
$\widehat{HFK}_{\mathbb{Z}_{2}}(S,p,\phi,\mathbf{a})$, and call the
tuple $(S,p,\phi,\mathbf{a})$ as a \textbf{single-pointed arc diagram}. 
\begin{lem}
\label{lem:isotopy-arcslide}Let $S$ be a compact oriented surface
with boundary, $p$ be a point in the interior of $S$, and $\mathbf{a}_{1},\mathbf{a}_{2}$
be arc bases of $S$ whose arcs do not contain $p$. Then $\mathbf{a}_{1}$
and $\mathbf{a}_{2}$ are related by a sequence of isotopies and arc-slides,
where the isotopies are performed outside $p$.
\end{lem}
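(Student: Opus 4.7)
The plan is to bootstrap the classical theorem---that any two arc bases of a compact oriented surface with boundary are related by a finite sequence of arc slides and ambient isotopies---to the marked-point setting, by arguing that each move can be realized while avoiding $p$. The classical statement is standard and appears for instance in the foundations of bordered Heegaard Floer homology and in the theory of open book decompositions, so I would invoke it to produce a sequence $\mathbf{a}_1 = \mathbf{b}_0, \mathbf{b}_1, \ldots, \mathbf{b}_N = \mathbf{a}_2$ of arc bases of $S$ in which consecutive terms differ by either an arc slide or an ambient isotopy.

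Arc-slide steps cause no difficulty: a slide is supported in a disk neighborhood of two arcs together with a segment of $\partial S$, and by a small perturbation of this local model the supporting neighborhood can be taken to miss $p$. Similarly, a generic perturbation ensures each intermediate basis $\mathbf{b}_i$ is disjoint from $p$. The essential case is therefore an ambient isotopy $\mathbf{b}_i \to \mathbf{b}_{i+1}$ during which some arc $a$ of the basis sweeps through $p$. To handle it, I would cut $S$ along the arcs of $\mathbf{b}_i$ other than $a$, obtaining a disk $D$ containing $p$ in which $a$ becomes a proper arc; two arcs in $D$ with identical endpoints which are isotopic in $D$ but not in $D \setminus \{p\}$ cobound a bigon $B \subset D$ containing $p$ in its interior.

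The main obstacle is to express the ``through-$p$'' isotopy as a sequence of arc-slides and sub-isotopies that all avoid $p$. My plan is to induct on the number of arcs of $\mathbf{b}_i$ meeting $B$ (after reassembling $S$ from $D$), using that one can always slide $a$ over a basis arc bordering the sub-region of $B$ containing $p$; such a slide strictly reduces this count while still producing a valid arc basis that avoids $p$. Once the count reaches zero, $a$ can be isotoped directly in $D \setminus \{p\}$ to its target position. Concatenating these moves replaces the offending isotopy with a legitimate sequence of arc-slides and $p$-avoiding isotopies, completing the proof.
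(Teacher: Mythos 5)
Your plan matches the paper's proof in outline: invoke the classical theorem that arc bases are related by arc-slides and isotopies, and then replace any isotopy that drags an arc $a$ through $p$ by arc-slides and $p$-avoiding isotopies after cutting $S$ open along the remaining arcs.

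One topological slip should be corrected. Cutting $S$ along all arcs of $\mathbf{b}_i$ except $a$ does not give a disk. Cutting along the full arc basis yields a disk ($\chi=1$), and each arc raises $\chi$ by one, so cutting along all but one arc gives a connected surface of Euler characteristic $0$, i.e.\ an annulus $A$ (this is exactly what the paper observes). In $A$ the arc $a$ runs from one boundary circle to the other, and $a\cup a'$ splits $A$ into two bigons: one (the paper's strip $R$) containing $p$, and the other (the paper's disk $D$) not containing $p$. The replacement of $a$ by $a'$ takes place entirely in $D$: sweeping $a$ across $D$ toward $a'$ produces an arc-slide each time it crosses a segment of $\partial A$ that came from an arc of $\mathbf{b}_i$, and avoidance of $p$ is automatic because $p\in R$. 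Your inductive step---slide $a$ over a basis arc bordering the $p$-region and strictly reduce the count---is in the right spirit, but as stated it suggests pushing $a$ into the $p$-side rather than through the complementary disk, and the verification that the count decreases monotonically and that each intermediate collection is still an arc basis is exactly the content the paper relegates to Figure~10. So the route is the same; fix the annulus-vs-disk identification and carry out the sweep on the $p$-free side and the argument is the paper's.
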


\begin{proof}
We only have to prove that we can replace an isotopy through $p$
by a sequence of isotopies and handleslides performed in $S\backslash\{p\}$.
To prove this, choose an arc $a\in\mathbf{a}_{1}$ and suppose that
we want to isotope $a$ to another simple arc $a^{\prime}$, where
$p\notin a^{\prime}$, $a\cap a^{\prime}=\emptyset$, and $a,a^{\prime}$
cobound a strip $R\subset S$ where $p\in R$. Cutting $S$ along
the arcs in $\mathbf{a}_{1}\backslash\{a\}$ gives an annulus $A$,
where the two endpoints of $a$ lie in different components of $\partial A$
. Here, $A\backslash(a\cup a^{\prime})$ is a disjoint union of a
disk $D$ with the strip $R$. Then, by a sequence of isotopies and
arc-slides in $D$, we can replace $a$ by $a^{\prime}$, as in Figure
\ref{fig10}. Since $p\notin D$, we see that the isotopies we have
used are performed outside $p$.
\end{proof}
\begin{figure}
\resizebox{.7\textwidth}{!}{\includegraphics{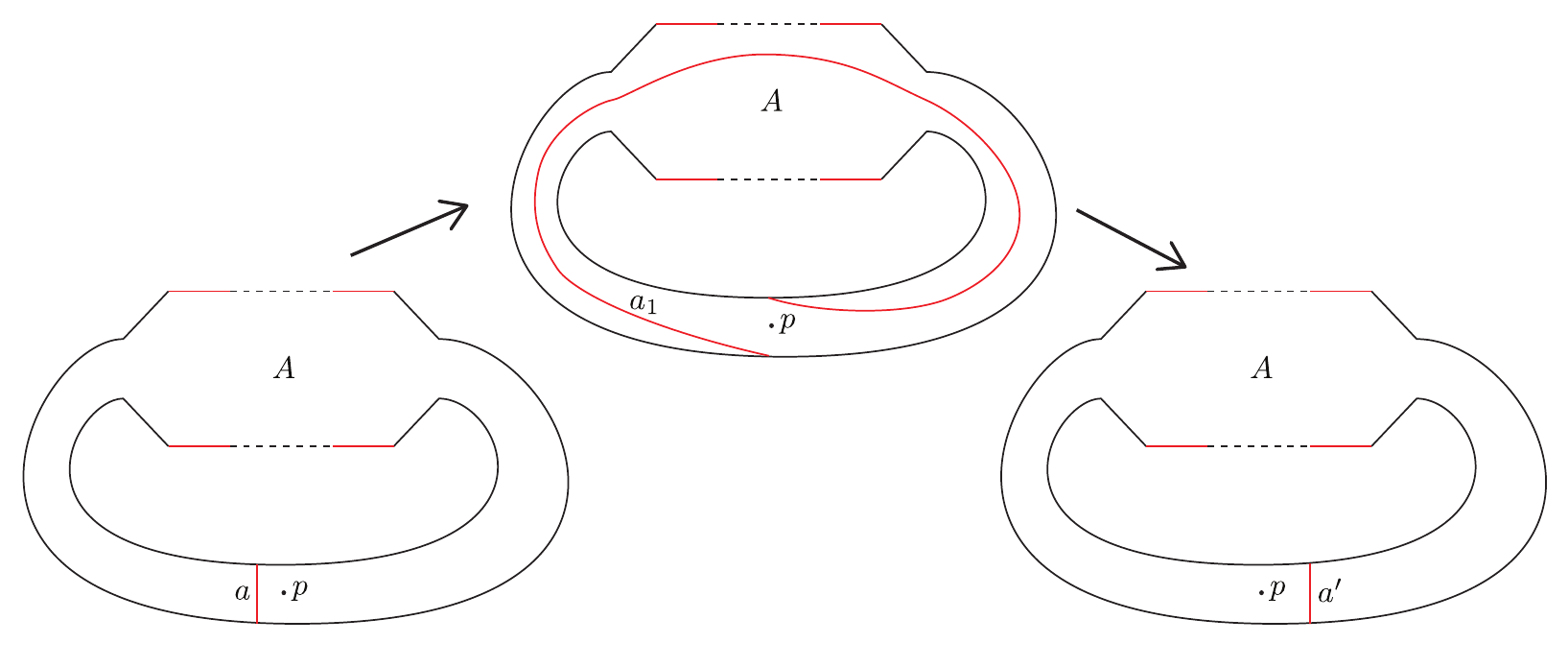}} \caption{\label{fig10}The arc $a_{1}$ is obtained from $a$ by a sequence
of arc-slides, and $a^{\prime}$ is obtained from $a_{1}$ again by
a sequence of arc-slides. During the process, the arc-slides we perform
do not pass through the basepoint $p$.}
\end{figure}
\begin{defn}
When two single-pointed arc diagrams $(S,p,\phi,\mathbf{a})$ and
$(S^{\prime},p^{\prime},\phi^{\prime},\mathbf{a}^{\prime})$ are related
by either an isotopy outside $p$, an arc-slide, or a stabilization,
we say that they are related by a \textbf{basic move}. Note that a
basic move induces a map 
\[
\widehat{HFK}_{\mathbb{Z}_{2}}(S^{\prime},p^{\prime},\phi^{\prime},\mathbf{a}^{\prime})\rightarrow\widehat{HFK}_{\mathbb{Z}_{2}}(S,p,\phi,\mathbf{a}).
\]
\end{defn}

\begin{lem}
\label{lem:loss-basicmove}Suppose that two single-pointed arc diagrams
$(S,p,\phi,\mathbf{a})$ and $(S^{\prime},p^{\prime},\phi^{\prime},\mathbf{a}^{\prime})$
are related by a basic move. Then the induced map 
\[
\widehat{HFK}_{\mathbb{Z}_{2}}(S^{\prime},p^{\prime},\phi^{\prime},\mathbf{a}^{\prime})\rightarrow\widehat{HFK}_{\mathbb{Z}_{2}}(S,p,\phi,\mathbf{a})
\]
 takes $\hat{\mathcal{T}}_{\mathbb{Z}_{2}}(S^{\prime},p^{\prime},\phi^{\prime},\mathbf{a}^{\prime})$
to $\hat{\mathcal{T}}_{\mathbb{Z}_{2}}(S,p,\phi,\mathbf{a})$.
\end{lem}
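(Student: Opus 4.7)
The plan is to treat each of the three types of basic moves---isotopies outside $p$, arc-slides, and positive stabilizations---separately, translating each into a corresponding sequence of $\mathbb{Z}_{2}$-equivariant Heegaard moves on the branched double cover of the associated knot Heegaard diagram $H_{(S,p,\phi,\mathbf{a})}$, and then tracking the cocycle representative $\mathbf{x}_{(S,p,\phi,\mathbf{a})}\otimes 1$ through the induced map in $\widehat{HFK}_{\mathbb{Z}_{2}}$. The key point throughout is that equivariant transversality is automatic for all homotopy classes whose domains avoid the basepoints $\{z,w\}$, by Lemma \ref{lem:eqvtrans-HFK}, so every equivariant holomorphic curve count we need can be read off from the corresponding non-equivariant count downstairs.

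The isotopy case is straightforward. An isotopy of the arc basis outside $p$ induces a Hamiltonian isotopy of the $\alpha$- and $\beta$-curves of $H_{(S,p,\phi,\mathbf{a})}$ supported away from the basepoints, which lifts to a $\mathbb{Z}_{2}$-equivariant Hamiltonian isotopy of the tori $\mathbb{T}_{\tilde{\boldsymbol{\alpha}}}$ and $\mathbb{T}_{\tilde{\boldsymbol{\beta}}}$. The continuation map of Proposition 3.24 of \cite{eqv-Floer}, modelled on thin bigons between the original and isotoped intersection points, transports $\mathbf{x}_{(S,p,\phi,\mathbf{a})}$ to the analogous cocycle for the isotoped basis.

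For an arc-slide of $a_{i}$ over $a_{j}$, both the arcs $a_{i}$ and their perturbations $b_{i}$ must be slid simultaneously, which induces an $\alpha$- and a $\beta$-handleslide on $H_{(S,p,\phi,\mathbf{a})}$, lifting $\mathbb{Z}_{2}$-equivariantly to the branched cover. The resulting map on $\widehat{HFK}_{\mathbb{Z}_{2}}$ is a composition of two equivariant triangle maps whose distinguished generators $\theta_{\alpha,\alpha'}$ and $\theta_{\beta,\beta'}$ are $\mathbb{Z}_{2}$-invariant top cycles; the plan is to import the original LOSS-invariance argument of \cite{LOSS-inv}, whose content is that in each of the two relevant triple diagrams the only small triangles contributing to the image of the LOSS-type cocycle are supported in thin strips near the intersection points $x_{i}$, and their signed count reassembles into the analogous cocycle for $\mathbf{a}'$. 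Well-definedness of the composition of the two equivariant triangle maps is supplied by the associativity of Lemma \ref{associativity-2}, and passing to cohomology yields the desired identity $\hat{\mathcal{T}}_{\mathbb{Z}_{2}}(S,p,\phi,\mathbf{a}')\mapsto\hat{\mathcal{T}}_{\mathbb{Z}_{2}}(S,p,\phi,\mathbf{a})$.

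The stabilization case corresponds to a Heegaard stabilization of $H_{(S,p,\phi,\mathbf{a})}$ on a $1$-handle attached to $\partial S\times\{0,1\}$; this adds a new pair $\alpha_{n+1},\beta_{n+1}$ intersecting transversely at a single point $x_{n+1}$, and composes the monodromy with a Dehn twist along the cocore. The lifted stabilization in the branched cover is $\mathbb{Z}_{2}$-equivariant, and by the neck-stretching argument underlying Theorem \ref{thm:basicmoves-knotHeegaard} the associated stabilization map admits an explicit chain-level description sending $(\mathbf{x}_{(S,p,\phi,\mathbf{a})}\cup\{x_{n+1}\})\otimes 1$ to $\mathbf{x}_{(S,p,\phi,\mathbf{a})}\otimes 1$. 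The main obstacle in all of this is the arc-slide case: although the combinatorial skeleton of the argument parallels classical LOSS invariance, one has to verify carefully that the equivariant triangle counts defining the handleslide maps reduce to $\mathbb{Z}_{2}$-orbits of the non-equivariant small triangles, which is ultimately the content of the equivariant transversality package of Lemma \ref{lem:eqvtrans-HFK}.
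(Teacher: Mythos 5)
Your proposal is correct and takes essentially the same route as the paper: the paper's proof is a one-line citation observing that each basic move downstairs lifts to a $\mathbb{Z}_{2}$-invariant pair of basic moves on the branched double cover, and then appealing to the Honda--Kazez--Mati\'c argument for invariance of the contact class (Section 3 of \cite{HKM}); your write-up simply unpacks what that citation entails. What you do fills in the three cases explicitly---continuation maps for isotopies, paired $\alpha$/$\beta$-handleslide triangle counts with the LOSS-style small-triangle analysis for arc-slides, and neck-stretching for stabilizations---and correctly identifies Lemma \ref{lem:eqvtrans-HFK} as the ingredient that transfers the non-equivariant counts to the equivariant ones (since (EH-2) holds once the domain misses the branching locus, this applies to triangle counts as well as to disks, even though the lemma is stated for disks). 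The one thing worth emphasizing, which the paper states explicitly but you leave implicit in ``lifting $\mathbb{Z}_{2}$-equivariantly,'' is that a single arc-slide or stabilization downstairs produces a $\mathbb{Z}_{2}$-orbit of two such moves upstairs; the invariance argument must be applied to the composite pair, and your invocation of Lemma \ref{associativity-2} for the well-definedness of the composed equivariant triangle maps is the correct way to handle this.
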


\begin{proof}
The branched double covers of $(S,p,\phi,\mathbf{a})$ and $(S^{\prime},p^{\prime},\phi^{\prime},\mathbf{a}^{\prime})$,
along $p$ and $p^{\prime}$, respectively, are related by two basic
moves, performed in a $\mathbb{Z}_{2}$-invariant way. Thus the conclusion
follows from the argument used to prove the invariance of the contact
classes of contact 3-manifolds; see Section 3 of \cite{HKM} for details.
\end{proof}
From the things we have proven so far, we see that we have found a
well-defined transverse invariant. Although we have not proven the
naturality of $\widehat{HFK}_{\mathbb{Z}_{2}}(\Sigma(K),K)$, we can
still define its well-defined element, which depends only on the transverse
isotopy class of $K$ in $S^{3}$.
\begin{thm}
\label{thm:eqvLOSS}The element $\hat{\mathcal{T}}_{\mathbb{Z}_{2}}(S,p,\phi,\mathbf{a})$
depends only on the transverse isotopy class of transverse knots which
$(S,p,\phi,\mathbf{a})$ encodes.
\end{thm}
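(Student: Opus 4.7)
The plan is to combine the three preparatory results --- Proposition \ref{prop:trans}, Lemma \ref{lem:isotopy-arcslide}, and Lemma \ref{lem:loss-basicmove} --- to reduce well-definedness to invariance under a short list of combinatorial basic moves, which has already been established. The argument naturally splits into two independent claims: invariance under change of the arc basis $\mathbf{a}$ for a fixed pointed open book $(S,p,\phi)$, and invariance under the moves relating two pointed open books that encode the same transverse knot.

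First I would handle the arc basis. Fix $(S,p,\phi)$ and let $\mathbf{a}_{1},\mathbf{a}_{2}$ be two arc bases of $S$, whose arcs avoid $p$. By Lemma \ref{lem:isotopy-arcslide}, $\mathbf{a}_{1}$ and $\mathbf{a}_{2}$ are related by a finite sequence of isotopies performed in $S\setminus\{p\}$ and arc-slides. Each such elementary move produces a new single-pointed arc diagram, and by Lemma \ref{lem:loss-basicmove} the induced map on $\widehat{HFK}_{\mathbb{Z}_{2}}$ carries $\hat{\mathcal{T}}_{\mathbb{Z}_{2}}$ to $\hat{\mathcal{T}}_{\mathbb{Z}_{2}}$. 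Composing these maps (which are all isomorphisms by the same argument used for invariance of $\widehat{HF}_{\mathbb{Z}_{2}}$ under isotopies and handleslides) shows that $\hat{\mathcal{T}}_{\mathbb{Z}_{2}}(S,p,\phi,\mathbf{a})$ is independent of $\mathbf{a}$, so we may unambiguously write $\hat{\mathcal{T}}_{\mathbb{Z}_{2}}(S,p,\phi)$.

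Next I would handle changes of the pointed open book. Let $K,K'\subset(S^{3},\xi_{std})$ be transversely isotopic and let $(S,p,\phi)$, $(S',p',\phi')$ be single-pointed open books encoding $K$ and $K'$ respectively. By Proposition \ref{prop:trans}, these two pointed open books are related by a finite sequence of isotopies and positive (de)stabilizations. An isotopy of pointed open books induces an isotopy (through $p$, but the arcs in an arc basis may be chosen to avoid the isotopy trace) of the associated single-pointed arc diagrams, and by the previous paragraph we may assume the arc basis is transported along with the isotopy; hence such an isotopy reduces to basic moves among arc diagrams. A positive stabilization of $(S,p,\phi)$ with respect to an arc $a\subset S\setminus\{p\}$ with $\partial a\subset\partial S$ is by definition a stabilization of single-pointed arc diagrams. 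In both cases, Lemma \ref{lem:loss-basicmove} shows that $\hat{\mathcal{T}}_{\mathbb{Z}_{2}}$ is carried to $\hat{\mathcal{T}}_{\mathbb{Z}_{2}}$ under the induced isomorphism.

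Chaining these isomorphisms together gives an isomorphism between $\widehat{HFK}_{\mathbb{Z}_{2}}(S,p,\phi,\mathbf{a})$ and $\widehat{HFK}_{\mathbb{Z}_{2}}(S',p',\phi',\mathbf{a}')$ under which $\hat{\mathcal{T}}_{\mathbb{Z}_{2}}(S,p,\phi,\mathbf{a})$ is sent to $\hat{\mathcal{T}}_{\mathbb{Z}_{2}}(S',p',\phi',\mathbf{a}')$, proving the theorem. The main technical point to check carefully is the translation from an isotopy of pointed open books (which can move $p$) to a sequence of basic moves of single-pointed arc diagrams; this is handled by first choosing an arc basis whose arcs stay away from the trajectory of $p$ (using Lemma \ref{lem:isotopy-arcslide} to compare with any starting arc basis), then observing that such an isotopy of $(S,\phi)$ while fixing $\mathbf{a}$ and transporting $p$ in the complement of $\mathbf{a}$ is precisely the kind of basic move covered by Lemma \ref{lem:loss-basicmove}. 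All other steps are formal consequences of results already in the excerpt.
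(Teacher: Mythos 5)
Your proof is correct and follows essentially the same route as the paper's one-line argument: reduce to basic moves via Lemma \ref{lem:isotopy-arcslide} and Proposition \ref{prop:trans}, then invoke Lemma \ref{lem:loss-basicmove}. You are in fact more careful than the paper, whose proof cites only the two lemmas and omits the explicit reference to Proposition \ref{prop:trans}, which is what guarantees that two single-pointed open books encoding the same transverse knot are connected by isotopies and positive (de)stabilizations.
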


\begin{proof}
This follows directly from Lemma \ref{lem:isotopy-arcslide} and Lemma
\ref{lem:loss-basicmove}.
\end{proof}
\begin{defn}
Given a (transverse isotopy class of) transverse knot $K$ in $(S^{3},\xi_{std})$,
we define $\hat{\mathcal{T}}_{\mathbb{Z}_{2}}(S,p,\phi,\mathbf{a})$,
for any single-pointed arc diagram $(S,p,\phi,\mathbf{a})$ encoding
$K$, as $\hat{\mathcal{T}}_{\mathbb{Z}_{2}}(K)$.
\end{defn}

Finally, we will see that $\hat{\mathcal{T}}_{\mathbb{Z}_{2}}(K)$
is a transverse invariant which is a refinement of both the hat-flavored
LOSS invariant $\hat{\mathcal{T}}(K)\in\widehat{HFK}(S^{3},K)$, and
the $\mathbb{Z}_{2}$-equivariant contact class $c_{\mathbb{Z}_{2}}(\xi_{K})\in\widehat{HF}_{\mathbb{Z}_{2}}(\Sigma(K))$,
defined in \cite{Kang}.
\begin{thm}
For any transverse knot $K$ in $(S^{3},\xi_{std})$, there exists
a localization isomorphism 
\[
\widehat{HFK}_{\mathbb{Z}_{2}}(\Sigma(K),K)\otimes_{\mathbb{F}_{2}[\theta]}\mathbb{F}_{2}[\theta,\theta^{-1}]\xrightarrow{\sim}\widehat{HFK}(S^{3},K)\otimes_{\mathbb{F}_{2}}\mathbb{F}_{2}[\theta,\theta^{-1}],
\]
 as defined in Theorem \ref{thm:hfk-localization}, which maps $\hat{\mathcal{T}}_{\mathbb{Z}_{2}}(K)\otimes1$
to $\hat{\mathcal{T}}(K)\otimes1$.
\end{thm}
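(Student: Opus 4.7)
The plan is to unfold the chain of canonical isomorphisms implicit in the proof of Theorem \ref{thm:hfk-localization} and to track the image of $\hat{\mathcal{T}}_{\mathbb{Z}_{2}}(K)\otimes 1$ step by step. Fix a single-pointed arc diagram $(S,p,\phi,\mathbf{a})$ encoding $K$, producing a knot Heegaard diagram $H=H_{(S,p,\phi,\mathbf{a})}$ with the LOSS cocycle $\mathbf{x}=\{x_{1},\dots,x_{n}\}\in\widehat{CF}(H)$. Its $\mathbb{Z}_{2}$-invariant lift $\tilde{\mathbf{x}}\in\widehat{CF}(B(H))$ represents $\hat{\mathcal{T}}(K)$ after forgetting the $\mathbb{Z}_{2}$-action, and represents $\hat{\mathcal{T}}_{\mathbb{Z}_{2}}(K)$ via the quasi-isomorphism $\widehat{CF}_{\mathbb{Z}_{2}}(B(H))\simeq\mathrm{RHom}_{\mathbb{F}_{2}[\mathbb{Z}_{2}]}(\widehat{CF}(B(H)),\mathbb{F}_{2})$ of Lemma \ref{lem:eqvtrans-HFK}, as the class of $\tilde{\mathbf{x}}\otimes 1$.

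Next, I would recreate the construction used in the proof of Theorem \ref{thm:hfk-localization}: attach $n$ extra $\alpha$-curves, $n$ extra $\beta$-curves, and $2n$ basepoints near $w$ to form $H_{n}$. Under the natural quasi-isomorphism $\widehat{CF}(H_{n})\simeq\widehat{CF}(H)\otimes C_{n}^{\ast}$, the cocycle $\mathbf{x}$ is sent to $\mathbf{x}\otimes v$, where $v\in C_{n}^{\ast}$ is the distinguished top generator coming from the unique intersection of each new $\alpha$--$\beta$ pair; the analogous statement holds $\mathbb{Z}_{2}$-equivariantly for $\tilde{\mathbf{x}}$ in $B(H_{n})$. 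For $n$ sufficiently large, carry $H_{n}$ to a genus-$0$ multi-pointed diagram $H_{0}$ by a sequence of isotopies, handleslides, and (de)stabilizations performed $\mathbb{Z}_{2}$-equivariantly upstairs. The naturality maps on $\widehat{CF}$ and on $\widehat{CF}_{\mathbb{Z}_{2}}$ induced by these moves send our distinguished cocycles to analogously-defined cocycles $\mathbf{x}_{0}\in\widehat{CF}(H_{0})$ and $\tilde{\mathbf{x}}_{0}\in\widehat{CF}(B(H_{0}))$, since the relevant triangle counts involve only top classes.

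The final step applies Smith-type localization on $B(H_{0})$: because $H_{0}$ lies on $S^{2}$, the diagram $B(H_{0})$ satisfies hypothesis (EH-1), so by the construction of Section 6 of \cite{eqv-Floer}, the chain-level localization map
\[
\widetilde{CF}_{\mathbb{Z}_{2}}(B(H_{0}))\otimes_{\mathbb{F}_{2}[\theta]}\mathbb{F}_{2}[\theta,\theta^{-1}]\longrightarrow\widehat{CF}(H_{0})\otimes_{\mathbb{F}_{2}}\mathbb{F}_{2}[\theta,\theta^{-1}]
\]
tautologically sends any invariant cocycle $\tilde{\mathbf{y}}\otimes 1$ to the corresponding Floer cocycle $\mathbf{y}$ downstairs. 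Applying this to $\tilde{\mathbf{x}}_{0}\otimes 1$ and tracing back through the chain yields $\hat{\mathcal{T}}_{\mathbb{Z}_{2}}(K)\otimes 1\mapsto\hat{\mathcal{T}}(K)\otimes 1$.

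The main obstacle is turning the abstract PID-classification argument in Theorem \ref{thm:hfk-localization} into a canonical map, i.e.\ showing that the intermediate cancellation of the auxiliary factor $V_{n}$ can be performed compatibly with the distinguished generator $v\in V_{n}$. This reduces to checking that each isomorphism in the chain $\widehat{HF}_{\mathbb{Z}_{2}}(B(H_{n}))\simeq\widehat{HFL}_{\mathbb{Z}_{2}}(\Sigma(L),L)\otimes V_{n}$ and its non-equivariant analogue is natural with respect to $v$; once this naturality is established via K\"unneth-type arguments using sufficiently stretched $\mathbb{Z}_{2}$-equivariant almost complex structures in the stabilization region, the tautological matching on $B(H_{0})$ transports back to the desired equality for $K$.
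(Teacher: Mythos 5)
Your strategy diverges from the paper's in a way that leaves a genuine gap. The paper does \emph{not} try to make the abstract Heegaard-stabilization argument of Theorem \ref{thm:hfk-localization} canonical. Instead, it replaces the Heegaard stabilizations $H\rightsquigarrow H_n$ by positive Markov moves of the braid, producing multi-pointed \emph{arc diagrams} $(S,\mathbf{p}_n,\phi,\mathbf{a}_n)$ encoding the stabilized braid $K_n$. This is crucial: every intermediate diagram retains an open-book structure, so there is a canonical distinguished cocycle at every stage, namely the BRAID cocycle $\mathbf{x}_{H_{(S,\mathbf{p}_n,\phi,\mathbf{a}_n)}}$. After $n$ Markov moves one reaches a planar open book; the localization on the planar diagram sends the $\mathbb{Z}_2$-equivariant BRAID cocycle to $\hat{t}(K_n)$; and the LOSS=BRAID theorem (Theorem 5.1 of \cite{GRID=00003DLOSS}) identifies $\hat{t}(K_n)$ with $\hat{\mathcal{T}}(K)\otimes c$ under the destabilization isomorphism. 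That last identification is the load-bearing step, and it does not appear anywhere in your proposal.

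Your plan instead stabilizes the Heegaard diagram directly (adding $\alpha/\beta$-curves and basepoints near $w$ as in Figure \ref{fig08}), moves by arbitrary isotopies, handleslides, and (de)stabilizations to a genus-zero $H_0$, and claims the naturality maps carry $\mathbf{x}$ and $\tilde{\mathbf{x}}$ to ``analogously-defined cocycles'' $\mathbf{x}_0,\tilde{\mathbf{x}}_0$ compatibly with localization. There are two problems. First, once you leave the world of pointed open books, there is no canonical ``analogously-defined'' cocycle in the intermediate diagrams: the image of a LOSS cocycle under an arbitrary handleslide triangle map is not a priori identifiable, and the remark that ``the relevant triangle counts involve only top classes'' is not a substitute for an identification. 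Second, the commutativity you need — that the chain-level Smith localization at $B(H_0)$ intertwines the equivariant naturality maps upstairs with the non-equivariant ones downstairs — is exactly the sort of statement the paper goes out of its way to avoid proving; you flag it at the end as ``the main obstacle'' but only assert that suitable ``K\"unneth-type arguments'' would establish it. In short, the proposal correctly senses that Theorem \ref{thm:hfk-localization} is not canonical as stated and must be upgraded, but it omits the actual mechanism (Markov stabilization of open books plus LOSS=BRAID) that the paper uses to produce a controlled intermediate cocycle, and replaces it with an unproven naturality claim.
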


\begin{proof}
Let $(S,p,\phi,\mathbf{a})$ be a single-pointed arc diagram encoding
$K$, so that $K$ is in a braid position with respect to the binding
$B=\partial S$. For a positive integer $n$, consider the transverse
braid $K_{n}$ given by applying postive Markov moves $n$ times to
$K$, and denote the $(n+1)$-pointed arc diagram encoding the braid
$K_{n}$ by $(S,\mathbf{p}_{n},\phi,\mathbf{a}_{n})$. Here, $\mathbf{p}_{n}=\{p,p_{1},\cdots,p_{n}\}$
where $p_{1},\cdots,p_{n}$ are pointes added by Markov moves, and
$\mathbf{a}_{n}$ is the arc basis given by adding to $\mathbf{a}$
small arcs connecting the points $p_{i}$ to the component of $\partial S$
where the $i$th Markov move was taken. Then, under the isomorphism
\[
\widehat{HFK}_{\mathbb{Z}_{2}}(S,p,\phi,\mathbf{a})\otimes_{\mathbb{F}_{2}}V_{n}\xrightarrow{\sim}\widehat{HFK}_{\mathbb{Z}_{2}}(S,\mathbf{p}_{n},\phi,\mathbf{a}_{n}),
\]
 the element $\hat{\mathcal{T}}_{\mathbb{Z}_{2}}(K)\otimes c$, where
$c$ denotes the top class in $V_{n}=(\mathbb{F}_{2}^{2})^{\otimes n}$,
is mapped to the cohomology class of the cocycle $\mathbf{x}_{H_{(S,\mathbf{p}_{n},\phi,\mathbf{a}_{n})}}$.

Now, when $n$ is sufficiently big, $(S,\mathbf{p}_{n},\phi,\mathbf{a}_{n})$
is related by a sequence of isotopies, arc-slides, and positive (de)stabilizations
to a pointed diagram of genus zero. So, by the argument used in the
proof of Theorem \ref{thm:hfk-localization}, together with Lemma
\ref{lem:loss-basicmove} applied to multi-pointed open books, we
have a localization isomorphism 
\[
\widehat{HFK}_{\mathbb{Z}_{2}}(S,\mathbf{p}_{n},\phi,\mathbf{a}_{n})\otimes_{\mathbb{F}_{2}[\theta]}\mathbb{F}_{2}[\theta,\theta^{-1}]\xrightarrow{\sim}\widehat{HFK}(S,\mathbf{p}_{n},\phi,\mathbf{a}_{n})\otimes_{\mathbb{F}_{2}}\mathbb{F}_{2}[\theta,\theta^{-1}],
\]
 which maps $\mathbf{x}_{H_{(S,\mathbf{p}_{n},\phi,\mathbf{a}_{n})}}\otimes1$
to $\hat{t}(K_{n})\otimes1$, where $\hat{t}(K_{n})$ is the image
of the BRAID invariant of the transverse braid $K_{n}$, defined in
\cite{GRID=00003DLOSS}, under the natural map from $HFK^{-}$ to
$\widehat{HFK}$. Also, by the LOSS=BRAID theorem (Theorem 5.1 of
\cite{GRID=00003DLOSS}) and the construction of localization map
in \cite{localization-map}, the isomorphism 
\[
\widehat{HFK}(S^{3},K)\otimes V_{n}\xrightarrow{\sim}\widehat{HFK}(S,\mathbf{p}_{n},\phi,\mathbf{a}_{n})
\]
 maps $\hat{\mathcal{T}}(K)\otimes c$ to $\hat{t}(K_{n})$. Therefore
there exists an isomorphism 
\[
\widehat{HFK}_{\mathbb{Z}_{2}}(\Sigma(K),K)\otimes_{\mathbb{F}_{2}[\theta]}\mathbb{F}_{2}[\theta,\theta^{-1}]\xrightarrow{\sim}\widehat{HFK}(S^{3},K)\otimes_{\mathbb{F}_{2}}\mathbb{F}_{2}[\theta,\theta^{-1}]
\]
 which maps $\hat{\mathcal{T}}_{\mathbb{Z}_{2}}(K)\otimes1$ to $\hat{\mathcal{T}}(K)\otimes1$
.
\end{proof}
Proving that $\hat{\mathcal{T}}_{\mathbb{Z}_{2}}(K)$ is a refinement
of $c_{\mathbb{Z}_{2}}(K)$ is a bit more difficult, as it needs an
extension of the definition of $c_{\mathbb{Z}_{2}}(\xi_{K})$. Recall
that, for a transverse knot $K$ in $(S^{3},\xi_{std})$, the $\mathbb{Z}_{2}$-equivariant
contact class $c_{\mathbb{Z}_{2}}(\xi_{K})$ is defined as follows.
\begin{itemize}
\item Choose a multi-pointed genus zero open book $(D,\mathbf{p}=\{p_{1},\cdots,p_{n}\},\phi)$
which encodes $K$.
\item Choose a system of pairwise disjoint arcs $\mathbf{a}=\{a_{1},\cdots,a_{n}\}$
such that, for each $i=2,\cdots,n$, $a_{i}$ starts from $p_{i}$
and ends at a point in $\partial D$.
\item Taking the branched double cover of $(D,\phi,\mathbf{a}\backslash\{a_{1}\})$,
branched along $\mathbf{p}$, gives a $\mathbb{Z}_{2}$-invariant
arc diagram $(\Sigma,\phi,\tilde{\mathbf{a}})$, where the point $p_{1}$
now works as a basepoint.
\item Applying Honda-Kazez-Matic construction of contact classes, as given
in the paper \cite{HKM}, gives an element $EH_{\mathbb{Z}_{2}}(\xi_{K})\in\widehat{CF}_{\mathbb{Z}_{2}}(\Sigma,\phi,\tilde{\mathbf{a}})$,
which turns out to be a cocycle which depends only on the transverse
isotopy class of $K$; see \cite{Kang} for details.
\item The cohomology class of $EH_{\mathbb{Z}_{2}}(\xi_{K})$ is denoted
as $c_{\mathbb{Z}_{2}}(\xi_{K})$.
\end{itemize}
We will now extend this definition to multi-pointed open books of
arbitrary genera.
\begin{defn}
Let $S$ be a compact oriented surface with boundary and $\mathbf{p}\subset\text{int}(S)$
be a finite subset, say $\mathbf{p}=\{p_{1},\cdots,p_{n}\}$. Then
an \textbf{extended arc basis} of the multi-pointed surface $(S,\mathbf{p})$
is a set $\mathbf{a}=\{a_{1}^{w},\cdots,a_{m}^{w},a_{1}^{h},\cdots,a_{n}^{h}\}$
of pairwise disjoint simple arcs, which satisfies the following conditions.
Here, the arcs $a_{i}^{w}$ are called \textbf{whole arcs}, and $a_{j}^{h}$
are called \textbf{half arcs}.
\begin{itemize}
\item $\{a_{1}^{w},\cdots,a_{m}^{w}\}$ is an arc basis of $S$.
\item For each $i=1,\cdots,n$, the arc $a_{i}^{h}$ starts from $p_{i}$
and ends at a point in $\partial S$.
\end{itemize}
\end{defn}

Given a multi-pointed open book $(S,\mathbf{p},\phi)$ encoding a
transverse knot $K$ in $(S^{3},\xi_{std})$, choose an extended arc
basis $\mathbf{a}$ of $(S,\mathbf{p})$ together with a distinguished
element $p_{1}\in\mathbf{p}$. Then, as in the genus zero case, taking
the branched double cover of $(S,\phi,\mathbf{a}\backslash\{a_{1}^{h}\})$
along $\mathbf{p}$, where $a_{1}^{h}$ is the half arc in $\mathbf{a}$
which contains $p_{1}$, gives a $\mathbb{Z}_{2}$-invariant arc diagram
$(\Sigma,\phi,\tilde{\mathbf{a}})$, where $p_{1}$ is now a basepoint
in $\Sigma\backslash\cup\tilde{\mathbf{a}}$. Applying Honda-Kazez-Matic
construction then gives a canonical element $EH_{\mathbb{Z}_{2}}(S,\mathbf{p},\phi,\mathbf{a},p_{1})$. 
\begin{defn}
The argument used to prove that $EH_{\mathbb{Z}_{2}}(\xi_{K})$ is
a cocycle can be directly applied to show that $EH_{\mathbb{Z}_{2}}(S,\mathbf{p},\phi,\mathbf{a},p_{1})$
is also a cocycle in $\widehat{CF}_{\mathbb{Z}_{2}}(\Sigma,\phi,\tilde{\mathbf{a}},p_{1})$.
So we denote its cohomology class as $c_{\mathbb{Z}_{2}}(S,\mathbf{p},\phi,\mathbf{a},p_{1})$.
\end{defn}

When $(S,\mathbf{p},\phi)$ is a multi-pointed open book, $p_{1}\in\mathbf{p}$
is a distinguished point, and $\mathbf{a}$ is an extended arc basis
of $(S,\mathbf{p})$, then we call the tuple $(S,\mathbf{p},\phi,\mathbf{a})$
as an \textbf{extended arc diagram}. The difference between using
genus zero extended arc diagrams and using diagrams of arbitrary genera
is that we now have four possible types of arc-slides. As usual, we
call isotopies and arc-slides as basic moves. Note that the HKM construction
applied to an extended arc diagram gives an extended bridge diagram,
and arc-slides of type I/II/III/IV correspond to the same types of
handleslides.

\subsection*{Isotopy}

We can perform an isotopy to arcs in an extended arc basis $\mathbf{a}$
of a multi-pointed surface $(S,\mathbf{p})$. Here, the isotopies
must not pass through points in $\mathbf{p}$.

\subsection*{Arc-slide of type I}

We can perform an (ordinary) arc-slide of an whole arc along another
whole arc in an extended arc basis $\mathbf{a}$, outside the basepoint
$p_{1}$ and the half-arcs in $\mathbf{a}$.

\subsection*{Arc-slide of type II}

Given a whole arc $a^{w}$ and a half arc $a^{h}$ in an extended
arc basis $\mathbf{a}$, we can replace $a^{w}$ by another whole
arc $a_{1}^{w}$ if $a_{1}^{w}$ does not intersect the arcs in $\mathbf{a}\backslash\{a^{w}\}$,
$a_{1}^{w}$ intersects $a^{w}$ transversely at one point, and $S\backslash(a^{w}\cup a_{1}^{w})$
contains a triangle component $T$ such that $a^{h}\subset T$.

\subsection*{Arc-slide of type III}

Given a half arc $a^{h}$ and a whole arc $a^{w}$ in an extended
arc basis $\mathbf{a}$, we can replace $a^{h}$ by another arc $a_{1}^{h}$
if $a^{h}\cap a_{1}^{h}$ is a point in $\mathbf{p}$, the interior
of $a_{1}^{h}$ does not intersect the arcs in $\mathbf{a}$, and
the arcs $a^{h},a^{w},a_{1}^{h}$ bound a strip in $S$.

\subsection*{Arc-slide of type IV}

Given two half arcs $a_{1}^{h},a_{2}^{h}$ in an extended arc basis
$\mathbf{a}$, we can replace $a_{1}^{h}$ by another arc $a_{1^{\prime}}^{h}$
if $a_{1}^{h}\cap a_{1^{\prime}}^{h}$ is a point in $\mathbf{p}$,
the interior of $a_{1^{\prime}}^{h}$ does not intersect the arcs
in $\mathbf{a}$, and $S\backslash(a_{1}^{h}\cup a_{1^{\prime}}^{h})$
contains a triangle component $T$ satisfying $a_{2}^{h}\subset T$.
\begin{lem}
\label{lem:exarcbasis}Any two arc bases of a multi-pointed surface
$(S,\mathbf{p})$ are related by a sequence of isotopies and arc-slides.
\end{lem}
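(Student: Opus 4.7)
The plan is to reduce to the single-basepoint situation of Lemma \ref{lem:isotopy-arcslide} by first matching the half arcs of the two given extended arc bases $\mathbf{a}$ and $\mathbf{a}'$, and then matching the whole arcs afterward. Throughout, every move I use will preserve the property of being an extended arc basis, and the reduction at the end is designed so that the remaining ``whole arc'' problem literally becomes the setting of Lemma \ref{lem:isotopy-arcslide}.

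For the first stage, I would modify $\mathbf{a}$ via arc-slides of types II, III, and IV, together with isotopies, so that the resulting half arcs coincide with those of $\mathbf{a}'$, one index at a time. For a fixed $i$, after fixing already-matched half arcs $a_1^{h}, \ldots, a_{i-1}^{h}$, choose a generic smooth isotopy of simple arcs from $a_i^{h}$ to $a_i'^{h}$ in $S$ which fixes $p_i$, moves the other endpoint along $\partial S$, and stays disjoint from $\mathbf{p} \setminus \{p_i\}$. The codimension-one singularities of this isotopy relative to the other arcs of $\mathbf{a}$ are simple tangencies; perturbing past each such tangency creates a pair of intersection points that can be resolved by an arc-slide of type III (if the tangency is with a whole arc) or type IV (if with another half arc). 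If a whole arc temporarily blocks the strip/triangle condition required for type III/IV, I would first clear the way by arc-slides of type II.

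Once the half arcs of $\mathbf{a}$ and $\mathbf{a}'$ agree, cut $S$ along these common half arcs to obtain a surface $S'$ in which the points $p_1, \ldots, p_n$ have been absorbed into $\partial S'$. The whole arcs of $\mathbf{a}$ and $\mathbf{a}'$ descend to two arc bases of $S'$ with no interior marked points, so a direct application of Lemma \ref{lem:isotopy-arcslide} (choosing any auxiliary basepoint on $\partial S'$, which we may subsequently ignore) relates them by isotopies and arc-slides of type I. These lift back to isotopies and arc-slides of type I in the uncut surface, completing the proof.

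The main technical obstacle lies in the first stage, because the shape conditions imposed on arc-slides of types III and IV (requiring a strip or triangle component, respectively) restrict which local moves can be made directly. To handle this cleanly, I expect to mimic the strategy of Lemma \ref{lem:isotopy-arcslide}: cut $S$ along the whole arcs of $\mathbf{a}$ to reduce to a disk (or a union of disks), so that each half arc becomes a simple arc in a disk avoiding the remaining marked points, and the required isotopy can then be decomposed explicitly into elementary moves, each of which is manifestly either an isotopy avoiding $\mathbf{p}$ or a single type III/IV arc-slide.
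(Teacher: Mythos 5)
Your two-stage structure (first match the half arcs, then cut along them and invoke Lemma~\ref{lem:isotopy-arcslide} for the whole arcs) is reasonable, and the second stage is sound: after the half arcs agree, cutting along them produces a surface $S'$ with no interior marked points, the marked points $p_i$ having been absorbed into $\partial S'$, and the two families of whole arcs become ordinary arc bases of $S'$; relating them by isotopies and arc-slides and pushing forward gives isotopies outside $\mathbf{p}$ and arc-slides of type I. The difficulty is that your first stage rests on a step that does not exist in general: you begin by choosing ``a generic smooth isotopy of simple arcs from $a_i^h$ to $a_i'^h$ in $S$'' and then resolve the tangencies it creates. But $a_i^h$ and $a_i'^h$ need not be isotopic as embedded arcs in $S$ from $p_i$ to $\partial S$. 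If $\partial S$ has several components they can land on different ones, and even on the same component they can differ by how they wind through the genus of $S$; in either case no such isotopy exists, so there is nothing whose tangencies you can resolve. Your proposed remedy at the end --- cut $S$ along the whole arcs of $\mathbf{a}$ to reduce to a disk --- does not close this gap, because $a_i'^h$ is an arc of $\mathbf{a}'$ and typically crosses the whole arcs of $\mathbf{a}$, so it does not descend to an embedded arc in that disk. The cut-open picture captures $a_i^h$ but not $a_i'^h$, and the isotopy your argument hinges on is still never exhibited.

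The way to repair stage one is not to fix an ambient isotopy in advance but to argue directly by induction on the geometric intersection number of $a_i'^h$ with the arcs of the current basis: use arc-slides of type III (sliding $a_i^h$ along a whole arc, which in particular can change the boundary component at which its free endpoint lies and cancel an intersection with a whole arc of $\mathbf{a}'$) and type IV (to pass other half arcs), together with type II when the strip/triangle hypotheses fail, reducing the intersection number to zero; only then is one in the disk situation where an isotopy to $a_i'^h$ actually exists. Note also that while you are moving $a_i^h$ you may have to move the not-yet-matched $a_{i+1}^h,\dots,a_n^h$ out of the way as well, since $a_i'^h$ is not assumed disjoint from them. The paper's one-line proof (combine Lemma~\ref{lem:isotopy-arcslide} with Proposition 5.3 of~\cite{Kang}) is too terse to compare in detail, but both cited results are of the inductive ``reduce intersection numbers and avoid marked points'' form, and that is the ingredient your proposal is missing.
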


\begin{proof}
This can be seen easily by combining the argument used in the proof
of Lemma \ref{lem:isotopy-arcslide} together with the proof of Proposition
5.3 in \cite{Kang}.
\end{proof}
\begin{prop}
\label{prop:inv-cz2}Let $(S,\mathbf{p},\phi,\mathbf{a},p_{1})$ be
an extended arc diagram, where $(S,\mathbf{p},\phi)$ encodes a transverse
knot $K$ in $(S^{3},\xi_{std})$. Then $c_{\mathbb{Z}_{2}}(S,\mathbf{p},\phi,\mathbf{a},p_{1})$
depends only on the transverse isotopy class of $K$.
\end{prop}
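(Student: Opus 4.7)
The plan is to verify that $c_{\mathbb{Z}_2}(S,\mathbf{p},\phi,\mathbf{a},p_1)$ is unchanged under every elementary modification of the defining data that preserves the transverse isotopy class of $K$. By Proposition \ref{prop:trans} (together with its extension to multi-pointed open books via Markov's theorem for transverse braids) and Lemma \ref{lem:exarcbasis}, any two extended arc diagrams encoding the same transverse knot $K$ are related by a sequence of three types of moves: (i) basic moves of $\mathbf{a}$ with $(S,\mathbf{p},\phi,p_1)$ fixed, (ii) positive (de)stabilizations of the multi-pointed open book $(S,\mathbf{p},\phi)$, and (iii) relabelings of the distinguished point $p_1 \in \mathbf{p}$. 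It thus suffices to verify invariance of the equivariant contact cohomology class under each of these three types of moves.

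For (i), the key observation is that taking the branched double cover along $\mathbf{p}$ converts an arc-slide of type $X\in\{\mathrm{I,II,III,IV}\}$ into a $\mathbb{Z}_2$-equivariant handleslide of type $X$ on the associated extended bridge diagram, and converts an isotopy of $\mathbf{a}$ performed away from $\mathbf{p}$ into a $\mathbb{Z}_2$-equivariant isotopy of the covering diagram. By Theorem \ref{thm:basicmove-isom} and Remark \ref{rem:functor-ext}, these moves induce canonical isomorphisms on $\widehat{HF}_{\mathbb{Z}_2}$, and by directly tracking the equivariant HKM generator through the associated handleslide triangle map, exactly as in Section 3 of \cite{HKM} and Section 5 of \cite{Kang} but with the $\mathbb{Z}_2$-equivariant refinement, each such isomorphism carries the equivariant HKM cocycle to the equivariant HKM cocycle of the target diagram.

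For (ii), I would imitate the open book stabilization argument of \cite{Kang}, now in arbitrary genus: a positive stabilization attaches a 1-handle to $S$ along $\partial S$ and enlarges $\mathbf{a}$ by a single new whole arc running over this handle, which on the branched double cover becomes a stabilization of type I of the extended bridge diagram together with localized type-I handleslides. The unique new intersection point created on the stabilization handle lifts to a $\mathbb{Z}_2$-invariant pair that is precisely the HKM generator on the new handle, so the stabilization map, which is a well-defined isomorphism by Proposition \ref{prop-stab-unique}, sends the original equivariant HKM cocycle to the stabilized one.

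The main obstacle will be (iii), the independence of the cohomology class from the choice of distinguished point $p_1 \in \mathbf{p}$. Here two choices $p_1$ and $p_1'$ determine, a priori, different ambient equivariant Floer groups since a different half arc is removed before taking the branched double cover, and one must identify these ambient groups via the natural isomorphism furnished by Theorem \ref{thm:main2} and then show the two HKM classes correspond. The plan is to reduce to the case where $p_1$ and $p_1'$ are joined by a single half arc in an auxiliary diagram obtained by suitable positive open-book stabilizations, so that swapping which half arc is removed is realized by an arc-slide of type IV; the equivariant triangle map counting $\mathbb{Z}_2$-invariant holomorphic triangles in the resulting diagram should then send one HKM cocycle to the other, by a direct combinatorial count analogous to that appearing in the proof of Lemma \ref{lem:handleswap-inv}.
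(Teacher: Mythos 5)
The paper's own proof is a one-line reference to the invariance argument for genus-zero open books in \cite{Kang}, so your detailed expansion is genuinely useful. Your items (i) and (ii) are sound: arc-slides and isotopies of $\mathbf{a}$ lift to basic moves on the extended bridge diagram, and the HKM-style argument tracks the cocycle through the equivariant triangle maps; positive open-book stabilization of $S$ lifts to an equivariant stabilization on which the new intersection point is the HKM generator on the new handle. However, there are two gaps in what you propose.

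First, your list of moves in the setup is incomplete. Positive stabilizations of the open book $(S,\mathbf{p},\phi)$ do not change the cardinality of $\mathbf{p}$, but in order to realize an arbitrary transverse isotopy of $K$ you also need positive Markov (de)stabilizations of the braid, which add or delete a point of $\mathbf{p}$ together with its half arc. These are not covered by (ii) and require a separate argument (in the paper's own application such a move is exactly what is being applied when passing from $(S,p,\phi,\mathbf{a})$ to $(S,\mathbf{p}_n,\phi,\mathbf{a}_n)$). In the branched cover, a positive Markov move changes the genus of the covering surface and should be analyzed as a composition of an equivariant stabilization and a handleslide, and one then needs to check that this composition carries the HKM cocycle to the new one.

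Second, the mechanism you propose for (iii) cannot work as stated. An arc-slide of type IV replaces a half arc $a_1^h$ attached to $p_1$ by another half arc $a_{1'}^h$ that is still attached to $p_1$ (the intersection $a_1^h\cap a_{1'}^h$ is required to be a point of $\mathbf{p}$, namely $p_1$, and the whole move leaves the endpoints in $\mathbf{p}$ fixed). Hence no sequence of type-IV arc-slides can transfer the distinguished role from $p_1$ to a different point $p_1'$, so your proposed ``swap the removed half arc by a type-IV slide'' does not happen. Showing that $c_{\mathbb{Z}_2}$ is independent of the choice of $p_1$ requires a genuinely different argument, e.g.\ an equivariant basepoint-moving isotopy of the branched double cover (compatible with the deck transformation) or a reduction through Markov (de)stabilization that forces the distinguished point to be a newly created one; either way one must then identify the ambient groups $\widehat{HF}_{\mathbb{Z}_2}(\Sigma(K),p_1)$ and $\widehat{HF}_{\mathbb{Z}_2}(\Sigma(K),p_1')$ by a canonical isomorphism and show the two HKM classes correspond under it. As written, this is the step of your plan that has a real gap.
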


\begin{proof}
The proof is essentially the same as in the proof of invariance of
$c_{\mathbb{Z}_{2}}(\xi_{K})$ for genus zero open books. See Section
5 of \cite{Kang} for details.
\end{proof}
Finally, we can prove that for transverse knots $K$ in $(S^{3},\xi_{std})$,
$\hat{\mathcal{T}}_{\mathbb{Z}_{2}}(K)$ is a refinement of $c_{\mathbb{Z}_{2}}(\xi_{K})$.
\begin{thm}
For any transverse knot $K$ in $(S^{3},\xi_{std})$, the natural
map 
\[
\widehat{HFK}_{\mathbb{Z}_{2}}(\Sigma(K),K)\rightarrow\widehat{HF}_{\mathbb{Z}_{2}}(\Sigma(K))
\]
 maps $\hat{\mathcal{T}}_{\mathbb{Z}_{2}}(K)$ to $c_{\mathbb{Z}_{2}}(\xi_{K})$.
\end{thm}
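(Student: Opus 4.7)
The plan is to exhibit a single open book decomposition of $(S^{3},\xi_{std})$ encoding $K$ and show that, after the HKM-style doubling and branched double cover constructions, both $\hat{\mathcal{T}}_{\mathbb{Z}_{2}}(K)$ and $c_{\mathbb{Z}_{2}}(\xi_{K})$ are represented by the same intersection-point cocycle at the chain level; the natural map will then be realized by forgetting the second basepoint in the underlying Heegaard diagram.

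First I would pick a single-pointed open book $(S,p,\phi)$ encoding $K$, as provided by Lemma \ref{lem:trans1}, together with a whole-arc basis $\mathbf{a}$ of $S$. This data simultaneously determines a single-pointed arc diagram $(S,p,\phi,\mathbf{a})$ and an extended arc diagram $(S,\{p\},\phi,\mathbf{a}\cup\{a_{1}^{h}\},p)$, where $a_{1}^{h}$ is any half-arc from $p$ to $\partial S$ disjoint from $\mathbf{a}$. The doubled surface $\Sigma=(S\times\{0,1\})/\!\sim$, together with the $\alpha$- and $\beta$-curves coming from $\mathbf{a}$ and $\phi(\mathbf{a})$, is the same in both constructions. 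Taking the branched double cover along $\{z,w\}=\{(p,0),(p,1)\}$ produces the $\mathbb{Z}_{2}$-invariant Heegaard diagram $B(H)$ (where $H=H_{(S,p,\phi,\mathbf{a})}$), and forgetting $w$ yields exactly the diagram $\hat{H}$ used in computing $\widehat{HF}_{\mathbb{Z}_{2}}(\Sigma(K))$. The generator $\mathbf{x}$ sitting above the HKM intersection points $\{a_{i}\cap b_{i}\}$ is $\mathbb{Z}_{2}$-invariant by construction.

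Next I would check that $\mathbf{x}\otimes 1$ represents $\hat{\mathcal{T}}_{\mathbb{Z}_{2}}(K)$ in $\text{RHom}_{\mathbb{F}_{2}[\mathbb{Z}_{2}]}(\widehat{CF}(B(H)),\mathbb{F}_{2})$ by the definition of the former, and represents $c_{\mathbb{Z}_{2}}(\xi_{K})$ in $\text{RHom}_{\mathbb{F}_{2}[\mathbb{Z}_{2}]}(\widehat{CF}(\hat{H}),\mathbb{F}_{2})$ by the extended-HKM construction underlying Proposition \ref{prop:inv-cz2}. The essential compatibility is the standard HKM positivity argument: every domain of a nonconstant Whitney disk emanating from $\mathbf{x}$ contains a region on the ``wrong side'' of some $\alpha$- or $\beta$-arc and must therefore intersect at least one of $z,w$ with positive multiplicity. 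Consequently $\mathbf{x}$ remains a cocycle in both complexes, and the extra differential contributions coming from disks through $w$ vanish identically on $\mathbf{x}$.

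Finally I would identify the natural map $\widehat{HFK}_{\mathbb{Z}_{2}}(\Sigma(K),K)\to\widehat{HF}_{\mathbb{Z}_{2}}(\Sigma(K))$ with the edge map of the spectral sequence of Theorem \ref{thm:specseq2}, arising from the $w$-filtration on $\widehat{CF}_{\mathbb{Z}_{2}}(\hat{H})$ whose associated graded is $\widehat{CF}_{\mathbb{Z}_{2}}(B(H))$. Since no disk from $\mathbf{x}$ crosses $w$, the class $[\mathbf{x}\otimes 1]$ survives every higher differential, and its image in $\widehat{HF}_{\mathbb{Z}_{2}}(\Sigma(K))$ is represented by the same chain viewed in $\widehat{CF}_{\mathbb{Z}_{2}}(\hat{H})$; hence the natural map carries $\hat{\mathcal{T}}_{\mathbb{Z}_{2}}(K)$ to $c_{\mathbb{Z}_{2}}(\xi_{K})$. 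The hard part will be matching the two chain-level cocycle descriptions precisely, since the invariants are defined with slightly different auxiliary data; one has to check that discarding $a_{1}^{h}$ in the extended HKM construction returns exactly the data of $(S,p,\phi,\mathbf{a})$ and that the resulting $\mathbb{Z}_{2}$-invariant Heegaard diagrams coincide. This uses the framework of Section 4 together with Lemma \ref{lem:eqvtrans-HFK} to choose a simultaneous $\mathbb{Z}_{2}$-equivariant family of almost complex structures achieving transversality for both complexes, ensuring that the cocycle identification is independent of all auxiliary choices.
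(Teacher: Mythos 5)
Your proposal is correct and takes essentially the same approach as the paper: both exhibit a single-pointed arc diagram $(S,p,\phi,\mathbf{a})$ encoding $K$, observe that the $\mathbb{Z}_2$-invariant HKM intersection cocycle $\mathbf{x}$ in the branched-double-cover diagram represents $\hat{\mathcal{T}}_{\mathbb{Z}_2}(K)$ and, after appending a half-arc $a^h$, is literally the cocycle $EH_{\mathbb{Z}_2}(S,p,\phi,\mathbf{a}\cup\{a^h\},p)$ representing $c_{\mathbb{Z}_2}(\xi_K)$, and note that the natural map (forgetting the second basepoint) acts as the identity on this chain. One small wrinkle: the positivity argument should be phrased as showing every nonconstant positive domain from $\mathbf{x}$ crosses $z$ (hence no disk is counted in either differential), rather than asserting that no disk crosses $w$, but this does not affect the conclusion.
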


\begin{proof}
Let $(S,p,\phi,\mathbf{a})$ be a single\textendash pointed arc diagram,
where $(S,p,\phi)$ encodes $K$. Choose a simple arc $a^{h}$ on
$S\backslash\cup\mathbf{a}$ which starts from $p$ and ends at a
point in $\partial S$. Then the image of $\mathbf{x}_{H_{(S,p,\phi,\mathbf{a})}}$
in the chain level is given by $EH_{\mathbb{Z}_{2}}(S,p,\phi,\mathbf{a}\cup\{a^{h}\},p)$.
Since $\hat{\mathcal{T}}_{\mathbb{Z}_{2}}(K)$ is the cohomology class
of $\mathbf{x}_{H_{(S,p,\phi,\mathbf{a})}}$, and the cohomology class
of $EH_{\mathbb{Z}_{2}}(S,p,\phi,\mathbf{a}\cup\{a^{h}\},p)$ is equal
to $c_{\mathbb{Z}_{2}}(\xi_{K})$ by \ref{prop:inv-cz2}, we see that
$\hat{\mathcal{T}}_{\mathbb{Z}_{2}}(K)$ is mapped to $c_{\mathbb{Z}_{2}}(\xi_{K})$.
\end{proof}

\end{document}